\newcommand{\crosses}[1]{%
	\ifcase#1\relax
	\or
	\rslash\or
	\rslash\mskip-5.5mu\rslash\or
	\rslash\mskip-5.5mu\rslash\mskip-5.5mu\rslash%
	\fi
}
\newcommand{\rslash}{\raisebox{.15ex}{/}}
\numberwithin{equation}{section}
\theoremstyle{plain}
\newtheorem{lemma}{Lemma}[section]
\newtheorem{proposition}[lemma]{Proposition}
\newtheorem{proposition/definition}[lemma]{Proposition/Definition}
\newtheorem{theorem}[lemma]{Theorem}
\newtheorem{corollary}[lemma]{Corollary}
\theoremstyle{definition}
\newtheorem{definition}[lemma]{Definition}
\newtheorem{remark}[lemma]{Remark}
\newtheorem{example}[lemma]{Example}
\DeclareRobustCommand{\SkipTocEntry}[5]{}
\DeclareMathOperator{\id}{id}
\DeclareMathOperator{\im}{im}
\DeclareMathOperator{\rank}{rank}
\DeclareMathOperator{\Ad}{Ad}
\newcommand{\J}{\mathcal{J}}
\newcommand{\moment}{\bbJ}
\newcommand{\Mod}{\ \text{mod}\ }
\DeclareRobustCommand{\bigO}{%
	\text{\usefont{OMS}{cmsy}{m}{n}O}%
}
\newcommand{\calC}{\mathcal{C}}
\newcommand{\calD}{\mathcal{D}}
\newcommand{\calG}{\mathcal{G}}
\newcommand{\calH}{\mathcal{H}}
\newcommand{\calJ}{\mathcal{J}}
\newcommand{\calK}{\mathcal{K}}
\newcommand{\calL}{\mathcal{L}}
\newcommand{\calP}{\mathcal{P}}
\newcommand{\calR}{\mathcal{R}}
\newcommand{\calS}{\mathcal{S}}
\newcommand{\calX}{\mathcal{X}}
\newcommand{\bbC}{\mathbb{C}}
\newcommand{\bbD}{\mathbb{D}}
\newcommand{\bbJ}{\mathbb{J}}
\newcommand{\bbN}{\mathbb{N}}
\newcommand{\bbP}{\mathbb{P}}
\newcommand{\bbR}{\mathbb{R}}
\newcommand{\bbS}{\mathbb{S}}
\newcommand{\frakX}{\mathfrak{X}}
\newcommand{\frakg}{\mathfrak{g}}
\newcommand{\rmR}{\mathrm{R}}
\newcommand{\rmc}{\mathrm{c}}
\newcommand{\rmd}{\mathrm{d}}
\newcommand{\rmh}{\mathrm{h}}
\newcommand{\ldsb}{[\![}
\newcommand{\rdsb}{]\!]}
\newcommand{\ldab}{\langle\!\langle}
\newcommand{\rdab}{\rangle\!\rangle}
\renewcommand{\theta}{\vartheta}
\newcounter{proof:lem:centralizers}
\title{Contact Dual Pairs}
\author[A. M. Blaga]{Adara Monica Blaga}
\address{Department of Mathematics, West University of Timi\c soara, Bld.~V.~P\^arvan 4, 300223, Romania.}
\email{\href{mailto:adarablaga@yahoo.com}{adarablaga@yahoo.com}}
\author[M. A. Salazar]{Maria Amelia Salazar}
\address{Departamento de Matem\'atica Aplicada, Universidade Federal Fluminense, Niter\'oi - RJ, Brazil.}
\email{\href{mailto:mariasalazar@id.uff.br}{mariasalazar@id.uff.br}}
\author[A. G. Tortorella]{Alfonso Giuseppe Tortorella}
\address{Department of Mathematics, KU Leuven, Celestijnenlaan 200B - 3001 Leuven, Belgium.}
\email{\href{mailto:alfonsogiuseppe.tortorella@kuleuven.be}{alfonsogiuseppe.tortorella@kuleuven.be}}
\author[C. Vizman]{Cornelia Vizman}
\address{Department of Mathematics, West University of Timi\c soara, Bld.~V.~P\^arvan 4, 300223, Romania.}
\email{\href{mailto:cornelia.vizman@e-uvt.ro}{cornelia.vizman@e-uvt.ro}}
\keywords{}
\subjclass[2010]{53D10 (Primary), 
				53D17, 
				53D20, 
				58H05 
			}
\begin{document}

\begin{abstract}
	We introduce and study the notion of contact dual pair adopting a line bundle approach to contact and Jacobi geometry.
	A contact dual pair is a pair of Jacobi morphisms defined on the same contact manifold and satisfying a certain orthogonality condition.
	Contact groupoids and contact reduction are the main sources of examples.
	Among other properties, we prove the Characteristic Leaf Correspondence Theorem for contact dual pairs which parallels the analogous result of Weinstein for symplectic dual pairs.
\end{abstract}

\maketitle

\tableofcontents

\section{Introduction}
\label{sec:intro}

Tracing back to S.~Lie~\cite{lie1890theorie}, the notion of dual pair of Poisson maps (symplectic dual pairs) has its modern origin in the works of Weinstein~\cite{We83}, on the local structure of Poisson manifolds, and Howe~\cite{howe1989remarks}, on representation theory in connection with quantum mechanics.
Symplectic dual pairs are  important in Poisson geometry and geometric mechanics.
For instance, they naturally pop up in relation with: Morita equivalence of Poisson manifolds, bifoliations and superintegrable Hamiltonian systems, as well as moment maps and symplectic reduction.

Jacobi structures, introduced independently by Kirillov~\cite{kirillov} and Lichnerowicz~\cite{lichnerowicz1978jacobi}, encompass, generalizing and unifying, several geometric structures, like
Poisson,  (locally conformal) symplectic, and (generically non-coorientable) contact.
Following~\cite{marle} a Jacobi bundle is a line bundle $L\to M$ equipped with a Jacobi structure $\{-,-\}$, i.e.~a Lie bracket on $\Gamma(L)$ which additionally is a differential operator (DO) in each entry.
Then a Jacobi manifold is a manifold with a Jacobi bundle over it.
In this paper, inspired by~\cite{crainic2015jacobi}, we adopt the line bundle approach to contact and Jacobi geometry.
The conceptual backgrounds of this approach are represented by the gauge algebroid $DL$ of a line bundle $L\to M$ and the associated der-complex of $L$-valued Atiyah forms and graded Lie algebra of multi-differential operators (cf.~Appendix~\ref{sec:gauge_algebroid}).

There exists a close relation between Poisson/symplectic and Jacobi/contact geometry.
On one hand, contact structures are viewed as the odd-dimensional analogue of symplectic structures and, on the other hand, Poisson structures can be seen as a contravariant generalization of symplectic structures, with the latter being the non-degenerate case of the former.
In addition, following Lichnerowicz's philosophy, one can also view Jacobi structures as a ``contravariant'' generalization of contact structures, with the latter being the non-degenerate case of the former.
This close relation between Poisson/symplectic and Jacobi/contact geometry makes it pretty natural to wonder what, if any, is the contact analogue of symplectic dual pairs.
This paper aims at introducing, on conceptually well-grounded basis, and systematically investigating the concept of contact dual pairs.

{\bf Dual pairs in symplectic and Poisson geometry.}
The symplectic dual pairs are the source of inspiration in handling contact dual pairs.
Here, following~\cite{ortega2013momentum}, we recollect their main properties.
Let $(M_i,\{-,-\}_i)$ be Poisson manifolds, for $i=1,2$, and $(M,\omega)$ be a symplectic manifold, with $\{-,-\}$ the corresponding non-degenerate Poisson structure on $M$.
A pair of Poisson maps
\begin{equation}
\label{eq:SDP}
\begin{tikzcd}
(M_1,\{-,-\}_1)&(M,\omega)\arrow[l, swap, "\varphi_1"]\arrow[r, "\varphi_2"]&(M_2,\{-,-\}_2)
\end{tikzcd}
\end{equation}
is called a \emph{(Lie--Weinstein) symplectic dual pair}, or simply \emph{symplectic dual pairs}, if the distributions $\ker T\varphi_1$ and $\ker T\varphi_2$ are the orthogonal complement of each other w.r.t.~$\omega$.
Diagram~\eqref{eq:SDP} forms a \emph{Howe symplectic dual pair} if the Poisson subalgebras $\varphi_1^\ast C^\infty(M_1)$ and $\varphi_2^\ast C^\infty(M_2)$ of $C^\infty(M)$ are the centralizer of each other w.r.t.~$\{-,-\}$.
Unlike the Lie--Weinstein definition, which is a local condition, the Howe definition has a global character.
The relation between these two notions has been investigated in~\cite{MOR}.

If two Poisson manifolds fit into a symplectic dual pair, their local structures are very closely related.
Indeed, let us assume that, in diagram~\eqref{eq:SDP}, the Poisson maps $\varphi_1$ and $\varphi_2$ are surjective submersions (i.e.~the dual pair is \emph{full}) with connected fibers.
Then the relation $\calS_2=\varphi_2(\varphi_1^{-1}(\calS_1))$ establishes a 1-1 correspondence between the symplectic leaves $\calS_1$ of $M_1$ and $\calS_2$ of $M_2$.  
Further, if $\omega_1$ and $\omega_2$ are the symplectic structures inherited by $\calS_1$ and $\calS_2$ respectively, then they are related as follows
\begin{equation*}
\iota_\calK^\ast\omega=\varphi_1|_\calK^\ast\omega_1+\varphi_2|_\calK^\ast\omega_2,
\end{equation*}
where $\calK:=\varphi_1^{-1}(\calS_1)=\varphi_2^{-1}(\calS_2)$, and $\iota_\calK:\calK\to M$ is the inclusion.
Finally, the transverse Poisson structures to $\calS_1$ and $\calS_2$ turn out to be anti-isomorphic.
These properties of symplectic dual pairs lead to introduce and investigate the \emph{Morita equivalence} of Poisson manifolds~\cite{Xu}.

One source of examples are the symplectic groupoids.
Additionally, symplectic dual pairs also naturally emerge from reduction of symplectic manifolds with symmetries.
Let us consider a Lie group $G$ acting freely, properly, and by symplectomorphisms on $(M,\omega)$.
Assume further that the action is Hamiltonian with $\Ad^\ast$ equivariant moment map $\moment:M\to\frakg^\ast$.
Then one gets the symplectic dual pair
\begin{equation*}
\begin{tikzcd}
(M/G,\{-,-\}_{M/G})&(M,\omega)\arrow[l, swap, "q"]\arrow[r, "\bbJ"]&(\frakg^\ast,\{-,-\}_+),
\end{tikzcd}
\end{equation*}
where $\{-,-\}_+$ is the $+$-Lie-Poisson bracket on $\frakg^\ast$ and $\{-,-\}_{M/G}$ is the quotient Poisson structure on $M/G$.

Finally, we point out that the notion of symplectic dual pairs generalizes to the setting of Dirac structures.
In particular, this generalization, introduced in~\cite{BR}, has allowed to obtain alternative proofs of the normal form theorem around Dirac transversals and the existence of symplectic realizations~\cite{FM}.

{\bf Dual pairs in contact and Jacobi geometry}.
Let $(M_i,L_i,\{-,-\}_i)$ be Jacobi manifolds, with $i=1,2$, and $(M,\calH)$ be a contact manifold.
Set $L:=TM/\calH$ and denote by $\theta$ the corresponding $L$-valued contact form, by $\rmc_\calH$ the associated curvature form, and by $\{-,-\}$ the corresponding non-degenerate Jacobi structure on $L\to M$ (see Section~\ref{sec:review} for a brief review of contact and Jacobi geometry).
Then a \emph{(Lie--Weinstein) contact dual pair} (or simply \emph{contact dual pair}) is a pair of Jacobi morphisms
\begin{equation}
\label{eq:intro:CDP}
\begin{tikzcd}
(M_1,L_1,\{-,-\}_1)&(M,\calH)\arrow[l, swap, "\Phi_1"]\arrow[r, "\Phi_2"]&(M_2,L_2,\{-,-\}_2),
\end{tikzcd}
\end{equation}
with underlying maps $\!\!\begin{tikzcd}M_1&M\arrow[l, swap, "\varphi_1"]\arrow[r, "\varphi_2"]&M_2\end{tikzcd}\!\!\!$, such that the following three conditions hold:
\begin{enumerate}[label=(\roman*)]
	\item the contact distribution $\calH$ is transverse to both $\ker T\varphi_1$ and $\ker T\varphi_2$,
	\item the pull-back sections $\Phi_1^\ast\lambda_1$ and $\Phi_2^\ast\lambda_2$ Jacobi commute, for all $\lambda_1\in\Gamma(L_1)$ and $\lambda_2\in\Gamma(L_2)$,
	\item $\calH\cap\ker T\varphi_1$ and $\calH\cap\ker T\varphi_2$
	are the orthogonal complement of each other w.r.t.~$\rmc_\calH$.
\end{enumerate}
The main motivating examples are contact groupoids.
Indeed, the very definition is modelled so that the source and the target map of any contact groupoid form a contact dual pair.
Nevertheless, a more compact and geometrically insightful description of contact dual pairs is obtained by means of the interpretation of the $L$-valued contact form $\theta$ as an $L$-valued symplectic Atiyah form $\varpi$.
Indeed, Proposition~\ref{prop:LW_CDP_Atiyah_forms} shows that diagram~\eqref{eq:intro:CDP} is a contact dual pair iff the kernels of the induced gauge algebroid morphisms $D\Phi_1:DL\to DL_1$ and $D\Phi_2:DL\to DL_2$ are the orthogonal complement of each other w.r.t.~$\varpi$, i.e.
\begin{equation}
\label{eq:intro:CDP_orthogonality_condition}
\ker D\Phi_1=(\ker D\Phi_2)^{\perp\varpi}.
\end{equation}
This characterization of contact dual pairs immediately leads to other two equivalent descriptions.
First, Proposition~\ref{prop:CDP_Dirac-Jacobi} rephrases the orthogonality condition~\eqref{eq:intro:CDP_orthogonality_condition} into the language of Dirac--Jacobi geometry.
Even though this rephrasing already plays a crucial role in this paper (e.g.~in the proof of Theorem~\ref{theor:transverse_structure}), the generalization of contact dual pairs to the setting of Dirac--Jacobi structures will only be addressed in a separate short note~\cite{schnitzer2019weakdualpairs}.
Second, Proposition~\ref{prop:CDPs=homogeneous_SDPs} establishes a 1-1 correspondence 
between the contact dual pairs and the so-called \emph{homogeneous symplectic dual pairs}.

Diagram~\eqref{eq:intro:CDP} is a \emph{Howe contact dual pair} if the Lie subalgebras $\Phi_1^\ast\Gamma(L_1)$ and $\Phi_2^\ast\Gamma(L_2)$ of $\Gamma(L)$ are the centralizer of each other w.r.t.~$\{-,-\}$.
In contrast to the Lie--Weinstein definition, which is a local condition, the Howe definition is a global condition.
In parallel with the analogous results for symplectic dual pairs~\cite{MOR}, Proposition~\ref{prop:relation_Howe_Weinstein} studies the non-trivial relation between these two notions of dual pair.

The main result of the paper is the Characteristic Leaf Correspondence, according to which the local structures of two Jacobi manifolds fitting into a contact dual pair are very closely related.
It consists of three parts, in close analogy to Weinstein's results for symplectic dual pairs~\cite{We83}.
Let us assume that the underlying maps $\varphi_1$ and $\varphi_2$ in diagram~\eqref{eq:SDP} are surjective submersions (i.e.~the dual pair is \emph{full}) with connected fibers.
Then the relation $\calS_2=\varphi_2(\varphi_1^{-1}(\calS_1))$ establishes a 1-1 correspondence between the characteristic leaves $\calS_1$ of $M_1$ and $\calS_2$ of $M_2$ (see Theorem~\ref{theor:LeafCorrespondenceI}).
Further, Theorem~\ref{theor:LeafCorrespondenceII} proves that $\calS_1$ and $\calS_2$ are either both contact or both l.c.s., and describes the relation between their inherited (transitive Jacobi) structures.
Finally, Theorem~\ref{theor:transverse_structure} shows that the transverse structures to $\calS_1$ and to $\calS_2$ are anti-isomorphic.
These properties of contact dual pairs seem to suggest the introduction and the investigation of Morita equivalence for Jacobi manifolds.
This suggestive idea will be pursued by the authors in a future work.

In addition to contact groupoids, as pointed out by Theorem~\ref{theor:CDP_contact groupoid_action}, another source of examples is represented by contact reduction.
In this paper we consider contact actions of contact groupoids on contact manifolds.
As a special case, let us consider a Lie group $G$ acting freely, properly, and by contactomorphisms on $(M,\calH)$.
Assume further that the orbits are transverse to $\calH$.
Then one gets a \emph{moment map} $\moment:M\to\bbP(\frakg^\ast)$ (also called \emph{Jacobi moment map} in~\cite[Def.~2.27]{SaSe}) and the following contact dual pair
\begin{equation*}
\begin{tikzcd}
(M/G,L/G,\{-,-\}_{M/G})&(M,\calH)\arrow[l, swap, "q"]\arrow[r, "\bbJ"]&(\bbP(\frakg^\ast),\bigO_{\bbP(\frakg^\ast)}(1),\{-,-\}_{\bbP(\frakg^\ast)}),
\end{tikzcd}
\end{equation*}
where $(\bbP(\frakg^\ast),\bigO_{\bbP(\frakg^\ast)}(1),\{-,-\}_{\bbP(\frakg^\ast)})$ is the Jacobi manifold of Example~\ref{projective}, and the quotient line bundle $L/G\to M/G$ is equipped with the quotient Jacobi structure $\{-,-\}_{M/G}$.

{\bf Structure of the paper.}
Section~\ref{sec:review} gives a brief review of contact and Jacobi geometry.
Section~\ref{sec:CDPs} introduces contact dual pairs with motivating examples coming from contact groupoids.
Section~\ref{sec:Properties} studies the first properties of contact dual pairs, like the alternative equivalent description in terms of Dirac--Jacobi geometry, the identification with homogeneous symplectic dual pairs and the relation between contact dual pairs and Howe contact dual pairs.
Section~\ref{sec:CharacteristicLeafCorrespondence} discusses the main result of the paper, namely the Characteristic Leaf Correspondence.
Section~\ref{sec:contact_groupoid_action_CDP} describes in detail how contact dual pairs naturally emerge from the reduction of contact manifolds with symmetries.
The paper also contains two appendices.
Appendix~\ref{sec:differential_operators} recalls basic facts about differential operators (DOs) and the conceptual backgrounds of the line bundle approach to contact and Jacobi geometry.
Appendix~\ref{sec:homogeneous_symplectic/Poisson} summarizes the alternative, but equivalent, approach to contact and Jacobi geometry inspired by the ``symplectization/Poissonization trick''.

\section{A Review of Contact and Jacobi Geometry}
\label{sec:review}

This section gives a very brief review of (the line bundle approach to) contact and Jacobi geometry.
It recalls the (possibly non-standard) terminology and the background material that we will systematically adopt and use later on in the paper.
In doing so, we will freely use the language of jets and differential operators (DOs) on line bundles as summarized in Appendix~\ref{sec:differential_operators}.
In addition to~\cite{kirillov} and~\cite{lichnerowicz1978jacobi}, our main references for this material are~\cite{crainic2015jacobi,le2017jacobi,LOTV,SaSe,tortorella2017phdthesis}.

\subsection{Contact Manifolds}
\label{sec:contact_structures}

For any manifold $M$, there is a canonical 1-1 correspondence between:
\begin{enumerate}[label=(\arabic*)]
	\item \emph{pre-contact distributions} $\calH$ on $M$, i.e.~hyperplane distributions $\calH\subset TM$,
	\item \emph{pre-contact forms} $\theta$ on $M$, i.e.~nowhere zero $1$-forms $\theta\in\Omega^1(M;L)$ with values in some line bundle $L\to M$, unique up to line bundle isomorphisms.
\end{enumerate}
Clearly in one direction this correspondence works by setting $L:=TM/\calH$ and $\theta(X)=X\Mod\calH$, for all $X\in\frakX(M)$, and in the opposite direction it works by setting $\calH=\ker\theta$.

Let $\calH$ be a pre-contact distribution on $M$ and let $\theta$ be a corresponding $L$-valued pre-contact form on $M$.
The associated \emph{curvature $2$-form} $\rmc_\calH\in\Gamma(\wedge^2\calH^\ast\otimes L)$ is defined by
$$\rmc_\calH(X,Y):=\theta[X,Y], \text{ for all }X,Y\in\Gamma(\calH).$$
The curvature form measures the failure of $\calH$ to be integrable.
Indeed, by the Frobenius Theorem, $\calH$ is integrable iff $\rmc_\calH=0$.
So $\calH$ is called \emph{maximally non-integrable} if $\rmc_\calH$ is non-degenerate, i.e.~the vector bundle morphism $\rmc_\calH^\flat:\calH\to\calH^\ast\otimes L$ is an isomorphism, with inverse denoted by $\rmc_\calH^\sharp:\calH^\ast\otimes L\to \calH$.
If this is the case, then $\dim M=\text{odd}$, and $\calH$ and $\theta$ are said to be, not just pre-contact, but properly \emph{contact}.

\begin{definition}
	\label{def:contact_structures}
	A \emph{contact manifold} is a manifold equipped with a \emph{contact structure} which is equivalently given by either a contact distribution $\calH$ or a corresponding contact form $\theta$.
\end{definition}

\begin{remark}
	\label{rem:pull-back_VB_valued_forms}
	For future reference, let us recall that a line bundle isomorphism $\Phi:L_1\to L_2$, covering $\varphi:M_1\to M_2$, determines the module isomorphism $\Phi^\ast:\Omega^\bullet(M_2;L_2)\to\Omega^\bullet(M_1;L_1)$, covering the algebra isomorphism $\varphi^\ast:\Omega^\bullet(M_2)\to\Omega^\bullet(M_1)$, given by $(\Phi^\ast\omega_2)_x=(\Phi_x)^{-1}\circ\omega_{2,\varphi(x)}\circ\wedge^kT_x\varphi$, for all $k\geq 0$, $\omega_2\in\Omega^k(M_2;L_2)$, and $x\in M_1$.
	Further, if $\Phi$ is just a \emph{regular line bundle morphism} (cf.~Remark~\ref{rem:regularVBmorphisms}), $\Phi^\ast$ is still well-defined, even though it is not an isomorphism.
\end{remark}

The duality between distributions and forms seen in the description of contact manifolds is reflected in the description of their contactomorphisms.

\begin{definition}
	Let $\calH_i$ be a contact distribution on $M_i$ and $\theta_i\in\Omega^1(M_i;L_i)$ be a corresponding contact form, with $i=1,2$.
	A \emph{contactomorphism} is equivalently given by either:
	\begin{enumerate}[label=(\arabic*)]
		\item
		\label{enuitem:contactomorphism:1}
		a diffeomorphism $\varphi:M_1\to M_2$ such that $\calH_2=\varphi_\ast\calH_1:=(T\varphi)\calH_1$, or
		\item
		\label{enuitem:contactomorphism:2}
		a line bundle isomorphism $\Phi:L_1\to L_2$ over $\varphi:M_1\to M_2$, such that $\theta_1=\Phi^\ast\theta_2:=\Phi^{-1}\circ\theta_2\circ(T\varphi)$.
	\end{enumerate} 
\end{definition}

Let $(M,\calH)$ be a contact manifold and $\theta\in\Omega(M;L)$ be a contact form corresponding to the contact distribution $\calH$.
Denote by $\frakX(M,\calH)\subset\frakX(M)$ the Lie algebra of \emph{infinitesimal contactomorphisms} of $(M,\calH)$, i.e.~those vector fields whose flow consists of local contactomorphisms.
It turns out that $\frakX(M,\calH)$ is formed exactly by the \emph{contact vector fields} of $(M,\calH)$, i.e.~those $X\in\frakX(M)$ such that $[X,\Gamma(\calH)]\subset\Gamma(\calH)$, and so it fits in the short exact sequence of $\bbR$-linear maps
\begin{equation}
\label{eq:contactHamiltonianVFs}
\begin{tikzcd}
0\arrow[r]&\frakX(M,\calH)\arrow[r, "\text{incl}"]&\frakX(M)\arrow[r, "\phi"]&\Gamma(\calH^\ast\otimes L)\arrow[r]&0,
\end{tikzcd}
\end{equation}
where $\frakX(M)\to\Gamma(\calH^\ast\otimes L)$, $X\mapsto\phi_X$, is the 1st-order linear differential operator (DO) defined by $\phi_X(Y)=\theta[X,Y]$, for all $Y\in\Gamma(\calH)$.
Further $\rmc_\calH^\sharp:\calH^\ast\otimes L\to\calH\subset TM$ splits~\eqref{eq:contactHamiltonianVFs}, so that $\frakX(M)=\Gamma(\calH)\oplus\frakX(M,\calH)$ and there is a unique $\bbR$-linear isomorphism 
\begin{equation*}
\Gamma(L)\overset{\sim}{\to}\frakX(M,\calH),\ \lambda\mapsto\calX_\lambda,
\quad\text{such that}\ \theta(\calX_\lambda)=\lambda.
\end{equation*}
Additionally, $\calX_{(-)}$ is a 1st-order DO from $L$ to $TM$, sending each line bundle section $\lambda$ to its associated \emph{Hamiltonian vector field} $\calX_\lambda$.
Indeed, for all $\lambda\in\Gamma(L)$ and $f\in C^\infty(M)$, one gets that
\begin{equation}
\label{eq:contact_vector_fields:DO}
\calX_{f\lambda}=f\calX_\lambda+\rmc_\calH^\sharp((\rmd f)|_\calH\otimes\lambda).
\end{equation}
Hence a contact structure determines the bracket
\begin{equation*}
\{-,-\}:\Gamma(L)\times\Gamma(L)\to\Gamma(L),\ (\lambda,\mu)\mapsto\{\lambda,\mu\}:=\theta(\calX_\lambda,\calX_\mu),
\end{equation*}
which is both a Lie bracket and a 1st-order linear bi-DO from $L$ to $L$, and so it is a Jacobi structure on $L$ (cf.~Definition~\ref{def:Jacobi_structures}).
Actually, $\calH$ and $\theta$ are fully encoded by their \emph{associated Jacobi structure} $\{-,-\}$.

\begin{example}[\textbf{The trivial line bundle case}]
Let $\calH$ be a contact distribution on $M$ and let $\theta$ be a corresponding $L$-valued contact form on $M$.
Assume that $L$ is the trivial line bundle $\bbR_M:=M\times\bbR\to M$.
In this case $\theta$ becomes a nowhere zero \emph{real-valued} $1$-form on $M$ and $\rmc_\calH$ coincides with $-(\rmd\theta)|_\calH$.
So the maximal non-integrability of $\calH$ reduces to the ordinary condition defining \emph{coorientable contact structures}, i.e.
\begin{equation*}
\theta\wedge(\rmd\theta)^n\ \text{is a volume form on}\ M^{2n+1}.
\end{equation*}
Further, in this special coorientable case, there is a distinguished contact vector field which is nowhere tangent to $\calH$, the so-called \emph{Reeb vector field} $E:=\calX_1$.

Consider, for $i=1,2$, two coorientable contact manifolds $(M_i,\calH_i)$, with $\calH_i=\ker\theta_i$, for some $\theta_i\in\Omega^1(M_i)$.
Then a contactomorphism $(M_1,\calH_1)\to(M_2,\calH_2)$ can also be described as a pair $(\varphi,a)$ formed by a diffeomorphism $\varphi:M_1\to M_2$ and a nowhere zero function $a\in C^\infty(M_1)$, the so-called \emph{conformal factor}, such that $\theta_1=a\varphi^\ast\theta_2$.
So, in this setting, a contactomorphism is also denoted by $(\varphi,a):(M_1,\calH_1)\to(M_2,\calH_2)$ and if, additionally, $a=1$, then it is said to be a \emph{strict contactomorphism}.

\end{example}

\subsection{Jacobi Manifolds}
\label{sec:Jacobi_structures}

\begin{definition}
	\label{def:Jacobi_structures}
	A \emph{Jacobi bundle}~\cite{marle} over a manifold $M$ is a line bundle $L\to M$ equipped with a \emph{Jacobi structure} (or \emph{Jacobi bracket}), i.e.~a Lie bracket $\{-,-\}:\Gamma(L)\times\Gamma(L)\to\Gamma(L)$ which additionally satisfies the following two equivalent conditions
	\begin{enumerate}[label=(\arabic*)]
		\item it is a \emph{1st-order linear bi-differential operator} on $L\to M$,
		\item it is \emph{local}, i.e.~$\operatorname{supp}(\{\lambda,\mu\})\subset\operatorname{supp}(\lambda)\cap\operatorname{supp}(\mu)$, for all $\lambda,\mu\in\Gamma(L)$.
	\end{enumerate}
	Then a \emph{Jacobi manifold} $(M,L,\{-,-\})$ is a manifold $M$ with a Jacobi bundle $(L,\{-,-\})$ over it.
\end{definition}

In this paper, for any line bundle $L\to M$, we will use the following identification of the Jacobi structures on $L\to M$ with the Maurer--Cartan elements of the graded Lie algebra $((\calD^\bullet L)[1],\ldsb-,-\rdsb)$ formed by the multi-DOs from $L$ to $L$ w.r.t.~the Schouten--Jacobi bracket (cf.~Appendix~\ref{sec:gauge_algebroid}).

\begin{proposition}[{\cite[Proposition~2.7]{tortorella2017phdthesis}}]
	\label{prop:Jacobi_bi-DOs}
	For any line bundle $L\to M$, the relation $$\{\lambda,\mu\}=\J(j^1\lambda,j^1\mu),\ \text{for all}\ \lambda,\mu\in\Gamma(L),$$
	establishes a 1-1 correspondence between the Jacobi structures $\{-,-\}$ on $L\to M$ and the \emph{Jacobi bi-DOs} $\J$ on $L\to M$, i.e.~those $\J\in\calD^2L$ such that $\ldsb\J,\J\rdsb=0$.
\end{proposition}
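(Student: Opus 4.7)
The plan is to split the correspondence into two separate equivalences: a purely bundle-theoretic one between skew-symmetric 1st-order bi-DOs on $L\to M$ and degree-two multi-DOs $\J\in\calD^2L$, and then an equivalence between the Jacobi identity for the induced bracket and the Maurer--Cartan condition $\ldsb\J,\J\rdsb=0$ for the Schouten--Jacobi bracket.

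For the first equivalence, I would invoke the universal property of the first jet bundle $J^1L$: a 1st-order DO from $L$ to a vector bundle $E$ is canonically the same as a vector bundle morphism $J^1L\to E$ obtained by pre-composition with the universal DO $j^1:\Gamma(L)\to\Gamma(J^1L)$. Applying this in each entry, a 1st-order bi-DO $\Gamma(L)\times\Gamma(L)\to\Gamma(L)$ corresponds to a vector bundle morphism $J^1L\otimes J^1L\to L$, and a skew-symmetric one corresponds to a morphism $\wedge^2(J^1L)\to L$, which by definition is precisely an element of $\calD^2L$ (cf.~Appendix~\ref{sec:gauge_algebroid}). Under this bijection the bracket reads exactly $\{\lambda,\mu\}=\J(j^1\lambda,j^1\mu)$, and skew-symmetry, locality and the generalized Leibniz rule come for free from $\J$ being a section of the appropriate Hom-bundle.

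It then remains to show that, under this bijection, the Jacobi identity for $\{-,-\}$ is equivalent to $\ldsb\J,\J\rdsb=0$. The Schouten--Jacobi bracket is designed precisely so that, just as in the classical Schouten--Nijenhuis case for multivector fields, the graded commutator of $\J$ with itself computes the Jacobiator. Concretely, I would establish the pointwise identity
\begin{equation*}
\tfrac{1}{2}\ldsb\J,\J\rdsb(j^1\lambda,j^1\mu,j^1\nu)=\{\lambda,\{\mu,\nu\}\}+\{\mu,\{\nu,\lambda\}\}+\{\nu,\{\lambda,\mu\}\},
\end{equation*}
for all $\lambda,\mu,\nu\in\Gamma(L)$. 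Combined with the universal property above, vanishing of either side for all sections forces the vanishing of the other.

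The main obstacle is verifying this last identity, since $\ldsb-,-\rdsb$ is defined abstractly through the derived bracket/commutator construction on multi-DOs rather than by an explicit formula on jets. The cleanest route is to argue on generators: since $\calD^\bullet L$ is generated as a graded Lie algebra by $\calD^0 L=\Gamma(L)$ and $\calD^1 L=DL$ under the natural wedge product, it suffices to verify the identity on decomposable elements of the form $X\wedge Y$ with $X,Y\in DL$, where both sides can be computed explicitly in terms of commutators of derivations, and then to extend by bi-linearity and the graded Leibniz rule of $\ldsb-,-\rdsb$. Alternatively, one can proceed via the Poissonization trick reviewed in Appendix~\ref{sec:homogeneous_symplectic/Poisson}: passing to the total space of $L^\ast\setminus 0$, multi-DOs on $L$ correspond bijectively to homogeneous multi-vectors, and the Schouten--Jacobi bracket is identified with the restriction of the Schouten--Nijenhuis bracket, so the claim reduces to the classical equivalence between Poisson brackets and Maurer--Cartan elements of the multi-vector Gerstenhaber algebra.
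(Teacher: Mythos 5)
The paper offers no proof of this statement: it is imported verbatim from \cite[Prop.~2.7]{tortorella2017phdthesis}, so there is no internal argument to compare yours against. Your outline is nevertheless correct and is essentially the standard proof that the cited reference gives: (i) the universal property of $j^1$ identifies skew-symmetric first-order bi-DOs with sections of $\wedge^2(J^1L)^\ast\otimes L=\calD^2L$, with the bracket recovered as $\{\lambda,\mu\}=\J(j^1\lambda,j^1\mu)$; (ii) the Jacobi identity is then equivalent to $\ldsb\J,\J\rdsb=0$ because $\ldsb\J,\J\rdsb(j^1\lambda,j^1\mu,j^1\nu)$ is, up to a nonzero convention-dependent constant, the Jacobiator of $\{-,-\}$ evaluated at $(\lambda,\mu,\nu)$; since both sides are elements of $\calD^3L$ and hence determined by their values on holonomic triples $(j^1\lambda,j^1\mu,j^1\nu)$, the vanishing of one forces the vanishing of the other.

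Two caveats on your justification of the Jacobiator identity. First, the ``generators'' route is stated imprecisely: the wedge of two elements of $\calD^1L=\Gamma((J^1L)^\ast\otimes L)$ naturally lives in $\Gamma(\wedge^2(J^1L)^\ast\otimes L^{\otimes 2})$, not in $\calD^2L$, so $\calD^\bullet L$ is \emph{not} generated by $\calD^0L$ and $\calD^1L$ under a wedge product in the naive sense; this decomposition only becomes available after fixing a local frame of $L$ (which does suffice, as the identity to be checked is local and tensorial in $\J$). Second, the Poissonization route is the cleaner one and is fully supported by Appendix~\ref{sec:homogeneous_symplectic/Poisson} of the paper: $\widehat{\pi}^\ast$ is a graded Lie algebra isomorphism $(\calD^\bullet L)[1]\to\frakX^\bullet_{\mathrm{hom}}(\widetilde{L})[1]$ intertwining $\{-,-\}$ with the bracket of the homogeneous bivector $\widetilde{\J}$, so the claim reduces to the classical statement for the Schouten--Nijenhuis bracket, using that the differentials of degree-one homogeneous functions span $T^\ast\widetilde{L}$ pointwise so that a homogeneous trivector vanishing on all such triples vanishes identically. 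With either route made precise, your argument is complete.
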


\begin{remark}
	\label{rem:regularVBmorphisms}
	For the reader's convenience we recall here the notion of regular vector bundle morphism.
	Specifically, a vector bundle morphism $\Phi:E_1\to E_2$, covering a smooth map $\varphi:M_1\to M_2$, is called \emph{regular} if its restriction on fibers $\Phi_x:E_{1,x}\to E_{2,\varphi(x)}$ is a linear isomorphism for all $x\in M_1$.
	Consequently, $\Phi^\ast:\Gamma(E_2)\to\Gamma(E_1),\lambda\mapsto\Phi^\ast \lambda$, \emph{the pull-back of sections along $\Phi$}, is well-defined by setting \begin{equation*}
	(\Phi^\ast \lambda)_x:=(\Phi_x)^{-1}(\lambda_{\varphi(x)})\in E_{1,x},\ \text{for all}\ x\in M_1\ \text{and}\ \lambda\in\Gamma(E_2).
	\end{equation*}
	Equivalently, the regular vector bundle morphism $\Phi:E_1\to E_2$ can also be seen as a pair formed by a smooth map $\varphi:M_1\to M_2$ and a vector bundle isomorphism $F:\varphi^\ast E_2\to E_1$, covering the identity map $\id_{M_1}:M_1\to M_1$, so that now $\Phi^\ast \lambda=F\circ(\varphi^\ast \lambda)$, for all $\lambda\in\Gamma(E_2)$.
	Further, it is easy to see that vector bundles with regular vector bundle morphisms form a category.
\end{remark}

\begin{definition}
	\label{def:Jacobi_morphism}
	A \emph{Jacobi morphism} $\Phi:(M_1,L_1,\{-,-\}_1)\longrightarrow (M_2,L_2,\{-,-\}_2)$ is a regular line bundle morphism $\Phi:L_1\to L_2$, covering $\varphi:M_1\to M_2$, such that, for all $\lambda,\mu\in\Gamma(L_2)$,
	\begin{equation}
	\label{eq:def:Jacobi_morphism}
	\Phi^\ast\{\lambda,\mu\}_2=\{\Phi^\ast\lambda,\Phi^\ast\mu\}_1.
	\end{equation}
\end{definition}

Let $(M,L, \{-,-\})$ be a Jacobi manifold.
Denote by $\J\in\calD^2 L$ the Jacobi bi-DO corresponding to $\{-,-\}$ (cf.~Proposition~\ref{prop:Jacobi_bi-DOs}) and by $\J^\sharp:J^1L\to DL$ the associated vector bundle morphism over $\id_M:M\to M$, with $DL=(J^1L)^*\otimes L\to M$ denoting the gauge algebroid of $L$ (cf.~Appendix~\ref{sec:gauge_algebroid}).
Any section $\lambda\in\Gamma(L)$ determines a \emph{Hamiltonian DO} $\Delta_\lambda:=\J^\sharp(j^1\lambda)$ and a \emph{Hamiltonian vector field} 
\begin{equation}\label{calc1}
\calX_\lambda:=(\sigma\circ\J^\sharp)(j^1\lambda)=\sigma(\Delta_\lambda)\in\frakX(M), 
\end{equation}
with $\sigma:DL\to TM$ denoting the symbol.
They are also equivalently defined by the following identities:
\begin{equation*}
\Delta_\lambda\mu=\{\lambda,\mu\}\quad\text{\and}\quad\{\lambda,f\mu\}=\calX_\lambda(f)\mu+f\{\lambda,\mu\},
\end{equation*}
for all $\lambda,\mu\in\Gamma(L)$ and $f\in C^\infty(M)$.
So one gets the Lie algebra morphisms $\Gamma(L)\to\calD(L),\ \lambda\mapsto\Delta_\lambda$, and $\Gamma(L)\to\frakX(M),\ \lambda\mapsto\calX_\lambda$, which are also 1st-order linear DO from $L$ to $DL$ and $TM$ respectively.

\begin{remark}
	\label{rem:Jacobi_morphism}
	For further reference, notice that, if $\Phi:(M_1,L_1,\{-,-\}_1)\to(M_2,L_2,\{-,-\}_2)$ is a Jacobi morphism covering $\varphi:M_1\to M_2$, then, for any section $\lambda\in\Gamma(L_2)$, the following identities hold
	\begin{equation}
	\label{eq:rem:Jacobi_morphism}
	(D\Phi)\circ\Delta_{\Phi^\ast\lambda}=\Delta_{\lambda}\circ\varphi\quad\text{and}\quad (T\varphi)\circ\calX_{\Phi^\ast\lambda}=\calX_{\lambda}\circ\varphi.
	\end{equation}
	Indeed, the first identity above is just a rephrasing of Equation~\eqref{eq:def:Jacobi_morphism} defining the Jacobi morphisms, and the second identity follows from the first one by taking the symbol componentwise.
\end{remark}

\begin{example}[\bf{The projectivization $\mathbb{P}(\frakg^*)$}~\cite{SaSe}]\label{projective}
 Given a Lie algebra $\frakg$, the projectivization 
$\mathbb{P}(\frakg^*)=(\frakg^*\setminus\{0\})/\mathbb{R}^\times$
 inherits a Jacobi bracket $\{-,-\}_{\bbP(\frakg^\ast)}$ on $\bigO_{\bbP(\frakg^\ast)}(1)$, 
the dual of the tautological line bundle $\tau$ over $\bbP(\frakg^*)$ ($\tau_{[\mu]}=\langle\mu\rangle,\ \mu\in\frakg^*\setminus\{0\}$). 
The bracket comes from the natural identification of $\Gamma(\bigO_{\bbP(\frakg^\ast)}(1))$ with $C^\infty_{\rm hom}(\mathfrak{g}^*\setminus\{0\})$, the space of (degree $1$) homogeneous smooth functions on $\mathfrak{g}^*\setminus\{0\}$, 
and the fact that $C^\infty_{\rm hom}(\mathfrak{g}^*\setminus\{0\})$ is a subalgebra of $C^\infty(\mathfrak{g}^*\setminus\{0\})$ with the linear Poisson bracket $\{-,-\}_+$. 
The identification takes a section $\beta\in\Gamma(\bigO_{\bbP(\frakg^\ast)}(1))$, and identifies it with the homogeneous function $F_\beta$ on $ \mathfrak{g}^*\setminus\{0\}$
defined by 
$F_\beta(\mu):=\langle\beta({[\mu]}),\mu\rangle$.  With this, the projection map 
$\frakg^*\setminus\{0\}\to \mathbb{P}(\frakg^*),\ \mu\mapsto[\mu]$
becomes a Jacobi morphism with bundle component given by
\begin{equation*}\Phi:\mathbb{R}\times (\frakg^*\setminus\{0\})\to \bigO_{\bbP(\frakg^\ast)}(1),\quad
\Phi(t,\mu):\langle\mu\rangle\to\mathbb{R}, \ s\mu\mapsto st.
\end{equation*}
Alternatively, $(\bbP(\frakg^\ast),\bigO_{\bbP(\frakg^\ast)}(1),\{-,-\}_{\bbP(\frakg^\ast)})$ can be seen as the dehomogenization of $(\frakg^\ast,\{-,-\}_+)$ (cf.~Appendix \ref{sec:homogeneous_symplectic/Poisson}).
\end{example}

\subsection*{The trivial line bundle case.}
	A \emph{Jacobi pair}~\cite{lichnerowicz1978jacobi} on a manifold $M$ is a pair $(\Pi,E)$ formed by a bivector field $\Pi\in\frakX^2(M)$ and a vector field $E\in\frakX(M)$ such that
	$\ldsb\Pi,\Pi\rdsb=2E\wedge\Pi\ \text{and}\ \ldsb E,\Pi\rdsb=0$,
	where $\ldsb-,-\rdsb$ denotes the Schouten--Nijenhuis bracket on multivector fields on $M$.

\begin{proposition}[{\cite[Lemma 3]{kirillov}}]
	\label{prop:Jacobi_pairs}
	For any manifold $M$, the following relation
	\begin{equation*}
	\{f,g\}=\Pi(\rmd f,\rmd g)+i_E(f\rmd g-g\rmd f),\ \text{for all}\ f, g\in C^\infty(M),
	\end{equation*}
	gives a 1-1 correspondence between Jacobi structures $\{-,-\}$ on $\bbR_M\to M$ and Jacobi pairs $(\Pi,E)$ on $M$.
\end{proposition}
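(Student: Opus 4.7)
The plan is to invoke Proposition~\ref{prop:Jacobi_bi-DOs} to reduce the statement to a bijection between Jacobi bi-DOs $\J\in\calD^2\bbR_M$ and Jacobi pairs $(\Pi,E)$ on $M$, and then to exploit the canonical trivialization of the gauge algebroid $D\bbR_M$ available for the trivial line bundle. Once the structural correspondence $\J\leftrightarrow(\Pi,E)$ is in place and shown to reproduce the stated formula, what remains is to verify that the Maurer--Cartan equation $\ldsb\J,\J\rdsb=0$ splits precisely into the two Jacobi-pair conditions.

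First I would set up the packaging of a bi-DO as a pair. The distinguished nowhere-zero section $1\in\Gamma(\bbR_M)$ determines a canonical splitting of the symbol sequence for the gauge algebroid, namely $D\bbR_M\cong TM\oplus\bbR_M$ via $\Delta\mapsto(\sigma(\Delta),\Delta(1))$, and dually $J^1\bbR_M\cong\bbR_M\oplus T^\ast M$ via $j^1f\mapsto(f,\rmd f)$. Taking exterior powers, the induced identification $\calD^k\bbR_M\cong\frakX^k(M)\oplus\frakX^{k-1}(M)$ sends a skew 1st-order bi-DO $\J$ to a unique pair $(\Pi,E)$ with $\Pi\in\frakX^2(M)$ and $E\in\frakX(M)$. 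Unpacking the pairing $\J(j^1f,j^1g)$ through this identification produces exactly $\Pi(\rmd f,\rmd g)+f\,E(g)-g\,E(f)=\Pi(\rmd f,\rmd g)+i_E(f\rmd g-g\rmd f)$, which is the formula in the statement. At this stage the assignment $\{-,-\}\leftrightarrow(\Pi,E)$ is an unconstrained bijection onto skew 1st-order bi-DOs; no integrability condition has been used yet.

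Next I would match the integrability conditions. Writing $\J=\Pi+\id_{\bbR_M}\wedge E$ in $\calD^2\bbR_M$ under the above splitting, and using the fact (to be extracted from Appendix~\ref{sec:gauge_algebroid}) that the Schouten--Jacobi bracket $\ldsb-,-\rdsb$ on $\calD^\bullet\bbR_M$ restricts to the ordinary Schouten--Nijenhuis bracket on the multivector summand and has a controlled Leibniz-type interaction with $\id_{\bbR_M}$, one can expand
\[
\ldsb\J,\J\rdsb=\ldsb\Pi,\Pi\rdsb+2\ldsb\Pi,\id_{\bbR_M}\wedge E\rdsb+\ldsb\id_{\bbR_M}\wedge E,\id_{\bbR_M}\wedge E\rdsb
\]
and decompose the result along the bigrading $\calD^3\bbR_M\cong\frakX^3(M)\oplus\frakX^2(M)$. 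A short sign-tracking computation shows that the trivectorial component equals $\ldsb\Pi,\Pi\rdsb-2E\wedge\Pi$ and the bivectorial component equals $2\ldsb E,\Pi\rdsb$, so $\ldsb\J,\J\rdsb=0$ is equivalent to the pair $\ldsb\Pi,\Pi\rdsb=2E\wedge\Pi$ and $\ldsb E,\Pi\rdsb=0$. Combined with the bijective parametrization of the previous step, this completes the 1-1 correspondence.

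The main obstacle is the bookkeeping in this bigraded expansion: keeping track of signs, of wedge factors of $E$ produced by the $\id_{\bbR_M}$-interactions, and of the correct identification of the two homogeneous components requires care. One can instead bypass the gauge-algebroid formalism and verify the equivalence directly by substituting $\{f,g\}=\Pi(\rmd f,\rmd g)+fE(g)-gE(f)$ into the Jacobi identity $\{f,\{g,h\}\}+\text{cyc.}=0$ and sorting the resulting terms by their order in $f,g,h$, but this alternative route is considerably more laborious. I would favor the conceptual Maurer--Cartan route, since it makes the two separate Jacobi-pair conditions transparent as the two homogeneous pieces of a single equation.
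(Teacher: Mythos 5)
The paper offers no proof of this proposition: it is imported verbatim from Kirillov's Lemma~3, whose original argument is essentially the elementary route you sketch at the end (substitute the ansatz into the Jacobi identity and sort terms by their polynomial order in $f,g,h$). Your main route is therefore genuinely different from the source the paper leans on, and it is correct and well adapted to the paper's own framework: the splitting $D\bbR_M\cong TM\oplus\bbR_M$ induced by $\mathbbm{1}$, dually $j^1f\mapsto(f,\rmd f)$, does identify $\calD^2\bbR_M$ with $\frakX^2(M)\oplus\frakX(M)$; unpacking $\J(j^1f,j^1g)$ under this identification gives exactly $\Pi(\rmd f,\rmd g)+fE(g)-gE(f)=\Pi(\rmd f,\rmd g)+i_E(f\,\rmd g-g\,\rmd f)$, with no integrability used; and the decomposition of $\ldsb\J,\J\rdsb=0$ into the two Jacobi-pair conditions is precisely the content behind Proposition~\ref{prop:Jacobi_bi-DOs} specialized to the trivial bundle. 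The one caveat is that the explicit formula for the Schouten--Jacobi bracket under this splitting is not actually stated in Appendix~\ref{sec:gauge_algebroid} (which defers to \cite[Section~1.3.1]{tortorella2017phdthesis}), so the ``short sign-tracking computation'' must be carried out against that reference, and the signs in your intermediate expressions depend on the ordering convention chosen in $\J=\Pi+\mathbbm{1}\wedge E$; with a consistent choice the two homogeneous components do come out as $\ldsb\Pi,\Pi\rdsb-2E\wedge\Pi$ and a multiple of $\ldsb E,\Pi\rdsb$, matching the paper's definition of a Jacobi pair. What your structural route buys is that the two conditions appear as the homogeneous pieces of a single Maurer--Cartan equation; what Kirillov's direct verification buys is independence from the gauge-algebroid formalism.
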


Let $\{-,-\}$ be a Jacobi structure on $\bbR_M\to M$ and let $(\Pi,E)$ be the corresponding Jacobi pair on $M$.
Then the Jacobi manifold $(M,\bbR_M,\{-,-\})$ is also denoted by $(M,\Pi,E)$.
The vector field $E$ measures the failure of $\{-,-\}$ to be a Poisson structure or, equivalently, of $\Pi$ to be a Poisson bivector.
Indeed,
\begin{equation*}
\{f,gh\}-\{f,g\}h-g\{f,h\}=ghE(f),\quad\text{for all}\ f,g,h\in C^\infty(M),
\end{equation*}
and so $\{-,-\}$ satisfies the Leibniz rule if and only if $E=0$.
Furthermore, $E=\calX_1$, i.e.~$E$ is the distinguished Hamiltonian vector field corresponding to the distinguished section $1\in C^\infty(M)=\Gamma(\bbR_M)$.
In general, for any $f\in C^\infty(M)$, the associated Hamiltonian vector field $\calX_f$ can be rewritten as
\begin{equation*}
\calX_f=\Pi^\sharp(\rmd f)+f E.
\end{equation*}

Let $(\Pi_i,E_i)$ be a Jacobi pair on $M_i$, with corresponding Jacobi structures $\{-,-\}_i$ on $\bbR_{M_i}\to M_i$, for $i=1,2$.
In view of Remark~\ref{rem:regularVBmorphisms}, a Jacobi morphism $\Phi:(M_1,\bbR_{M_1},\{-,-\}_1)\to(M_2,\bbR_{M_2},\{-,-\}_2)$ reduces to a pair $(\varphi,a)$ formed by a map $\varphi:M_1\to M_2$ and a nowhere zero function $a\in C^\infty(M_1)$, the so-called \emph{conformal factor}, such that, for all $f,g\in C^\infty(M_2)$,
\begin{equation*}
\{a\varphi^\ast f,a\varphi^\ast g\}_1=a\varphi^\ast\{f,g\}_2.
\end{equation*}
So, in this setting, a Jacobi morphism is also denoted by $(\varphi,a):(M_1,\Pi_1,E_1)\to(M_2,\Pi_2,E_2)$ and if, additionally, $a=1$, then it is said to be a \emph{strict Jacobi morphism}.
	
\subsection*{Non-degenerate Jacobi structures.}

As seen in Section~\ref{sec:contact_structures}, a contact structure is fully encoded by its associated Jacobi structure.
In this way, as we are going to recall below following~\cite[Section 3]{vitagliano2018djbundles}, contact structures identify with the so-called non-degenerate Jacobi structures.

\begin{definition}
	\label{def:non-degenerate_Jacobi}
	A Jacobi structure $\{-,-\}$ on $L\to M$ is called \emph{non-degenerate} if the corresponding Jacobi bi-DO $\J\in\calD^2 L$ is non-degenerate, i.e.~the associated vector bundle morphism $\J^\sharp:J^1L\to DL$ is an isomorphism.
\end{definition}

Let $L\to M$ be a line bundle.
The relation $\J^\sharp=\varpi^\sharp$ establishes a 1-1 correspondence between non-degenerate bi-DO $\J\in\calD^2 L=\Gamma(\wedge^2(J^1L)^\ast\otimes L)$ and non-degenerate $L$-valued Atiyah forms $\varpi\in\Omega_L^2=\Gamma(\wedge^2(DL)^\ast\otimes L)$.
Further it turns out that, within this correspondence, the Jacobi identity for $\{-,-\}$ (or equivalently $\ldsb\J,\J\rdsb=0$) holds if and only if $\varpi$ is $d_D$-closed.
This leads to the following.

\begin{proposition}
	\label{prop:non-degenerateJacobi=symplecticAtiyah}
	There is a canonical 1-1 correspondence between non-degenerate Jacobi structures $\J=\{-,-\}$ on $L\to M$ and $L$-valued symplectic Atiyah forms $\varpi$.
\end{proposition}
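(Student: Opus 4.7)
The proof proceeds in three logical steps: establish the pointwise $C^\infty(M)$-linear bijection between $\calD^2 L$ and $\Omega^2_L$, check that non-degeneracy corresponds, and verify the integrability equivalence $\ldsb \J, \J\rdsb = 0 \Leftrightarrow d_D \varpi = 0$.

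First, I would set up the linear correspondence. Since $DL = (J^1L)^\ast \otimes L$ by the definition of the gauge algebroid, dualizing gives $(DL)^\ast \otimes L \cong J^1L$, so both $\calD^2 L = \Gamma(\wedge^2(J^1L)^\ast \otimes L)$ and $\Omega_L^2 = \Gamma(\wedge^2(DL)^\ast \otimes L)$ may be identified, via contraction, with $C^\infty(M)$-linear vector bundle morphisms $J^1L \to DL$ and $DL \to J^1L$, respectively. The rule $\J^\sharp = \varpi^\sharp := (\varpi^\flat)^{-1}$ then provides a bijection between those $\J$ and $\varpi$ whose associated morphisms are invertible; this simultaneously yields the bijection at the level of non-degenerate objects and the equivalence of the non-degeneracy conditions.

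The analytic heart of the proof is the equivalence of the integrability conditions. Mimicking the classical Poisson/symplectic argument, for any triple $\lambda_1, \lambda_2, \lambda_3 \in \Gamma(L)$ I would compute $(d_D\varpi)(\Delta_{\lambda_1}, \Delta_{\lambda_2}, \Delta_{\lambda_3})$ using the Cartan-type formula for $d_D$ on the der-complex, together with the defining identities $\varpi(\Delta_{\lambda_i}, \Delta_{\lambda_j}) = \{\lambda_i, \lambda_j\}$ and the relation between $[\Delta_\lambda, \Delta_\mu]_{DL}$ and $\Delta_{\{\lambda,\mu\}}$. A term-by-term comparison with $\ldsb\J,\J\rdsb(j^1\lambda_1, j^1\lambda_2, j^1\lambda_3)$ — which unpacks to the cyclic sum expressing the Jacobiator of $\{-,-\}$ — should show that the two quantities agree up to a combinatorial constant. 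Non-degeneracy then closes the argument, since $\{\Delta_\lambda : \lambda \in \Gamma(L)\}$ generates $\Gamma(DL)$ over $C^\infty(M)$ and $\{j^1\lambda : \lambda \in \Gamma(L)\}$ generates $\Gamma(J^1L)$ over $C^\infty(M)$, so vanishing on such triples implies vanishing globally as an $L$-valued Atiyah $3$-form, respectively as a tri-DO.

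I expect the main obstacle to be the bookkeeping in the middle step. The classical Poisson derivation relies on $C^\infty(M)$-valued formulas and the Schouten–Nijenhuis bracket on ordinary multivector fields, whereas here one must work with $L$-valued Atiyah forms, the Atiyah bracket on $\Gamma(DL)$, and the Schouten–Jacobi bracket on the graded Lie algebra of multi-DOs. Once the graded Leibniz rule for $d_D$ acting on $L$-valued forms and the derivation property of $\ldsb-,-\rdsb$ are invoked cleanly, the combinatorics reduce to those of the Poisson case; the delicate point is tracking the intermediate correction $\Delta_{\{\lambda,\mu\}} - [\Delta_\lambda, \Delta_\mu]_{DL}$, which is not zero a priori and only vanishes modulo the Jacobiator of $\{-,-\}$ — exactly the term whose vanishing we are trying to match with $d_D\varpi = 0$.
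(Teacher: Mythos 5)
Your proposal is correct and follows exactly the route the paper takes: the paper does not supply a formal proof but simply asserts, in the paragraph preceding the proposition, the two facts you set out to verify, namely that $\J^\sharp=\varpi^\sharp$ gives a bijection between non-degenerate bi-DOs and non-degenerate Atiyah $2$-forms, and that $\ldsb\J,\J\rdsb=0$ is equivalent to $d_D\varpi=0$ under this correspondence. Your outline of the verification (evaluating $d_D\varpi$ on Hamiltonian DOs via the Chevalley--Eilenberg formula, matching the result with the Jacobiator, and using that the $\Delta_\lambda$ and $j^1\lambda$ generate $DL$ and $J^1L$ pointwise by non-degeneracy) is the standard and sound way to fill in the details the paper omits.
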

The relation $\varpi=d_D(\theta\circ\sigma)$, or equivalently $\iota_{\mathbbm{1}}\varpi=\theta\circ\sigma$, establishes a 1-1 correspondence between the $L$-valued pre-contact forms $\theta$ on $M$ and those $d_D$-closed $L$-valued Atiyah $2$-forms $\varpi$ such that $(\iota_{\mathbbm{1}}\varpi)_x\neq0_x\in( D_xL)^\ast\otimes L_x\simeq J^1_xL$, for all $x\in M$.
Further, within this correspondence, $\theta$ is contact, i.e.~$\ker\theta$ is maximally non-integrable, if and only if $\varpi$ is non-degenerate.
This leads to the following.

\begin{proposition}[{\cite[Prop.~3.3]{vitagliano2018djbundles}}]
	\label{prop:contact=symplecticAtiyah}
	There is a canonical 1-1 correspondence between $L$-valued symplectic Atiyah forms $\varpi$ and $L$-valued contact forms $\theta$.
\end{proposition}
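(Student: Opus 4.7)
My plan is to verify the two-step correspondence spelled out in the statement. First, I would establish the bijection between $L$-valued pre-contact forms $\theta$ and those $d_D$-closed Atiyah $2$-forms $\varpi$ for which $(\iota_{\mathbbm{1}}\varpi)_x \neq 0$ at every $x\in M$. Second, I would upgrade this to the bijection between contact forms and symplectic (non-degenerate, $d_D$-closed) Atiyah $2$-forms by showing that maximal non-integrability of $\ker\theta$ is equivalent to non-degeneracy of $\varpi$.

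The first step runs on Cartan calculus on the gauge algebroid. The distinguished section $\mathbbm{1}\in\Gamma(DL)$ lies in $\ker\sigma$, satisfies $[\mathbbm{1},X]=0$ for every $X\in\Gamma(DL)$ (since it acts as the identity on $\Gamma(L)$), and therefore $\calL_{\mathbbm{1}} = \iota_{\mathbbm{1}} d_D + d_D \iota_{\mathbbm{1}}$ acts as the identity on $\Omega^\bullet_L$. The forward assignment $\theta \mapsto \varpi := d_D(\theta\circ\sigma)$ gives $d_D \varpi = 0$ automatically, and $\iota_{\mathbbm{1}}\varpi = \theta\circ\sigma$ follows from $\iota_{\mathbbm{1}}(\theta\circ\sigma) = \theta(\sigma\mathbbm{1}) = 0$ together with the Cartan identity above. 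Conversely, for such a $\varpi$ the $L$-valued Atiyah $1$-form $\iota_{\mathbbm{1}}\varpi$ annihilates $\mathbbm{1}$ by antisymmetry, hence kills all of $\ker\sigma = \langle\mathbbm{1}\rangle$, so it factors as $\theta\circ\sigma$ for a unique $\theta\in\Omega^1(M;L)$; Cartan once more yields $\varpi = d_D(\theta\circ\sigma)$, and the nowhere-vanishing conditions on $\theta$ and $\iota_{\mathbbm{1}}\varpi$ match.

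The heart of the argument is the second step. The key pointwise calculation is that, for $X,Y\in D_xL$ with $\sigma X,\sigma Y\in\calH_x := \ker\theta_x$, by extending locally to sections $\tilde X,\tilde Y\in\Gamma(DL)$ whose symbols stay inside $\calH$, the formula $\varpi(\tilde X,\tilde Y) = \tilde X\cdot\theta(\sigma\tilde Y) - \tilde Y\cdot\theta(\sigma\tilde X) - \theta([\sigma\tilde X,\sigma\tilde Y])$ collapses to $\varpi(X,Y) = -\rmc_\calH(\sigma X,\sigma Y)$, while also $\varpi(\mathbbm{1},Y) = \theta(\sigma Y)$ for any $Y\in D_xL$ by step one. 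A short linear-algebra argument then shows that $\sigma$ restricts to a vector-space isomorphism $\ker\varpi^\flat_x \xrightarrow{\sim} \ker(\rmc_\calH^\flat)_x$: pairing $X\in\ker\varpi^\flat_x$ against $\mathbbm{1}$ forces $\sigma X\in\calH_x$, pairing against lifts of $\calH_x$ forces $\sigma X\in\ker(\rmc_\calH^\flat)_x$, and conversely any $V\in\ker(\rmc_\calH^\flat)_x$ lifts uniquely to some $X = \tilde V + c\mathbbm{1}\in\ker\varpi^\flat_x$, the scalar $c$ being determined by requiring $\varpi(X,Y_0) = 0$ against a single fixed $Y_0$ with $\sigma Y_0\notin\calH_x$; any other $Y\in D_xL$ differs from a multiple of $Y_0$ by an element of $\sigma^{-1}(\calH_x)$, against which $X$ already pairs trivially. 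This uniform-adjustment argument is the only real subtlety; with it, non-degeneracy of $\varpi$ is equivalent to non-degeneracy of $\rmc_\calH$, i.e.~to $\theta$ being contact, which closes the proof.
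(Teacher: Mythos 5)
Your proposal is correct and follows exactly the route the paper takes: the paper's own justification is the paragraph preceding the statement (the correspondence $\varpi=d_D(\theta\circ\sigma)$, equivalently $\iota_{\mathbbm{1}}\varpi=\theta\circ\sigma$, between pre-contact forms and suitable $d_D$-closed Atiyah $2$-forms, followed by the equivalence of maximal non-integrability of $\ker\theta$ with non-degeneracy of $\varpi$), with the details deferred to the cited reference. Your write-up simply supplies those details — the Cartan-calculus argument with the contracting homotopy $\iota_{\mathbbm{1}}$ and the pointwise identification $\ker\varpi^\flat_x\simeq\ker(\rmc_\calH^\flat)_x$ — and both steps check out.
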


From the combination of Propositions~\ref{prop:non-degenerateJacobi=symplecticAtiyah} and~\ref{prop:contact=symplecticAtiyah}, one ends up with the following. 
\begin{proposition}[{\cite[Sect.~4]{kirillov}}]
	\label{prop:non-degenerateJacobi=contact}
	For any line bundle $L\to M$, there is a canonical 1-1 correspondence between non-degenerate Jacobi structures on $L$ and $L$-valued contact forms on $M$.
\end{proposition}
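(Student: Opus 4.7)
The plan is to simply compose the two bijections furnished by Propositions~\ref{prop:non-degenerateJacobi=symplecticAtiyah} and~\ref{prop:contact=symplecticAtiyah}. Both correspondences go through $L$-valued symplectic Atiyah $2$-forms on $DL$, which serve as the common intermediary object. The whole argument is essentially a bookkeeping exercise, so my job is mainly to identify the composite map explicitly and to check that it is canonical, i.e.\ independent of auxiliary choices.

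Concretely, I would proceed as follows. Starting from a non-degenerate Jacobi structure $\{-,-\}=\J$ on $L\to M$, Proposition~\ref{prop:non-degenerateJacobi=symplecticAtiyah} produces a unique $L$-valued symplectic Atiyah form $\varpi\in\Omega_L^2$ characterized by $\varpi^\sharp=\J^\sharp:J^1L\overset{\sim}{\to}DL$. Proposition~\ref{prop:contact=symplecticAtiyah} then produces a unique $L$-valued contact form $\theta$ characterized by the relation $\iota_{\mathbbm{1}}\varpi=\theta\circ\sigma$ (equivalently $\varpi=d_D(\theta\circ\sigma)$). Composing, we obtain an $L$-valued $1$-form $\theta$ from $\J$; I would then verify in the reverse direction that starting from an $L$-valued contact form $\theta$, the associated $\varpi$ is automatically non-degenerate (by Prop.~\ref{prop:contact=symplecticAtiyah}) and hence corresponds to a non-degenerate Jacobi bi-DO via Prop.~\ref{prop:non-degenerateJacobi=symplecticAtiyah}. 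Since both legs of the composition are bijections, so is the composite, and it is canonical because both constituent bijections are.

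The expected obstacle, such as it is, is purely notational: making sure the identifications $\calD^2 L=\Gamma(\wedge^2(J^1L)^\ast\otimes L)$, $\Omega_L^2=\Gamma(\wedge^2(DL)^\ast\otimes L)$, and the duality between $J^1L$ and $DL$ twisted by $L$ are used consistently, so that $\varpi^\sharp$ and $\J^\sharp$ really do match as maps $J^1L\to DL$. Once this is set up, the conclusion is immediate. For this reason, I would simply write the proof as a one-line appeal to the conjunction of Propositions~\ref{prop:non-degenerateJacobi=symplecticAtiyah} and~\ref{prop:contact=symplecticAtiyah}, perhaps recording the composite correspondence $\J\leftrightarrow\theta$ through the intermediate $\varpi$ for later reference.
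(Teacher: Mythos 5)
Your proposal is correct and is exactly the paper's own argument: the text introduces Proposition~\ref{prop:non-degenerateJacobi=contact} with the phrase ``From the combination of Propositions~\ref{prop:non-degenerateJacobi=symplecticAtiyah} and~\ref{prop:contact=symplecticAtiyah}, one ends up with the following,'' i.e.\ the correspondence is obtained by composing the two bijections through the intermediate $L$-valued symplectic Atiyah form $\varpi$, just as you describe. Your additional remarks about checking canonicity and the identifications $\varpi^\sharp=\J^\sharp$ are consistent with the paper's setup and add nothing contradictory.
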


\subsection{Locally Conformal Symplectic Structures}
\label{sec:lcs_structures}

Besides Poisson and contact, another class of Jacobi structures is given by the locally conformal symplectic structures.
Following~\cite[App.~A]{vitagliano2015algebras}, and so slightly generalizing~\cite{vaisman1985}, we adopt a line bundle approach to these structures.

\begin{definition}
	A \emph{locally conformal symplectic} (or \emph{l.c.s.}) structure on a manifold $M$ consists of a line bundle $L\to M$, a representation $\nabla$ of $TM$ on $L$, and a $\rmd_\nabla$-closed non-degenerate $2$-form $\omega\in\Omega^2(M;L)$.
	Then a \emph{l.c.s.~manifold} $(M,L,\nabla,\omega)$ is a manifold $M$ equipped with a l.c.s.~structure $(L,\nabla,\omega)$ over it.
\end{definition}

Clearly a l.c.s.~manifold $M$ is even-dimensional.

In the coorientable case, i.e.~when $L=\bbR_M$, a l.c.s.~structure reduces to a pair $(\eta,\omega)\in\Omega^1(M)\times\Omega^2(M)$ s.~t.~$\eta$ is closed, $\omega$ is non-degenerate, and $\rmd\omega+\omega\wedge\eta=0$.
So, in the coorientable case, one recovers the notion of l.c.s.~structure as  given in~\cite{vaisman1985}.
In particular, a symplectic structure is nothing but a l.c.s.~structure with $L=\bbR_M$ and $\nabla_X=X$.

For any l.c.s.~structure $(L,\nabla,\omega)$ on $M$, there exists an associated Jacobi structure $\{-,-\}$ on $L\to M$ defined in the following way.
First, for any section $\lambda\in\Gamma(L)$, there exists a Hamiltonian vector field 
\begin{equation}\label{calc2}
\calX_\lambda:=\omega^\sharp(\rmd_\nabla\lambda)\in\frakX(M).
\end{equation}
Then the \emph{associated Jacobi structure} $\{-,-\}:\Gamma(L)\times\Gamma(L)\to\Gamma(L)$ is defined by
\begin{equation*}
\{\lambda,\mu\}:=\omega(\calX_\lambda,\calX_\mu)=(\rmd_\nabla\lambda)(\calX_\mu).
\end{equation*}
Such a bracket satisfies the Jacobi identity because $\rmd_\nabla\omega=0$, and it is a bi-DO since $\calX_{(-)}:\Gamma(L)\to\frakX(M)$ is a DO from $L$ to $TM$ with, in particular,
\begin{equation}
\label{eq:lcs_Ham_vector_fields:DO}
\calX_{f\lambda}=f\calX_\lambda+\omega^\sharp(\rmd f\otimes\lambda),
\end{equation}
for all $\lambda\in\Gamma(L)$ and $f\in C^\infty(M)$.
Further, it is easy to see that this Jacobi structure fully encodes the l.c.s.~structure we started with.
For more details, we refer the reader to~\cite[App.~A]{vitagliano2015algebras}.

\section{Contact Dual Pairs}
\label{sec:CDPs}

In this Section we initiate our investigation of contact dual pairs.
More specifically, Definitions~\ref{def:CDP} and~\ref{def:H_CDP} introduce Lie--Weinstein contact dual pairs (simply called contact dual pairs) and Howe contact dual pairs respectively.
The latter are the local version and the global version respectively of dual pairs of Jacobi morphisms.
Further, as the main source of motivating examples for contact dual pairs, Theorem~\ref{theor:Jacobi} shows that any contact groupoid naturally gives rise to a contact dual pair. 

\subsection{Contact Dual Pairs}
\label{sec:LW_CDPs}

Let $(M_i,L_i,\{-,-\}_i)$ be Jacobi manifolds, for $i=1,2$, and let $(M,\calH)$ be a contact manifold.
Let $\theta\in\Omega^1(M;L)$ be a contact form corresponding to $\calH$. We denote by $\rmc_\calH$ the associated curvature form and by $\{-,-\}_\calH$ the corresponding non-degenerate Jacobi structure on $L\to M$.
Consider a pair of Jacobi morphisms
\begin{equation}
\label{eq:CDP}
\begin{tikzcd}
(M_1,L_1,\{-,-\}_1)&(M,\calH)\ar[l, swap, "\Phi_1"]\ar[r, "\Phi_2"]&(M_2,L_2,\{-,-\}_2),
\end{tikzcd}
\end{equation}
with underlying maps $M_1\overset{\varphi_1}{\longleftarrow} M\overset{\varphi_2}{\longrightarrow} M_2$.

\begin{definition}
	\label{def:CDP}
	Diagram~\eqref{eq:CDP} forms a \emph{contact dual pair} if:
	\begin{enumerate}[label=(\arabic*)]
		\item\label{enumitem:def:CDP_1} $\calH$ is transverse to both $\ker T\varphi_1$ and $\ker T\varphi_2$, i.e.
		\begin{equation}
		\label{eq:def:CDP_1}
		\calH+\ker T\varphi_1=TM=\calH+\ker T\varphi_2,
		\end{equation}
		\item\label{enumitem:def:CDP_2} $\Phi_1^\ast\lambda_1$ and $\Phi_2^\ast\lambda_2$ Jacobi commute, for all $\lambda_1\in\Gamma(L_1)$ and $\lambda_2\in\Gamma(L_2)$, i.e.
		\begin{equation}
		\label{eq:def:CDP_2}
		\{\Phi_1^\ast\Gamma(L_1),\Phi_2^\ast\Gamma(L_2)\}_\calH=0,
		\end{equation}
		\item\label{enumitem:def:CDP_3} the $\varphi_1$-vertical part $\calH_1:=\calH\cap\ker T\varphi_1$ and the $\varphi_2$-vertical part $\calH_2:=\calH\cap\ker T\varphi_2$ of $\calH$ are the $\rmc_\calH$-orthogonal complement of each other, i.e.
		\begin{equation}
		\label{eq:def:CDP_3}
		(\calH_1)^{\perp{\rmc_\calH}}=\calH_2.
		\end{equation}
	\end{enumerate}
Further, the contact dual pair~\eqref{eq:CDP} is called \emph{full} if both of the underlying maps $\varphi_1:M\to M_1$ and $\varphi_2:M\to M_2$ are surjective submersions.
\end{definition}

For the first motivating example of this definition, namely contact groupoids, we refer the reader to Section~\ref{sec:contact_groupoids}. 
In the following, with reference to the pair of Jacobi morphisms~\eqref{eq:CDP}, we will denote by $\calP_i\subset\Gamma(L)$ the $C^\infty(M_i)$-module and Lie subalgebra formed by the pull-back sections along $\Phi_i$:
\begin{equation}\label{calpe}
	\calP_i:=\Phi_i^\ast\Gamma(L_i)=\{\Phi_i^\ast\lambda_i:\lambda_i\in\Gamma(L_i)\},\text{ for }i=1,2.
\end{equation}

\begin{proposition}
	\label{prop:LW_CDP:tangent_fibers}
	Let~\eqref{eq:CDP} be a contact dual pair.
	The distribution $\ker T\varphi_1$ (resp.~$\ker T\varphi_2$) is generated by the contact vector fields $\calX_\lambda$, with $\lambda\in\calP_2$ (resp.~$\lambda\in\calP_1$), i.e.
	\begin{align}
	\label{eq:prop:LW_CDP:tangent_fibers:1}
	\ker T\varphi_1 = \operatorname{span} \{\calX_{\Phi_2^\ast\lambda_2}:\lambda_2\in\Gamma(L_2)\} \quad \text{and}\quad
	\ker T\varphi_2 = \operatorname{span} \{\calX_{\Phi_1^\ast\lambda_1}:\lambda_1\in\Gamma(L_1)\}.
	\end{align}
	In particular, the maps $\varphi_1:M\to M_1$ and $\varphi_2:M\to M_2$ have constant rank, with
	\begin{equation}
	\label{eq:prop:LW_CDP:tangent_fibers:2}
	1+\rank\varphi_1+\rank\varphi_2=\dim M.
	\end{equation}
	If additionally the contact dual pair~\eqref{eq:CDP} is full, the following dimensional relation holds
	\begin{equation}
	\label{eq:dimensional_relation}
	1+\dim M_1+\dim M_2=\dim M.
	\end{equation}
\end{proposition}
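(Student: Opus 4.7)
My plan is to prove \eqref{eq:prop:LW_CDP:tangent_fibers:1} via a mix of an algebraic argument (using \eqref{eq:def:CDP_2}) and a pointwise dimension count, with \eqref{eq:prop:LW_CDP:tangent_fibers:2} dropping out along the way and \eqref{eq:dimensional_relation} being immediate in the full case. I would first derive the rank formula \eqref{eq:prop:LW_CDP:tangent_fibers:2}. Condition \eqref{eq:def:CDP_1} yields $\dim(\calH_x\cap\ker T_x\varphi_i)=\dim\ker T_x\varphi_i-1$ pointwise, while non-degeneracy of $\rmc_\calH$ on $\calH$ combined with \eqref{eq:def:CDP_3} gives $\dim\calH_{1,x}+\dim\calH_{2,x}=\dim\calH_x=\dim M-1$. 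Summing and rearranging, $\rank T_x\varphi_1+\rank T_x\varphi_2=\dim M-1$ pointwise; since both ranks are lower semicontinuous with constant sum, each is locally constant, giving \eqref{eq:prop:LW_CDP:tangent_fibers:2}. Equation \eqref{eq:dimensional_relation} then follows in the full case where $\rank\varphi_i=\dim M_i$.

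For \eqref{eq:prop:LW_CDP:tangent_fibers:1}, the inclusion comes from \eqref{eq:def:CDP_2}: expanding $\Delta_{\Phi_2^\ast\lambda_2}(\Phi_1^\ast(f\lambda_1))=0$ via Leibniz for the first-order DO $\Delta_{\Phi_2^\ast\lambda_2}$ and using $\Delta_{\Phi_2^\ast\lambda_2}(\Phi_1^\ast\lambda_1)=0$ yields $\calX_{\Phi_2^\ast\lambda_2}(\varphi_1^\ast f)\cdot\Phi_1^\ast\lambda_1=0$; locally choosing $\lambda_1$ with $\Phi_1^\ast\lambda_1$ nowhere zero gives $\calX_{\Phi_2^\ast\lambda_2}\in\ker T\varphi_1$. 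For equality, at $x\in M$ the assignment $\lambda_2\mapsto\calX_{\Phi_2^\ast\lambda_2}(x)$ depends only on $j^1_{\varphi_2(x)}\lambda_2$ and factors as
\[
J^1_{\varphi_2(x)}L_2\xrightarrow{\tilde\Phi_2^\ast}J^1_xL\xrightarrow{\calJ^\sharp}D_xL\xrightarrow{\sigma}T_xM,
\]
with $\tilde\Phi_2^\ast$ the jet pullback, $\calJ^\sharp$ an isomorphism by Proposition~\ref{prop:non-degenerateJacobi=symplecticAtiyah}, and $\ker\sigma=\mathbb{R}\cdot\mathbbm{1}$. The image of $\tilde\Phi_2^\ast$ has dimension $1+\rank T_x\varphi_2$ and meets $T^\ast_xM\otimes L_x\subset J^1_xL$ in $(\ker T_x\varphi_2)^\circ\otimes L_x$. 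The relation $\iota_\mathbbm{1}\varpi=\theta\circ\sigma$ identifies $(\calJ^\sharp)^{-1}(\mathbbm{1}_x)=\theta_x$, so the composition is injective on $\im\tilde\Phi_2^\ast$ iff $\theta_x\notin\im\tilde\Phi_2^\ast$, which amounts to $\ker T_x\varphi_2\not\subseteq\ker\theta_x=\calH_x$, i.e.~exactly \eqref{eq:def:CDP_1}. Hence $\dim\im=1+\rank T_x\varphi_2=\dim\ker T_x\varphi_1$ by the rank formula, proving the first equality in \eqref{eq:prop:LW_CDP:tangent_fibers:1}; the other follows by symmetry.

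The delicate step will be the dimension count in the second paragraph: translating the transversality \eqref{eq:def:CDP_1} into the precise non-containment $\theta_x\notin\im\tilde\Phi_2^\ast$ in $J^1_xL$. This hinges on the specific identification $\calJ^\sharp(\theta)=\mathbbm{1}$ bridging the contact-form and symplectic-Atiyah-form descriptions of the contact structure; once this bridge is in place, the rest reduces to rank arithmetic.
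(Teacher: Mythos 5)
Your proof is correct. The inclusion step (Leibniz expansion of $\Delta_{\Phi_2^\ast\lambda_2}(\Phi_1^\ast(f\lambda_1))=0$) and the rank arithmetic are exactly the paper's: Condition~\ref{enumitem:def:CDP_1} gives $\dim\calH_{i,x}=\dim M-1-\rank(\varphi_i)_x$ and Condition~\ref{enumitem:def:CDP_3} gives $\dim\calH_{1,x}+\dim\calH_{2,x}=\dim M-1$, yielding Equation~\eqref{eq:prop:LW_CDP:tangent_fibers:2} (the paper leaves the semicontinuity argument for constant rank implicit, which you spell out). Where you genuinely diverge is the equality step in~\eqref{eq:prop:LW_CDP:tangent_fibers:1}: the paper stays inside $TM$, using Equation~\eqref{eq:contact_vector_fields:DO} to decompose the span pointwise as $\langle\calX_{\Phi_i^\ast\mu_i,x}\rangle\oplus\calH_{i,x}^{\perp\rmc_\calH}$ and then counting dimensions with the curvature form, whereas you factor $\lambda_2\mapsto\calX_{\Phi_2^\ast\lambda_2,x}$ through $J^1_{\varphi_2(x)}L_2\to J^1_xL\to D_xL\to T_xM$ and detect injectivity on $\im\tilde\Phi_2^\ast=(\ker D_x\Phi_2)^\circ$ via the identification $\calJ^\sharp(\theta\circ\sigma)=\mathbbm{1}$ and the equivalence $\theta_x\in\im\tilde\Phi_2^\ast\Leftrightarrow\ker T_x\varphi_2\subset\calH_x$. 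Your route is sound and in effect anticipates the paper's own Section~\ref{sec:LW_CDP_Atiyah_forms} (compare Lemma~\ref{lem:technical_lemma} and Proposition~\ref{prop:LW_CDP_Atiyah_forms}); it buys a cleaner conceptual bridge to the Atiyah-form picture at the cost of invoking the $J^1L$/$DL$ machinery earlier, while the paper's version is self-contained with only the Section~\ref{sec:contact_structures} material.
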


\begin{proof}
	Fix $x\in M$.
	Using Equation~\eqref{eq:contact_vector_fields:DO} and the identity $\calH_i^{\perp\rmc_\calH}=\rmc_\calH^\sharp(\calH_i^\circ)$, one gets the following
	\begin{equation}
	\label{expres}
	\{\calX_{\lambda,x}:\lambda\in\calP_i\}=\langle\calX_{\Phi_i^\ast\mu_i,x}\rangle\oplus\calH_{i,x}^{\perp\rmc_\calH},
	\end{equation}
	for any choice of local frames $\mu_i$ of $L_i\to M_i$ around $\varphi_i(x)$, with $i=1,2$.
	Condition~\ref{enumitem:def:CDP_2} in Definition~\ref{def:CDP} allows to compute
	\begin{equation*}
	\calX_{\Phi_1^\ast\lambda_1}(\varphi_2^\ast f_2)\cdot\Phi_2^\ast\lambda_2=\{\Phi_1^\ast\lambda_1,\Phi_2^\ast(f_2\lambda_2)\}_\calH-(\varphi_2^\ast f_2)\cdot\{\Phi_1^\ast\lambda_1,\Phi_2^\ast\lambda_2\}_\calH\overset{\ref{enumitem:def:CDP_2}}{=}0,
	\end{equation*}
	for all $\lambda_1\in\Gamma(L_1)$, $\lambda_2\in\Gamma(L_2)$ and $f_2\in C^\infty(M_2)$, and similarly exchanging indices $1$ and $2$.
	So one gets
	\begin{equation}
	\label{eq:CDP_4th_condition}
	\{\calX_{\lambda,x}:\lambda\in\calP_1\}\subset\ker T_x\varphi_2\quad \text{and}\quad \{\calX_{\lambda,x}:\lambda\in\calP_2\}\subset\ker T_x \varphi_1.
	\end{equation}
	The latter inclusions are actually equalities.
	Indeed, using Condition~\ref{enumitem:def:CDP_1} in Definition~\ref{def:CDP}, one gets that
	\begin{equation}
	\label{eq:proof:prop:LW_CDP:tangent_fibers:1stCondition}
	\dim\calH_{i,x}=-1-\rank(\varphi_i)_x+\dim M,
	\end{equation}
	for $i=1,2$, and then, using Condition~\ref{enumitem:def:CDP_3} in Definition~\ref{def:CDP}, one can compute
	\begin{equation*}
	\dim\{\calX_{\lambda,x}:\lambda\in\calP_1\}\overset{\eqref{expres}}{=}1+\dim\calH_{1,x}^{\perp\rmc_\calH}\overset{\ref{enumitem:def:CDP_3}}{=}1+\dim\calH_{2,x}\overset{\eqref{eq:proof:prop:LW_CDP:tangent_fibers:1stCondition}}{=}\dim(\ker T_x\varphi_2),
	\end{equation*}
	and similarly exchanging indices $1$ and $2$.
	Now, the latter and Equation~\eqref{eq:CDP_4th_condition} prove that Equation~\eqref{eq:prop:LW_CDP:tangent_fibers:1} holds.
	Finally from Equation~\eqref{eq:proof:prop:LW_CDP:tangent_fibers:1stCondition} it also follows that 
	\begin{equation*}
	-1+\dim M=\rank\calH\overset{\ref{enumitem:def:CDP_3}}{=}\dim\calH_{1,x}+\dim\calH_{2,x}\overset{\eqref{eq:proof:prop:LW_CDP:tangent_fibers:1stCondition}}{=}-2+2\dim M-\rank(\varphi_1)_x-\rank(\varphi_2)_x.
	\end{equation*}
	This shows that also Equation~\eqref{eq:prop:LW_CDP:tangent_fibers:2} holds, and so concludes the proof.
\end{proof}

For future reference, we single out also the following by-product of the proof of Proposition~\ref{prop:LW_CDP:tangent_fibers}.
\begin{corollary}
	\label{cor:LW_CDP:tangent_fibers}
	In a contact dual pair~\eqref{eq:CDP}, for any $x\in M$, the following identities hold
	\begin{equation}
	\label{eq:cor:LW_CDP:tangent_fibers}
		\ker T_x\varphi_1=\langle\calX_{\Phi_2^\ast\lambda_2,x}\rangle \oplus \calH_{2,x}^{\perp\rmc_\calH} \quad\text{and} \quad \ker T_x\varphi_2=\langle\calX_{\Phi_1^\ast\lambda_1,x}\rangle \oplus \calH_{1,x}^{\perp\rmc_\calH},
	\end{equation}
	where $\lambda_1$ (resp.~$\lambda_2$) is any local frame of $L_1\to M_1$ (resp.~$L_2\to M_2$) around $\varphi_1(x)$ (resp.~$\varphi_2(x)$).
\end{corollary}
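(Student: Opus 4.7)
The plan is to extract the corollary as a direct byproduct of the proof of Proposition~\ref{prop:LW_CDP:tangent_fibers}, since both ingredients have already been established there. I would first recall the local identity
\[
\{\calX_{\lambda,x}:\lambda\in\calP_i\}=\langle\calX_{\Phi_i^\ast\mu_i,x}\rangle\oplus\calH_{i,x}^{\perp\rmc_\calH}
\]
obtained in equation~\eqref{expres} for an arbitrary local frame $\mu_i$ of $L_i\to M_i$ around $\varphi_i(x)$, with $i=1,2$. This identity is a pointwise consequence of the DO formula~\eqref{eq:contact_vector_fields:DO} for $\calX_{(-)}$ together with $\calH_i^{\perp\rmc_\calH}=\rmc_\calH^\sharp(\calH_i^\circ)$, and so holds independently of the rest of the argument.

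Next I would invoke Proposition~\ref{prop:LW_CDP:tangent_fibers} itself: after the dimension count therein, one has the equalities of subspaces
\[
\ker T_x\varphi_1=\{\calX_{\lambda,x}:\lambda\in\calP_2\}\quad\text{and}\quad\ker T_x\varphi_2=\{\calX_{\lambda,x}:\lambda\in\calP_1\}
\]
at every $x\in M$, i.e.\ Equation~\eqref{eq:prop:LW_CDP:tangent_fibers:1} read fiberwise. Combining this with the identity~\eqref{expres} applied with $i=2$ (respectively $i=1$) yields the desired decompositions~\eqref{eq:cor:LW_CDP:tangent_fibers}.

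There is no genuine obstacle here: both ingredients — the pointwise description~\eqref{expres} and the fiberwise identification of $\ker T\varphi_1$ and $\ker T\varphi_2$ with the span of Hamiltonian vector fields of pull-back sections — are already in the proof of Proposition~\ref{prop:LW_CDP:tangent_fibers}. The only thing to be careful about is that the local frame $\lambda_i$ in the statement of the corollary can be \emph{any} local frame, which is automatic from~\eqref{expres} since the right-hand side is independent of the chosen frame (any two local frames differ by a nowhere-vanishing smooth factor, and the DO formula~\eqref{eq:contact_vector_fields:DO} shows that rescaling the frame only modifies $\calX_{\Phi_i^\ast\mu_i,x}$ by an element of $\calH_{i,x}^{\perp\rmc_\calH}$, leaving the direct sum unchanged).
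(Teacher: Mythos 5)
Your proposal is correct and matches the paper exactly: the paper offers no separate argument for this corollary, presenting it precisely as a by-product of the proof of Proposition~\ref{prop:LW_CDP:tangent_fibers}, obtained by combining~\eqref{expres} with the fiberwise reading of~\eqref{eq:prop:LW_CDP:tangent_fibers:1}. Your additional check that the decomposition is independent of the chosen local frame (via~\eqref{eq:contact_vector_fields:DO} and the fact that $\varphi_i^\ast(\rmd f_i)$ annihilates $\calH_i$) is sound and makes explicit a point the paper leaves implicit.
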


\begin{example}[{\bf The trivial line bundle case}]
	\label{ex:LW_CDP:trivial_line_bundle_case}
Let us assume that, in diagram~\eqref{eq:CDP}, all the line bundles $L=TM/\calH$, $L_1$ and $L_2$ are the trivial ones.
So that, in particular, the contact structure on $M$ is coorientable, i.e.~$\calH=\ker\theta$ for some distinguished $\theta\in\Omega^1(M)$.
Further, for $i=1,2$, the Jacobi structure $\{-,-\}_i$ on $L_i$ reduces to a Jacobi pair $(\Pi_i,E_i)$ on $M_i$ and the Jacobi morphism $\Phi_i$ reduces to the pair formed by the smooth map $\varphi_i:M\to M_i$ and the conformal factor $a_i\in C^\infty(M)$.
Therefore diagram~\eqref{eq:CDP} can also be depicted in the following way
\begin{equation}
\label{eq:CDP_trivial_line_bundle}
\begin{tikzcd}
(M_1,\Pi_1,E_1)&(M,\theta)\ar[l, swap, "{(\varphi_1,a_1)}"]\ar[r, "{(\varphi_2,a_2)}"]&(M_2,\Pi_2,E_2).
\end{tikzcd}
\end{equation}
Under these assumptions, it is easy to see that, in Definition~\ref{def:CDP}, Condition~\ref{enumitem:def:CDP_2} can be replaced by the following pair of conditions:
\begin{enumerate}[label=(2.\alph*)]
	\item\label{enumitem:LW_CDP_2a}
	the conformal factors $a_1$ and $a_2$ Jacobi commute, i.e.~$\{a_1,a_2\}_\calH=0$,
	\item\label{enumitem:LW_CDP_2b}
	$\calX_{a_1}\in\Gamma(\ker T\varphi_2)$ and $\calX_{a_2}\in\Gamma(\ker T\varphi_1)$.
\end{enumerate}
Diagram~\eqref{eq:CDP_trivial_line_bundle} is called a \emph{strict contact dual pair}
if in addition $a_1=a_2=1$, i.e.~both of the  maps $\varphi_1$ and $\varphi_2$ are strict Jacobi morphisms.
In this case, the Reeb vector field $E=\calX_{a_1}=\calX_{a_2}$ belongs to $\ker T\varphi_1\cap \ker T\varphi_2$, and so  $E_1=E_2=0$ (cf.~Remark~\ref{rem:Jacobi_morphism}), and the Jacobi structures on $M_1$ and $M_2$ are properly Poisson.

Now, if~\eqref{eq:CDP_trivial_line_bundle} is a contact dual pair, the direct sum decompositions~\eqref{eq:cor:LW_CDP:tangent_fibers} hold globally, i.e.
\begin{equation}
\label{eq:LW_CDP:tangent_fibers:trivial_line_bundle}
\ker T\varphi_1=\langle\calX_{a_2}\rangle \oplus \calH_2^{\perp\rmc_\calH} \quad\text{and} \quad \ker T\varphi_2=\langle\calX_{a_1}\rangle \oplus \calH_1^{\perp\rmc_\calH},
\end{equation}
where the RHS are the \emph{pseudo-orthogonal}~\cite{libermann1991legendre} of $\ker T\varphi_2$ and $\ker T\varphi_1$ w.r.t.~$(a_2)^{-1}\theta$ and $(a_1)^{-1}\theta$ respectively.
Consequently, in the Definition~\ref{def:CDP}, Condition~\ref{enumitem:def:CDP_3} can be replaced by~\eqref{eq:LW_CDP:tangent_fibers:trivial_line_bundle}.
\end{example}

	The notion of contact dual pair is \emph{local} in the following sense.
Diagram~\eqref{eq:CDP} is a contact dual pair if and only if, for any $x\in M$ and any open neighbourhood $V$ of $x$ in $M$, there exists an open neighbourhood $U$ of $x$ in $V$ such that the following is a contact dual pair
\begin{equation}
\label{eq:LW_CDP_local}
\begin{tikzcd}
(M_1,L_1,\{-,-\}_1)&(U,\calH|_U)\ar[l, swap, "\Phi_1"]\ar[r, "\Phi_2"]&(M_2,L_2,\{-,-\}_2).
\end{tikzcd}
\end{equation}
However,  there is also the notion of \emph{Howe contact dual pair} which is global.
We notice that Condition~\ref{enumitem:def:CDP_2} in Definition~\ref{def:CDP} can be equivalently rewritten as
	\begin{equation}
	\label{eq:rem:def:LW_CDP:2_bis}
	\calP_1\subset\calP_2^c\quad(\text{or}\quad\calP_2\subset\calP_1^c),
	\end{equation}
	where the superscript ${}^c$ denotes the \emph{centralizer} in $\Gamma(L)$ w.r.t.~the Jacobi bracket $\{-,-\}_\calH$.
Inspired by this, 
as in the symplectic setting, we introduce the following global version of contact dual pair.

	\begin{definition}
	\label{def:H_CDP}
	Diagram~\eqref{eq:CDP} is a \emph{Howe contact dual pair} if the following conditions hold:
	\begin{enumerate}[label=(\arabic*)]
		\item
		\label{enumitem:def:H_CDP:1}
		$\calH$ is transverse to both $\ker T\varphi_1$ and $\ker T\varphi_2$, i.e.~$\calH+\ker T\varphi_1=TM=\calH+\ker T\varphi_2$,
		\item
		\label{enumitem:def:H_CDP:2}
		$\calP_1$ and $\calP_2$ centralize each other w.r.t.~the Jacobi bracket $\{-,-\}_\calH$, i.e.
		\begin{equation}
		\label{eq:enumitem:def:H_CDP:2}
		\calP_1^c=\calP_2\qquad\text{and}\qquad\calP_2^c=\calP_1.
		\end{equation}
	\end{enumerate}
	As above, the Howe contact dual pair~\eqref{eq:CDP} is called \emph{full} if both of the underlying maps $\varphi_1:M\to M_1$ and $\varphi_2:M\to M_2$ are surjective submersions.
\end{definition}

	In Section~\ref{sec:relation_Howe_Weinstein}, we will 
	describe the relationship between these two notions of contact dual pairs:
	the local one in Definition \ref{def:CDP} and the global one in Definition \ref{def:H_CDP}.


\subsection{Contact Groupoids}
\label{sec:contact_groupoids}

The first main motivating examples for our definition of contact dual pairs are contact groupoids~\cite{dazord,KS}.
Contact groupoids play an analogous role in Jacobi geometry to the one played by symplectic groupoids in Poisson geometry.
They arise as ``desingularizations'' of Jacobi manifolds, in the sense that, up to certain technical conditions, a Jacobi manifold $M$ integrates to a contact groupoid.

Let $\calG\rightrightarrows\calG_0$ be a Lie groupoid with structure maps $s,t:\calG\to\calG_0$, $m:\calG^{(2)}:=\calG{}_s{\times}_t\calG\to\calG$, $u:\calG_0\to\calG$, and $i:\calG\to\calG$.
One says that a distribution $\calH\subset T\calG$ is \emph{multiplicative}, if $\calH$ is a wide Lie subgroupoid of the tangent groupoid $T\calG\rightrightarrows T\calG_0$, or in other words $T\calG_0\subset\calH$, $\calH\cdot\calH\subset\calH$ and $\calH^{-1}\subset\calH$, where the multiplication and the inversion are those of $T\calG\rightrightarrows T\calG_0$.
Correspondingly, given a representation $L_0\to \calG_0$ of $\calG$, a form $\theta\in\Omega^1(\calG;t^*L_0)$ is \emph{multiplicative} if	
\begin{equation}
	\label{eq:mult_contact_form:CrainicSalazar}
	(m^\ast\theta)_{(g,h)}=(\operatorname{pr}_1^\ast\theta)_{(g,h)}+g\cdot(\operatorname{pr}_2^\ast\theta)_{(g,h)},
	\end{equation}
	for all $(g,h)\in\calG^{(2)}$, where $\operatorname{pr}_1,\operatorname{pr}_2:\calG^{(2)}\to\calG$ denote the standard projections.
	
In this setting, the duality between forms and distributions in the description of contact structures viewed in Section~\ref{sec:contact_structures} specializes in the following way.

\begin{lemma}
	[{\cite[Lemmas 3.6 and 3.7]{crainic2015multiplicative}}]
	\label{prop:multiplicative_contact_structures}
	Let $\theta\in\Omega^1(\calG;t^*L_0)$ by a multiplicative form.
	If $\theta$ is regular in the sense of~\cite{crainic2015multiplicative}, i.e.~$\theta_g:T_gM\to L_{0,tg}$ is surjective, for all $g\in\calG$, then the hyperplane distribution $\calH:=\ker\theta$ is multiplicative, and moreover, any multiplicative hyperplane distribution arises in this way.
	In particular, there is a 1-1 correspondence between multiplicative contact distributions and multiplicative contact forms. 
\end{lemma}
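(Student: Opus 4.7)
The plan is to establish each direction of the correspondence separately and then observe that the two constructions are mutually inverse.

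For the \emph{forward direction}, assume $\theta \in \Omega^1(\calG; t^*L_0)$ is regular and multiplicative. Since $\theta_g: T_g\calG \to L_{0, t(g)}$ is surjective onto a one-dimensional fibre, $\calH := \ker \theta$ is automatically a hyperplane distribution. To show that $\calH$ is a wide Lie subgroupoid of $T\calG \rightrightarrows T\calG_0$, I would verify three things. First, that $T\calG_0 \subset \calH$: applying the multiplicativity identity~\eqref{eq:mult_contact_form:CrainicSalazar} at a unit pair $(u(x), u(x))$ to vectors of the form $(Tu(v), Tu(v))$, and using that units act trivially on $L_0$, yields $\theta(Tu(v)) = 2\theta(Tu(v))$, forcing $\theta|_{Tu(T\calG_0)} = 0$. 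Second, that $\calH \cdot \calH \subset \calH$: for composable $X_g \in \calH_g$ and $X_h \in \calH_h$, formula~\eqref{eq:mult_contact_form:CrainicSalazar} directly gives $\theta(X_g \cdot X_h) = \theta(X_g) + g \cdot \theta(X_h) = 0$. Third, closure under inversion, which follows by applying~\eqref{eq:mult_contact_form:CrainicSalazar} to the pair $(g, g^{-1})$ and invoking $T\calG_0 \subset \calH$.

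For the \emph{backward direction}, assume $\calH \subset T\calG$ is a multiplicative hyperplane distribution. Set $L := T\calG/\calH$, a line bundle on $\calG$, and let $\theta: T\calG \to L$ denote the quotient projection, which is fibrewise surjective. Since $\calH$ is a wide VB-subgroupoid of $T\calG$, the quotient $L$ inherits the structure of a VB-groupoid over $\calG$ and $\theta$ is a morphism of VB-groupoids. Because $\calH$ already contains the units $T\calG_0$, the base of this quotient VB-groupoid is trivial, so $L$ is a ``pure-core'' VB-groupoid. The general structural result is then that any such VB-groupoid is canonically isomorphic to the pullback $t^*L_0$ of its core $L_0 := L|_{\calG_0}$ along the target map, and $L_0$ inherits a $\calG$-representation structure (with the action $g \cdot [Y]_{L_0}$ constructed using groupoid multiplication in $L$ from representative lifts in $T\calG$). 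Under this identification, $\theta$ becomes a $1$-form with values in $t^*L_0$, and its multiplicativity is precisely the VB-groupoid morphism property of the quotient map.

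The two constructions are mutually inverse by inspection: from $\theta$, producing $\calH = \ker\theta$, and then taking the quotient $T\calG/\calH$ gives back a line bundle canonically isomorphic to $t^*L_0$ with quotient map identified with $\theta$. The specialisation to the contact case is then immediate: $\calH$ is maximally non-integrable (contact) if and only if the associated curvature $2$-form $\rmc_\calH$ is non-degenerate, a property intrinsic to $\calH$ and preserved by the correspondence. The \emph{main obstacle} lies in the backward direction, specifically in establishing the canonical identification $L \cong t^*L_0$ and extracting the $\calG$-representation structure on $L_0$ from the multiplicative structure. This is an instance of the general fact that VB-groupoids over $\calG \rightrightarrows \calG_0$ with trivial base correspond to representations of $\calG$ on vector bundles over $\calG_0$; while the full VB-groupoid statement requires some care, in the line-bundle case it can be verified directly by choosing a splitting of $\calH \hookrightarrow T\calG$ along the units and tracing the groupoid operations modulo $\calH$.
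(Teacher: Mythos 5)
Your argument is correct and follows essentially the same route as the paper, which itself only cites Lemmas 3.6--3.7 of Crainic--Salazar--Struchiner and then sketches the backward direction via the quotient $L=T\calG/\calH$, the identification $L\simeq t^\ast L_0$ by translations, and the induced representation of $\calG$ on $L_0$ (cf.\ also Remark~\ref{rem:multiplicative_forms} and the full-core groupoid $L\rightrightarrows\mathbf{0}_{\calG_0}$, which is exactly your ``pure-core'' quotient). Your forward-direction computations with~\eqref{eq:mult_contact_form:CrainicSalazar} at units, products and inverses are the standard ones and are correct.
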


As part of this lemma, the fact that $\calH$ is wide implies that $\calH$ is transverse to both the $s$ and the $t$ fibers:
\begin{equation}
\label{eq:CDP_from_contact_grpd:1st_condition}
\calH+\ker Ts=T\calG=\calH+\ker Tt.
\end{equation}
Hence $L:=T\calG/\calH$ is canonically isomorphic to $\ker Ts/\calH^s$ and to $\ker Tt/\calH^t$, where $\calH^s:=\calH\cap \ker Ts$ and $\calH^t:=\calH\cap \ker Tt$.
Setting $L_0:=L|_{\calG_0}$, left and right translations induce regular line bundle morphisms $S:L\to L_0$ and $T:L\to L_0$ respectively, covering $s:\calG\to\calG_0$ and $t:\calG\to\calG_0$ respectively.
This gives rise to the representation of $\calG$ on $L_0$, such that 
$g\cdot (S_g\lambda_g)=T_g\lambda_g$, for all $g\in\calG$ and $\lambda_g\in L_g$, and to the line bundle isomorphism $L\to t^\ast L_0,\ \lambda_g\mapsto(g,T_g\lambda_g)$.
Then the quotient bundle map $\theta:T\calG\to L,\ u_g\mapsto u_g+\calH_g$, can be interpreted as a form $\theta\in\Omega^1(\calG;t^\ast L_0)$ which turns out to be multiplicative (cf.~\cite[Lemma 3.7]{crainic2015multiplicative}). 

The above Lemma~\ref{prop:multiplicative_contact_structures} motivates the next definition.
\begin{definition}
	\label{def:contact_groupoid}
	A \emph{contact groupoid} is a Lie groupoid $\calG\rightrightarrows\calG_0$ equipped with a \emph{multiplicative contact structure} which is equivalently given by either a multiplicative contact distribution $\calH$ or a multiplicative contact form $\theta$ (cf.~Lemma~\ref{prop:multiplicative_contact_structures}).
\end{definition}

\begin{example}\label{projective2}
	Let $\calG\rightrightarrows\calG_0$ be a Lie groupoid, with Lie algebroid $A\Rightarrow M$.
	It is easy to see that the symplectic groupoid of $A^\ast$, i.e.~the cotangent groupoid $T^\ast\calG\rightrightarrows A^\ast$ equipped with the canonical symplectic form $\omega_\calG$, is homogeneous.
	Hence, by the dehomogenization procedure in Appendix~\ref{sec:homogeneous_symplectic/Poisson}, one obtains the contact groupoid $(\bbP(T^\ast\calG),\theta)\rightrightarrows (\bbP(A)^\ast,\bigO_{\bbP(A^\ast)}(1),\{-,-\})$ (cf.~Corollary~\ref{cor:homogeneous_symplectic_groupoids}).
	In particular, when $A=\frakg$ is a Lie algebra and $\calG=G$ is a Lie group with Lie algebra $\frakg$, then the \emph{projectivized cotangent bundle} $\bbP(T^\ast G)\rightrightarrows\bbP(\frakg^\ast)$ is a contact groupoid where the Jacobi structure on $\bbP(\frakg^\ast)$ is that described on Example~\ref{projective}. 
\end{example}

\begin{remark}
	\label{rem:multiplicative_forms}
	Keeping the same notations of Lemma~\ref{prop:multiplicative_contact_structures}, we point out, for later use in Section~\ref{sec:contact_groupoid_action_CDP}, that the regular line bundle morphisms $S,T:L\to L_0$, covering respectively $s,t:\calG\rightrightarrows\calG_0$, can be viewed as the source and the target of a certain trivial-core line bundle groupoid $L\rightrightarrows L_0$ over $\calG\rightrightarrows\calG_0$ (for the definition of trivial-core line bundle groupoid see~\cite[Def.~4.1]{ETV2016} and references therein).
	The latter is determined by the property that the line bundle isomorphism $L\to t^\ast L_0$ induced by $T$ is a groupoid isomorphism from $L\rightrightarrows L_0$ to the action groupoid $t^\ast L_0\rightrightarrows L_0$ associated with the representation of $\calG$ on $L_0$.
	So, understanding the isomorphism $L\simeq_T t^\ast L_0$, the structure maps of $L\rightrightarrows L_0$ are:
	\begin{equation*}
	S(g,v)=g^{-1}\cdot v,\ T(g,v)=v,\ M((g,v),(h,g^{-1}\cdot v))=(gh,v),\ I(g,v)=(g^{-1},g^{-1}\cdot v),\ U(v)=(tg,v),
	\end{equation*}
	for all $g\in\calG$ and $v\in L_{0,tg}$.
	Finally, if we now look at $\theta$ as taking values in $L\simeq_T t^\ast L_0$, then, through a straightforward computation, Equation~\eqref{eq:mult_contact_form:CrainicSalazar} can be equivalently rephrased as follows 
	\begin{equation}
	\label{eq:enumitem:mult_contact_form}
	M^\ast\theta=\operatorname{Pr}_1^\ast\theta+\operatorname{Pr}_2^\ast\theta,
	\end{equation}
	where $\operatorname{Pr}_1,\operatorname{Pr}_2\colon L{}_S{\times}_T L\to L$ are the projections.
	The latter will be useful later in proving Proposition~\ref{prop:Lie_group_actions}.
\end{remark}

\begin{remark}
	As a continuation of Remark~\ref{rem:multiplicative_forms}, notice also that there is a unique full-core line bundle groupoid $L\rightrightarrows\textbf{0}_{\calG_0}$ such that $L\to t^\ast L_0$ is a line bundle groupoid isomorphism from $L\rightrightarrows\textbf{0}_{\calG_0}$ to the line bundle groupoid $t^\ast L_0\rightrightarrows \textbf{0}_{\calG_0}$ whose structure maps are the following
	\begin{equation*}
	S(g,v)=0_{s(g)},\ T(g,v)=0_{t(g)},\ M((g,v),(h,w))=(gh,v+g\cdot w),\ I(g,v)=(g^{-1},-g^{-1}\cdot v),\ U(0_x)=(x,0_x).
	\end{equation*}
	Now, as pointed out by Drummond and Egea~\cite{drummond2019multiplicative}, Condition~\eqref{eq:mult_contact_form:CrainicSalazar} (and so also Condition~\eqref{eq:enumitem:mult_contact_form}) is equivalent to the multiplicativity of $\theta$ viewed as a form on $\calG$ with values in the line bundle groupoid $L\rightrightarrows\textbf{0}_{\calG_0}$.
\end{remark}

Contact groupoids naturally give rise to  contact dual pairs as explained in the following Theorem~\ref{theor:Jacobi}.
Let $\calG\rightrightarrows\calG_0$ be a contact groupoid, with multiplicative distribution $\calH$ and corresponding multiplicative contact form $\theta\in\Omega^1(\calG;t^*L_0)$, where $L:=T\calG/\calH$, and $L_0:=L|_{\calG_0}$.
Denote by $\{-,-\}$ the Jacobi structure on $L\to\calG$ associated to the multiplicative contact structure on $\calG$.
We need first to recall the following.

\begin{proposition}[{\cite[Th.~1]{crainic2015jacobi}\cite[Sect.~3.5]{cabrera2018explicit}}]
	\label{prop:Jacobi_structures_from_contact_grpds}
	There exists a unique Jacobi structure $\{-,-\}_0$ on $L_0\to\calG_0$ such that the following two equivalent conditions are satisfied:
\begin{enumerate}[label=(\arabic*)]
	\item $T:(\calG,L,\{-,-\})\to(\calG_0,L_0,\{-,-\}_0)$ is a Jacobi morphism, covering $t:\calG\to\calG_0$,
	\item $S:(\calG,L,\{-,-\})\to(\calG_0,L_0,-\{-,-\}_0)$ is a Jacobi morphism, covering $s:\calG\to\calG_0$. 
\end{enumerate}
\end{proposition}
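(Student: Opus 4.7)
The plan is to first establish uniqueness, then existence of a Jacobi structure $\{-,-\}_0$ satisfying condition~(1), and finally deduce the equivalence with condition~(2) via an inversion argument. \emph{Uniqueness} is immediate: since $T:L\to L_0$ is a regular line bundle morphism covering the surjective submersion $t:\calG\to\calG_0$, the pull-back $T^\ast:\Gamma(L_0)\to\Gamma(L)$ is injective, so the requirement $T^\ast\{\lambda,\mu\}_0=\{T^\ast\lambda,T^\ast\mu\}$ defines $\{\lambda,\mu\}_0$ uniquely whenever it makes sense.

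For \emph{existence}, the main step is to prove that the submodule $T^\ast\Gamma(L_0)\subset\Gamma(L)$ is closed under the Jacobi bracket $\{-,-\}$. Via the identification $L\simeq t^\ast L_0$ of Remark~\ref{rem:multiplicative_forms}, the sections of the form $T^\ast\lambda$ are exactly those that are right-invariant along the $s$-fibres of the line bundle groupoid $L\rightrightarrows L_0$. Using the multiplicativity identity \eqref{eq:enumitem:mult_contact_form} together with the defining relation $\theta(\calX_\mu)=\mu$, one shows that for any $\lambda\in\Gamma(L_0)$ the Hamiltonian vector field $\calX_{T^\ast\lambda}$ is left-invariant on $\calG$ and, consequently, its $\theta$-pairing with any other such Hamiltonian vector field is again right-invariant. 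Therefore $\{T^\ast\lambda,T^\ast\mu\}=\theta(\calX_{T^\ast\lambda},\calX_{T^\ast\mu})$ lies in $T^\ast\Gamma(L_0)$, and we set $T^\ast\{\lambda,\mu\}_0:=\{T^\ast\lambda,T^\ast\mu\}$. The resulting bracket inherits antisymmetry and the Jacobi identity from $\{-,-\}$ by injectivity of $T^\ast$; its first-order bi-DO property follows from the analogous property of $\{-,-\}$ combined with \eqref{eq:contact_vector_fields:DO} and the regularity of $T$.

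The \emph{equivalence of (1) and (2)} will follow from the fact that the inversion $i:\calG\to\calG$ lifts to a line bundle isomorphism $I:L\to L$ covering $i$ and satisfying $I^\ast\theta=-\theta$; this latter identity is itself obtained by differentiating \eqref{eq:enumitem:mult_contact_form} along the submanifold $\{(g,g^{-1})\}\subset L{}_S{\times}_T L$ and using $m(g,g^{-1})=u(tg)$. Hence $I$ is an anti-Jacobi self-morphism, namely $I^\ast\{\nu_1,\nu_2\}=-\{I^\ast\nu_1,I^\ast\nu_2\}$. Combining this with the identity $S=T\circ I$ (reflecting $s=t\circ i$) one checks that $T$ being a Jacobi morphism to $(\calG_0,L_0,\{-,-\}_0)$ is equivalent to $S$ being a Jacobi morphism to $(\calG_0,L_0,-\{-,-\}_0)$. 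The main technical obstacle is the infinitesimal argument behind the left-invariance of $\calX_{T^\ast\lambda}$: this is precisely the step where multiplicativity is genuinely used. A convenient way to streamline it is to transfer the problem to the level of the associated $L$-valued symplectic Atiyah form $\varpi$ via Propositions~\ref{prop:non-degenerateJacobi=symplecticAtiyah} and~\ref{prop:contact=symplecticAtiyah}, where the required invariance property reduces, through the gauge algebroid, to the classical symplectic-groupoid analogue of Coste--Dazord--Weinstein.
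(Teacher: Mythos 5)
The paper does not prove this proposition: it is recalled from \cite{crainic2015jacobi} and \cite{cabrera2018explicit}, so there is no in-paper argument to compare yours against. Your outline is the standard one and is essentially sound: uniqueness from injectivity of $T^\ast$ (since $t$ is a surjective submersion and $T$ is regular), existence from closure of $T^\ast\Gamma(L_0)$ under $\{-,-\}$ via invariance of the Hamiltonian vector fields of pull-back sections, and the equivalence of (1) and (2) from $S=T\circ I$ together with $I^\ast\theta=-\theta$, which indeed follows from multiplicativity and the fact that the unit section is Legendrian (i.e.~$T\calG_0\subset\calH$). Two caveats. First, your handedness is reversed relative to the convention the paper itself uses later: in the proof of Theorem~\ref{theor:Jacobi} the maps $\lambda\mapsto\calX_{T^\ast\lambda}$ and $\lambda\mapsto\calX_{S^\ast\lambda}$ land in the \emph{right}- and \emph{left}-invariant contact vector fields respectively, whereas you call $\calX_{T^\ast\lambda}$ left-invariant; this is harmless in itself but should be made consistent, since the commutation of the two families is what drives Condition~\ref{enumitem:def:CDP_2} later on. Second, the one genuinely non-trivial step --- that $\calX_{T^\ast\lambda}$ is invariant and hence that $\{T^\ast\lambda,T^\ast\mu\}$ is again a pull-back along $T$ --- is asserted rather than carried out; your proposed way of closing it, namely passing to the symplectization $(\widetilde{L},\widetilde{\varpi})$, which by Corollary~\ref{cor:homogeneous_symplectic_groupoids} is a homogeneous symplectic groupoid, invoking the Coste--Dazord--Weinstein theorem there, and then dehomogenizing the resulting homogeneous Poisson structure on $\widetilde{L}_0$, is legitimate and is in fact the quickest rigorous route, since it reduces the whole proposition to its symplectic-groupoid ancestor via Proposition~\ref{prop:equivalence_categories:2}.
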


\begin{theorem}
	\label{theor:Jacobi}
	The regular line bundle morphisms $S:L\to L_0$ and $T:L\to L_0$, covering $s:\calG\to\calG_0$ and $t:\calG\to\calG_0$ respectively, form a full contact dual pair:
	\begin{equation}
	\label{eq:theor:Jacobi}
	\begin{tikzcd}
	(\calG_0,L_0,-\{-,-\}_0)&(\calG,L,\{-,-\})\arrow[l, swap, "S"]\arrow[r, "T"]&(\calG_0,L_0,\{-,-\}_0).
	\end{tikzcd}
	\end{equation}
\end{theorem}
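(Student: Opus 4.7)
The plan is to verify the three conditions of Definition~\ref{def:CDP} for diagram~\eqref{eq:theor:Jacobi}; fullness is automatic since the source and target maps of any Lie groupoid are surjective submersions. Condition~\ref{enumitem:def:CDP_1} is precisely Equation~\eqref{eq:CDP_from_contact_grpd:1st_condition}, already observed as a consequence of $\calH$ being a wide Lie subgroupoid of the tangent groupoid $T\calG$.

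The core technical input, needed for Condition~\ref{enumitem:def:CDP_2}, is the contact-groupoid analogue of the classical fact that on a symplectic groupoid the Hamiltonian vector fields of target-pullback functions are right-invariant: namely, for every $\lambda \in \Gamma(L_0)$, the contact Hamiltonian vector field $\calX_{T^\ast\lambda}$ on $\calG$ should be the right-invariant extension of the Hamiltonian vector field $\calX_\lambda$ on $\calG_0$---so in particular tangent to the $s$-fibers---and symmetrically $\calX_{S^\ast\mu}$ is the left-invariant extension of $-\calX_\mu$, tangent to the $t$-fibers. I would prove this by differentiating the multiplicativity relation~\eqref{eq:enumitem:mult_contact_form} along the flow of $\calX_{T^\ast\lambda}$, combined with the defining identity $\theta(\calX_{T^\ast\lambda}) = T^\ast\lambda$ and Proposition~\ref{prop:Jacobi_structures_from_contact_grpds}. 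Granted this, Condition~\ref{enumitem:def:CDP_2} follows by writing $\{S^\ast\mu, T^\ast\lambda\}_\calH = \Delta_{S^\ast\mu}(T^\ast\lambda)$ and computing in the trivialization $L \simeq t^\ast L_0$: the symbol $\calX_{S^\ast\mu}$ of $\Delta_{S^\ast\mu}$ being $t$-vertical, it annihilates the $t^\ast$-pullback factor appearing in the decomposition $T^\ast\lambda = (t^\ast a)\cdot(T^\ast e)$; the vanishing of $\Delta_{S^\ast\mu}(T^\ast e)$ for a distinguished local frame $e$ of $L_0$ then reduces to a check along the unit section, which uses the Jacobi-morphism property of $S$.

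For Condition~\ref{enumitem:def:CDP_3}, namely $(\calH \cap \ker Ts)^{\perp\rmc_\calH} = \calH \cap \ker Tt$, the inclusion $\calH \cap \ker Tt \subseteq (\calH \cap \ker Ts)^{\perp\rmc_\calH}$ follows from the commutation of left- and right-invariant vector fields on $\calG$: the multiplicativity of $\calH$ allows one to extend any $v \in (\calH \cap \ker Tt)_g$ to a left-invariant section $V \in \Gamma(\calH \cap \ker Tt)$ and any $w \in (\calH \cap \ker Ts)_g$ to a right-invariant section $W \in \Gamma(\calH \cap \ker Ts)$, and then $[V,W]=0$ gives $\rmc_\calH(v,w) = \theta_g([V,W]_g) = 0$. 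Equality then follows by a dimension count using Condition~\ref{enumitem:def:CDP_1}, the non-degeneracy of $\rmc_\calH$ on $\calH$, and the standard relation $\dim \calG = 2\dim\calG_0 + 1$ for contact groupoids, which arises from the Lie algebroid of $\calG$ being canonically isomorphic to the Jacobi algebroid $J^1 L_0$.

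The main obstacle is the invariance claim for Hamiltonian vector fields of target- and source-pullback sections; once that is in hand, the remaining verifications are essentially mechanical.
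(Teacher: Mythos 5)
Your overall strategy coincides with the paper's: verify the three conditions of Definition~\ref{def:CDP}, with Condition~\ref{enumitem:def:CDP_1} coming from Equation~\eqref{eq:CDP_from_contact_grpd:1st_condition}, Condition~\ref{enumitem:def:CDP_3} from the orthogonality $\calH^s=(\calH^t)^{\perp\rmc_\calH}$ (which the paper simply cites from Crainic--Salazar; your commutation-plus-dimension-count argument is a reasonable proof of it), and Condition~\ref{enumitem:def:CDP_2} from the left/right invariance of the Hamiltonian vector fields of pull-back sections --- exactly the lemma the paper imports as \cite[Corollary 5.2]{crainic2015jacobi} rather than reproving.

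There is, however, a genuine gap in your assembly of Condition~\ref{enumitem:def:CDP_2}. Writing $\{S^\ast\mu,T^\ast\lambda\}=\Delta_{S^\ast\mu}(T^\ast\lambda)$ and using the Leibniz rule with $T^\ast\lambda=(t^\ast a)\cdot(T^\ast e)$, the $t$-verticality of the \emph{symbol} $\calX_{S^\ast\mu}$ only kills the term $\calX_{S^\ast\mu}(t^\ast a)\cdot T^\ast e$; the remaining term $(t^\ast a)\,\Delta_{S^\ast\mu}(T^\ast e)$ contains the zeroth-order part of the operator, and $\Delta_{S^\ast\mu}(T^\ast e)=\{S^\ast\mu,T^\ast e\}$ is precisely the quantity to be shown to vanish --- this is the point where Jacobi geometry genuinely differs from the Poisson case, where verticality of the Hamiltonian vector field suffices. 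Your proposed ``reduction to a check along the unit section'' does not close this: $T^\ast e$ is \emph{not} a left-invariant section of $L\to\calG$ unless the frame $e$ is invariant under the $\calG$-representation on $L_0$ (under $L\simeq t^\ast L_0$ one has $(T^\ast e)_g=(g,e_{tg})$, while left translation by $h$ produces $(hg,h\cdot e_{tg})$), so $\Delta_{S^\ast\mu}(T^\ast e)$ is not determined by its restriction to $\calG_0$, and it is in any case unclear why it would vanish there. The fix is immediate from ingredients you already have: since $\calX_{S^\ast\mu}$ is left-invariant and $\calX_{T^\ast\lambda}$ is right-invariant, $\calX_{\{S^\ast\mu,T^\ast\lambda\}}=[\calX_{S^\ast\mu},\calX_{T^\ast\lambda}]=0$, and on a contact manifold $\lambda\mapsto\calX_\lambda$ is injective, whence $\{S^\ast\mu,T^\ast\lambda\}=0$. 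This is the route the paper takes.
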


\begin{proof}
	In view of Proposition~\ref{prop:Jacobi_structures_from_contact_grpds}, diagram~\eqref{eq:theor:Jacobi} is a well-defined pair of Jacobi morphisms covering surjective submersions.
	Concerning the conditions in Definition~\ref{def:CDP}, it satisfies Condition~\ref{enumitem:def:CDP_1} because of Equation~\eqref{eq:CDP_from_contact_grpd:1st_condition}, and Condition~\ref{enumitem:def:CDP_3} because, as it is well-known (cf.~\cite[Proposition 5.1]{crainic2015jacobi}),
	\begin{equation}
		\label{eq:CDP_from_contact_grpd:3rd_condition}
		\calH^s=(\calH^t)^{\perp\rmc_\calH}.
	\end{equation}
	Furthermore, in a contact groupoid, $\calX_{(-)}:\Gamma(L)\to\mathfrak{X}(\calG,\calH)$ induces the following $\bbR$-linear isomorphisms
\begin{equation*}
	\Gamma(L_0)\to\mathfrak{X}(\calG,\calH)^l,\ \lambda\mapsto\calX_{S^\ast\lambda}\quad\text{and}\quad\Gamma(L_0)\to\mathfrak{X}(\calG,\calH)^r,\ \lambda\mapsto\calX_{T^\ast\lambda}, 
\end{equation*}
where $\mathfrak{X}(\calG,\calH)^l$ (resp.~$\mathfrak{X}(\calG,\calH)^r$) denotes the space of left (resp.~right) invariant contact vector fields of $(\calG,\calH)$
(see~\cite[Corollary 5.2]{crainic2015jacobi} up to the appropriate modifications for the map $S$).
From this and the fact that, in any Lie groupoid, left invariant vector fields commute with right invariant vector fields, we obtain that
\begin{equation*}
\label{eq:CDP_from_contact_grpd:2nd_condition}
\{S^\ast\Gamma(L_0),T^\ast\Gamma(L_0)\}_\calH=0.
\end{equation*}
So, diagram~\eqref{eq:theor:Jacobi} also satisfies Condition~\ref{enumitem:def:CDP_2} in Definition~\ref{def:CDP}, and this completes the proof.
\end{proof}

\begin{example}[\textbf{The trivial line bundle case}]
	Let $\calH$ be a multiplicative contact structure on a Lie groupoid $\calG\rightrightarrows\calG_0$ as above.
	Further let us assume that the contact structure is coorientable, i.e.~all the involved line bundles are the trivial ones.
	In this setting, the representation of $\calG\to\calG_0$ on $\bbR_{\calG_0}\to\calG_0$ reduces to multiplicative function $f\in C^\infty(\calG)$, where multiplicativity means $m^\ast f=\operatorname{pr}_1^\ast f+\operatorname{pr}_2^\ast f$, and the multiplicative contact form reduces to an ordinary real-valued contact form $\theta\in\Omega^1(\calG)$ such that
	\begin{equation}
	\label{eq:multiplicative_coorientable_contact_form}
	m^\ast\theta=\operatorname{pr}_1^\ast\theta+(\operatorname{pr}_1^\ast e^f)\operatorname{pr}_2^\ast\theta.
	\end{equation}
	Moreover, the regular line bundle morphisms $I:\bbR_\calG\to\bbR_\calG$, $S:\bbR_\calG\to\bbR_{\calG_0}$ and $T:\bbR_\calG\to\bbR_{\calG_0}$ reduce to the pairs $(i,e^f)$, $(s,e^f)$ and $(t,1)$ respectively.
	So, the Jacobi structure $\{-,-\}_0$ on $\bbR_{\calG_0}\to\calG_0$ is characterized by the property that both target $t$ and source $s$ are Jacobi maps, from $(\calG_0,\theta)$ to $(\calG_0,\{-,-\}_0)$, with conformal factors $1$ and $-e^f$ respectively.
\end{example}

\section{Properties of Contact Dual Pairs}
\label{sec:Properties}

This section continues the study of contact dual pairs.
The interpretation of contact forms with values in a line bundle $L\to M$ as $L$-valued symplectic Atiyah forms (cf.~Proposition~\ref{prop:contact=symplecticAtiyah}) leads to a more compact and geometrically insightful description of contact dual pairs.
Indeed, as proven in Proposition~\ref{prop:LW_CDP_Atiyah_forms}, the list of the three different conditions in Definition~\ref{def:CDP} can be summarized by a single orthogonality condition.
This immediately leads to other equivalent descriptions of contact dual pairs.
First, Proposition~\ref{prop:CDP_Dirac-Jacobi} recasts the notion of contact dual pairs in the language of Dirac--Jacobi geometry.
Second, using the ``symplectization/Poissonization trick'' (see Appendix~\ref{sec:homogeneous_symplectic/Poisson}), Proposition~\ref{prop:CDPs=homogeneous_SDPs} establishes a 1-1 correspondence 
between the contact dual pairs and the homogeneous symplectic dual pairs (in the sense of Definition~\ref{def:homogeneous_SDPs}).
Finally, Proposition~\ref{prop:relation_Howe_Weinstein} investigates the non-trivial relation existing between the local version (Definition~\ref{def:CDP}) and the global version (Definition~\ref{def:H_CDP}) of contact dual pairs.

\subsection{Contact dual pairs and symplectic Atiyah forms}
\label{sec:LW_CDP_Atiyah_forms}

The aim of this section is to obtain a more compact description of contact dual pairs by making use of symplectic Atiyah forms.

Let $(M,\calH)$ be a contact manifold, with $\calH=\ker\theta$ for a contact form $\theta\in\Omega^1(M;L)$.
Denote by $\J=\{-,-\}_\calH$ the corresponding non-degenerate Jacobi structure on $L\to M$ and by $\varpi$ the corresponding $L$-valued symplectic Atiyah form (cf.~Propositions~\ref{prop:contact=symplecticAtiyah} and~\ref{prop:non-degenerateJacobi=contact}).
Further consider a pair of Jacobi morphisms
\begin{equation}
\label{eq:LW_CDP_Atiyah_forms:diagram}
\begin{tikzcd}
(M_1,L_1,\{-,-\}_1)&(M,\calH)\arrow[l, swap, "\Phi_1"]\arrow[r, "\Phi_2"]&(M_2,L_2,\{-,-\}_2),
\end{tikzcd}
\end{equation}
with underlying maps $\!\!\begin{tikzcd}M_1&M\arrow[l, swap, "\varphi_1"]\arrow[r, "\varphi_2"]&M_2\end{tikzcd}\!\!.$

\begin{lemma}
	\label{lem:technical_lemma}
	For all $x\in M$ and $i=1,2$, the following identities hold
	\begin{align}
	\label{eq:LW_CDP_Atiyah_forms:1}
	(\ker D_x\Phi_i)^\circ&=\left\{j^1_x(\Phi_i^\ast\lambda_i):\lambda_i\in\Gamma(L_i)\right\}\subset J^1_xL,\\
	\label{eq:LW_CDP_Atiyah_forms:2}
	(\ker D_x\Phi_i)^{\perp\varpi}&=\left\{(\Delta_{\Phi_i^\ast\lambda_i})_x:\lambda_i\in\Gamma(L_i)\right\}\subset D_xL,
	\end{align}
	where the superscript ${}^\circ$ denotes the annihilator w.r.t.~the  natural duality pairing $\langle-,-\rangle:DL\otimes J^1L\to L$. 
	Moreover, for all $x\in M$ and $i=1,2$, one also gets the following identities:
	\begin{equation}
	\label{eq:LW_CDP_Atiyah_forms:3}
	\sigma^{-1}(\calH_x)=\langle\mathbbm{1}_x\rangle^{\perp\varpi}\quad \text{and}\quad \sigma^{-1}(\ker T_x\varphi_i)=\langle\mathbbm{1}_x\rangle\oplus\ker D_x\Phi_i,
	\end{equation}
	with $\sigma:DL\to TM$ denoting the symbol map.
\end{lemma}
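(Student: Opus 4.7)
The plan is to prove the three identities in turn, exploiting the duality between derivations and jets encoded in the pairing $\langle-,-\rangle\colon DL\otimes J^1L\to L$, together with the defining properties of the vector bundle morphisms $\J^\sharp,\varpi^\sharp$ and the symbol sequence.

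For \eqref{eq:LW_CDP_Atiyah_forms:1}, I would start from the definition of $D\Phi_i\colon D_xL\to D_{\varphi_i(x)}L_i$: for $\delta\in D_xL$ and any local section $\lambda_i$ of $L_i$ around $\varphi_i(x)$, one has $(D\Phi_i)(\delta)(\lambda_i)=\Phi_{i,x}(\delta(\Phi_i^\ast\lambda_i))$. Since $\Phi_{i,x}$ is a linear isomorphism on fibers, $\delta\in\ker D_x\Phi_i$ if and only if $\delta(\Phi_i^\ast\lambda_i)=\langle\delta,j^1_x(\Phi_i^\ast\lambda_i)\rangle=0$ for every $\lambda_i\in\Gamma(L_i)$. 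This yields the inclusion ``$\supseteq$''. For the reverse, a dimension count suffices: $\sigma\circ D\Phi_i=T\varphi_i\circ\sigma$ and $\Phi_i$ is regular on the line bundles, so $\rank D_x\Phi_i=1+\rank T_x\varphi_i$ and hence $\dim(\ker D_x\Phi_i)^\circ=1+\rank T_x\varphi_i$; on the other hand, the jet $j^1_x(\Phi_i^\ast\lambda_i)$ only depends on $j^1_{\varphi_i(x)}\lambda_i$ through the pull-back along $T_x\varphi_i$, so the subspace $\{j^1_x(\Phi_i^\ast\lambda_i):\lambda_i\in\Gamma(L_i)\}$ has precisely dimension $1+\rank T_x\varphi_i$ as well.

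Identity \eqref{eq:LW_CDP_Atiyah_forms:2} then follows essentially formally from \eqref{eq:LW_CDP_Atiyah_forms:1}. Since $\varpi$ is non-degenerate, $\varpi^\sharp=\J^\sharp\colon J^1L\to DL$ is a vector bundle isomorphism, and by definition of the musical morphism one has $\varpi(\delta,\varpi^\sharp(\alpha))=\langle\delta,\alpha\rangle$ for all $\delta\in D_xL$ and $\alpha\in J^1_xL$. Therefore an element $\delta'=\varpi^\sharp(\alpha)\in D_xL$ lies in $(\ker D_x\Phi_i)^{\perp\varpi}$ if and only if $\alpha\in(\ker D_x\Phi_i)^\circ$. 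Plugging in \eqref{eq:LW_CDP_Atiyah_forms:1} and recalling that $\J^\sharp(j^1_x\mu)=(\Delta_\mu)_x$ for any $\mu\in\Gamma(L)$ immediately gives the right-hand side of \eqref{eq:LW_CDP_Atiyah_forms:2}.

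For \eqref{eq:LW_CDP_Atiyah_forms:3}, the first equality uses the tautological relation $\iota_{\mathbbm{1}}\varpi=\theta\circ\sigma$ recalled before Proposition~\ref{prop:contact=symplecticAtiyah}: for $\delta\in D_xL$ one has $\varpi(\mathbbm{1}_x,\delta)=\theta(\sigma(\delta))$, and $\sigma(\delta)\in\calH_x=\ker\theta_x$ iff this vanishes, i.e.~$\delta\in\langle\mathbbm{1}_x\rangle^{\perp\varpi}$. For the second equality, from $\sigma\circ D\Phi_i=T\varphi_i\circ\sigma$ and $\sigma(\mathbbm{1}_x)=0$ one obtains both inclusions $\langle\mathbbm{1}_x\rangle\subset\sigma^{-1}(\ker T_x\varphi_i)$ and $\ker D_x\Phi_i\subset\sigma^{-1}(\ker T_x\varphi_i)$. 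The sum is direct because $(D\Phi_i)(\mathbbm{1}_x)=\mathbbm{1}_{\varphi_i(x)}\neq 0$, and the dimension count
\begin{equation*}
\dim\langle\mathbbm{1}_x\rangle+\dim\ker D_x\Phi_i=1+\dim M-\rank T_x\varphi_i=1+\dim\ker T_x\varphi_i=\dim\sigma^{-1}(\ker T_x\varphi_i)
\end{equation*}
closes the argument. The only mildly delicate point, and the one I expect to require the most care, is keeping the conventions for $\varpi^\sharp$, the natural pairing, and the symbol sequence consistent throughout, so that the two ``dual'' identities \eqref{eq:LW_CDP_Atiyah_forms:1} and \eqref{eq:LW_CDP_Atiyah_forms:2} line up cleanly; once these are fixed, the rest is a matter of linear algebra and dimension counting.
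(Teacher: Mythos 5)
Your proof is correct and fills in, with the right amount of detail, exactly the skeleton the paper uses: the duality argument (plus a rank count) for \eqref{eq:LW_CDP_Atiyah_forms:1}, the identity $(\ker D\Phi_i)^{\perp\varpi}=\varpi^\sharp((\ker D\Phi_i)^\circ)$ together with $\varpi^\sharp=\J^\sharp$ for \eqref{eq:LW_CDP_Atiyah_forms:2}, and the three identities $\theta\circ\sigma=\iota_{\mathbbm{1}}\varpi$, $\sigma\circ D\Phi_i=T\varphi_i\circ\sigma$, $(D_x\Phi_i)\mathbbm{1}_x=\mathbbm{1}_{\varphi_i(x)}$ for \eqref{eq:LW_CDP_Atiyah_forms:3}. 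The only cosmetic difference is that for \eqref{eq:LW_CDP_Atiyah_forms:1} you could avoid the dimension count altogether by observing that your first computation identifies $\ker D_x\Phi_i$ as the annihilator of the subspace $\{j^1_x(\Phi_i^\ast\lambda_i)\}$ and then taking double annihilators.
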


\begin{proof}
	Working with local coordinates on $M_i$ and local frames of $L_i\to M_i$, it is easy to see that Equation~\eqref{eq:LW_CDP_Atiyah_forms:1} holds.
	Then Equation~\eqref{eq:LW_CDP_Atiyah_forms:2} follows immediately from Equation~\eqref{eq:LW_CDP_Atiyah_forms:1} because of the fact that $\varpi^\sharp=\J^\sharp$ (cf.~Proposition~\ref{prop:non-degenerateJacobi=symplecticAtiyah}) and $(\ker D\Phi_i)^{\perp\varpi}=\varpi^\sharp((\ker D\Phi_i)^\circ)$.
	Finally Equation~\eqref{eq:LW_CDP_Atiyah_forms:3} is a straightforward consequence of the identities $\theta\circ\sigma=\iota_{\mathbbm{1}}\varpi$, $\sigma\circ D\Phi_i=T\varphi_i\circ\sigma$ and $(D_x\Phi_i)\mathbbm{1}_x=\mathbbm{1}_{\varphi_i(x)}$.
\end{proof}

\begin{lemma}
	\label{lem:LW_CDP_Atiyah_forms:2}
	Condition~\ref{enumitem:def:CDP_2} in Definition~\ref{def:CDP} holds if and only if $(\ker D\Phi_1)^{\perp\varpi}\subset\ker D\Phi_2$.
\end{lemma}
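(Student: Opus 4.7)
My strategy is to show that both sides of the equivalence unpack to the very same pointwise identity, namely the vanishing of $\Delta_{\Phi_1^\ast\lambda_1}$ on the $\Phi_2$-pullback sections of $L_2$. The two ingredients I intend to use are Lemma~\ref{lem:technical_lemma} and the defining property $\Delta_\lambda\mu = \{\lambda,\mu\}_\calH$ of the Hamiltonian DO.

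First, I would dualize equation~\eqref{eq:LW_CDP_Atiyah_forms:1} for $i=2$ in order to describe $\ker D_x\Phi_2$ in terms of pullback sections. Concretely, since the fiberwise pairing $\langle\delta,j^1_x\mu\rangle = \delta(\mu)_x$ between $D_xL$ and $J^1_xL$ is perfect, taking annihilators on both sides of~\eqref{eq:LW_CDP_Atiyah_forms:1} yields
\begin{equation*}
\ker D_x\Phi_2 = \bigl\{\delta\in D_xL : \delta(\Phi_2^\ast\mu_2)_x = 0 \text{ for all } \mu_2\in\Gamma(L_2)\bigr\}.
\end{equation*}

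Next I would invoke equation~\eqref{eq:LW_CDP_Atiyah_forms:2} for $i=1$, which parametrizes $(\ker D_x\Phi_1)^{\perp\varpi}$ precisely by the values at $x$ of the Hamiltonian derivations $\Delta_{\Phi_1^\ast\lambda_1}$, as $\lambda_1$ ranges over $\Gamma(L_1)$. Combining this with the previous description of $\ker D_x\Phi_2$, the inclusion $(\ker D_x\Phi_1)^{\perp\varpi} \subset \ker D_x\Phi_2$ holding at every point $x\in M$ is equivalent to
\begin{equation*}
\Delta_{\Phi_1^\ast\lambda_1}(\Phi_2^\ast\lambda_2)_x = \{\Phi_1^\ast\lambda_1,\Phi_2^\ast\lambda_2\}_\calH(x) = 0 \quad\text{for all } x\in M,\ \lambda_1\in\Gamma(L_1),\ \lambda_2\in\Gamma(L_2),
\end{equation*}
which is literally Condition~\ref{enumitem:def:CDP_2} of Definition~\ref{def:CDP}. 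Both implications then follow at once.

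I do not anticipate any serious obstacle: once the dualization of~\eqref{eq:LW_CDP_Atiyah_forms:1} has been recorded, the equivalence is a tautological matching of two ways of encoding the same pointwise vanishing. The only mild point to be careful about is that the pairing $D_xL\otimes J^1_xL\to L_x$ is perfect, so that the double annihilator of the subspace on the right of~\eqref{eq:LW_CDP_Atiyah_forms:1} recovers $\ker D_x\Phi_2$ itself; this is immediate and requires no additional work.
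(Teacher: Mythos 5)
Your proof is correct and is exactly the argument the paper intends: it cites Lemma~\ref{lem:technical_lemma} and calls the equivalence "a straightforward consequence of the identities~\eqref{eq:LW_CDP_Atiyah_forms:1} and~\eqref{eq:LW_CDP_Atiyah_forms:2}", which is precisely the dualization of~\eqref{eq:LW_CDP_Atiyah_forms:1} plus the parametrization~\eqref{eq:LW_CDP_Atiyah_forms:2} that you spell out. The pointwise reduction to $\Delta_{\Phi_1^\ast\lambda_1}(\Phi_2^\ast\lambda_2)=\{\Phi_1^\ast\lambda_1,\Phi_2^\ast\lambda_2\}_\calH=0$ is the right bookkeeping and the perfectness of the pairing $DL\otimes J^1L\to L$ is indeed the only point that needs to be noted.
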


\begin{proof}
	It is a straightforward consequence of the identities~\eqref{eq:LW_CDP_Atiyah_forms:1} and \eqref{eq:LW_CDP_Atiyah_forms:2}.
\end{proof}

\begin{lemma}
	\label{lem:LW_CDP_Atiyah_forms:1}
	For $i=1,2$, $\calH$ is transverse to $\ker T\varphi_i$ if and only if $\langle\mathbbm{1}\rangle^{\perp\varpi}$ is transverse to $\ker D\Phi_i$.
\end{lemma}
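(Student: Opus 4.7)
The plan is to reduce both transversality conditions to one another via the symbol map $\sigma:DL\to TM$, exploiting the preimage identities recorded in Lemma~\ref{lem:technical_lemma}. Concretely, I would first observe two preliminary facts: $\mathbbm{1}\in\langle\mathbbm{1}\rangle^{\perp\varpi}$ (by antisymmetry of $\varpi$, so $\varpi(\mathbbm{1},\mathbbm{1})=0$), and $\sigma$ restricts to surjections $\langle\mathbbm{1}\rangle^{\perp\varpi}\twoheadrightarrow\calH$ and $\ker D\Phi_i\twoheadrightarrow\ker T\varphi_i$. The first surjection is immediate from the identity $\sigma^{-1}(\calH)=\langle\mathbbm{1}\rangle^{\perp\varpi}$ in \eqref{eq:LW_CDP_Atiyah_forms:3}. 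For the second, from the commutation $\sigma\circ D\Phi_i=T\varphi_i\circ\sigma$ one gets $\sigma(\ker D\Phi_i)\subset\ker T\varphi_i$; conversely, given $v\in\ker T_x\varphi_i$, any lift $d\in D_xL$ with $\sigma(d)=v$ satisfies $D\Phi_i(d)\in\ker\sigma=\langle\mathbbm{1}\rangle$, and using $D\Phi_i(\mathbbm{1}_x)=\mathbbm{1}_{\varphi_i(x)}$ one can correct $d$ by a multiple of $\mathbbm{1}_x$ to land in $\ker D\Phi_i$.

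Assume first that $\langle\mathbbm{1}\rangle^{\perp\varpi}+\ker D\Phi_i=DL$. Applying $\sigma$ and using the two surjections above, we get $\calH+\ker T\varphi_i=TM$, which is the transversality of $\calH$ and $\ker T\varphi_i$.

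For the converse, assume $\calH+\ker T\varphi_i=TM$. Taking preimages under the surjective symbol map and using the general fact that $\sigma^{-1}(A+B)=\sigma^{-1}(A)+\sigma^{-1}(B)$ (both sides contain $\ker\sigma$), the identities \eqref{eq:LW_CDP_Atiyah_forms:3} yield
\begin{equation*}
DL=\sigma^{-1}(\calH)+\sigma^{-1}(\ker T\varphi_i)=\langle\mathbbm{1}\rangle^{\perp\varpi}+\bigl(\langle\mathbbm{1}\rangle\oplus\ker D\Phi_i\bigr).
\end{equation*}
Since $\langle\mathbbm{1}\rangle\subset\langle\mathbbm{1}\rangle^{\perp\varpi}$ by the preliminary observation, the $\langle\mathbbm{1}\rangle$ summand is absorbed, giving $\langle\mathbbm{1}\rangle^{\perp\varpi}+\ker D\Phi_i=DL$, as required.

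There is no real obstacle beyond assembling the right ingredients; the only subtle point is the correction step in the surjectivity of $\sigma|_{\ker D\Phi_i}$, which crucially relies on the fact that the gauge algebroid morphism $D\Phi_i$ sends $\mathbbm{1}$ to $\mathbbm{1}$ (a standard feature of regular line bundle morphisms, cf.~Appendix~\ref{sec:gauge_algebroid}), so that the kernel of $\sigma$ is ``available'' inside $\ker D\Phi_i$ only up to a one-dimensional correction that is in any case invisible after forming the sum with $\langle\mathbbm{1}\rangle^{\perp\varpi}$.
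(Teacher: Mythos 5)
Your proof is correct and follows essentially the same route as the paper: both rest on the identities $\sigma^{-1}(\calH)=\langle\mathbbm{1}\rangle^{\perp\varpi}$ and $\sigma^{-1}(\ker T\varphi_i)=\langle\mathbbm{1}\rangle\oplus\ker D\Phi_i$ from Lemma~\ref{lem:technical_lemma}, the absorption of $\langle\mathbbm{1}\rangle$ into $\langle\mathbbm{1}\rangle^{\perp\varpi}$, and the surjectivity of the symbol map. The paper handles both implications at once by noting that, for surjective $\sigma$, a subbundle equals $TM$ iff its $\sigma$-preimage equals $DL$, whereas you split off the forward direction and push forward along $\sigma$; this is only a cosmetic difference.
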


\begin{proof}
	Since $\sigma^{-1}(\calH)=\ker(\iota_{\mathbbm{1}}\varpi)=\langle\mathbbm{1}\rangle^{\perp\varpi}$ and $\sigma^{-1}(\ker T\varphi_i)=\langle\mathbbm{1}\rangle\oplus\ker D\Phi_i$, one can compute:
	\begin{equation*}
	\sigma^{-1}(\calH+\ker T\varphi_i)=\langle\mathbbm{1}\rangle^{\perp\varpi}+\ker D\Phi_i.
	\end{equation*}
	Hence, since $\sigma$ is surjective, one gets that  $\calH+\ker T\varphi_i=TM$ if and only if $\langle\mathbbm{1}\rangle^{\perp\varpi}+\ker D\Phi_i=DL$.
\end{proof}

\begin{proposition}
	\label{prop:LW_CDP_Atiyah_forms}
	Diagram~\eqref{eq:LW_CDP_Atiyah_forms:diagram} is a contact dual pair if and only if  $\ker D\Phi_1$ and $\ker D\Phi_2$ are the orthogonal complement of each other w.r.t.~$\varpi$, i.e.
	\begin{equation}
		\label{eq:prop:LW_CDP_Atiyah_forms}
		(\ker D\Phi_1)^{\perp\varpi}=\ker D\Phi_2.
	\end{equation}
\end{proposition}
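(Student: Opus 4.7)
The plan is to reduce all three conditions of Definition~\ref{def:CDP} to linear-algebraic statements about $\varpi$ on $DL$, using Lemmas~\ref{lem:technical_lemma}--\ref{lem:LW_CDP_Atiyah_forms:1} to translate between the $TM$-world and the $DL$-world, and then checking equivalence via a dimension count and the identification of $\varpi|_{\sigma^{-1}(\calH)}$ with $-\rmc_\calH$. Throughout I set $K_i := \ker D\Phi_i$ and $V_i := K_i \cap \langle\mathbbm{1}\rangle^{\perp\varpi}$, and work pointwise (the statement being fiberwise in nature). Two recurring facts will be used without comment: $(D\Phi_i)\mathbbm{1} = \mathbbm{1}_{\varphi_i}\neq 0$ (so $\mathbbm{1}\notin K_i$ and $\sigma:K_{i,x}\to\ker T_x\varphi_i$ is a linear isomorphism), and $\varpi$ is non-degenerate, so $A^{\perp\varpi\perp\varpi}=A$ and $(A\cap B)^{\perp\varpi}=A^{\perp\varpi}+B^{\perp\varpi}$.

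\textbf{Forward direction.} Assume~\eqref{eq:CDP} is a contact dual pair. Lemma~\ref{lem:LW_CDP_Atiyah_forms:2} applied to Condition~\ref{enumitem:def:CDP_2} gives $K_1^{\perp\varpi}\subset K_2$ pointwise. For equality I compare dimensions: $\dim K_{1,x}^{\perp\varpi}=(\dim M+1)-\dim K_{1,x}=\rank_x\varphi_1+1$, while $\dim K_{2,x}=\dim M-\rank_x\varphi_2$. By Proposition~\ref{prop:LW_CDP:tangent_fibers}, $1+\rank_x\varphi_1+\rank_x\varphi_2=\dim M$, so these dimensions agree and the inclusion is an equality.

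\textbf{Backward direction.} Conversely, assume $K_1^{\perp\varpi}=K_2$. Condition~\ref{enumitem:def:CDP_2} is immediate from Lemma~\ref{lem:LW_CDP_Atiyah_forms:2}. For Condition~\ref{enumitem:def:CDP_1}, since $\mathbbm{1}\notin K_i$ we have $\langle\mathbbm{1}\rangle\cap K_i=0$; taking $\varpi$-orthogonals yields $\langle\mathbbm{1}\rangle^{\perp\varpi}+K_i^{\perp\varpi}=DL$, i.e.~$\langle\mathbbm{1}\rangle^{\perp\varpi}+K_j=DL$ for $\{i,j\}=\{1,2\}$, so Lemma~\ref{lem:LW_CDP_Atiyah_forms:1} delivers transversality for both fibrations. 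For Condition~\ref{enumitem:def:CDP_3}, the key input is that $\varpi=d_D(\theta\circ\sigma)$ restricts to a degenerate $2$-form on $\sigma^{-1}(\calH)=\langle\mathbbm{1}\rangle^{\perp\varpi}$ whose radical is $\langle\mathbbm{1}\rangle$ and which descends under $\sigma$ to $-\rmc_\calH$ on $\calH$; indeed, for $\Delta_1,\Delta_2\in\Gamma(\sigma^{-1}(\calH))$ one has $(\theta\circ\sigma)(\Delta_k)=\theta(\sigma\Delta_k)=0$, and the formula for $d_D$ on $L$-valued Atiyah $1$-forms gives
\begin{equation*}
\varpi(\Delta_1,\Delta_2)=-(\theta\circ\sigma)[\Delta_1,\Delta_2]=-\theta[\sigma\Delta_1,\sigma\Delta_2]=-\rmc_\calH(\sigma\Delta_1,\sigma\Delta_2).
\end{equation*}
By Lemma~\ref{lem:technical_lemma}, $\sigma^{-1}(\calH_i)=\langle\mathbbm{1}\rangle+V_i$ and $\sigma:V_i\to\calH_i$ is a linear isomorphism. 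A short computation using the hypothesis then gives $V_1^{\perp\varpi}=(K_1\cap\langle\mathbbm{1}\rangle^{\perp\varpi})^{\perp\varpi}=K_1^{\perp\varpi}+\langle\mathbbm{1}\rangle=K_2+\langle\mathbbm{1}\rangle$, and intersecting with $\langle\mathbbm{1}\rangle^{\perp\varpi}$ (using $\langle\mathbbm{1}\rangle\subset\langle\mathbbm{1}\rangle^{\perp\varpi}$) yields $V_1^{\perp\varpi}\cap\langle\mathbbm{1}\rangle^{\perp\varpi}=\langle\mathbbm{1}\rangle+V_2=\sigma^{-1}(\calH_2)$. Applying $\sigma$ and invoking the identification above, this translates into $\calH_1^{\perp\rmc_\calH}=\calH_2$, which is Condition~\ref{enumitem:def:CDP_3}.

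\textbf{Main obstacle.} The delicate point is the backward implication for Condition~\ref{enumitem:def:CDP_3}: one must upgrade an orthogonality relation in the full Atiyah bundle $DL$ with respect to $\varpi$ to an orthogonality relation in the contact hyperplane $\calH$ with respect to $\rmc_\calH$. The bridge is the identification $\varpi|_{\langle\mathbbm{1}\rangle^{\perp\varpi}}/\langle\mathbbm{1}\rangle\cong-\rmc_\calH$ coming from $\varpi=d_D(\theta\circ\sigma)$, together with the careful juggling of intersections and $\varpi$-orthogonals outlined above; all other pieces reduce, via the lemmas and a dimension count, to straightforward linear algebra.
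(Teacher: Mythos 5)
Your proof is correct and follows essentially the same route as the paper's: the same two lemmas dispose of Conditions~\ref{enumitem:def:CDP_1} and~\ref{enumitem:def:CDP_2}, and your treatment of Condition~\ref{enumitem:def:CDP_3} — identifying $\rmc_\calH$ (up to sign) with the form induced by $\varpi$ on $\langle\mathbbm{1}\rangle^{\perp\varpi}/\langle\mathbbm{1}\rangle$ and then juggling $\sigma^{-1}(\calH_i)=\langle\mathbbm{1}\rangle\oplus(\langle\mathbbm{1}\rangle^{\perp\varpi}\cap\ker D\Phi_i)$ against its $\varpi$-orthogonal — is exactly the paper's computation. The only deviation is your forward direction, where you upgrade the inclusion $(\ker D\Phi_1)^{\perp\varpi}\subset\ker D\Phi_2$ to an equality by a dimension count via the rank relation of Proposition~\ref{prop:LW_CDP:tangent_fibers} instead of running the Condition-\ref{enumitem:def:CDP_3} equivalence in both directions; this is a legitimate and slightly shorter shortcut.
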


\begin{proof}
	Since $\mathbbm{1}\notin\ker D\Phi_i$, for $i=1,2$, we get that, if $(\ker D\Phi_1)^{\perp\varpi}=\ker D\Phi_2$, then $(\ker D\Phi_1)^{\perp\varpi}\subset\ker D\Phi_2$ and $\langle\mathbbm{1}\rangle^{\perp\varpi}$ is transverse to both $\ker D\Phi_1$ and $\ker D\Phi_2$.
	So, in view of Lemmas~\ref{lem:LW_CDP_Atiyah_forms:2} and~\ref{lem:LW_CDP_Atiyah_forms:1}, we can assume that Conditions~\ref{enumitem:def:CDP_1} and~\ref{enumitem:def:CDP_2} in Definition~\ref{def:CDP} hold, and prove that, under this assumption, $\ker D\Phi_1=(\ker D\Phi_2)^{\perp\varpi}$ becomes equivalent to Condition~\ref{enumitem:def:CDP_3} in Definition~\ref{def:CDP}.
	
	Since $\sigma^{-1}\calH=\langle\mathbbm{1}\rangle^{\perp\varpi}$, the symbol map $\sigma:DL\to TM$ induces a vector bundle isomorphism $\langle\mathbbm{1}\rangle^{\perp\varpi}/\langle\mathbbm{1}\rangle\longrightarrow \calH$, covering the identity map $\id_M:M\to M$.
	Further, since $\sigma$ preserves the Lie brackets, and $\varpi:=-d_D(\theta\circ\sigma)$, it is easy to see that this isomorphism identifies the curvature form $\rmc_\calH$ on $\calH$ with the non-degenerate $2$-form induced by $\varpi$ on $\langle\mathbbm{1}\rangle^{\perp\varpi}/\langle\mathbbm{1}\rangle$.
	Hence we get that
	\begin{equation}
	\label{eq:proof:prop:LW_CDP_Atiyah_forms:1}
	\calH_1=(\calH_2)^{\perp\rmc_\calH}\Longleftrightarrow\sigma^{-1}(\calH_1)=(\sigma^{-1}(\calH_2))^{\perp\varpi},
	\end{equation}
	where, let us recall, $\calH_i:=\calH\cap\ker T\varphi_i$.
	For $i=1,2$, using Equation~\eqref{eq:LW_CDP_Atiyah_forms:3}, we can easily compute
	\begin{equation*}
	\sigma^{-1}\calH_i=\sigma^{-1}(\calH)\cap \sigma^{-1}(\ker T\varphi_i)\overset{\eqref{eq:LW_CDP_Atiyah_forms:3}}{=}\langle\mathbbm{1}\rangle^{\perp\varpi}\cap(\langle\mathbbm{1}\rangle\oplus\ker D\Phi_i)=\langle\mathbbm{1}\rangle\oplus(\langle\mathbbm{1}\rangle^{\perp\varpi}\cap\ker D\Phi_i).
	\end{equation*}
	Additionally, using Lemma~\ref{lem:LW_CDP_Atiyah_forms:1} and Condition~\ref{enumitem:def:CDP_1} in Definition~\ref{def:CDP}, we can also compute:
	\begin{equation*}
	(\sigma^{-1}\calH_i)^{\perp\varpi}=\langle\mathbbm{1}\rangle^{\perp\varpi}\cap(\langle\mathbbm{1}\rangle+(\ker D\Phi_i)^{\perp\varpi})=\langle\mathbbm{1}\rangle\oplus(\langle\mathbbm{1}\rangle^{\perp\varpi}\cap(\ker D\Phi_i)^{\perp\varpi}).
	\end{equation*}
	Hence we get the following chain of equivalences
	\begin{equation}
	\label{eq:proof:prop:LW_CDP_Atiyah_forms:2}
	\sigma^{-1}\calH_1=(\sigma^{-1}\calH_2)^{\perp\varpi}\Longleftrightarrow \langle\mathbbm{1}\rangle^{\perp\varpi}\cap\ker D\Phi_1=\langle\mathbbm{1}\rangle^{\perp\varpi}\cap(\ker D\Phi_2)^{\perp\varpi}\Longleftrightarrow\ker D\Phi_1=(\ker D\Phi_2)^{\perp\varpi}
	\end{equation}
	where, in the last step, we have used Lemma~\ref{lem:LW_CDP_Atiyah_forms:2}, Condition~\ref{enumitem:def:CDP_2} in Definition~\ref{def:CDP}, and again the fact that $\mathbbm{1}\notin\ker D\Phi_i$.
	Finally, combining together Equations~\eqref{eq:proof:prop:LW_CDP_Atiyah_forms:1} and~\eqref{eq:proof:prop:LW_CDP_Atiyah_forms:2}, we conclude that $\calH_1=(\calH_2)^{\perp\rmc_\calH}$ if and only if $\ker D\Phi_1=(\ker D\Phi_2)^{\perp\varpi}$.
\end{proof}

As a consequence of Proposition~\ref{prop:LW_CDP_Atiyah_forms} and Equation~\eqref{eq:LW_CDP_Atiyah_forms:2} in Lemma~\ref{lem:technical_lemma}, one obtains the following.
\begin{corollary}
	\label{cor:LW_CDP:fiber_tangent_derivations}
	Let diagram~\eqref{eq:CDP} be a contact dual pair.
	The subbundle $\ker D\Phi_1$ (resp.~$\ker D\Phi_2$) is generated by the Hamiltonian DOs $\Delta_\lambda$, with $\lambda\in\calP_2$ (resp.~$\lambda\in\calP_1$), i.e.
	\begin{align}
	\label{eq:cor:LW_CDP:fiber_tangent_derivations}
	\ker D\Phi_1 = \operatorname{span} \{\Delta_{\Phi_2^\ast\lambda_2}:\lambda_2\in\Gamma(L_2)\} \quad \text{and}\quad
	\ker D\Phi_2 = \operatorname{span} \{\Delta_{\Phi_1^\ast\lambda_1}:\lambda_1\in\Gamma(L_1)\}.
	\end{align}
\end{corollary}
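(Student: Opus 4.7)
The plan is to derive the corollary as an immediate consequence of the two results it is flagged as a consequence of, namely Proposition~\ref{prop:LW_CDP_Atiyah_forms} and identity~\eqref{eq:LW_CDP_Atiyah_forms:2} of Lemma~\ref{lem:technical_lemma}. First I would apply Proposition~\ref{prop:LW_CDP_Atiyah_forms} to the contact dual pair~\eqref{eq:CDP}, which yields the orthogonality
\begin{equation*}
\ker D\Phi_1=(\ker D\Phi_2)^{\perp\varpi}\qquad\text{and}\qquad \ker D\Phi_2=(\ker D\Phi_1)^{\perp\varpi},
\end{equation*}
the second identity being just the involutivity of the $\varpi$-orthogonal complement on subbundles (since $\varpi$ is non-degenerate).

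Next, I would invoke identity~\eqref{eq:LW_CDP_Atiyah_forms:2} of Lemma~\ref{lem:technical_lemma}, which computes the $\varpi$-orthogonal complement of $\ker D\Phi_i$ pointwise, for $i=1,2$, as
\begin{equation*}
(\ker D_x\Phi_i)^{\perp\varpi}=\left\{(\Delta_{\Phi_i^\ast\lambda_i})_x:\lambda_i\in\Gamma(L_i)\right\}\subset D_xL.
\end{equation*}
Combining this (applied with $i=2$) with the first displayed orthogonality above gives
\begin{equation*}
\ker D_x\Phi_1=(\ker D_x\Phi_2)^{\perp\varpi}=\operatorname{span}\{(\Delta_{\Phi_2^\ast\lambda_2})_x:\lambda_2\in\Gamma(L_2)\},
\end{equation*}
which is exactly the first identity in~\eqref{eq:cor:LW_CDP:fiber_tangent_derivations}. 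The second identity follows symmetrically by exchanging the roles of the indices $1$ and $2$.

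Since both ingredients are already established in the excerpt, there is no real obstacle; the only thing to verify is that the equality of fibers in the preceding display promotes to an equality of subbundles of $DL$, which is automatic because each side is the span, over $x \in M$, of (restrictions of) globally defined sections of $DL$, namely the Hamiltonian DOs $\Delta_{\Phi_i^\ast \lambda_i}$. Hence the corollary reduces to a one-line chaining of Proposition~\ref{prop:LW_CDP_Atiyah_forms} and Lemma~\ref{lem:technical_lemma}.
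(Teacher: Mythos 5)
Your argument is correct and is precisely the paper's own route: the corollary is stated there as an immediate consequence of Proposition~\ref{prop:LW_CDP_Atiyah_forms} and Equation~\eqref{eq:LW_CDP_Atiyah_forms:2} of Lemma~\ref{lem:technical_lemma}, and your chaining of the orthogonality $\ker D\Phi_1=(\ker D\Phi_2)^{\perp\varpi}$ (plus its involutive counterpart, valid since $\varpi$ is non-degenerate) with the pointwise description of $(\ker D_x\Phi_i)^{\perp\varpi}$ as the set of Hamiltonian DOs $(\Delta_{\Phi_i^\ast\lambda_i})_x$ is exactly what is intended. Your closing remark that the fiberwise equality assembles into an equality of subbundles because both sides are spanned by global sections is a correct, if routine, point.
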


The compact characterization of contact dual pairs provided by Proposition~\ref{prop:LW_CDP_Atiyah_forms} admits the following, similarly compact, re-interpretation in terms of Dirac--Jacobi geometry which will play a central role in the proof of Theorem~\ref{theor:transverse_structure}.
We suggest the reader to give a look at Appendix~\ref{sec:Dirac-Jacobi} (and references therein) for definitions and properties of the operations on Dirac--Jacobi structures (like gauge transformations, push-forward and pull-back) which appear in both the statement and the proof of the following proposition.
\begin{proposition}
	\label{prop:CDP_Dirac-Jacobi}
	The following pair of Jacobi morphisms, defined on the same contact manifold,
	\begin{equation*}
		\begin{tikzcd}
			(M_1,L_1,\J_1=\{-,-\}_1)&(M,L,\theta)\ar[l, swap, "\Phi_1"]\ar[r, "\Phi_2"]&(M_2,L_2,\J_2=\{-,-\}_2),
		\end{tikzcd}
	\end{equation*}
	forms a contact dual pair if and only if
	\begin{equation}
		\label{eq:prop:CDP_Dirac-Jacobi}
		\Phi_1^!\operatorname{Gr}\J_1=\calR_\varpi\Phi_2^!\operatorname{Gr}(-\J_2),
	\end{equation}
	where $\varpi\in\Omega^2_L$ denotes the symplectic Atiyah form corresponding to $\theta$ (cf.~Proposition~\ref{prop:contact=symplecticAtiyah}).
\end{proposition}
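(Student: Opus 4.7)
The strategy is to use Proposition~\ref{prop:LW_CDP_Atiyah_forms} as a bridge: it replaces the three CDP conditions by the single linear identity $(\ker D\Phi_1)^{\perp\varpi}=\ker D\Phi_2$, and I would then translate this orthogonality into the Dirac--Jacobi equation \eqref{eq:prop:CDP_Dirac-Jacobi}.

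First I would spell out both sides of \eqref{eq:prop:CDP_Dirac-Jacobi} fiberwise, using the backward-image and gauge-transformation formulas from Appendix~\ref{sec:Dirac-Jacobi}:
\begin{align*}
	\Phi_1^!\operatorname{Gr}\J_1 &= \{(d,\Phi_1^\ast\xi_1)\in DL\oplus J^1L\,:\,D\Phi_1(d)=\J_1^\sharp\xi_1\},\\
	\calR_\varpi\Phi_2^!\operatorname{Gr}(-\J_2) &= \{(d,\Phi_2^\ast\xi_2+\iota_d\varpi)\,:\,D\Phi_2(d)=-\J_2^\sharp\xi_2\}.
\end{align*}
Both are Lagrangian sub-bundles of $DL\oplus J^1L$ of the same rank $\dim M+1$, so \eqref{eq:prop:CDP_Dirac-Jacobi} is equivalent to either one of the two inclusions.

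For the direction $(\Rightarrow)$ I fix $(d,\Phi_1^\ast\xi_1)\in\Phi_1^!\operatorname{Gr}\J_1$ and produce $\xi_2\in J^1L_2$ witnessing membership in the RHS. For the existence of $\xi_2$ with $\Phi_1^\ast\xi_1-\iota_d\varpi=\Phi_2^\ast\xi_2$, by Lemma~\ref{lem:technical_lemma} the image of $\Phi_2^\ast$ equals $(\ker D\Phi_2)^\circ$, so it suffices to check that $\Phi_1^\ast\xi_1-\iota_d\varpi$ annihilates $\ker D\Phi_2$. Given $d'\in\ker D\Phi_2=(\ker D\Phi_1)^{\perp\varpi}$, orthogonality yields $\iota_{d'}\varpi\in(\ker D\Phi_1)^\circ=\Phi_1^\ast(J^1L_1)$, hence $\iota_{d'}\varpi=\Phi_1^\ast\eta_1$ for a unique $\eta_1$. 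Applying $\J^\sharp=\varpi^\sharp$ (Proposition~\ref{prop:non-degenerateJacobi=symplecticAtiyah}) yields $d'=\J^\sharp\Phi_1^\ast\eta_1$, and pushing this through $D\Phi_1$ while invoking the Jacobi-morphism identity $D\Phi_1\circ\J^\sharp\circ\Phi_1^\ast=\J_1^\sharp$ of Remark~\ref{rem:Jacobi_morphism} gives $D\Phi_1(d')=\J_1^\sharp\eta_1$; a short antisymmetry computation with $\J_1$, together with the constraint $\J_1^\sharp\xi_1=D\Phi_1(d)$, then forces the vanishing. Once $\xi_2$ exists, the second condition $D\Phi_2(d)=-\J_2^\sharp\xi_2$ follows by the symmetric application of the Jacobi-morphism identity to $\Phi_2$ combined once more with $\J^\sharp=\varpi^\sharp$. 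The converse $(\Leftarrow)$ is obtained by intersecting both sides of \eqref{eq:prop:CDP_Dirac-Jacobi} with $DL\oplus\{0\}$ and $\{0\}\oplus J^1L$: the resulting equalities recover the three conditions of Definition~\ref{def:CDP} via Lemmas~\ref{lem:LW_CDP_Atiyah_forms:2} and~\ref{lem:LW_CDP_Atiyah_forms:1}.

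The main obstacle is the pairing computation inside the direct implication: one has to juggle the two pull-backs $\Phi_i^\ast$ on 1-jets, the gauge algebroid morphisms $D\Phi_i$, the musical maps of $\varpi$ and of $\J_1,\J_2$, and the attendant line-bundle twists and signs. The underlying content is however purely linear-algebraic, and the CDP orthogonality $(\ker D\Phi_1)^{\perp\varpi}=\ker D\Phi_2$ is precisely the missing ingredient that aligns the two sides of \eqref{eq:prop:CDP_Dirac-Jacobi}.
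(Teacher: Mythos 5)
Your overall architecture matches the paper's: both proofs funnel everything through Proposition~\ref{prop:LW_CDP_Atiyah_forms}, so that the task reduces to showing that the single orthogonality condition~\eqref{eq:prop:LW_CDP_Atiyah_forms} is equivalent to~\eqref{eq:prop:CDP_Dirac-Jacobi}. Your forward direction is correct but executed differently: you chase elements through the explicit fiberwise descriptions of $\Phi_1^!\operatorname{Gr}\J_1$ and $\calR_\varpi\Phi_2^!\operatorname{Gr}(-\J_2)$ and use Lagrangian-ness to upgrade one inclusion to an equality, whereas the paper avoids the pairing computation entirely by first establishing $\Phi_i^!\operatorname{Gr}\J_i=\Phi_i^!\Phi_{i!}\operatorname{Gr}\varpi$ (a consequence of $\Phi_i$ being a Jacobi morphism) and then invoking the push--pull identity $\Phi_i^!\Phi_{i!}\operatorname{Gr}\varpi=\ker D\Phi_i+\calR_\varpi((\ker D\Phi_i)^{\perp\varpi})$, into which~\eqref{eq:prop:LW_CDP_Atiyah_forms} can be substituted directly. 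Your route is more elementary but sign-heavy; the paper's is shorter once the push--pull formula is available. I checked your pairing argument (writing $\iota_{d'}\varpi=\Phi_1^\ast\eta_1$ for $d'\in\ker D\Phi_2=(\ker D\Phi_1)^{\perp\varpi}$ and using Remark~\ref{rem:Jacobi_morphism}) and it does close up.

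The converse, however, has a genuine gap. Intersecting both sides of~\eqref{eq:prop:CDP_Dirac-Jacobi} with $\{0\}\oplus J^1L$ yields only $\Phi_1^\ast(\ker\J_1^\sharp)=\Phi_2^\ast(\ker\J_2^\sharp)$, which carries no useful information (both sides vanish, e.g., when the $\J_i$ are non-degenerate). Intersecting with $DL\oplus\{0\}$ computes the right-hand side to $(\ker D\Phi_2)^{\perp\varpi}\oplus\{0\}$, but the left-hand side is $(D\Phi_1)^{-1}\bigl(\J_1^\sharp(\ker\Phi_1^\ast)\bigr)\oplus\{0\}$, which contains $\ker D\Phi_1\oplus\{0\}$ but coincides with it only when $\ker\Phi_1^\ast=(\operatorname{im}D\Phi_1)^\circ$ vanishes, i.e.\ when $\varphi_1$ is a submersion --- a hypothesis not present in Definition~\ref{def:CDP}. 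In general you therefore only extract the inclusion $\ker D\Phi_1\subseteq(\ker D\Phi_2)^{\perp\varpi}$, and there is no dimension count available to promote it to an equality. The paper sidesteps this by applying $\varpi^\sharp\circ\operatorname{pr}_J$ to both sides of~\eqref{eq:prop:CDP_Dirac-Jacobi}: using that every jet is the jet of a section together with Lemma~\ref{lem:technical_lemma}, one gets $\varpi^\sharp\operatorname{pr}_J\Phi_1^!\operatorname{Gr}\J_1=(\ker D\Phi_1)^{\perp\varpi}$ and $\varpi^\sharp\operatorname{pr}_J\calR_\varpi\Phi_2^!\operatorname{Gr}(-\J_2)=\ker D\Phi_2$, whence~\eqref{eq:prop:LW_CDP_Atiyah_forms} follows at once and in full generality. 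You should replace your intersection step with this projection argument (or explicitly restrict to, and justify, the submersion case).
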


\begin{proof}
	Let us start showing that Equation~\eqref{eq:prop:LW_CDP_Atiyah_forms} implies Equation~\eqref{eq:prop:CDP_Dirac-Jacobi}.
	First of all, through a straightforward computation, it is easy to see that, for $i=1,2$,
	\begin{equation}
		\label{eq:proof:prop:CDP_Dirac-Jacobi:2a}
		{\Phi_i}^!\Phi_{i!}\operatorname{Gr}\varpi=\ker D\Phi_i+\operatorname{Gr}\varpi\cap(\ker D\Phi_i)^\perp=\ker D\Phi_i+\calR_\varpi((\ker D\Phi_i)^{\perp\varpi}),
	\end{equation}
	where the superscripts ${}^{\perp\varpi}$ (resp.~${}^\perp$) denotes the orthogonal complement in the gauge algebroid $DL$ (resp.~the omni-Lie algebroid $\bbD L$) w.r.t.~the symplectic Atiyah form $\varpi$ (resp.~the standard bilinear form $\ldab-,-\rdab$).
	Further, since $\Phi_i$ is a Jacobi morphism, one gets that, for $i=1,2$,
	\begin{equation}
		\label{eq:proof:prop:CDP_Dirac-Jacobi:2b}
		{\Phi_i}^!\operatorname{Gr}\J_i={\Phi_i}^!\Phi_{i!}\operatorname{Gr}\varpi.
	\end{equation}
	Now, combining together Equations~\eqref{eq:prop:LW_CDP_Atiyah_forms}, \eqref{eq:proof:prop:CDP_Dirac-Jacobi:2a} and~\eqref{eq:proof:prop:CDP_Dirac-Jacobi:2b}, one immediately obtains Equation~\eqref{eq:prop:CDP_Dirac-Jacobi}.
	
	Let us continue showing that Equation~\eqref{eq:prop:CDP_Dirac-Jacobi} implies Equation~\eqref{eq:prop:LW_CDP_Atiyah_forms}.
	Since $\Phi_i$ is a Jacobi morphism, it is easy to see that, in particular,
	\begin{gather*}
		\varpi^\sharp\operatorname{pr}_J{\Phi_1}^!\operatorname{Gr}\J_1=(\ker D\Phi_1)^{\perp\varpi},\\
		\varpi^\sharp\operatorname{pr}_J\calR_\varpi{\Phi_2}^!\operatorname{Gr}(-\J_2)=\ker D\Phi_2.
	\end{gather*}
	From the latter, and Equation~\eqref{eq:prop:CDP_Dirac-Jacobi}, easily follows Equation~\eqref{eq:prop:LW_CDP_Atiyah_forms}.
\end{proof}


\subsection{Contact dual pairs and homogeneous symplectic dual pairs}
\label{sec:Homogeneization_CDP}

This section aims at identifying 
the contact dual pairs with the so-called homogeneous symplectic dual pairs.
In doing so we will freely use the ``symplectization/Poissonization trick'' as summarized in Appendix~\ref{sec:homogeneous_symplectic/Poisson}.

\begin{definition}
	\label{def:homogeneous_SDPs}
	A \emph{homogeneous symplectic dual pair} is a symplectic dual pair (cf.~Section~\ref{sec:intro})
	\begin{equation*}
	\begin{tikzcd}
	(P_1,\{-,-\}_1)&(P,\omega)\arrow[l, swap, "\varphi_1"]\arrow[r, "\varphi_2"]&(P_2,\{-,-\}_2)
	\end{tikzcd}
	\end{equation*}
	such that the manifolds $P$, $P_1$ and $P_2$ are equipped with principal $\bbR^\times$-bundle structures, and both the symplectic and Poisson structures, as well as the Poisson maps are homogeneous (cf.~Definition~\ref{def:homogeneous_symplectic/Poisson}).
\end{definition}

The following result extends, from Lie groupoids to dual pairs, the 1-1 correspondence existing between contact groupoids and homogeneous symplectic groupoids (cf.~Corollary~\ref{cor:homogeneous_symplectic_groupoids}).

\begin{proposition}
	\label{prop:CDPs=homogeneous_SDPs}
	The homogenization functor (and the dehomogenization functor, in the opposite direction) establishes a 1-1 correspondence between 1) contact dual pairs and 2) homogeneous symplectic dual pairs.
\end{proposition}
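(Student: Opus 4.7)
The strategy is to reduce the statement to the compact characterization of contact dual pairs given by Proposition~\ref{prop:LW_CDP_Atiyah_forms}, so that only a pointwise linear algebra check on the symplectization is required. Recall that the homogenization functor of Appendix~\ref{sec:homogeneous_symplectic/Poisson} already provides: (a) a 1-1 correspondence between line bundles $L\to M$ and principal $\bbR^\times$-bundles $P\to M$, under which the gauge algebroid satisfies $DL\simeq TP/\bbR^\times$ and an $L$-valued symplectic Atiyah form $\varpi$ corresponds to a homogeneous symplectic form $\omega\in\Omega^2(P)$; (b) a 1-1 correspondence between Jacobi manifolds and homogeneous Poisson manifolds; and (c) a 1-1 correspondence between Jacobi morphisms $\Phi:L_1\to L_2$ and $\bbR^\times$-equivariant Poisson maps $\tilde\varphi:P_1\to P_2$, such that $D\Phi:DL\to DL_i$ is the $\bbR^\times$-descent of $T\tilde\varphi_i:TP\to TP_i$.

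Given a contact dual pair as in~\eqref{eq:CDP}, apply the homogenization functor componentwise to obtain a diagram of homogeneous Poisson maps
\begin{equation*}
\begin{tikzcd}
(P_1,\{-,-\}_1^{\mathrm{hom}}) & (P,\omega) \arrow[l, swap, "\tilde\varphi_1"] \arrow[r, "\tilde\varphi_2"] & (P_2,\{-,-\}_2^{\mathrm{hom}}),
\end{tikzcd}
\end{equation*}
with $(P,\omega)$ homogeneous symplectic. What must be verified is the symplectic orthogonality $\ker T\tilde\varphi_1=(\ker T\tilde\varphi_2)^{\perp\omega}$. By the equivariance of $\tilde\varphi_i$, the subbundles $\ker T\tilde\varphi_i\subset TP$ are $\bbR^\times$-invariant, and the identification $TP/\bbR^\times\simeq DL$ of item (a) sends them precisely to $\ker D\Phi_i\subset DL$. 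Under the same identification, $\omega$-orthogonality on $\bbR^\times$-invariant subbundles descends to $\varpi$-orthogonality on the corresponding subbundles of $DL$, so that $(\ker T\tilde\varphi_i)^{\perp\omega}$ is $\bbR^\times$-invariant and descends to $(\ker D\Phi_i)^{\perp\varpi}$. Combining these observations with Proposition~\ref{prop:LW_CDP_Atiyah_forms}, the contact dual pair condition $\ker D\Phi_1=(\ker D\Phi_2)^{\perp\varpi}$ lifts exactly to the desired symplectic orthogonality, yielding a homogeneous symplectic dual pair.

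The converse direction is entirely parallel: starting from a homogeneous symplectic dual pair, dehomogenization yields a pair of Jacobi morphisms sharing the same contact manifold, and the $\omega$-orthogonality between $\bbR^\times$-invariant subbundles descends to $\varpi$-orthogonality in $DL$, so that Proposition~\ref{prop:LW_CDP_Atiyah_forms} again produces a contact dual pair. That these two constructions are mutually inverse is then automatic, since they are already so at the level of the correspondences (a), (b), (c).

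The only step requiring real care is the claim that taking the $\omega$-orthogonal complement of an $\bbR^\times$-invariant subbundle of $TP$ corresponds, under $TP/\bbR^\times\simeq DL$, to taking the $\varpi$-orthogonal complement in $DL$. This reduces to the defining property linking $\omega$ and $\varpi$ via this identification (this is precisely how an $L$-valued symplectic Atiyah form is recovered from a homogeneous symplectic form on $P$) together with a pointwise fact about non-degenerate skew-symmetric forms that descend through a quotient by a one-dimensional subspace. Once this is checked, the remainder of the argument is a straightforward unpacking of the existing homogenization dictionary.
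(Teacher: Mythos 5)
Your proposal is correct and follows essentially the same route as the paper: reduce to the orthogonality characterization of Proposition~\ref{prop:LW_CDP_Atiyah_forms}, then use the homogenization dictionary to match $\ker T\widetilde{\Phi}_i$ with $\ker D\Phi_i$ and $\widetilde{\varpi}$-orthogonality with $\varpi$-orthogonality. One small correction to your final paragraph: the comparison map $\widehat{\pi}_\nu:T_\nu\widetilde{L}\to D_xL$ is a fiberwise linear \emph{isomorphism} (both spaces have dimension $\dim M+1$) carrying $\widetilde{\varpi}_\nu$ to the real-valued form $\langle\nu,\varpi_x\rangle$, so no descent through a quotient by a one-dimensional subspace is involved and the step is even more direct than you suggest.
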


\begin{proof}
	In view of Proposition~\ref{prop:equivalence_categories:2} 
	the homogenization functor (and the dehomogenization functor, in the opposite direction) establishes a 1-1 correspondence between:
	\begin{itemize}
		\item pairs of Jacobi morphisms defined on the same contact manifold
			\begin{equation}
			\begin{tikzcd}
			(M_1,L_1,\{-,-\}_1)&(M,\calH)\arrow[l, swap, "\Phi_1"]\arrow[r, "\Phi_2"]&(M_2,L_2,\{-,-\}_2),
			\end{tikzcd}
			\end{equation}
			where $\calH=\ker\theta$, for $\theta\in\Omega^1(M;L)$, with corresponding symplectic Atiyah form $\varpi\in\Omega_L^2$,
		\item pairs of homogeneous Poisson maps defined on the same homogeneous symplectic manifold
			\begin{equation}
			\begin{tikzcd}
			(\widetilde{L}_1,\{-,-\}_{\widetilde{L}_1})&(\widetilde{L},\widetilde{\varpi})\arrow[l, swap, "\widetilde{\Phi}_1"]\arrow[r, "\widetilde{\Phi}_2"]&(\widetilde{L}_2,\{-,-\}_{\widetilde{L}_2}).
			\end{tikzcd}
			\end{equation}
	\end{itemize}
	Now, in view of Proposition~\ref{prop:LW_CDP_Atiyah_forms}, it only remains to check that, in this setting, the following conditions are equivalent:
	\begin{itemize}
		\item $\ker D\Phi_1$ and $\ker D\Phi_2$ are the orthogonal complement of each other w.r.t.~$\varpi$,
		\item $\ker T\widetilde{\Phi_1}$ and $\ker T\widetilde{\Phi_2}$ are the orthogonal complement of each other w.r.t.~$\widetilde{\varpi}$.
	\end{itemize}
	Denote, for $i=1,2$, by $\pi:\widetilde{L}\to M$ and $\pi_i:\widetilde{L}_i\to M_i$ the bundle projections, and by $\varphi_i:M\to M_i$ the underlying maps for $\Phi_i:L\to L_i$.
	Fix arbitrarily $x\in M$ and $\nu\in\widetilde{L}_x$.
	By the very definition of homogenization functor and symplectization (cf.~Section~\ref{sec:homogeneous_symplectic/Poisson}), the $\bbR$-linear isomorphism $\widehat{\pi}_\nu:T_\nu\widetilde{L}\to D_xL$ transforms the symplectic form $\widetilde{\varpi}_\nu$ on $T_\nu\widetilde{L}$ into the symplectic form $\langle\nu,\varpi_x\rangle$ on $D_xL$ (cf.~Equations~\eqref{eq:homogeneization:1} and~\eqref{eq:homogeneization:2}).
	Further it fits in the following commutative diagram, for $i=1,2$:
	\begin{equation*}
	\begin{tikzcd}
	T_\nu\widetilde{L}\arrow[d, hook, two heads, swap, "\widehat{\pi}_\nu"]\arrow[r, "T_\nu\widetilde\Phi_i"]&T_{\widetilde{\Phi}_i(\nu)}\widetilde{L}_i\arrow[d, hook, two heads, "\widehat{\pi}_{i,\widetilde{\Phi}_i(\nu)}"]\\
	D_xL\arrow[r, swap, "D_x\Phi_i"]&D_{\varphi_i(x)}L_i
	\end{tikzcd}
	\end{equation*}
	Consequently, the symplectic isomorphism $\widehat{\pi}_{\nu}:(T_\nu\widetilde{L},\widetilde{\varpi}_\nu)\to ( D_xL,\langle\nu,\varpi_x\rangle )$ transforms $\ker T_\nu\widetilde{\Phi}_i$ and  $(\ker T_\nu\widetilde{\Phi}_i)^{\perp\widetilde{\varpi}}$ into $\ker D_x\Phi_i$ and  $(\ker D_x\Phi_i)^{\perp\varpi}$ respectively.
	This completes the proof.
\end{proof}

\subsection{The relation between local and global contact duality}
\label{sec:relation_Howe_Weinstein}

The relation existing between the two notions of contact dual pairs (the local one in Definition~\ref{def:CDP} and the global one in Definition~\ref{def:H_CDP}) is described in the next Proposition inspired by the analogous result for symplectic dual pairs (cf.~\cite[Theorem~4]{MOR}). 
\begin{proposition}
	\label{prop:relation_Howe_Weinstein}
	Assume, with reference to diagram~\eqref{eq:CDP}, that the maps $M_1\overset{\varphi_1}{\longleftarrow} M\overset{\varphi_2}{\longrightarrow} M_2$ are surjective submersions.
	Then:
	\begin{enumerate}[label=(\arabic*)]
		\item\label{item:Weinstein->Howe}
		if diagram~\eqref{eq:CDP} is a contact dual pair, with underlying maps having connected fibers, then it is also a Howe contact dual pair,
		\item\label{item:Howe->Weinstein}
		if diagram~\eqref{eq:CDP} is a Howe contact dual pair, with $1+\dim M_1+\dim M_2=\dim M$, then it is also a contact dual pair.
	\end{enumerate}
\end{proposition}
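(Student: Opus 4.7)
The plan is to reduce both directions to a single algebraic characterization of $\calP_i=\Phi_i^\ast\Gamma(L_i)$ in terms of the subbundle $\ker D\Phi_i\subset DL$. Since $\varphi_i$ is a surjective submersion and $\Phi_i$ is a regular line bundle morphism, $\Phi_i$ induces a canonical identification $L\simeq\varphi_i^\ast L_i$ under which $\calP_i$ becomes the space of pull-back sections along $\varphi_i$. The key technical claim is that, when the fibers of $\varphi_i$ are connected,
\begin{equation}\label{eq:plan_key_char}
\calP_i=\{\mu\in\Gamma(L):\delta\mu=0\ \text{for every}\ \delta\in\Gamma(\ker D\Phi_i)\}.
\end{equation}
In foliated coordinates $(y^\alpha,x^j)$ adapted to $\varphi_i$, together with a local frame $e$ of $L$ pulled back from $L_i$, sections of $\ker D\Phi_i$ appear as the vertical vector fields $b^j(y,x)\partial_{x^j}$ with vanishing $\mathbbm{1}$-component, and their action on $\mu=f(y,x)e$ is $b^j\partial_{x^j}f\cdot e$; annihilation forces $f$ to depend only on $y$, so $\mu$ is locally pulled back from $M_i$, and connectedness of the fibers glues these local pull-backs into a globally defined $\lambda_i\in\Gamma(L_i)$.

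For part~(1), given $\mu\in\calP_2^c$, antisymmetry of the Jacobi bracket gives $\Delta_{\Phi_2^\ast\lambda_2}\mu=0$ for every $\lambda_2\in\Gamma(L_2)$. By Corollary~\ref{cor:LW_CDP:fiber_tangent_derivations} these Hamiltonian DOs pointwise span $\ker D\Phi_1$, so $\mu$ is annihilated by all of $\Gamma(\ker D\Phi_1)$, and \eqref{eq:plan_key_char} yields $\mu\in\calP_1$. Combined with the reverse inclusion $\calP_1\subseteq\calP_2^c$ coming from Definition~\ref{def:CDP}(2), and the symmetric argument with the indices $1$ and $2$ swapped, this gives the Howe identities $\calP_1^c=\calP_2$ and $\calP_2^c=\calP_1$.

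For part~(2), by Proposition~\ref{prop:LW_CDP_Atiyah_forms} it suffices to establish the orthogonality $(\ker D\Phi_1)^{\perp\varpi}=\ker D\Phi_2$. The Howe condition implies $\calP_1\subseteq\calP_2^c$, which by Lemma~\ref{lem:LW_CDP_Atiyah_forms:2} translates to the inclusion $(\ker D\Phi_1)^{\perp\varpi}\subseteq\ker D\Phi_2$. To promote this to equality I count ranks: $D\Phi_i$ is fiberwise surjective (since $\varphi_i$ is a submersion and $\Phi_i$ is regular), so $\dim\ker D\Phi_i=\dim M-\dim M_i$; since $\varpi$ is non-degenerate on $DL$ (of rank $\dim M+1$), we get $\dim(\ker D\Phi_1)^{\perp\varpi}=\dim M_1+1$, and the dimensional hypothesis $1+\dim M_1+\dim M_2=\dim M$ forces this to coincide with $\dim\ker D\Phi_2$. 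The chief technical step is the characterization~\eqref{eq:plan_key_char}: the local analysis is routine, but the globalization leans essentially on connected fibers, since distinct fiber components could otherwise induce inconsistent values at a single point of $M_i$.
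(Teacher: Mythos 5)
Your proposal is correct and follows essentially the same route as the paper: part~(2) is the same inclusion-plus-rank-count via Lemma~\ref{lem:LW_CDP_Atiyah_forms:2} and Proposition~\ref{prop:LW_CDP_Atiyah_forms}, and part~(1) is the paper's argument with Corollary~\ref{cor:LW_CDP:fiber_tangent_derivations} feeding into the ``annihilated by $\ker D\Phi_i$ plus connected fibers implies pull-back'' step, which you merely make explicit in foliated coordinates where the paper states it directly.
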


\begin{proof}
	(1)
	In view of Equation~\eqref{eq:rem:def:LW_CDP:2_bis}, and for symmetry reasons, it is enough to prove one of the two inclusions $\calP_1^c\subset\calP_2$ and $\calP_2^c\subset\calP_1$.
	We will focus on proving the first inclusion.
	So fix $\lambda\in\Gamma(L)$ and assume that $\lambda$ is in the centralizer of $\calP_1:=\Phi_1^\ast\Gamma(L_1)$.
	Actually, this assumption can be equivalently rephrased as
	\begin{equation*}
	\Delta_\lambda:=\J^\sharp(j^1\lambda)\in\Gamma(\ker D\Phi_1).
	\end{equation*}
	Further, by means of $\ker D\Phi_1=(\ker D\Phi_2)^{\perp\varpi}$ (cf.~Proposition~\ref{prop:LW_CDP_Atiyah_forms}) and $(\ker D\Phi_2)^{\perp\varpi}=\J^\sharp((\ker D\Phi_2)^\circ)$, the latter becomes
	\begin{equation*}
	j^1\lambda\in\Gamma(\left(\ker D\Phi_2\right)^\circ).
	\end{equation*}
	Hence, because of Equation~\eqref{eq:cor:LW_CDP:fiber_tangent_derivations}, the connectedness hypothesis implies that the section $\lambda$ is constant on the fibers of $\varphi_2:M\to M_2$, so that $\lambda=\Phi_2^\ast\lambda_2$, for some (unique) $\lambda_2\in\Gamma(L_2)$. 
	
	(2)
	If diagram~\eqref{eq:CDP} is a Howe contact dual pair, then, in particular, Condition~\ref{enumitem:def:CDP_2} in Definition~\ref{def:CDP} holds, namely $\{\Phi_1^\ast\lambda_1,\Phi_2^\ast\lambda_2\}_\calH=0$, for all $\lambda_1\in\Gamma(L_1)$ and $\lambda_2\in\Gamma(L_2)$.
	In view of Lemma~\ref{lem:LW_CDP_Atiyah_forms:2}, the latter is completely equivalent to the following condition
	\begin{equation*}
	\left(\ker D\Phi_1\right)^{\perp\varpi}\subset\ker D\Phi_2.
	\end{equation*}
	Now the hypothesis, and the identity $\rank(\ker D\Phi_i)=\rank(\ker T\varphi_i)$, for $i=1,2$, allow to compute
	\begin{equation*}
	\rank((\ker D\Phi_1)^{\perp\varpi})=1+\dim M-\rank(\ker T\varphi_1)=1+\dim M_1=\dim M-\dim M_2=\rank(\ker D\Phi_2),
	\end{equation*}
	and so $(\ker D\Phi_1)^{\perp\varpi}=\ker D\Phi_2$ as we needed to prove, in view of Proposition~\ref{prop:LW_CDP_Atiyah_forms}.
\end{proof}

\begin{remark}
	\label{rem:Howe->Weinstein}
	For future reference it will be helpful to notice that the proof of Proposition~\ref{prop:relation_Howe_Weinstein}~\ref{item:Howe->Weinstein} does not use the full Condition~\ref{enumitem:def:H_CDP:2} from Definition~\ref{def:H_CDP} but only one of the two identities $\calP_1=\calP_2^c$ and $\calP_2=\calP_1^c$.
	Consequently, if $\calH$ is transverse to both $\ker T\varphi_1$ and $\ker T\varphi_2$, the underlying maps $M_1\overset{\varphi_1}{\longleftarrow}M\overset{\varphi_2}{\longrightarrow}M_2$ are surjective submersions with connected fibers, and $1+\dim M_1+\dim M_2=\dim M$, then the following conditions are equivalent:
	\begin{enumerate}[label=(\arabic*)]
		\item\label{rem:Howe->Weinstein:1}
		$\calP_1$ is the centralizer of $\calP_2$ w.r.t.~$\{-,-\}_\calH$, i.e.~$\calP_1=\calP_2^c$,
		\item\label{rem:Howe->Weinstein:2}
		$\calP_2$ is the centralizer of $\calP_1$ w.r.t.~$\{-,-\}_\calH$, i.e.~$\calP_2=\calP_1^c$.
	\end{enumerate}
	Hence, in this case, diagram~\eqref{eq:CDP} is a Howe contact dual pair if and only if $\calH$ is transverse to both $\ker T\varphi_1$ and $\ker T\varphi_2$, and one of the above Conditions~\ref{rem:Howe->Weinstein:1} and~\ref{rem:Howe->Weinstein:2} holds.
\end{remark}

\section{Characteristic Leaf Correspondence}
\label{sec:CharacteristicLeafCorrespondence}

This section discusses one of the main results of this paper, namely the Characteristic Leaf Correspondence Theorem which parallels the analogous result obtained by Weinstein~\cite[Section~8]{We83} for symplectic dual pairs.
It actually consists of three parts.
First, Theorem~\ref{theor:LeafCorrespondenceI} shows that, given a full contact dual pair, if the underlying maps have connected fibers, then there exists a 1-1 correspondence between the characteristic leaves of the two legs of the diagram.
Second, Theorem~\ref{theor:LeafCorrespondenceII} proves that the corresponding characteristic leaves are either both contact or both l.c.s., and exhibits the relation existing between their inherited (transitive Jacobi) structures.
Third, using the Dirac--Jacobi geometric description of contact dual pairs (cf.~Proposition~\ref{prop:CDP_Dirac-Jacobi}), Theorem~\ref{theor:transverse_structure} shows that the transverse structures to corresponding characteristic leaves are anti-isomorphic.
In order to obtain these results, we also provide a very brief review of the local structure of Jacobi manifolds.

\subsection{Characteristic Foliation}
\label{sec:CharacteristicFoliation}

Let $\{-,-\}$ be a Jacobi structure on a line bundle $L\to M$, with corresponding Jacobi bi-DO $\J\in\calD^2L$.
The \emph{characteristic distribution} of the Jacobi manifold $(M,L,\{-,-\})$ is the smooth singular distribution $\calC\subset TM$ generated by the Hamiltonian vector fields, i.e.
\begin{equation*}
\calC:=\text{span}\{\calX_\lambda:\lambda\in\Gamma(L)\}=(\sigma\circ\J^\sharp)(J^1L).
\end{equation*}
In particular, if $\calC=TM$, the Jacobi structure is called \emph{transitive}.
By Equations~\eqref{eq:contact_vector_fields:DO} and \eqref{eq:lcs_Ham_vector_fields:DO} contact and l.c.s.~structures are transitive Jacobi structures (see also the following Propositions~\ref{prop:odd-dim_transitive_Jacobi} and~\ref{prop:even-dim_transitive_Jacobi}).

\begin{proposition}[\textbf{Characteristic Foliation Theorem}~\cite{kirillov}]
	\label{prop:CharacteristicFoliation}
	For any Jacobi manifold $(M,L,\{-,-\})$, the characteristic distribution $\calC$ is integrable à la Stefan--Sussmann.
	For any integral leaf $\calS$ of $\calC$, also called \emph{characteristic leaf} of $(M,L,\{-,-\})$, there exists a unique transitive Jacobi structure on the restricted line bundle $L|_\calS\to\calS$ such that the inclusion $L|_\calS\to L$ is a Jacobi morphism.
\end{proposition}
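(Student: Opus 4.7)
The plan is to prove the two parts separately, first integrability of $\calC$, then the construction of the transitive Jacobi structure on each leaf.

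For the integrability, I would appeal to the Stefan--Sussmann criterion in the form which demands only that $\calC$ be smooth and invariant under the flows of a generating family of vector fields. The generators are the Hamiltonian vector fields $\calX_\lambda$, $\lambda\in\Gamma(L)$, whose flows $\varphi_t^\lambda$ are (local) infinitesimal Jacobi automorphisms: indeed, since $\lambda\mapsto\Delta_\lambda$ is a Lie algebra morphism from $(\Gamma(L),\{-,-\})$ to $(\calD(L),[-,-])$, the flow of $\calX_\lambda$ lifts to a flow of line bundle automorphisms of $L$ preserving $\{-,-\}$. Consequently $(\varphi_t^\lambda)_\ast\calX_\mu=\calX_{(\varphi_t^\lambda)_\ast\mu}$, and hence $(\varphi_t^\lambda)_\ast\calC\subset\calC$, which is exactly the Stefan--Sussmann invariance condition. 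Alternatively, one can import integrability from the Poisson case via the symplectization/Poissonization trick of Appendix~\ref{sec:homogeneous_symplectic/Poisson}: the characteristic distribution of the homogeneous Poisson manifold $(\widetilde{L},\{-,-\}_{\widetilde L})$ is $\bbR^\times$-invariant and projects onto $\calC$.

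For the second part, fix a characteristic leaf $\calS$ with inclusion $\iota\colon\calS\hookrightarrow M$ and consider the restricted line bundle $L_\calS:=L|_\calS\to\calS$, together with the tautological regular line bundle morphism $I\colon L_\calS\to L$ covering $\iota$. Uniqueness is clear: if $\{-,-\}_\calS$ makes $I$ a Jacobi morphism, then for $\lambda,\mu\in\Gamma(L_\calS)$ and any local extensions $\tilde\lambda,\tilde\mu\in\Gamma(L)$ one is forced to set
\begin{equation*}
\{\lambda,\mu\}_\calS=I^\ast\{\tilde\lambda,\tilde\mu\}=\{\tilde\lambda,\tilde\mu\}|_\calS.
\end{equation*}
For existence, I would take the latter formula as the definition and check independence of the extension. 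By bilinearity and locality of $\{-,-\}$, it suffices to show that $\{\tilde\lambda,\tilde\mu\}|_\calS=0$ whenever $\tilde\lambda|_\calS=0$. Writing locally $\tilde\lambda=\sum_i f_i\sigma_i$ with $f_i|_\calS=0$ and $\sigma_i\in\Gamma(L)$, and using that $\{-,-\}$ is a first-order bi-DO, one computes
\begin{equation*}
\{\tilde\lambda,\tilde\mu\}=\sum_i\bigl(f_i\{\sigma_i,\tilde\mu\}-\calX_{\tilde\mu}(f_i)\,\sigma_i\bigr).
\end{equation*}
Along $\calS$, the first summand vanishes because $f_i|_\calS=0$, and the second vanishes because $\calX_{\tilde\mu}$ is tangent to $\calS$ (as a section of $\calC$), so it acts on $f_i|_\calS=0$.

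It remains to verify that $\{-,-\}_\calS$ is a Jacobi bracket and that the resulting structure is transitive. The Lie identities and the first-order bi-DO property pass to the restriction by construction, so $\{-,-\}_\calS$ is a Jacobi structure on $L_\calS$ and $I$ is a Jacobi morphism. Finally, the Hamiltonian vector field on $\calS$ associated with $\lambda=I^\ast\tilde\lambda$ is the restriction of $\calX_{\tilde\lambda}$ to $\calS$, by Remark~\ref{rem:Jacobi_morphism}; since by definition the $\calX_{\tilde\lambda}|_\calS$ span $T\calS=\calC|_\calS$, the Jacobi structure on $L_\calS$ is transitive. The main technical point in this argument is the well-definedness step, which is precisely where the first-order DO character of $\J$ and the tangency of Hamiltonian vector fields to the characteristic distribution conspire to give the desired result.
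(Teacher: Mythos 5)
Your proof is correct. Note that the paper does not actually prove this proposition: it is quoted from Kirillov's paper, so there is no in-text argument to compare against. What you give is the standard classical argument, and both halves are sound: integrability follows from the Stefan--Sussmann criterion because each $\Delta_\lambda$, being a derivation of $\{-,-\}$ by the Jacobi identity, integrates (via Proposition~\ref{prop:DO_infinitesimal_LB_automorphisms}) to local Jacobi automorphisms whose underlying flows permute the Hamiltonian vector fields and hence preserve $\calC$; and the leafwise bracket is forced by the Jacobi-morphism condition and is well defined because $L$ is a line bundle (so locally $\tilde\lambda=f\sigma$ with $f|_\calS=0$) and the Hamiltonian vector fields are tangent to the leaf. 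Two small points you could make explicit: leaves of a singular foliation are only immersed, so ``local extension'' must be understood on plaques (embedded leaf neighbourhoods), which suffices by locality of the bracket; and well-definedness in the second argument follows from the first by skew-symmetry. Neither affects the validity of the argument.
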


The dimensional relation $\text{rank}(DL)=1+\dim M$ and the skew-symmetry of $\J\in\calD^2L=\Gamma(\wedge^2(J^1L)^\ast\otimes L)$ imply that: the Jacobi bi-DO $\J$ is non-degenerate if and only if the Jacobi structure $\{-,-\}$ is transitive and $M$ is odd-dimensional.
This, together with Proposition~\ref{prop:non-degenerateJacobi=contact}, leads to the following. 

\begin{proposition}
	\label{prop:odd-dim_transitive_Jacobi}
	There is a canonical 1-1 correspondence between transitive Jacobi structures on $L\to M$, with $\dim M=\text{odd}$, and $L$-valued contact forms on $M$.
\end{proposition}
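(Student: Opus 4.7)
The plan is to reduce the statement to Proposition~\ref{prop:non-degenerateJacobi=contact} by showing that a Jacobi structure on $L\to M$ is non-degenerate if and only if it is transitive and $\dim M$ is odd; combining these two equivalences yields the claimed correspondence. Thus the entire content of the proof is the pointwise rank analysis of $\J^\sharp\colon J^1L\to DL$ suggested in the paragraph preceding the proposition.

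First I would fix a point $x\in M$ and work with the finite-dimensional linear map $\J^\sharp_x\colon J^1_xL\to D_xL$. The two basic numerical facts I will use are (i) $\rank(J^1L)=\rank(DL)=1+\dim M$, so that non-degeneracy of $\J$ is equivalent to $\J^\sharp$ being \emph{either} injective \emph{or} surjective at every point, and (ii) after choosing any trivialisation of $L_x$, the identification $D_xL\simeq (J^1_xL)^\ast$ turns $\J^\sharp_x$ into a skew-symmetric map from a vector space to its dual (by skew-symmetry of $\J\in\Gamma(\wedge^2(J^1L)^\ast\otimes L)$), so $\rank\J^\sharp_x$ is always \textbf{even}, independently of the chosen trivialisation.

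Next I would bring in the symbol sequence
\begin{equation*}
0\longrightarrow \bbR_M\overset{\mathbbm{1}}{\longrightarrow} DL\overset{\sigma}{\longrightarrow} TM\longrightarrow 0,
\end{equation*}
which gives $\rank(\sigma\circ\J^\sharp_x)\le \rank\J^\sharp_x\le \rank(\sigma\circ\J^\sharp_x)+1$, since $\ker\sigma$ is one-dimensional and spanned by $\mathbbm{1}_x$. By definition of the characteristic distribution one has $\calC_x=(\sigma\circ\J^\sharp_x)(J^1_xL)$, so transitivity at $x$ is exactly $\rank(\sigma\circ\J^\sharp_x)=\dim M$.

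With these ingredients the two directions are immediate. If $\J$ is non-degenerate, then $\J^\sharp$ is an isomorphism, so $\rank\J^\sharp_x=1+\dim M$ is even, forcing $\dim M$ odd; moreover $\sigma\circ\J^\sharp_x$ is surjective, so the Jacobi structure is transitive. Conversely, if $\J$ is transitive and $\dim M$ is odd, then $\rank\J^\sharp_x$ is an even integer satisfying $\dim M\le \rank\J^\sharp_x\le \dim M+1$, which forces $\rank\J^\sharp_x=\dim M+1$, i.e.~$\J^\sharp_x$ is an isomorphism. Having established this equivalence, I invoke Proposition~\ref{prop:non-degenerateJacobi=contact} to identify non-degenerate Jacobi structures on $L$ with $L$-valued contact forms on $M$, concluding the proof. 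No step looks genuinely hard; the only subtle point to articulate carefully is why $\rank\J^\sharp_x$ is even despite $\J^\sharp_x$ taking values in $D_xL$ rather than in $(J^1_xL)^\ast$, which I handle by trivialising $L_x$ and noting that ``being skew-symmetric'' is invariant under rescaling.
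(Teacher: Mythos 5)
Your proposal is correct and follows essentially the same route as the paper: the text immediately preceding the proposition asserts exactly the equivalence (non-degenerate $\Leftrightarrow$ transitive and $\dim M$ odd) via the relation $\rank(DL)=1+\dim M$ and the skew-symmetry of $\J$, and then combines it with Proposition~\ref{prop:non-degenerateJacobi=contact}. Your pointwise rank analysis (even rank of $\J^\sharp_x$ after trivialising $L_x$, plus the bound coming from the symbol sequence) is a correct and careful fleshing-out of that sketch.
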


Let $L\to M$ be a line bundle, with $\dim M=\text{even}$.
As recalled in Section~\ref{sec:lcs_structures}, a l.c.s.~structure $(\nabla,\omega)$ on $L\to M$ is fully encoded by the associated Jacobi structure $\{-,-\}$.
Moreover, the latter is always transitive.
Indeed, for all $\lambda\in\Gamma(L)$, the two associated Hamiltonian vector fields, the one w.r.t.~$(\nabla,\omega)$ and the other one w.r.t.~$\{-,-\}$, coincide.
So, in view of Equation~\eqref{eq:lcs_Ham_vector_fields:DO}, the Hamiltonian vector fields generate $TM$.
Conversely, for any transitive Jacobi structure $\{-,-\}$ on $L\to M$, there exist a $TM$-connection $\nabla$ on $L\to M$ and a non-degenerate $\omega\in\Omega^2(M;L)$ uniquely determined by
\begin{equation}
\label{eq:even-dim_transitive_Jacobi}
\omega(\calX_\lambda,\calX_\mu)=\nabla_{\calX_\mu}\lambda=\{\lambda,\mu\},
\end{equation}
for all $\lambda,\mu\in\Gamma(L)$.
The Jacobi identity for $\{-,-\}$ implies that $(\nabla,\omega)$ is actually a l.c.s.~structure on $L\to M$, i.e.$\nabla$ is flat and $\rmd_\nabla\omega=0$, and clearly its associated Jacobi structure is $\{-,-\}$.
This leads to the following.

\begin{proposition}
	\label{prop:even-dim_transitive_Jacobi}
	There is a canonical 1-1 correspondence between transitive Jacobi structures on $L\to M$, with $\dim M=\text{even}$, and l.c.s.~structures on $L\to M$.
\end{proposition}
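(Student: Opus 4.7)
The plan is to prove the correspondence by constructing both directions and checking they are mutually inverse. The direction from l.c.s.~structures to transitive Jacobi structures is essentially already contained in Section~\ref{sec:lcs_structures}: the bracket $\{\lambda,\mu\}=\omega(\calX_\lambda,\calX_\mu)$ is a Jacobi structure, and since $\omega^\sharp$ is a bundle isomorphism, Equation~\eqref{eq:lcs_Ham_vector_fields:DO} shows that every tangent vector is of the form $\calX_\lambda$ at each point, giving transitivity. The substantial task is the reverse direction: starting from a transitive Jacobi structure $\{-,-\}$ on $L\to M$ with $\dim M$ even and corresponding bi-DO $\J\in\calD^2L$, I would produce $(\nabla,\omega)$ by the formulas in Equation~\eqref{eq:even-dim_transitive_Jacobi}.

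The heart of the argument, and what I expect to be the main obstacle, is the following kernel identification for the anchor $\ell:=\sigma\circ\J^\sharp\colon J^1L\to TM$. Since $\rank J^1L=1+\dim M$ and transitivity means $\ell$ is surjective, $\ker\ell$ is a line subbundle. I claim that in fact $\ker\J^\sharp=\ker\ell$. The inclusion $\ker\J^\sharp\subset\ker\ell$ is immediate. For the reverse inclusion, surjectivity of $\sigma\circ\J^\sharp$ combined with the short exact sequence $0\to L\to DL\to TM\to 0$ forces $\rank\im\J^\sharp\geq\dim M$; if the extremal case $\rank\im\J^\sharp=1+\dim M$ occurred, $\J^\sharp$ would be an isomorphism and $\J$ non-degenerate, so by Propositions~\ref{prop:non-degenerateJacobi=symplecticAtiyah} and~\ref{prop:non-degenerateJacobi=contact} the Jacobi structure would be contact, forcing $\dim M$ odd, a contradiction. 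Therefore $\rank\im\J^\sharp=\dim M$, $\rank\ker\J^\sharp=1=\rank\ker\ell$, and the inclusion is an equality.

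With this identification in hand, I would finish as follows. If $\calX_\lambda(x)=0$, then $j^1\lambda|_x\in\ker\ell|_x=\ker\J^\sharp|_x$, so $\{\lambda,\mu\}(x)=\langle\J^\sharp j^1\lambda,j^1\mu\rangle(x)=0$ for every $\mu$, and an analogous statement in the second slot follows from skew-symmetry of $\J$. This renders $\omega(X,Y):=\{\lambda,\mu\}$ (with $X=\calX_\lambda$, $Y=\calX_\mu$) and $\nabla_X\lambda:=\{\lambda,\mu\}$ (with $X=\calX_\mu$) well-defined smooth objects; skew-symmetry of $\omega$ is inherited from $\{-,-\}$, and non-degeneracy of $\omega$ is the same identification in reverse, since $\omega(\calX_\lambda,-)\equiv 0$ forces $\J^\sharp j^1\lambda=0$ and hence $\calX_\lambda=0$. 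The 1st-order bi-DO property of $\{-,-\}$ delivers both $C^\infty$-linearity of $\nabla$ in the direction and the Leibniz rule in the section, making $\nabla$ a genuine linear connection on $L\to M$. Flatness of $\nabla$ and $\rmd_\nabla\omega=0$ are then checked by plugging in triples of Hamiltonian vector fields into the curvature and the de Rham-type differential and using $[\calX_\mu,\calX_\nu]=\calX_{\{\mu,\nu\}}$ to reduce everything to the Jacobi identity for $\{-,-\}$. Uniqueness of $(\nabla,\omega)$ is built into~\eqref{eq:even-dim_transitive_Jacobi} since Hamiltonian vector fields span $TM$, and the mutual-inverse property of the two constructions is tautological from the characterizing identity $\nabla_{\calX_\mu}\lambda=\{\lambda,\mu\}=\omega(\calX_\lambda,\calX_\mu)$.
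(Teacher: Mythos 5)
Your proof is correct and takes essentially the same route as the paper, whose own argument is just the paragraph preceding the proposition: it defines $(\nabla,\omega)$ by~\eqref{eq:even-dim_transitive_Jacobi}, asserts they are ``uniquely determined'', and lets the Jacobi identity deliver flatness and $\rmd_\nabla\omega=0$. Your kernel identification $\ker\J^\sharp=\ker(\sigma\circ\J^\sharp)$ --- equivalently, that a skew-symmetric $L$-valued form on the odd-rank bundle $J^1L$ cannot be non-degenerate, so transitivity forces $\rank(\im\J^\sharp)=\dim M$ --- is exactly the well-definedness step the paper leaves implicit (and uses elsewhere, e.g.~in the proof of Theorem~\ref{theor:LeafCorrespondenceII}), so your write-up is a faithful expansion rather than a different proof.
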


\subsection{Characteristic Leaf Correspondence}
\label{sec:CharacteristicLeafCorrespondence:part1}

Let us begin by stating two of the three main theorems of this Section~\ref{sec:CharacteristicLeafCorrespondence}, and explain some examples where these results can be applied.
We leave the proofs of these two theorems for the end of the current Section~\ref{sec:CharacteristicLeafCorrespondence:part1}. 

\begin{theorem}[\textbf{Characteristic Leaf Correspondence I}]
	\label{theor:LeafCorrespondenceI}
	In a full contact dual pair
	\begin{equation}
	\label{eq:theor:LeafCorrespondenceI:CDP}
	\begin{tikzcd}
	(M_1,L_1,\{-,-\}_1)&(M,\calH)\arrow[l, swap, "\Phi_1"]\arrow[r, "\Phi_2"]&(M_2,L_2,\{-,-\}_2),
	\end{tikzcd}
	\end{equation}
	with underlying maps 
	$\!\!\begin{tikzcd}
	M_1&M\arrow[l, swap, "\varphi_1"]\arrow[r, "\varphi_2"]&M_2,\end{tikzcd}\!\!\!$ the singular distribution $\calD:=\ker T\varphi_1+\ker T\varphi_2$ on $M$ is smooth and integrable à la Stefan--Sussmann.
	Further, if $\varphi_1:M\to M_1$ and $\varphi_2:M\to M_2$ have connected fibers, then the relations $\varphi_1^{-1}(\calS_1)=\calS=\varphi_2^{-1}(\calS_2)$ establish 1-1 correspondences between
	\begin{itemize}
		\item the integral leaves $\calS$ of $\calD$, and
		\item the characteristic leaves $\calS_i$ of $M_i$, with $i=1,2$.
	\end{itemize}
	In particular, if $\calS_1$ and $\calS_2$ correspond to each other as above, i.e.~$\calS_2=\varphi_2(\varphi_1^{-1}(\calS_1))$, then they have the same codimension, and either they are both even-dimensional or they are both odd-dimensional.
\end{theorem}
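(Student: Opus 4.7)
The plan is to exhibit $\calD$ as the span of a natural Lie subalgebra of vector fields on $M$, and then use the maps $\varphi_1,\varphi_2$ to translate integrability and leaf decomposition from $M$ down to $M_1$ and $M_2$. By Proposition~\ref{prop:LW_CDP:tangent_fibers}, the distribution $\calD$ is spanned by the family
\[ \calF:=\{\calX_{\Phi_1^\ast\lambda_1}:\lambda_1\in\Gamma(L_1)\}\cup\{\calX_{\Phi_2^\ast\lambda_2}:\lambda_2\in\Gamma(L_2)\}, \]
so $\calD$ is smooth. To obtain Stefan--Sussmann integrability I would check that $\calF$ is closed under Lie brackets: the ``same-side'' brackets reduce to $[\calX_{\Phi_i^\ast\lambda_i},\calX_{\Phi_i^\ast\mu_i}]=\calX_{\Phi_i^\ast\{\lambda_i,\mu_i\}_i}$ using that $\calX_{(-)}$ is a Lie algebra morphism and that $\Phi_i$ is a Jacobi morphism, while the ``cross'' brackets $[\calX_{\Phi_1^\ast\lambda_1},\calX_{\Phi_2^\ast\lambda_2}]$ vanish by Condition~\ref{enumitem:def:CDP_2} in Definition~\ref{def:CDP}.

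For the leaf correspondence, the crucial identity is $T_x\varphi_1(\calD_x)=\calC_{1,\varphi_1(x)}$, which follows from the generators of $\calD$ together with $T\varphi_1\circ\calX_{\Phi_1^\ast\lambda_1}=\calX_{\lambda_1}\circ\varphi_1$ of Remark~\ref{rem:Jacobi_morphism} (and the fact that $T\varphi_1$ kills $\ker T\varphi_1$); a short linear-algebra check upgrades this to $(T_x\varphi_1)^{-1}(\calC_{1,\varphi_1(x)})=\calD_x$. Now for any integral leaf $\calS$ of $\calD$, the inclusion $\ker T\varphi_1\subset\calD$ together with connectedness of the $\varphi_1$-fibers forces $\calS$ to be a union of entire fibers, so $\calS=\varphi_1^{-1}(\calS_1)$ with $\calS_1:=\varphi_1(\calS)$. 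Using the submersion property of $\varphi_1$ and the identity above, $\calS_1$ is a connected integral submanifold of $\calC_1$; maximality follows by contradiction, since a strictly larger characteristic leaf $\calS_1\subsetneq\calS_1'$ would yield a strictly larger connected (by connected fibers) integral submanifold $\varphi_1^{-1}(\calS_1')$ of $\calD$ through $\calS$, violating maximality of $\calS$. The converse direction (preimages of characteristic leaves of $M_1$ give integral leaves of $\calD$) is handled by the same argument, and the correspondence with $M_2$ is obtained by swapping indices.

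The final claim follows from elementary dimension counting: a full dual pair satisfies $1+\dim M_1+\dim M_2=\dim M$ by Proposition~\ref{prop:LW_CDP:tangent_fibers}, and since $\varphi_i$ is a submersion one has $\dim\calS=\dim\calS_i+(\dim M-\dim M_i)$ for $i=1,2$. Equating these two expressions gives $\operatorname{codim}\calS_1=\operatorname{codim}\calS_2$. Moreover, oddness of $\dim M$ (as the dimension of a contact manifold) forces $\dim M_1$ and $\dim M_2$ to share the same parity, whence equality of codimensions transfers this common parity to $\dim\calS_1$ and $\dim\calS_2$.

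I expect the main obstacle to be the maximality argument in the leaf correspondence: Stefan--Sussmann leaves are in general only injectively immersed, not embedded, submanifolds, so one must argue carefully that the preimage $\varphi_1^{-1}(\calS_1')$ of an immersed characteristic leaf is again an integral submanifold of $\calD$ (which is what the identity $(T\varphi_1)^{-1}\calC_1=\calD$ guarantees) and that it inherits connectedness from $\calS_1'$. The hypothesis of connected fibers enters precisely to ensure that preimages of connected leaves remain connected, so that the correspondence is bijective rather than merely surjective.
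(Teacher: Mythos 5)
Your argument is essentially correct but follows a genuinely different route from the paper. The paper derives the key identity $(T\varphi_1)^{-1}\calC_1=(T\varphi_2)^{-1}\calC_2=\ker T\varphi_1+\ker T\varphi_2$ from the Atiyah-form orthogonality $\ker D\Phi_1=(\ker D\Phi_2)^{\perp\varpi}$ of Proposition~\ref{prop:LW_CDP_Atiyah_forms} combined with Lemma~\ref{lem:pullback_distribution}, applied at the level of the gauge algebroid and then pushed down by the symbol map; it then gets both integrability and the leaf bijection in one stroke by citing Blaom's pull-back lemma (Lemma~\ref{lem:blaom}) for singular distributions along surjective submersions with connected fibers. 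You instead work directly with the Hamiltonian vector field generators from Proposition~\ref{prop:LW_CDP:tangent_fibers} and Remark~\ref{rem:Jacobi_morphism}, and you re-prove the relevant special case of Blaom's lemma by hand (fibers are saturated, images are integral submanifolds, maximality by contradiction). Your derivation of $T\varphi_1(\calD)=\calC_1$ and the upgrade to $(T\varphi_1)^{-1}\calC_1=\calD$ by a dimension count is sound, as is the final parity argument, which coincides with the paper's. What the paper's approach buys is that the symplectic-Atiyah formalism and Blaom's lemma absorb all the point-set subtleties about immersed leaves that you correctly identify as the delicate part of your direct argument.

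One step deserves more care than you give it: for a \emph{singular} distribution, exhibiting a spanning family of vector fields that is closed under Lie brackets is \emph{not} by itself sufficient for Stefan--Sussmann integrability (the standard counterexample $\operatorname{span}\{\partial_x,\,e^{-1/x}\partial_y\}$ on $\bbR^2$ is spanned by a Lie subalgebra of vector fields but is not integrable). You need either Sussmann's flow-invariance criterion or Hermann's local finite generation. Both are available here, precisely because $\ker T\varphi_1$ and $\ker T\varphi_2$ are \emph{constant-rank} involutive subbundles (Proposition~\ref{prop:LW_CDP:tangent_fibers}): their section modules are locally finitely generated, the cross brackets of the generators vanish by Condition~\ref{enumitem:def:CDP_2}, and the same-side brackets close up, so $\Gamma(\ker T\varphi_1)+\Gamma(\ker T\varphi_2)$ is a locally finitely generated involutive module spanning $\calD$. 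Make this explicit, or simply observe that $\calD$ is the pull-back of the integrable distribution $\calC_1$ along the submersion $\varphi_1$ and invoke Lemma~\ref{lem:blaom}, which is exactly the shortcut the paper takes.
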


Let $\calS_1$ be a characteristic leaf of $M_1$ and let $\calS_2$ be the corresponding characteristic leaf of $M_2$, as in Theorem~\ref{theor:LeafCorrespondenceI}.
Set $\calS:=\varphi_1^{-1}(\calS_1)=\varphi_2^{-1}(\calS_2)$, $\ell:=L|_\calS\to\calS$ and $\ell_i:=L_i|_{\calS_i}\to\calS_i$, for $i=1,2$.
Then the restriction $\Phi_i|_\calS:\ell\to\ell_i$ is a regular line bundle morphism, covering the surjective submersion $\varphi_i|_\calS:\calS\to\calS_i$, with $i=1,2$.

\begin{theorem}[\textbf{Characteristic Leaf Correspondence II}]
	\label{theor:LeafCorrespondenceII}
	Let $\calS_1$ and $\calS_2$ correspond to each other as in Theorem~\ref{theor:LeafCorrespondenceI}, with $\calS=\varphi_1^{-1}(\calS_1)=\varphi_2^{-1}(\calS_2)$.
	Denote by $I_S:\ell\to L$ the line bundle embedding covering the inclusion $\iota_\calS:\calS\to M$.
	Then there are only the following two possible cases.
	\begin{enumerate}[label=(\arabic*)]
		\item $\calS_i$ is odd-dimensional with inherited $\ell_i$-valued contact form $\theta_i$, for $i=1,2$.
		Then the following relation holds:
		\begin{equation}
		\label{eq:relation_contact}
		I_\calS^\ast\theta=\Phi_1|_\calS^\ast\theta_1+\Phi_2|_\calS^\ast\theta_2\in\Omega^1(\calS;\ell).
		\end{equation}
		\item $\calS_i$ is even-dimensional with inherited l.c.s.~structure $(\ell_i,\nabla^i,\omega_i)$, for $i=1,2$.
		Then there exists a unique representation $\nabla$ of $T\calS$ on $\ell\to\calS$ such that, for all $\lambda_i\in\Gamma(L_i)$ and $i=1,2$,
		\begin{equation}
		\label{eq:connection}
		\left(\nabla_{\calX_{\Phi_i^\ast\lambda_i}|_\calS}\right)\circ\Phi_i|_\calS^\ast=\Phi_i|_\calS^\ast\circ\nabla^i_{\calX_{\lambda_i}},
		\end{equation}
		i.e.~$\nabla$ is the pull-back of $\nabla^i$ along $\Phi_i|_\calS:\ell\to\ell_i$, for $i=1,2$.
		Further the following relation holds
		\begin{equation}
		\label{eq:relation_lcs}
		\rmd_\nabla(I_\calS^\ast\theta)=\Phi_1|_\calS^\ast\omega_1+\Phi_2|_\calS^\ast\omega_2\in\Omega^2(\calS;\ell),
		\end{equation}
		where $(\Omega^\bullet(\calS;\ell),\rmd_\nabla)$ denotes the de Rham complex of $T\calS$ with values in the representation $\nabla$.
	\end{enumerate}
\end{theorem}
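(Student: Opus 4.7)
The plan is to reduce everything to evaluation on the two families of Hamiltonian vector fields that span $T\calS$ pointwise, and then to exploit the defining properties of the inherited transitive Jacobi structures on the characteristic leaves. Throughout the argument I will use that, for $x\in\calS$, one has $T_x\calS=\ker T_x\varphi_1+\ker T_x\varphi_2$, with $\ker T_x\varphi_i|_\calS$ pointwise generated by $\calX_{\Phi_{3-i}^\ast\lambda_{3-i},x}$ (Proposition~\ref{prop:LW_CDP:tangent_fibers}), and that $T\varphi_i\circ\calX_{\Phi_i^\ast\lambda_i}=\calX_{\lambda_i}\circ\varphi_i$ by Remark~\ref{rem:Jacobi_morphism}; by the Characteristic Foliation Theorem (Proposition~\ref{prop:CharacteristicFoliation}), $\calX_{\lambda_i}|_{\calS_i}$ coincides with the Hamiltonian vector field of $\lambda_i|_{\calS_i}$ with respect to the inherited Jacobi structure on $\calS_i$.

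For Case (1), I would evaluate both sides of \eqref{eq:relation_contact} on a vector $X=\calX_{\Phi_1^\ast\lambda_1,x}\in\ker T_x\varphi_2$. The left-hand side equals $(\Phi_1^\ast\lambda_1)(x)$ by the defining property $\theta(\calX_\nu)=\nu$. On the right, the $\Phi_2|_\calS^\ast\theta_2$ summand vanishes because $T\varphi_2\cdot X=0$, whereas the $\Phi_1|_\calS^\ast\theta_1$ summand evaluates to $(\Phi_1|_\calS)_x^{-1}\bigl(\theta_1(\calX^1_{\lambda_1})(\varphi_1(x))\bigr)=(\Phi_1|_\calS)_x^{-1}(\lambda_1(\varphi_1(x)))=(\Phi_1^\ast\lambda_1)(x)$, using the defining property of the inherited contact form $\theta_1$. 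The symmetric computation handles $\ker T_x\varphi_1$, and $\bbR$-linearity together with the decomposition $T_x\calS=\ker T_x\varphi_1+\ker T_x\varphi_2$ completes Case~(1).

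For Case (2), the connection $\nabla$ is constructed as the pullback $\Phi_i|_\calS^\ast\nabla^i$: since each $\nabla^i$ is flat and $\Phi_i|_\calS$ is a regular line bundle morphism covering a surjective submersion, the pullback is a well-defined flat connection on $\ell$, and formula \eqref{eq:connection} is then tautological from the defining property of pullback connections. The key point is that the two candidates $\Phi_1|_\calS^\ast\nabla^1$ and $\Phi_2|_\calS^\ast\nabla^2$ coincide; to see this I would pass through the Dirac--Jacobi reformulation of the contact dual pair condition (Proposition~\ref{prop:CDP_Dirac-Jacobi}), which exhibits the triple $(\ell,\nabla,\varpi_\calS)$ as the unique datum induced on $\calS$ compatibly with both pullbacks, the orthogonality $\ker D\Phi_1=(\ker D\Phi_2)^{\perp\varpi}$ forcing the agreement of the two connections on the spanning set of Hamiltonian derivations. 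Equivalently, one can argue directly: given $s\in\Gamma(\ell)$ and $X\in T\calS$ in one of the spanning families, the difference $(\Phi_1|_\calS^\ast\nabla^1)_X s-(\Phi_2|_\calS^\ast\nabla^2)_X s$ is $C^\infty(\calS)$-linear in $s$ (by Leibniz) and vanishes on sections of the form $\Phi_i|_\calS^\ast\mu_i$ when $X$ kills the relevant projection, which together with Condition~\ref{enumitem:def:CDP_2} of Definition~\ref{def:CDP} forces it to vanish identically.

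Once $\nabla$ is in hand, formula \eqref{eq:relation_lcs} is verified by checking both sides on a pair $(X,Y)$ in the spanning families via the Cartan formula $\rmd_\nabla\alpha(X,Y)=\nabla_X\alpha(Y)-\nabla_Y\alpha(X)-\alpha([X,Y])$. When $X=\calX_{\Phi_1^\ast\lambda_1}|_\calS$ and $Y=\calX_{\Phi_1^\ast\mu_1}|_\calS$ both lie in $\ker T\varphi_2$, one uses $[\calX_\nu,\calX_\eta]=\calX_{\{\nu,\eta\}}$ and the Jacobi-morphism identity $\{\Phi_1^\ast\lambda_1,\Phi_1^\ast\mu_1\}_\calH=\Phi_1^\ast\{\lambda_1,\mu_1\}_1$ to reduce every term to $\Phi_1|_\calS^\ast\{\lambda_1,\mu_1\}_1$, matching $(\Phi_1|_\calS^\ast\omega_1)(X,Y)=\Phi_1|_\calS^\ast\omega_1(\calX^1_{\lambda_1},\calX^1_{\mu_1})=\Phi_1|_\calS^\ast\{\lambda_1,\mu_1\}_1$ while the $\Phi_2|_\calS^\ast\omega_2$ summand vanishes. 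Case (b), both in $\ker T\varphi_1$, is symmetric, and the mixed case both sides vanish identically thanks to Condition~\ref{enumitem:def:CDP_2} of Definition~\ref{def:CDP} (which gives $[X,Y]=0$ and kills both $\nabla_X\alpha(Y)$ and $\nabla_Y\alpha(X)$ by the pullback description of $\nabla$). The hardest step is the equality of the two candidate pullback connections; everything else is bookkeeping with the Cartan formula and the defining identities for Hamiltonian vector fields.
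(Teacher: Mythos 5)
Your proof is correct, and its core strategy---evaluate everything on the two spanning families $\calX_{\Phi_1^\ast\lambda_1}|_\calS$ and $\calX_{\Phi_2^\ast\lambda_2}|_\calS$, let the diagonal terms reproduce the inherited leaf structures via the Jacobi-morphism identities, and kill the cross terms with Condition~\ref{enumitem:def:CDP_2} of Definition~\ref{def:CDP}---is the same as the paper's. The packaging differs in two places. First, in the odd-dimensional case the paper proves the identity at the level of Atiyah $2$-forms, $I_\calS^\ast\varpi=\Phi_1|_\calS^\ast\varpi_1+\Phi_2|_\calS^\ast\varpi_2$, and only then recovers \eqref{eq:relation_contact} by applying the contracting homotopy $\iota_{\mathbbm{1}}$ of the der-complex; your direct evaluation of the three $1$-forms on the spanning vectors, using $\theta(\calX_\nu)=\nu$ and $T\varphi_i\circ\calX_{\Phi_i^\ast\lambda_i}=\calX_{\lambda_i}\circ\varphi_i$, reaches the same conclusion more elementarily. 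Second, in the even-dimensional case the paper defines $\nabla$ intrinsically through the splitting $D\ell=\langle\mathbbm{1}\rangle\oplus\left(\ker D\Phi_1+\ker D\Phi_2\right)|_\calS$, so that $\nabla_{\calX_{\Phi_i^\ast\lambda_i}}=\Delta_{\Phi_i^\ast\lambda_i}|_\calS$ and both instances of \eqref{eq:connection} hold automatically; you instead take the two pullback connections $\Phi_1|_\calS^\ast\nabla^1$ and $\Phi_2|_\calS^\ast\nabla^2$ and must prove they coincide. You are right that this is the genuine extra step (the two prescriptions overlap, since $\ker T\varphi_1\cap\ker T\varphi_2$ has positive dimension), and your direct argument does close it: for $X=\calX_{\Phi_1^\ast\lambda_1}|_\calS\in\ker T\varphi_2$ the difference of the two connections is $C^\infty(\calS)$-linear in the section and vanishes on the local frame $\Phi_2|_\calS^\ast\mu_2$, because $(\Phi_2|_\calS^\ast\nabla^2)_X$ kills it trivially while $(\Phi_1|_\calS^\ast\nabla^1)_X$ sends it to $\{\Phi_1^\ast\lambda_1,\Phi_2^\ast\mu_2\}_\calH|_\calS=0$. (Your alternative suggestion of deducing the agreement from Proposition~\ref{prop:CDP_Dirac-Jacobi} is too vague to stand on its own, but you do not need it.) Two small points to make explicit in a final write-up: the signs in the Cartan-formula computation must be tracked against the conventions $\nabla_{\calX_{\Phi_i^\ast\lambda_i}}(\Phi_i^\ast\mu_i)=\{\Phi_i^\ast\lambda_i,\Phi_i^\ast\mu_i\}_\calH$ and \eqref{eq:even-dim_transitive_Jacobi}, and the flatness of $\nabla$ (needed for it to be a representation) should be recorded, though it is immediate for a pullback of a flat connection.
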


\begin{remark}
	Notice that Equations~\eqref{eq:relation_contact} and~\eqref{eq:relation_lcs} involve the graded module morphisms $I_\calS^\ast\colon\Omega^\bullet(M;L)\to\Omega^\bullet(\calS;\ell)$ and $\Phi_i|_\calS^\ast\colon\Omega^\bullet(\calS_i;\ell_i)\to\Omega^\bullet(\calS;\ell)$, over the graded algebra morphisms $\iota_\calS^\ast\colon\Omega^\bullet(M)\to\Omega^\bullet(\calS)$ and $\varphi_i|_\calS^\ast\colon\Omega^\bullet(\calS_i)\to\Omega^\bullet(\calS)$, induced (as in Remark~\ref{rem:pull-back_VB_valued_forms}) by the regular line bundle morphisms $I_\calS:\ell\to L$ and $\Phi_i|_\calS:\ell\to\ell_i$ respectively, covering $\varphi_i|_\calS:\calS\to\calS_i$ and $\iota_S:\calS\to M$ respectively.
\end{remark}

\begin{remark}
	\label{noncon}
	As for symplectic dual pairs~\cite[Theorem E.15]{blaom2001geometric}, results similar to Theorems~\ref{theor:LeafCorrespondenceI} and~\ref{theor:LeafCorrespondenceII} hold true when, in the full contact dual pair~\eqref{eq:theor:LeafCorrespondenceI:CDP}, only one of the two underlying maps has connected fibers.
	Assuming that $\varphi_2$ has connected fibers, to each characteristic leaf $\calS_1\subset M_1$ one associates the so-called \emph{pseudoleaf} $\calS_2':=\varphi_2(\varphi_1^{-1}(\calS_1))$ of $M_2$, whose connected components are characteristic leaves of $(M_2,L_2,\{-,-\}_2)$.
	Then, generalizing Theorem~\ref{theor:LeafCorrespondenceI}, there exists a 1-1 correspondence between the characteristic leaves $\calS_1$ of $M_1$ and the pseudoleaves $\calS_2'$ of $M_2$.
	Furthermore, also the statement of Theorem~\ref{theor:LeafCorrespondenceII} keeps holding true, up to replacing the characteristic leaves $\calS_2$ with the pseudoleaves $\calS_2'$ of $M_2$.
\end{remark}

\begin{example}\label{ex:contact_groupoids}
	If the contact dual pair comes from a contact groupoid
	$(\calG,\calH)\rightrightarrows\calG_0$, then the characteristic leaf correspondence is the identity.
	Indeed, the foliation $\calC$ is precisely given by the image via the differential of the target $t$ of $\operatorname{span} \{\calX_{T^\ast\lambda}:\lambda\in\Gamma(L_0)\}$, which, by Proposition \ref{prop:LW_CDP:tangent_fibers}, is equal to $\ker Ts$.
	Hence, the leaves of $\calC$ coincide with the orbits of the groupoid i.e.\ the immersed submanifolds $\bigO_x:=t(s^{-1}(x))\subset \calG_0,\ x\in \calG_0$.
	It's now an easy exercise to see that $t(s^{-1}(\bigO_x))=s(t^{-1}(\bigO_x))=\bigO_x$.
\end{example}

\begin{example}[\textbf{The sphere of $\mathfrak{su}(3)$}]
Consider the Lie algebra $\mathfrak{su}(3)=\{A\in M_3(\mathbb{C}) \colon A+A^\ast=0,\ \mathrm{tr}(A)=0\}$
	whose 1-connected Lie group is the compact Lie group $\operatorname{SU}(3)=\{U\in M_3(\bbC) \colon UU^\ast=I,\ \mathrm{det}(U)=1\}$.
	Let $\bbS(\mathfrak{su}(3)^\ast)$ be the sphere on $\mathfrak{su}(3)^\ast$ for the bi-invariant inner product $\langle A,B \rangle=-\operatorname{tr}(AB).$
	Using this inner product we identify $\mathfrak{su}(3)$ with its dual, and define the maps $\bar s,t:\operatorname{SU}(3)\times \bbS(\mathfrak{su}(3)^\ast)\to \bbS(\mathfrak{su}(3)^\ast)$ by setting $t:(U,A)=\Ad^\ast_U A$, $\bar s(U,A):=\bar A$, where the bar denotes the complex conjugation.
	These maps define a (non-strict) full contact dual pair
	\begin{equation}
	\begin{tikzcd}
	\bbS(\mathfrak{su}(3)^\ast)&\operatorname{SU}(3)\times \bbS(\mathfrak{su}(3)^\ast)\arrow[l, swap, "\bar s"]\arrow[r, "t"]&\bbS(\mathfrak{su}(3)^\ast),
	\end{tikzcd}
	\end{equation}
	with the property that the characteristic leaf correspondence, as in Theorem~\ref{theor:LeafCorrespondenceI}, is not the identity map.
	This can be seen as follows: in view of~\cite[Example~2.3]{zambon2006contact}, the maps $s,t:\operatorname{SU}(3)\times \bbS(\mathfrak{su}(3)^\ast)\to \bbS(\mathfrak{su}(3)^\ast)$, where $s(U,A):=A$, are the source and target of the contact groupoid $\operatorname{SU}(3)\times \bbS(\mathfrak{su}(3)^\ast)$ over $\bbS(\mathfrak{su}(3)^\ast)$, hence the induced map on the leaf spaces is the identity (cf.~Remark~\ref{ex:contact_groupoids}).
	On the other hand, $\bar s$ is the composition of $s$ with the Poisson automorphism of $\bbS(\mathfrak{su}(3)^\ast)$ given by conjugation, which, as shown in~\cite{marcut2014liepoisson}, induces a non-trivial map on the leaf space of the Lie--Poisson sphere $\bbS(\mathfrak{su}(3)^\ast)$.
\end{example}

\begin{example}[\textbf{The trivial line bundle case}]
	Let us assume that in the full contact dual pair~\eqref{eq:theor:LeafCorrespondenceI:CDP} all the line bundles, $L=TM/\calH$, $L_1$ and $L_2$, are the trivial ones.
	So diagram~\eqref{eq:theor:LeafCorrespondenceI:CDP} can be rewritten as
	\begin{equation}
	\begin{tikzcd}
	(M_1,\Pi_1,E_1)&(M,\theta)\ar[l, swap, "{(\varphi_1,a_1)}"]\ar[r, "{(\varphi_2,a_2)}"]&(M_2,\Pi_2,E_2),
	\end{tikzcd}
	\end{equation}
	where $\theta\in\Omega^1(M)$ gives the contact structure on $M$ and, for $i=1,2$, Jacobi pair $(\Pi_i,E_i)$ encodes the Jacobi structure on $\bbR_{M_i}$ and $\varphi_i:M\to M_i$ is a Jacobi map with connected fibers and conformal factor $a_i\in C^\infty(M)$.
	Let the characteristic leaves $\calS_1\subset M_1$ and $\calS_2\subset M_2$ correspond to each other as in Theorem~\ref{theor:LeafCorrespondenceI}, with $\calS=\varphi_1^{-1}(\calS_1)=\varphi_2^{-1}(\calS_2)$.
	Then the only two possible cases (cf.~Theorem~\ref{theor:LeafCorrespondenceII}) look as follows.
	\begin{enumerate}[label=(\arabic*)]
		\item $\calS_i$ is odd-dimensional with $\theta_i\in\Omega(\calS_i)$ the inherited coorientable contact structure, for $i=1,2$.
		Then the following relation holds
		\begin{equation}
		\label{eq:relation_contact:coorientable}
		i_\calS^\ast\theta=a_1|_\calS\cdot(\varphi_1|_\calS^\ast\theta_1)+a_2|_\calS\cdot(\varphi_2|_\calS^\ast\theta_2).
		\end{equation}
		\item $\calS_i$ is even-dimensional, with $(\eta_i,\omega_i)$ the inherited l.c.s.~structure on $\bbR_{\calS_i}$, for $i=1,2$.
		Then there is a unique closed $1$-form $\eta\in\Omega^1(\calS)$ such that
		\begin{equation}
		\label{eq:connection:coorientable}
		\eta=-a_i|_\calS^{-1}\rmd a_i|_\calS+\varphi_i|_{\calS}^\ast\eta_i,\ \text{on}\ \ker T\varphi_i,
		\end{equation}
		for $i=1,2$.
		Further the following relation holds
		\begin{equation}
		\label{eq:relation_lcs:coorientable}
		\rmd(\iota_\calS^\ast\theta)-(\iota_\calS^\ast\theta)\wedge\eta=a_1|_\calS\cdot(\varphi_1|_\calS^\ast\omega_1)+a_2|_\calS\cdot(\varphi_2|_\calS^\ast\omega_2).
		\end{equation}
	\end{enumerate}
\end{example}

Coming back to the proofs of the theorems, we remark that, as pointed out by Blaom in~\cite[App.~E]{blaom2001geometric}, the Symplectic Leaf Correspondence Theorem (cf.~Section~\ref{sec:intro}) relies on a technical lemma about integrability and integral foliation of the pull-back of a singular distribution.
The same holds true also for the characteristic leaf correspondence in contact dual pairs.
On this regard recall that, for a singular distribution $\calC\subset TN$ and a smooth map $\varphi:M\to N$, the \emph{pull-back distribution} of $\calC$ along $\varphi$ is the singular distribution on $M$ given by $\varphi^\ast\calC:=(T\varphi)^{-1}\calC\subset TM$.

\begin{lemma}[{\cite[Corollary E.8]{blaom2001geometric}}]
	\label{lem:blaom}
	If $\varphi$ is a submersion and $\calC\subset TN$ is smooth/integrable, then $\varphi^\ast\calC$ is smooth/integrable.
	Further, if $\varphi$ is a surjective submersion with connected fibers and $\calC$ is integrable, then the relation $\calK=\varphi^{-1}(\calS)$ establishes a 1-1 correspondence between the integral leaves $\calS$ of $\calC$ and the integral leaves $\calK$ of $\varphi^\ast\calC$.
\end{lemma}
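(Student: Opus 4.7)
The plan is to attack smoothness, integrability, and the leaf correspondence in three steps, all built on the pointwise observation that $\ker T\varphi\subset\varphi^\ast\calC$ and, because $\varphi$ is a submersion, $\dim(\varphi^\ast\calC)_x=\dim\ker T_x\varphi+\dim\calC_{\varphi(x)}$. Fix $x_0\in M$ with $y_0=\varphi(x_0)$, pick local generators $Y_1,\dots,Y_k$ of $\calC$ around $y_0$, lift them via a local section of $\varphi$ to $\varphi$-related vector fields $\widetilde Y_1,\dots,\widetilde Y_k$ on a neighbourhood $U$ of $x_0$, and complete with a local frame $V_1,\dots,V_r$ of the regular subbundle $\ker T\varphi|_U$. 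By the dimension count, these together span $\varphi^\ast\calC$ on $U$, giving smoothness. Integrability follows from the Stefan--Sussmann criterion ``it suffices to exhibit local generators whose flows preserve the distribution'': the $V_j$ flow inside fibres and trivially preserve $\varphi^\ast\calC$, while for $\widetilde Y_i$ the intertwining $\varphi\circ\phi_t^{\widetilde Y_i}=\phi_t^{Y_i}\circ\varphi$ combined with $T\phi_t^{Y_i}(\calC)\subset\calC$ (which is the integrability of $\calC$) yields $T\phi_t^{\widetilde Y_i}(\varphi^\ast\calC)\subset\varphi^\ast\calC$ after a short diagram chase.

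For the leaf correspondence, given a leaf $\calS\subset N$ of $\calC$, the preimage $\calK:=\varphi^{-1}(\calS)$ is an immersed submanifold of $M$ (a submersive pullback of an immersed submanifold); its tangent space at $x\in\calK$ equals $(T_x\varphi)^{-1}(T_{\varphi(x)}\calS)=(\varphi^\ast\calC)_x$, and it is connected because both $\calS$ and the fibres of $\varphi$ are. Hence $\calK$ is a connected integral submanifold of $\varphi^\ast\calC$. The remaining task is to show that every integral leaf $\calK'$ of $\varphi^\ast\calC$ sits inside some $\varphi^{-1}(\calS)$; the matching dimension count then forces $\calK'$ to be open in the connected integral manifold $\varphi^{-1}(\calS)$, and the maximality of leaves yields $\calK'=\varphi^{-1}(\calS)$.

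The main obstacle is therefore to prove $\varphi(\calK')\subset\calS_0$ for a single leaf $\calS_0$ of $\calC$. My plan is to fix $x_0\in\calK'$ and $y_0=\varphi(x_0)\in\calS_0$ and choose a Stefan--Sussmann foliation chart of $\calC$ around $y_0$ in which $\calS_0$ appears as a level set of a submersion $\pi\colon V\to Q$ onto a local transversal. Since tangent vectors of $\calK'$ project under $T\varphi$ into $\calC=\ker T\pi$, the composition $\pi\circ\varphi$ has vanishing differential on $\calK'\cap\varphi^{-1}(V)$ and is therefore locally constant there. Because integral leaves of a Stefan--Sussmann distribution are by construction connected by piecewise-smooth paths tangent to the distribution, chaining this local-constancy argument along such a path in $\calK'$ forces $\varphi(\calK')\subset\calS_0$, which completes the correspondence.
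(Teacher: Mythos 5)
The paper itself offers no proof of this lemma---it is quoted from Blaom's memoir---so your proposal can only be judged on its own merits. The smoothness and integrability parts are essentially correct: lifting local vector fields tangent to $\calC$ to $\varphi$-related fields and adding sections of the regular subbundle $\ker T\varphi$ produces, through every point and every vector of $\varphi^\ast\calC$, a local section of $\varphi^\ast\calC$ whose flow preserves $\varphi^\ast\calC$, and the Stefan--Sussmann criterion applies. (Your phrase ``these together span $\varphi^\ast\calC$ on $U$'' is too strong for a singular distribution, whose rank may jump up at points near $x_0$, but the pointwise version of the statement is all you need.) Likewise, the verification that $\varphi^{-1}(\calS)$ is a connected integral manifold with tangent spaces equal to $\varphi^\ast\calC$ is fine.

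The genuine gap is in the last step, where you show that a leaf $\calK'$ of $\varphi^\ast\calC$ projects into a single leaf $\calS_0$ of $\calC$. You assume a chart in which $\calC=\ker T\pi$ for a local submersion $\pi$ onto a transversal. For a \emph{singular} integrable distribution no such $\pi$ exists near a point where the rank of $\calC$ is not locally constant: $\ker T\pi$ has constant rank, whereas $\calC$ may have strictly larger rank at points arbitrarily close to $y_0$, so $\calC\not\subset\ker T\pi$ there and the derivative of $\pi\circ\varphi$ along $\calK'$ need not vanish. Relatedly, the underlying heuristic---that a piecewise-$C^1$ curve tangent to an integrable singular distribution stays in a leaf---is false: on $\bbR^2$ take $\calC$ spanned by $\partial_x$ and $h(y)\partial_y$ with $h\ge 0$ vanishing exactly on $\{y=0\}$; this is integrable with leaves $\{y<0\}$, $\{y=0\}$, $\{y>0\}$, yet $t\mapsto(0,t^3)$ is tangent to $\calC$ and crosses all three. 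Since the lemma is applied in the paper to the characteristic distribution of a Jacobi manifold, which is genuinely singular, this cannot be waved away. The repair is to use the orbit description of leaves instead of arbitrary tangent paths: because $\calK'$ is connected and the orbit of $x_0$ under the flows of your spanning family $\{\widetilde Y+V\}$ is an open integral submanifold of $\calK'$, the leaf $\calK'$ coincides with that orbit; each such flow is $\varphi$-projectable onto the flow of a vector field tangent to $\calC$ (or onto the identity), and flows of vector fields tangent to an integrable distribution do preserve its leaves. Hence $\varphi(\calK')\subset\calS_0$, and the rest of your maximality argument then closes the correspondence as you intended.
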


For our aims it will be helpful to point out also the following result (cf.~\cite[Lemma E.11]{blaom2001geometric} for its analogue in the Poisson setting).

\begin{lemma}
	\label{lem:pullback_distribution}
	Let $\Phi_i:(M,L,\theta)\to(M_i,L_i,\{-,-\}_i)$ be a Jacobi morphism covering $\varphi_i:M\to M_i$.
	If $\varpi\in\Omega_L^2$ denotes the symplectic Atiyah form corresponding to $\theta$ (cf.~Proposition~\ref{prop:contact=symplecticAtiyah}) and $\J_i\in\calD^2L_i$ denotes the Jacobi bi-DO corresponding to $\{-,-\}_i$ (cf.~Proposition~\ref{prop:Jacobi_bi-DOs}), then one obtains, for any $x\in M$,
	\begin{equation}
	\label{eq:integrable_distribution_1}
	(D_x\Phi_i)^{-1}(\im \J_i^\sharp)=\ker D_x\Phi_i+(\ker D_x\Phi_i)^{\perp\varpi}.
	\end{equation}
\end{lemma}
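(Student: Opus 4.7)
My plan is to prove the two inclusions separately, relying on Equation~\eqref{eq:LW_CDP_Atiyah_forms:2} from Lemma~\ref{lem:technical_lemma} and the Jacobi morphism identities~\eqref{eq:rem:Jacobi_morphism}. The whole statement is essentially a pointwise translation of ``$\Phi_i$ is a Jacobi morphism'' into the language of the gauge algebroid, so no single step should be hard; the content is bookkeeping.

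For the inclusion $\supset$, note first that $\ker D_x\Phi_i\subset(D_x\Phi_i)^{-1}(0)\subset(D_x\Phi_i)^{-1}(\im\J_i^\sharp)$. Next, by Equation~\eqref{eq:LW_CDP_Atiyah_forms:2}, every element of $(\ker D_x\Phi_i)^{\perp\varpi}$ has the form $(\Delta_{\Phi_i^\ast\lambda_i})_x$ for some $\lambda_i\in\Gamma(L_i)$. Applying the first identity in~\eqref{eq:rem:Jacobi_morphism}, we compute
\begin{equation*}
(D_x\Phi_i)(\Delta_{\Phi_i^\ast\lambda_i})_x=(\Delta_{\lambda_i})_{\varphi_i(x)}=\J_i^\sharp\bigl(j^1_{\varphi_i(x)}\lambda_i\bigr)\in\im\J_i^\sharp,
\end{equation*}
which shows $(\ker D_x\Phi_i)^{\perp\varpi}\subset(D_x\Phi_i)^{-1}(\im\J_i^\sharp)$ as well.

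For the reverse inclusion $\subset$, take $\delta\in D_xL$ with $(D_x\Phi_i)(\delta)\in\im\J_i^\sharp$, so that $(D_x\Phi_i)(\delta)=\J_i^\sharp(\alpha)$ for some $\alpha\in J^1_{\varphi_i(x)}L_i$. Choose a section $\lambda_i\in\Gamma(L_i)$ realizing the $1$-jet $\alpha$, i.e.~$j^1_{\varphi_i(x)}\lambda_i=\alpha$ (any $1$-jet at a single point is the $1$-jet of an actual section, e.g.~by a partition-of-unity argument). Then, using again the first identity in~\eqref{eq:rem:Jacobi_morphism},
\begin{equation*}
(D_x\Phi_i)(\delta)=\J_i^\sharp\bigl(j^1_{\varphi_i(x)}\lambda_i\bigr)=(\Delta_{\lambda_i})_{\varphi_i(x)}=(D_x\Phi_i)(\Delta_{\Phi_i^\ast\lambda_i})_x,
\end{equation*}
whence $\delta-(\Delta_{\Phi_i^\ast\lambda_i})_x\in\ker D_x\Phi_i$. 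Since $(\Delta_{\Phi_i^\ast\lambda_i})_x\in(\ker D_x\Phi_i)^{\perp\varpi}$ by~\eqref{eq:LW_CDP_Atiyah_forms:2}, writing $\delta$ as the sum of these two terms provides the required decomposition.

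There is no genuine obstacle; the only point worth flagging is the realization of a pointwise $1$-jet by a global section, which is a standard cutoff argument, and the tacit use of $\varpi^\sharp=\J^\sharp$ (Proposition~\ref{prop:non-degenerateJacobi=symplecticAtiyah}) to translate $(\ker D_x\Phi_i)^{\perp\varpi}=\varpi^\sharp((\ker D_x\Phi_i)^\circ)$ into the Hamiltonian-DO description of Equation~\eqref{eq:LW_CDP_Atiyah_forms:2}.
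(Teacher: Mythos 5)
Your proof is correct and follows essentially the same route as the paper's: both rest on the Jacobi-morphism identity $(D_x\Phi_i)(\Delta_{\Phi_i^\ast\lambda_i})_x=(\Delta_{\lambda_i})_{\varphi_i(x)}$ from Remark~\ref{rem:Jacobi_morphism} together with the description $(\ker D_x\Phi_i)^{\perp\varpi}=\{(\Delta_{\Phi_i^\ast\lambda_i})_x:\lambda_i\in\Gamma(L_i)\}$ of Equation~\eqref{eq:LW_CDP_Atiyah_forms:2}. The paper merely compresses your two inclusions into a single displayed computation, so your write-up is just a more explicit version of the same argument.
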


\begin{proof}
	Fix $x\in M$.
	Since $\Phi_i:(M,L,\theta)\to(M_i,L_i,\{-,-\}_i)$ is a Jacobi morphism, in particular we have $(D_x\Phi_i)(\Delta_{\Phi_i^\ast\lambda_i})_x=(\Delta_{\lambda_i})_{\varphi_i(x)}$, for all $\lambda_i\in\Gamma(L_i)$ (cf.~Remark~\ref{rem:Jacobi_morphism}).
	Hence we can easily compute
	\begin{equation*}
	(D_x\Phi_i)^{-1}(\im\J_i^\sharp)=\ker D_x\Phi_i+\{(\Delta_{\Phi_i^\ast\lambda_i})_x:\lambda_i\in\Gamma(L_i)\}=\ker D_x\Phi_i+(\ker D_x\Phi_i)^{\perp\varpi},
	\end{equation*}
	where in the very last step we have made use of Equation~\eqref{eq:LW_CDP_Atiyah_forms:2}.
\end{proof}

\begin{proof}[Proof of Theorem~\ref{theor:LeafCorrespondenceI}]
	Combining together Proposition~\ref{prop:LW_CDP_Atiyah_forms} and Lemma~\ref{lem:pullback_distribution} we obtain, for any $x\in M$, that
	\begin{equation}
		\label{eq:proof:theor:LeafCorrespondenceI:pullback_distribution:1}
		(D_x\Phi_1)^{-1}(\im \J_1^\sharp)=(D_x\Phi_2)^{-1}(\im \J_2^\sharp)=\ker D_x\Phi_1+\ker D_x\Phi_2.
	\end{equation}
	So, applying the symbol map $\sigma:DL\to TM$ to both sides of the latter, we get the following
	\begin{equation}
		\label{eq:proof:theor:LeafCorrespondenceI:pullback_distribution:2}
		(T_x\varphi_1)^{-1}(\im (\sigma\circ \J_1^\sharp))=(T_x\varphi_2)^{-1}(\im (\sigma\circ \J_2^\sharp))=\ker T_x\varphi_1+\ker T_x\varphi_2.
	\end{equation}
	In the last step we have also used the following two identities.
	First, $\sigma(\ker D_x\Phi_i)=\ker T_x\varphi_i$, which is a straightforward consequence of Equation~\eqref{eq:LW_CDP_Atiyah_forms:3}, and second, $\sigma((D_x\Phi_i)^{-1}(\im\J^\sharp_i))=(T_x\varphi_i)^{-1}(\im(\sigma\circ\J_i^\sharp))$ which follows from the fact that $\Phi_i$ is a Jacobi morphism (cf.~Remark~\ref{rem:Jacobi_morphism}).
	In view of Equation~\eqref{eq:proof:theor:LeafCorrespondenceI:pullback_distribution:2}, the singular distribution $\calD:=\ker T\varphi_1+\ker T\varphi_2$ is the pull-back distribution, along $\varphi_i$, of the characteristic distribution $\calC_i:=\im(\sigma\circ\calJ_i^\sharp)$, with $i=1,2$.
	So the stated 1-1 correspondences follow by simply applying Lemma~\ref{lem:blaom} to this setting.
	Finally, a dimension count that uses the dimensional relation in Equation~\eqref{eq:dimensional_relation} proves the last statement of the theorem. 
\end{proof}

\begin{proof}[Proof of Theorem~\ref{theor:LeafCorrespondenceII}]
Since $\calS_i$ is a characteristic leaf of $(M_i,L_i,\{-,-\}_i)$, we get that
\begin{equation}
\label{eq:Dl_i:0}
D\ell_i=\sigma^{-1}(T\calS_i)=(\langle\mathbbm{1}\rangle+\im \J_i^\sharp)|_{\calS_i}.
\end{equation}
Further, since $\rank(D\ell_i)=1+\dim\calS_i$, and $\J_i$ is skew-symmetric (so that $\rank (\im \J^\sharp)$ is even), Equation~\eqref{eq:Dl_i:0} can be rewritten as follows:
\begin{equation}
\label{eq:Dl_i}
D\ell_i=
	\begin{cases}
	(\langle\mathbbm{1}\rangle\oplus\im \J_i^\sharp)|_{\calS_i},&\text{if $\dim\calS_i$=even},\\
	(\im \J_i^\sharp)|_{\calS_i},&\text{if $\dim\calS_i$=odd}.
	\end{cases}
\end{equation}
Using the hypothesis $\calS=\varphi_i^{-1}(\calS_i)$, and the fact that $\sigma\circ(D\Phi_i)=(T\varphi_i)\circ\sigma$, we can also write:
\begin{equation}
\label{eq:Dl}
D\ell=\sigma^{-1}(T\calS)=\sigma^{-1}(T\varphi_i)^{-1}(T\calS_i)=(D\Phi_i)^{-1}\sigma^{-1}(T\calS_i)=(D\Phi_i)^{-1}(D\ell_i).
\end{equation}
Combining together Equations~\eqref{eq:proof:theor:LeafCorrespondenceI:pullback_distribution:1}, \eqref{eq:Dl_i} and~\eqref{eq:Dl}, we get
\begin{equation}
\label{eq:Dl_cases}
D\ell=
\begin{cases}
\left.\left(\langle\mathbbm{1}\rangle\oplus\left(\ker D\Phi_1+\ker D\Phi_2\right)\right)\right|_\calS,&\text{if $\dim\calS_i$=even},\\
\left.\left(\ker D\Phi_1+\ker D\Phi_2\right)\right|_\calS,&\text{if $\dim\calS_i$=odd},
\end{cases}
\end{equation}
where, in the case $\dim\calS_i=\text{even}$, we have also used the fact that $\Phi_i^\ast\circ\mathbbm{1}=\mathbbm{1}\circ\Phi_i^\ast$.

Notice that Equation~\eqref{eq:Dl_cases}, when $\dim\calS_i=\text{even}$, provides a splitting of the short exact sequence
$$0\longrightarrow\langle\mathbbm{1}\rangle\overset{\text{incl}}{\longrightarrow} D\ell\overset{\sigma}{\longrightarrow} T\calS\longrightarrow 0$$
and so, it is also equivalent to the existence of a (unique) $T\calS$-connection $\nabla$ on $\ell\to\calS$ such that
\begin{equation*}
	\label{eq:connection:1}
	\{\nabla_\xi\in D_x\ell:x\in\calS,\ \xi\in T_x\calS\}=\left.\left(\ker D\Phi_1+\ker D\Phi_2\right)\right|_\calS.
\end{equation*}
In view of Equations~\eqref{eq:rem:Jacobi_morphism}, \eqref{eq:cor:LW_CDP:fiber_tangent_derivations} and \eqref{eq:even-dim_transitive_Jacobi}, the definition of $\nabla$ can be rephrased as in Equation~\eqref{eq:connection}, i.e.
\begin{equation}
\label{eq:connection:2}
\left(\nabla_{\calX_{\Phi_i^\ast\lambda_i}|_\calS}\right)\circ\Phi_i|_\calS^\ast\equiv\Phi_i|_\calS^\ast\circ\left(\Delta_{\lambda_i}|_{\calS_i}\right)=\Phi_i|_\calS^\ast\circ\nabla^i_{\calX_{\lambda_i}}:\Gamma(\ell_i)\to\Gamma(\ell),
\end{equation}
for all $i=1,2$, and $\lambda_i\in\Gamma(L_i)$.
Further, as a straightforward consequence of the Jacobi identity for $\{-,-\}$, we also get that the $T\calS$-connection $\nabla$ on $\ell$ is flat.

Now we are ready to prove the relations~\eqref{eq:relation_contact} and~\eqref{eq:relation_lcs}.
First, since $\Phi_i:(M,L,\{-,-\})\to(M_i,L_i,\{-,-\}_i)$ is a Jacobi morphism, for all $\lambda'_i,\lambda''_i\in\calP_i:=\Phi_i^\ast\Gamma(L_i)$, with $i=1,2$, we can write:
\begin{equation}
\label{eq:proof:theor:LeafCorrespondenceII:aux1}
\varpi(\Delta_{\lambda'_i},\Delta_{\lambda''_i})|_\calS=
\begin{cases}
(\Phi_i|_\calS^\ast\omega_i)(\calX_{\lambda'_i},\calX_{\lambda''_i}),&\text{if $\dim\calS_i=\text{even}$},\\
(\Phi_i|_\calS^\ast\varpi_i)(\Delta_{\lambda'_i},\Delta_{\lambda''_i}),&\text{if $\dim\calS_i=\text{odd}$},
\end{cases}
\end{equation}
where, in accordance with Proposition~\ref{prop:contact=symplecticAtiyah}, $\varpi$ denotes the $L$-valued symplectic Atiyah form corresponding to the contact form $\theta\in\Omega^1(M;L)$ and, if $\dim\calS_i=\text{odd}$, $\varpi_i$ denotes the $\ell_i$-valued symplectic Atiyah form corresponding to the contact form $\theta_i\in\Omega^1(\calS_i;\ell_i)$.
Further, because of Condition~\ref{enumitem:def:CDP_2} in Definition~\ref{def:CDP}, we also get that, in particular,
\begin{equation}
\label{eq:proof:theor:LeafCorrespondenceII:aux2}
\varpi(\Delta_{\lambda'_1},\Delta_{\lambda''_2})|_{\calS}=\varpi(\Delta_{\lambda'_2},\Delta_{\lambda''_1})|_{\calS}=0.
\end{equation}
Fix arbitrarily $x\in M$ and $\lambda'_i,\lambda''_i\in\calP_i:=\Phi_i^\ast\Gamma(L_i)$, with $i=1,2$, and set
\begin{align*}
\delta'_i&:=(\Delta_{\lambda_i'})_x,\ \delta''_i:=(\Delta_{\lambda_i''})_x\in D_xL,\qquad\text{and}\qquad \xi'_i:=(\calX_{\lambda_i'})_x,\ \xi''_i:=(\calX_{\lambda_i''})_x\in T_x\calS.
\end{align*}
Further define $\delta':=\delta'_1+\delta'_2,\ \delta'':=\delta''_1+\delta''_2\in D_xL$ and $\xi':=\xi_1'+\xi_2',\ \xi'':=\xi_1''+\xi_2''\in T_x\calS$.
In the following, we will consider separately the cases $\dim\calS_i=\text{even}$ and $\dim\calS_i=\text{odd}$.

Let us start with the case $\dim\calS_i=\text{even}$.
Using the defining property~\eqref{eq:connection:1} of $\nabla$, we can compute:
\begin{align*}
(d_\nabla(I_\calS^\ast\theta))(\xi',\xi'')\overset{\phantom{\eqref{eq:proof:theor:LeafCorrespondenceII:aux2}}}{=}&(d_\nabla(I_\calS^\ast\theta))(\xi'_1,\xi''_1)+(d_\nabla(I_\calS^\ast\theta))(\xi'_2,\xi''_1)+(d_\nabla(I_\calS^\ast\theta))(\xi'_1,\xi''_2)+(d_\nabla(I_\calS^\ast\theta))(\xi'_2,\xi''_2)\\
\overset{\eqref{eq:proof:theor:LeafCorrespondenceII:aux2}}{=}&\varpi(\delta'_1,\delta''_1)\,+\,{\underbrace{\cancel{\varpi(\delta'_1,\delta''_2)}}_{=0}}\,+\,{\underbrace{\cancel{\varpi(\delta'_2,\delta''_1)}}_{=0}}\,+\,\varpi(\delta'_2,\delta''_2)\\
\overset{\eqref{eq:proof:theor:LeafCorrespondenceII:aux1}}{=}&(\Phi_1|_\calS^\ast\omega_1)(\xi'_1,\xi''_1)+(\Phi_2|_\calS^\ast\omega_2)(\xi'_2,\xi''_2)\\
\overset{\phantom{\eqref{eq:proof:theor:LeafCorrespondenceII:aux2}}}{=}&(\Phi_1|_\calS^\ast\omega_1+\Phi_2|_\calS^\ast\omega_2)(\xi',\xi''),
\end{align*}
where, in the very last step, we have used Equation~\eqref{eq:prop:LW_CDP:tangent_fibers:1} in Proposition~\ref{prop:LW_CDP:tangent_fibers}.
In view of the identity $T_x\calS=\ker T_x\varphi_1+\ker T_x\varphi_2$, and the arbitrariness of $\xi_1',\xi_1''\in\ker T_x\varphi_2$ and $\xi_2',\xi_2''\in\ker T_x\varphi_1$, the latter means exactly that $d_\nabla(I^\ast_\calS\theta)=\Phi_1|_\calS^\ast\omega_1+\Phi_2|_\calS^\ast\omega_2$, as we needed to prove.

Let us continue with the case $\dim\calS_i=\text{odd}$.
We can compute:
\begin{align*}
\varpi(\delta',\delta'')\overset{\eqref{eq:proof:theor:LeafCorrespondenceII:aux2}}{=}&\varpi(\delta'_1,\delta''_1)\,+\,{\underbrace{\cancel{\varpi(\delta'_1,\delta''_2)}}_{=0}}\,+\,{\underbrace{\cancel{\varpi(\delta'_2,\delta''_1)}}_{=0}}\,+\,\varpi(\delta'_2,\delta''_2)\\
\overset{\eqref{eq:proof:theor:LeafCorrespondenceII:aux1}}{=}&(\Phi_1|_\calS^\ast\varpi_1)(\delta'_1,\delta''_1)+(\Phi_2|_\calS^\ast\varpi_2)(\delta'_2,\delta''_2)\\
\overset{\phantom{\eqref{eq:proof:theor:LeafCorrespondenceII:aux2}}}{=}&(\Phi_1|_\calS^\ast\varpi_1+\Phi_2|_\calS^\ast\varpi_2)(\delta',\delta''),
\end{align*}
where, in the very last step, we have used Equation~\eqref{eq:cor:LW_CDP:fiber_tangent_derivations} in Corollary~\ref{cor:LW_CDP:fiber_tangent_derivations}.
In view of the special case of Equation~\eqref{eq:Dl_cases}, for $\dim\calS_i=\text{odd}$, and the arbitrariness of $\delta'_1,\delta''_1\in\ker D_x\Phi_2$ and $\delta'_2,\delta''_2\in\ker D_x\Phi_1$, the latter means exactly that $I_\calS^\ast\varpi=\Phi_1|_\calS^\ast\varpi_1+\Phi_2|_\calS^\ast\varpi_2$.
Since both sides are 2-cocycles in the der-complex $(\Omega^\bullet_\ell,d_D)$, the latter can also be equivalently rephrased as $I_\calS^\ast(\theta\circ\sigma)=\Phi_1|_\calS^\ast(\theta_1\circ\sigma)+\Phi_2|_\calS^\ast(\theta_2\circ\sigma)$, up to applying component-wise the contracting homotopy $\iota_\mathbbm{1}$ of the der-complex (cf.~Appendix~\ref{sec:der-complex}).
Finally, the surjectivity of the symbol maps implies that $I_\calS^\ast\theta=\Phi_1|_\calS^\ast\theta_1+\Phi_2|_\calS^\ast\theta_2$, and this completes the proof.
\end{proof}

\subsection{Transverse Structure to Corresponding Characteristic Leaves}
\label{sec:transverse_structure_corresponding_char_leaves}

The study of the local structure of Jacobi manifolds was initiated by Dazord, Lichnerowicz and Marle~\cite{dazord1991structure}.
In particular, they proved the existence of certain structures transverse to the characteristic leaves of a Jacobi manifold.
More recently, this result has been generalized to Dirac--Jacobi manifolds by Vitagliano~\cite{vitagliano2018djbundles}.
In this section we describe the transverse structures to corresponding characteristic leaves in a contact dual pair; for this purpose, we present below, without proof, a proposition that, following mainly~\cite{vitagliano2018djbundles}, describes the transverse structure to characteristic leaves of a Jacobi manifold.
We follow the notation and operations described in the Appendix \ref{sec:Dirac-Jacobi} about the Omni-Lie algebroid and Dirac--Jacobi structures.

\begin{proposition}
	\label{prop:transverse_structure}
	Let $\calS$ be a characteristic leaf of a Jacobi manifold $(M,L,\J=\{-,-\})$.
	\newline
	1) 
	Let $x\in\calS$ and let $Q\subset M$ be a submanifold transverse to $\calS$ at $x$, with $T_xM=T_x\calS\oplus T_xQ$.
	Denote by $I_Q:L|_Q\to L$ the regular line bundle morphism, covering $i_Q:Q\to M$, given by the inclusion.
	Then ${I_Q}^!\operatorname{Gr}\J$ is a Dirac--Jacobi structure on $L_Q\to Q$.
	Specifically:
	\begin{itemize}
		\item if $\calS$ is a l.c.s.~leaf, there exists a unique Jacobi structure $\J_Q$ on $L_Q\to Q$ such that ${I_Q}^!\operatorname{Gr}\J=\operatorname{Gr}\J_Q$,
		\item if $\calS$ is a contact leaf, then ${I_Q}^!\operatorname{Gr}\J$ is of \emph{homogeneous Poisson type} (cf.~\cite[Definition 2.14]{schnitzer2019normal}).
	\end{itemize}
	2) Let $x,y\in\calS$ and let $Q,P\subset M$ be submanifolds transverse to $\calS$ at $x$ and $y$ respectively, with $T_xM=T_x\calS\oplus T_xQ$ and $T_yM=T_y\calS\oplus T_yP$.
	Then there exists a local line bundle isomorphism $\Phi:L|_Q\to L|_P$, covering $\varphi:Q\to P$, such that $\varphi(x)=y$ and $\bbD\Phi({I_Q}^!\operatorname{Gr}\J)={I_P}^!\operatorname{Gr}\J$.
\end{proposition}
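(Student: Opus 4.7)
The plan is to handle Part 1 directly via the cleanness machinery for backward images of Dirac--Jacobi structures, and Part 2 by combining uniqueness of the transverse datum at a single point with a Hamiltonian-flow argument to move along the leaf.

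For Part 1, the first step is to observe that, since $T_x\calS=\sigma(\im\J^\sharp_x)$ and $T_xM=T_x\calS\oplus T_xQ$, after possibly shrinking $Q$ one has $\im D_yI_Q+\im\J^\sharp_y=D_yL$ for every $y\in Q$. This is precisely the cleanness condition ensuring that ${I_Q}^!\operatorname{Gr}\J$ is a smooth Dirac--Jacobi subbundle of $\bbD(L|_Q)$; the Dirac--Jacobi integrability is then automatic from the general functorial behaviour of backward pull-back under regular line bundle morphisms. For the case distinction I would invoke the pointwise description already used in the proof of Theorem \ref{theor:LeafCorrespondenceII}: at $x$ one has $D_x\ell=\im\J^\sharp_x$ in the contact case and $D_x\ell=\langle\mathbbm{1}_x\rangle\oplus\im\J^\sharp_x$ in the l.c.s.~case. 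When $\calS$ is l.c.s., the fact that $\mathbbm{1}_x\notin\im\J^\sharp_x$ forces the fibre of ${I_Q}^!\operatorname{Gr}\J$ at $x$ to be transverse to the ``cotangent'' summand of $\bbD(L|_Q)$, so, shrinking $Q$, the backward image is the graph of a bi-DO $\J_Q$ which must be Jacobi because its graph is Dirac--Jacobi. When $\calS$ is contact, $\mathbbm{1}_x\in\im\J^\sharp_x$, and I would construct a local section $\xi\in\Gamma(L|_Q)$ near $x$ whose Hamiltonian DO with respect to ${I_Q}^!\operatorname{Gr}\J$ equals $\mathbbm{1}|_Q$ by pulling back a ``Reeb-type'' generator available on the contact leaf through $x$; the existence of such $\xi$ is exactly the definition of homogeneous Poisson type in the sense of \cite[Def.~2.14]{schnitzer2019normal}.

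For Part 2, I would first dispose of the sub-case $x=y$. Here, both pull-backs ${I_Q}^!\operatorname{Gr}\J$ and ${I_P}^!\operatorname{Gr}\J$ encode the ``transverse datum'' at $y$, so one can project $P$ onto $Q$ along the leaf through $y$, using a local tubular neighbourhood of $Q$ whose fibres are small open sets of leaves of $\calC$, and obtain a local diffeomorphism $\varphi_0:P\to Q$ lifting to a local Jacobi isomorphism $\Phi_0:L|_P\to L|_Q$ by rigidity of the characteristic foliation in the leaf direction. For general $x\ne y$ in the same leaf $\calS$, I would connect $x$ to $y$ by a finite concatenation of Hamiltonian trajectories of vector fields $\calX_{\lambda_1},\dots,\calX_{\lambda_n}$, which is possible because $T\calS$ is spanned by Hamiltonian vector fields. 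Lifting each flow to the Jacobi-preserving line bundle automorphism generated by the corresponding Hamiltonian DO $\Delta_{\lambda_i}$ and composing, one obtains a local line bundle automorphism $\Psi$ of $L$ over a local diffeomorphism sending $x$ to $y$ with $\bbD\Psi\operatorname{Gr}\J=\operatorname{Gr}\J$. The image $\Psi(L|_Q)$ lies over some transversal $Q'$ at $y$, and composing $\Psi$ with the single-point isomorphism from the first sub-case applied to $(Q',P)$ yields the desired $\Phi$.

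The main obstacle, I expect, will be the contact case of Part 1: producing the distinguished section $\xi$ witnessing homogeneous Poisson type on an entire neighbourhood of $x$ in $Q$, not merely pointwise at $x$, and confirming that shrinking $Q$ does not destroy the cleanness of the pull-back. Closely related is a technical subtlety in Part 2's first sub-case, namely the need to argue that the identification between ${I_Q}^!\operatorname{Gr}\J$ and ${I_P}^!\operatorname{Gr}\J$ is realised by a genuine local line bundle isomorphism preserving $\operatorname{Gr}\J$ and not merely by a pointwise linear datum. I expect both difficulties to be resolved in one stroke by invoking the local normal form for Dirac--Jacobi structures near a leaf developed in \cite{schnitzer2019normal} (see also \cite{vitagliano2018djbundles}), which provides the smoothness and rigidity needed to upgrade the pointwise classification of the transverse structure to a neighbourhood statement.
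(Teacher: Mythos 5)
The paper itself offers no proof of this proposition: it is stated explicitly ``without proof'' and attributed to \cite[Propositions 6.8 and 6.9]{vitagliano2018djbundles} (see also \cite{schnitzer2019normal}), so there is no internal argument to compare against. Your outline does follow the route taken in those references --- cleanness of the backward image ($\im D_yI_Q+\im\J^\sharp_y=D_yL$, which indeed follows from $T_xM=T_x\calS\oplus T_xQ$ together with $\sigma(\im\J^\sharp_x)=T_x\calS$ and $\mathbbm{1}_x\in\im D_xI_Q$, and is an open condition in $y$), the dichotomy $\mathbbm{1}_x\in\im\J^\sharp_x$ versus $\mathbbm{1}_x\notin\im\J^\sharp_x$, and Hamiltonian flows to move along the leaf in Part 2 --- so the overall strategy is the standard and correct one.

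There are, however, concrete gaps. First, the criterion you invoke in the l.c.s.\ case is misstated: ${I_Q}^!\operatorname{Gr}\J$ is the graph of a bi-DO if and only if it meets the \emph{derivation} summand $D(L|_Q)\oplus 0$ trivially (equivalently, projects isomorphically onto $J^1(L|_Q)$); transversality to the first-jet (``cotangent'') summand is the condition for being the graph of an Atiyah $2$-form, not of a bi-DO. The verification you omit is the short computation that any $\delta'\in\im D_xI_Q\cap\J^\sharp\bigl((\im D_xI_Q)^\circ\bigr)$ satisfies $\sigma(\delta')\in T_xQ\cap T_x\calS=0$, hence $\delta'\in\langle\mathbbm{1}_x\rangle\cap\im\J^\sharp_x$, which vanishes precisely in the l.c.s.\ case --- this is where the hypothesis actually enters. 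Second, in Part 2 the reduction of the same-point case to ``projecting $P$ onto $Q$ along the leaf'' is not available: the characteristic foliation is singular, its leaves near $x$ do not form the fibres of a tubular neighbourhood of $Q$, and such a projection would not a priori intertwine the Dirac--Jacobi structures; the standard fix is to realize the motion between two transversals at the \emph{same} point again by flows of Hamiltonian vector fields $\calX_\lambda$ vanishing at $x$, as in Weinstein's splitting argument. Finally, both the contact case of Part 1 (producing a section witnessing homogeneous Poisson type on a whole neighbourhood) and the same-point case of Part 2 are in your write-up ultimately deferred to the normal form theorem of \cite{schnitzer2019normal}; since that theorem essentially contains the statement to be proved, this deferral leaves the proposal without a self-contained argument exactly where the work lies.
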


For more details, including a proof, we refer the reader to~\cite[Propositions 6.8 and 6.9]{vitagliano2018djbundles} (see also~\cite{schnitzer2019normal}).

\begin{theorem}[\textbf{Characteristic Leaf Correspondence III}]
	\label{theor:transverse_structure}
	Consider a full contact dual pair
	\begin{equation}
		\begin{tikzcd}
			(M_1,L_1,\J_1=\{-,-\}_1)&(M,L,\theta)\arrow[l, swap, "\Phi_1"]\arrow[r, "\Phi_2"]&(M_2,L_2,\J_2=\{-,-\}_2)
		\end{tikzcd}
	\end{equation}
	whose underlying maps 
	$\!\!\begin{tikzcd}
	M_1&M\arrow[l, swap, "\varphi_1"]\arrow[r, "\varphi_2"]&M_2\end{tikzcd}\!\!\!$ have connected fibers.
	Let $\calS_1\subset M_1$ and $\calS_2\subset M_2$ be corresponding characteristic leaves, as in Theorem~\ref{theor:LeafCorrespondenceI}.
	Then the transverse structures to $\calS_1$ and $\calS_2$ are anti-isomorphic.
	More specifically, for any $x_i\in\calS_i$ and $Q_i\subset M_i$ submanifold transverse to $\calS_i$ at $x_i$, with $T_{x_i}M_i=T_{x_i}\calS_i\oplus T_{x_i}Q_i$, for $i=1,2$, there exists a local line bundle isomorphism $\Psi:L_1|_{Q_1}\to L_2|_{Q_2}$, covering a local diffeomorphism $\psi:Q_1\to Q_2$, such that $\psi(x_1)=x_2$ and
	\begin{equation}
		\bbD\Psi({I_{Q_1}}^!\operatorname{Gr}\J_1)=-({I_{Q_2}}^!\operatorname{Gr}\J_2),
	\end{equation}
	where $I_{Q_i}:L_i|_{Q_i}\to L_i$ denotes the inclusion, for $i=1,2$, and $\bbD\Psi$ is the isomorphism of omni-Lie algebroids $\bbD(L_1|_{Q_1})\to \bbD(L_2|_{Q_2})$ determined by $\Psi$ (see Appendix~\ref{sec:Dirac-Jacobi}).
\end{theorem}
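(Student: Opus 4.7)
The plan is to pull back the Dirac--Jacobi identity of Proposition~\ref{prop:CDP_Dirac-Jacobi}, $\Phi_1^!\operatorname{Gr}\J_1=\calR_\varpi\Phi_2^!\operatorname{Gr}(-\J_2)$, along the inclusion of a suitably chosen ``Legendrian-type'' transversal $Q\subset M$ for which $I_Q^\ast\varpi=0$; this kills the gauge transformation $\calR_\varpi$ and leaves an identity that translates directly into the desired anti-isomorphism of transverse Dirac--Jacobi structures. As a preliminary reduction, I pick any $x\in\varphi_1^{-1}(x_1)$; two applications of Proposition~\ref{prop:transverse_structure}~(2), one in $M_1$ and one in $M_2$, let me then assume that $\varphi_i(x)=x_i$ for $i=1,2$ and that $Q_i=\varphi_i(Q)$ for some $Q\subset M$ through $x$ on which both $\varphi_i|_Q$ are local diffeomorphisms, the two intertwining line bundle isomorphisms being absorbed into the final $\Psi$.

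I construct $Q$ as follows. Proposition~\ref{prop:LW_CDP_Atiyah_forms} together with Equation~\eqref{eq:LW_CDP_Atiyah_forms:3} show that $C:=\sigma^{-1}(T_x\calS)=\langle\mathbbm{1}\rangle+\ker D_x\Phi_1+\ker D_x\Phi_2$ is $\varpi$-coisotropic in $D_xL$, with $C^{\perp\varpi}=\ker D_x\Phi_1\cap\ker D_x\Phi_2\cap\sigma^{-1}(\calH_x)$; after projecting modulo $\langle\mathbbm{1}\rangle$ this yields the $\rmc_\calH$-coisotropy of $T_x\calS\cap\calH_x$ inside $(\calH_x,\rmc_{\calH,x})$. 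I then choose a $\rmc_\calH$-isotropic linear complement $T_xQ\subset\calH_x$ of $T_x\calS\cap\calH_x$; using the identity $T_xM=T_x\calS+\calH_x$ (a direct consequence of Condition~\ref{enumitem:def:CDP_1} in Definition~\ref{def:CDP}), a dimension count shows that $T_xQ$ is automatically complementary to $T_x\calS$ in $T_xM$, hence transverse to each $\ker T_x\varphi_i$. A standard Darboux argument integrates this isotropic plane to a local submanifold $Q\subset M$ through $x$ tangent to $\calH$, so that $\iota_Q^\ast\theta=0$ and hence $I_Q^\ast\varpi=-d_D(I_Q^\ast(\theta\circ\sigma))=0$. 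Transversality of $T_xQ$ to each $\ker T_x\varphi_i$ makes $\varphi_i|_Q:Q\to\varphi_i(Q)=:Q_i'$ a local diffeomorphism onto a transversal of $\calS_i$ at $\varphi_i(x)$, and correspondingly $\Phi_i|_Q:L|_Q\to L_i|_{Q_i'}$ a line bundle isomorphism.

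Setting $\Psi_Q:=\Phi_2|_Q\circ(\Phi_1|_Q)^{-1}:L_1|_{Q_1'}\to L_2|_{Q_2'}$ and pulling back the identity of Proposition~\ref{prop:CDP_Dirac-Jacobi} along $I_Q$, using the standard naturality $I_Q^!\circ\calR_\varpi=\calR_{I_Q^\ast\varpi}\circ I_Q^!$ together with $I_Q^\ast\varpi=0$ and the composition rule $I_Q^!\circ\Phi_i^!=(\Phi_i|_Q)^!\circ I_{Q_i'}^!$, one obtains $(\Phi_1|_Q)^!I_{Q_1'}^!\operatorname{Gr}\J_1=(\Phi_2|_Q)^!I_{Q_2'}^!\operatorname{Gr}(-\J_2)$. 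Since $\Phi_i|_Q$ is a line bundle isomorphism, $(\Phi_i|_Q)^!=\bbD(\Phi_i|_Q)^{-1}$, so applying $\bbD(\Phi_2|_Q)$ to both sides gives $\bbD\Psi_Q(I_{Q_1'}^!\operatorname{Gr}\J_1)=I_{Q_2'}^!\operatorname{Gr}(-\J_2)$, which together with the preliminary reduction yields the desired $\Psi$.

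The main obstacle is the construction, in the second step, of the integral-isotropic transversal $Q$, which must simultaneously be transverse to the characteristic leaf $\calS$ of $\calD=\ker T\varphi_1+\ker T\varphi_2$, tangent to the contact distribution $\calH$, and $\rmc_\calH$-isotropic; this ``Legendrian slice'' construction has no direct analogue in Weinstein's proof for symplectic dual pairs. Its feasibility rests precisely on the $\rmc_\calH$-coisotropy of $T_x\calS\cap\calH_x$, which is itself a consequence of the orthogonality condition~\eqref{eq:prop:LW_CDP_Atiyah_forms} characterising contact dual pairs. Once $Q$ is in place, trivialising the gauge transformation $\calR_\varpi$ is exactly what enables the transverse Dirac--Jacobi structures on the two sides of the contact dual pair to be matched up to the overall sign change, and the rest of the argument is formal manipulation of the pull-back and gauge operations on Dirac--Jacobi structures.
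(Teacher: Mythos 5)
Your proposal is correct and follows essentially the same route as the paper's proof: choose a point $x$ over $(x_1,x_2)$ and a transversal $Q\subset M$ to $\calS$ tangent to $\calH$, push it down to transversals $Q_i=\varphi_i(Q)$, and pull back the identity $\Phi_1^!\operatorname{Gr}\J_1=\calR_\varpi\Phi_2^!\operatorname{Gr}(-\J_2)$ of Proposition~\ref{prop:CDP_Dirac-Jacobi} along $I_Q$, where $I_Q^\ast\varpi=0$ kills the gauge transformation and $\Psi:=\Phi_2|_Q\circ(\Phi_1|_Q)^{-1}$ does the job. Your additional care — verifying the $\rmc_\calH$-coisotropy of $T_x\calS\cap\calH_x$ so that an isotropic complement exists and integrates to a slice genuinely tangent to $\calH$ (not just at $x$), and invoking Proposition~\ref{prop:transverse_structure}(2) to pass from the constructed transversals to arbitrary ones — only makes explicit two points the paper's proof leaves implicit.
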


\begin{proof}
	Set $\calS=\varphi_1^{-1}(\calS_1)=\varphi_2^{-1}(\calS_2)$.
	In view of Theorem~\ref{theor:LeafCorrespondenceI}, it is possible to choose $x\in\varphi_1^{-1}(x_1)\cap\varphi_2^{-1}(x_2)\subset\calS$, and additionally $Q\subset M$, a submanifold transverse to $\calS$ at $x$, such that $T_xM=T_x\calS\oplus T_xQ$ and
	\begin{equation}
		\label{eq:proof:theor:transverse_structure:a}
		T_xQ\subset\calH_x:=\ker\theta_x.
	\end{equation}
	Notice that we can assume this last property of $Q$ because of Condition~\ref{enumitem:def:CDP_1} in Definition~\ref{def:CDP}.
	Then, for $i=1,2$, the submanifold $Q_i:=\varphi_i(Q)\subset M$ is transverse to $\calS_i$ at $x_i$ with $T_{x_i}M_i=T_{x_i}\calS_i\oplus T_{x_i}Q_i$.
	Denote by $I_Q:L|_Q\to L$ and $I_{Q_i}:L_i|_{Q_i}\to L_i$, for $i=1,2$, the regular line bundle morphisms given by the inclusions.
	Up to replacing $M$ and $M_i$ with suitable neighbourhoods of $x$ and $x_i$ respectively, one can assume, for $i=1,2$, that $\Phi_i:L\to L_i$ induces a line bundle isomorphism $\Psi_i:L|_Q\to L_i|_{Q_i}$, covering a diffeomorphism $\psi_i:Q\to Q_i$, such that
	\begin{equation}
		\label{eq:proof:theor:transverse_structure:b}
		\Phi_i\circ I_Q=I_{Q_i}\circ\Psi_i.
	\end{equation} 
	Now, denoting by $\varpi\in\Omega_L^2$ the symplectic Atiyah form corresponding to $\theta$ according to Proposition~\ref{prop:contact=symplecticAtiyah}, and making use of Proposition~\ref{prop:CDP_Dirac-Jacobi}, one can easily compute
	\begin{multline*}
		{\Psi_1}^!{I_{Q_1}}^!\operatorname{Gr}\J_1\overset{\eqref{eq:proof:theor:transverse_structure:b}}{=}{I_Q}^!{\Phi_1}^!\operatorname{Gr}\J_1\overset{\eqref{eq:prop:CDP_Dirac-Jacobi}}{=}{I_Q}^!\calR_\varpi{\Phi_2}^!\operatorname{Gr}(-\J_2)=\calR_{I_Q^\ast\varpi}{I_Q}^!{\Phi_2}^!\operatorname{Gr}(-\J_2)\\={I_Q}^!{\Phi_2}^!\operatorname{Gr}(-\J_2)\overset{\eqref{eq:proof:theor:transverse_structure:b}}{=}{\Psi_2}^!{I_{Q_2}}^!\operatorname{Gr}(-\J_2)={\Psi_2}^!(-({I_{Q_2}}^!\operatorname{Gr}\J_2)).
	\end{multline*}
	Above we also used the fact that $I_Q^\ast\varpi=0$ because of Equation~\eqref{eq:proof:theor:transverse_structure:a}.
	The latter implies that $\bbD\Psi({I_{Q_1}}^!\operatorname{Gr}\J_1)=-({I_{Q_2}}^!\operatorname{Gr}\J_2)$, where $\Psi:L_1|_{Q_1}\to L_2|_{Q_2}$ is the line bundle isomorphism defined by $\Psi:=\Psi_2\circ\Psi_1^{-1}$. 
\end{proof}

\section{Contact Dual Pairs from Contact Reduction}
\label{sec:contact_groupoid_action_CDP}

In addition to contact groupoids, another source of examples for contact dual pairs is represented by contact reduction.
Following~\cite{zambon2006contact}, we introduce and study, in the generically non-coorientable case, contact actions of contact groupoids on contact manifolds (see Proposition~\ref{prop:contact_action} and Definition~\ref{def:contact_action}).
As an interesting special case, we also consider, in Proposition~\ref{prop:Lie_group_actions}, a Lie group $G$ acting by contactomorphisms on a contact manifold $(M,\calH)$ such that the $G$-orbits are transverse to $\calH$.
The main result of this section, namely Theorem~\ref{theor:CDP_contact groupoid_action}, proves that contact dual pairs naturally emerge from free, proper, and contact actions of source-connected contact groupoids on contact manifolds.
Finally, as a by-product of the latter, Corollary~\ref{cor:CDP_contact groupoid_action} gives an alternative proof of a result (cf.~\cite[Theorem 4.4]{zambon2006contact}) first proven, by different techniques, in the coorientable case.

\subsection{Contact actions by Lie groupoids}
\label{sec:contact_actions}

Let $\calG\rightrightarrows\calG_0$ be a Lie groupoid, with structure maps $s,t:\calG\to\calG_0$, $m:\calG^{(2)}\to\calG$, $u:\calG_0\to\calG$, $i:\calG\to\calG$, and let $\bbJ:M\to\calG_0$ be a smooth map.
Recall that a \emph{(left) action}~\cite[Definition 3.2]{mikami1988moments} of the Lie groupoid $\calG\rightrightarrows\calG_0$ on the manifold $M$ with \emph{moment map} $\moment:M\to\calG_0$ consists of a map
\begin{equation}
\label{eq:action}
\Phi:\calG{}_s{\times}_\moment M\to M,\ (g,x)\mapsto\Phi_g(x)=g\cdot x,
\end{equation}
such that the following action axioms are satisfied, for all $(h,g)\in\calG^{(2)}$ and $(g,x)\in\calG{}_s{\times}_\moment M$:
\begin{equation}
\label{eq:action_axioms}
\moment(g\cdot x)=t(g),\quad (hg)\cdot x=h\cdot(g\cdot x),\quad \moment(x)\cdot x=x.
\end{equation}
Additionally, the action $\Phi$ is called \emph{free} if, for any $x\in M$, the stabilizer group of $x$ is trivial, and it is called \emph{proper} if the map $\calG{}_s{\times}_\moment M\to M\times M$, $(g,x)\mapsto(g\cdot x,x)$, is proper.

\begin{remark}
	\label{rem:left_multiplication_groupoid}
	The action of the Lie groupoid $\calG\rightrightarrows\calG_0$ on itself by left translations is an instance of (left) action with moment map $t:\calG\to\calG_0$.
\end{remark}


Assume now that $M$ and $\calG$ are equipped with a contact distribution $\calH_M$ and a multiplicative contact distribution $\calH_\calG$ respectively.
Set $L_M:=(TM)/\calH_M$, $L:=(T\calG)/\calH_\calG$ and $L_0:=L|_{\calG_0}$.
Denote by $\theta_M\in\Omega^1(M;L_M)$ the corresponding contact form on $M$ (cf.~Section~\ref{sec:contact_structures}) and by $\theta_{\calG}\in\Omega^1(\calG;t^\ast L_0)$ the corresponding multiplicative contact form on $\calG$ (cf.~Section~\ref{sec:contact_groupoids}).

\begin{proposition}
	\label{prop:contact_action}
	For the action $\Phi$ in Equation~\eqref{eq:action}, the following two conditions are equivalent.
	\begin{enumerate}[label=(\arabic*)]
		\item
		\label{enumitem:prop:contact_action:distributions}
		For all $(\xi,u)\in T(\calG{}_s{\times}_\moment M)$, with $u\in\calH_M$, one gets that $(T\Phi)(\xi,u)\in\calH_M$ iff $\xi\in\calH_\calG$.
		\item
		\label{enumitem:prop:contact_action:forms}
		There exists a \emph{fat moment map} $\hat{\moment}:L_M\to L_0$, i.e.~a regular line bundle morphism, covering $\moment:M\to\mathcal{G}_0$,
		such that, for all $(g,x)\in\calG{}_s{\times}_\moment M$,
		\begin{equation}
		\label{eq:contact_action}
		\theta_M\circ T_{(g,x)}\Phi =(\theta_{\calG}\circ T_{(g,x)}\text{pr}_{\mathcal{G}}) + g\cdot(\theta_M\circ T_{(g,x)}\text{pr}_M),
		\end{equation}
		where $\text{pr}_\calG:\calG{}_s{\times}_\moment M\to\calG$ and $\text{pr}_M:\calG{}_s{\times}_\moment M\to M$ are the standard projections.
		In a more compact way, the latter can also be rewritten as follows:
		\begin{equation}
		\label{eq:contact_action_bis}
		(\Phi^\ast\theta_M)_{(g,x)}=(\text{pr}_{\calG}^\ast\theta_{\calG})_{(g,x)} + g\cdot(\text{pr}_M^\ast\theta_M)_{(g,x)}.
		\end{equation}
	\end{enumerate}
\end{proposition}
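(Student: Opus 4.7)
My plan is to tackle the two implications separately. The forward direction is a one-line verification, while the converse requires first building the fat moment map from the pointwise data in (\ref{enumitem:prop:contact_action:distributions}) and then propagating the multiplicativity identity \eqref{eq:contact_action_bis} from units to all of $\calG{}_s\times_\moment M$ via the groupoid structure and the multiplicativity of $\theta_\calG$.

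For $(\ref{enumitem:prop:contact_action:forms}) \Rightarrow (\ref{enumitem:prop:contact_action:distributions})$: given $(\xi, u) \in T_{(g,x)}(\calG{}_s\times_\moment M)$ with $u \in \calH_M$, equivalently $\theta_M(u) = 0$, I evaluate both sides of \eqref{eq:contact_action_bis} on $(\xi, u)$ and apply $\hat\moment_{g\cdot x}$ to land in $L_{0, t(g)}$. The second term on the right vanishes, leaving $\hat\moment(\theta_M(T\Phi(\xi,u))) = \theta_\calG(\xi)$, and the regularity (hence fiberwise injectivity) of $\hat\moment$ then gives $T\Phi(\xi,u) \in \calH_M \iff \xi \in \calH_\calG$.

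For $(\ref{enumitem:prop:contact_action:distributions}) \Rightarrow (\ref{enumitem:prop:contact_action:forms})$, I first build $\hat\moment$ pointwise. For $x \in M$ with $e_x := u(\moment(x))$, consider the infinitesimal action $\rho_x\colon \ker T_{e_x}s \to T_xM$, $\xi \mapsto T_{(e_x, x)}\Phi(\xi, 0)$, well-defined since $Ts(\xi) = 0 = T\moment(0)$. Condition (\ref{enumitem:prop:contact_action:distributions}) applied with $u = 0$ gives $\rho_x(\xi) \in \calH_M$ iff $\xi \in \calH_\calG$, so $\theta_M \circ \rho_x\colon \ker T_{e_x}s \to L_{M,x}$ and $\theta_\calG|_{\ker T_{e_x}s}\colon \ker T_{e_x}s \to L_{0,\moment(x)}$ share the common kernel $\ker T_{e_x}s \cap \calH_\calG|_{e_x}$. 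The transversality $\calH_\calG + \ker Ts = T\calG$ from \eqref{eq:CDP_from_contact_grpd:1st_condition} makes the second map surjective, so passing to the quotient yields a linear isomorphism $L_{0,\moment(x)} \to L_{M,x}$ of 1-dimensional spaces; I declare $\hat\moment_x$ to be its inverse, and smoothness of $\hat\moment$ follows by pushing a local smooth section of $\ker Ts|_{\calG_0}$ transverse to $\calH_\calG$ through $\rho$.

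The verification of \eqref{eq:contact_action_bis} at arbitrary $(g, x)$ proceeds in three steps. (a) For $\xi \in \ker T_gs$ and $v = 0$, write $\xi = T_{e_{t(g)}}R_g(\eta)$ with $\eta \in \ker T_{e_{t(g)}}s$ (using that $R_g\colon s^{-1}(t(g)) \to s^{-1}(s(g))$ is a diffeomorphism sending $e_{t(g)}$ to $g$), and combine the equivariance $\psi_x \circ R_g = \psi_{g\cdot x}$ of the orbit maps $\psi_y(h) := h\cdot y$ with the definition of $\hat\moment_{g\cdot x}$ and the right-translation invariance $\theta_\calG(TR_g\eta) = \theta_\calG(\eta)$ on $\ker Ts$ (a direct consequence of \eqref{eq:mult_contact_form:CrainicSalazar}) to obtain the desired identity. (b) For $\xi = 0$ and $v \in \ker T\moment$, an analogous left-translation argument produces $\hat\moment(\theta_M(T\lambda_g(v))) = g\cdot\hat\moment(\theta_M(v))$, which is precisely the condition that $\hat\moment$ intertwines the $\calG$-action on $L_M$ induced by $\Phi$ with the representation of $\calG$ on $L_0$. (c) For a generic $(\xi, v)$, choose a local bisection $b$ of $\calG$ with $b(s(g)) = g$ and decompose $(\xi, v) = (\xi - Tb(T\moment(v)), 0) + (Tb(T\moment(v)), v)$: the first summand is of type (a), and for the second the curve $(b(\moment(c(t))), c(t))$ exhibits $T\Phi(Tb(T\moment(v)), v) = T\hat b(v)$ with $\hat b(y) := b(\moment(y))\cdot y$, so that the multiplicativity \eqref{eq:mult_contact_form:CrainicSalazar} applied to $Tb$ furnishes the remaining identity. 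The full equation \eqref{eq:contact_action_bis} then holds by $\bbR$-bilinearity. The main obstacle is the bookkeeping in (c): in the generically non-coorientable setting one cannot reduce to a scalar computation, so combining the multiplicativity of $\theta_\calG$ with the definition of $\hat\moment$ and the $\cdot$-action on $L_0$ requires care; nevertheless, the underlying picture---that a contact groupoid action is entirely determined by its infinitesimal data together with a fat moment map---is natural and parallels the classical symplectic story.
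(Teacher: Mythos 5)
Your overall strategy coincides with the paper's: the forward implication by evaluating \eqref{eq:contact_action_bis} on $(\xi,u)$ with $u\in\calH_M$, the pointwise construction of $\hat{\moment}$ from the infinitesimal action at units using the transversality \eqref{eq:CDP_from_contact_grpd:1st_condition}, and a propagation of the identity to general $(g,x)$ via the splitting of $T_{(g,x)}(\calG{}_s{\times}_\moment M)$ into a $\ker T_gs$-part and the graph of a bisection over $T_xM$. The forward direction, the construction of $\hat{\moment}$, and your step (a) are correct and match the paper (it writes $\xi_g=(T_{(tg,g)}m)(\xi_{tg},0_g)$ where you write $T R_g\eta$, which is the same computation).

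The gap is in step (c), which is where the content of the proposition actually lies. For the summand $(Tb(T\moment v),v)$ with $v\in T_xM$ arbitrary you must prove $\hat{\moment}(\theta_M(T\hat b(v)))=\theta_\calG(Tb(T\moment v))+g\cdot\hat{\moment}(\theta_M(v))$, and ``multiplicativity of $\theta_\calG$ applied to $Tb$'' cannot furnish this on its own: \eqref{eq:mult_contact_form:CrainicSalazar} relates values of $\theta_\calG$ at products in $\calG$, whereas the identity to be proved relates $\theta_M$ at $g\cdot x$, $\theta_\calG$ at $g$, and $\theta_M$ at $x$ --- it \emph{is} the restriction of \eqref{eq:contact_action} to the graph of $b$, so invoking it here is circular. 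Nor can condition~\ref{enumitem:prop:contact_action:distributions} close the argument: it only identifies the kernels of the two sides (two $L_{0,t(g)}$-valued linear maps with equal kernels are merely proportional), and it says nothing at all about vectors $v\notin\calH_M$. What is missing is the double reduction the paper performs: first replace $b$ by a \emph{Legendrian} bisection $\Sigma$ through $g$ (the difference $Tb(T\moment v)-T\Sigma(T\moment v)$ lies in $\ker T_gs$ and is absorbed by step (a)); then split $v=T\Phi(\xi_{\moment x},0)+h$ with $\xi_{\moment x}\in\ker T_{\moment x}s$ and $h\in\calH_{M,x}$, which is possible because, as you showed when constructing $\hat{\moment}$, the orbit directions at units surject onto $T_xM/\calH_{M,x}$. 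On the $h$-part both sides vanish, by condition~\ref{enumitem:prop:contact_action:distributions} again combined with $\im T\Sigma\subset\calH_\calG$; on the orbit part the action axiom $(hg)\cdot x=h\cdot(g\cdot x)$ converts the pair into the single $\ker T_gs$-direction $(Tm)((T\Sigma)(Tt)\xi_{\moment x},\xi_{\moment x})$, where step (a), the multiplicativity of $\theta_\calG$ and the defining property of $\hat{\moment}$ finally combine. Your step (b) is also problematic: the equivariance identity is not obtained by a mere ``left-translation analogy'' (it requires the same reduction to orbit directions), and as written it is not even used in (c), whose second summand carries a general $v$ rather than $v\in\ker T\moment$.
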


\begin{remark}
	\label{rem:def:contact_action}
	Notice that Equation~\eqref{eq:contact_action} makes sense because the fat moment map $\hat\moment:L_M\to L_0$ identifies $L_M$ with $\moment^\ast L_0$.
	Indeed, in Equation~\eqref{eq:contact_action}, not only the first summand of its RHS takes values in $L_{0,t(g)}$, but also its LHS and the second summand of its RHS take values in $L_{0,\moment(g\cdot x)}=L_{0,t(g)}=g\cdot L_{0,\moment(x)}$.
	Above we have also used the representation of $\calG$ on $L_0$ induced by the multiplicative contact structure (cf.~Section~\ref{sec:contact_groupoids}). 
\end{remark}

\begin{remark}
	\label{rem:lifted_action}
	For later use in Section~\ref{subsec:global_contact_reduction}, we point out that, with the same assumptions as above, for any choice of a fat moment map $\hat{\moment}$ as in Proposition~\ref{prop:contact_action}~\ref{enumitem:prop:contact_action:forms}, there exists a unique \emph{fat action} $\hat{\Phi}:L{}_S{\times}_{\hat{\moment}}L_M\to L_M$.
	By fat action we mean an action $\hat{\Phi}$ of the line bundle groupoid $L\rightrightarrows L_0$, as introduced in Remark~\ref{rem:multiplicative_forms}, on $L_M$ with moment map $\hat{\moment}$ which is additionally a regular line bundle morphism from $L{}_S{\times}_{\hat{\moment}}L_M\to\calG{}_s{\times}_\moment M$ to $L_M\to M$ covering $\Phi:\calG{}_s{\times}_\moment M\to M$.
	Indeed such $\hat{\Phi}$ is uniquely determined as follows
	\begin{equation}
	\label{eq:rem:lifted_action}
	\hat{\moment}_{g\cdot x}(\hat{\Phi}(\lambda_g,\lambda_x))=g\cdot(\hat{\moment}_x(\lambda_x)),
	\end{equation}
	for all $(g,x)\in\calG{}_s{\times}_\moment M$, $\lambda_g\in L_g$ and $\lambda_x\in L_{M,x}$ such that $S(\lambda_g)=\hat{\moment}_x(\lambda_x)$.
\end{remark}

Proposition~\ref{prop:contact_action}, whose proof is presented below, justifies the following definition of contact action.

\begin{definition}
	\label{def:contact_action}
	An action $\Phi$, as in Equation~\eqref{eq:action}, of the contact groupoid $\calG\rightrightarrows\calG_0$ on the contact manifold $M$ with moment map $\moment:M\to\calG_0$ is called \emph{contact} if it satisfies the equivalent conditions~\ref{enumitem:prop:contact_action:distributions} and~\ref{enumitem:prop:contact_action:forms} in Proposition~\ref{prop:contact_action}.
\end{definition}

\begin{remark}
	\label{rem:left_multiplication_contact_groupoid}
	The action of the contact Lie groupoid $\calG\rightrightarrows\calG_0$ on itself by left translations is an example of contact (left) action with moment map $t:\calG\to\calG_0$, and fat moment map $T:L\to L_0$ (cf.~Section~\ref{sec:contact_groupoids}).
\end{remark}

One way to produce contact actions of Lie groupoids is as follows. Let $(\Gamma,\omega_\Gamma)\rightrightarrows\Gamma_0$ be a homogeneous symplectic groupoid (cf.~Definition~\ref{def:homogeneous_symplectic_groupoid}) and let $(\calG,\calH)\rightrightarrows\calG_0$ be the corresponding contact groupoid obtained by dehomogeneization (cf.~Corollary~\ref{cor:homogeneous_symplectic_groupoids}), so that, in particular, $\calG=\Gamma/\bbR^\times$ and $\calG_0=\Gamma_0/\bbR^\times$.
Let $(P,\omega_P)$ be a homogeneous symplectic manifold (cf.~Definition~\ref{def:homogeneous_symplectic/Poisson}) and let $(M,\calH_M)$ be the corresponding contact manifold obtained by dehomogeneization (cf.~Proposition~\ref{prop:equivalence_categories:2}), so that, in particular, $M=P/\bbR^\times$.
Consider additionally a (left) action of the Lie groupoid $\Gamma\rightrightarrows\Gamma_0$ on the manifold $P$, with moment map $\moment:P\to\Gamma_0$,
\begin{equation}
\label{eq:homogeneous_symplectic_action}
\Phi:\Gamma{}_s{\times}_\moment P\to P,\ (\gamma,p)\mapsto\Phi_\gamma(p)=\gamma\cdot p.
\end{equation}
This action is called \emph{homogeneous} if both $\moment:P\to\Gamma_0$ and $\Phi:\Gamma{}_s{\times}_\moment P\to P$ are $\bbR^\times$-equivariant, where the fibered product $\Gamma{}_s{\times}_\moment P$ is equipped with the diagonal $\bbR^\times$-action.
Further, the action $\Phi$ is called \emph{symplectic} if $\Phi^\ast\omega_P=\text{pr}_\Gamma^\ast\omega_\Gamma+\text{pr}_P^\ast\omega_P$, where $\text{pr}_\Gamma:\Gamma{}_s{\times}_\moment P\to\Gamma$ and $\text{pr}_P:\Gamma{}_s{\times}_\moment P\to P$ are the standard projections.

\begin{proposition}
	\label{prop:homogeneous_symplectic_action}
	Let the action $\Phi$, as in Equation~\eqref{eq:homogeneous_symplectic_action}, be homogeneous and symplectic.
	Then it gives rise, by dehomogeneization, to a contact action $\tilde{\Phi}$ of the contact groupoid $(\calG,\calH_\calG)\rightrightarrows\calG_0$ on the contact manifold $(M,\calH_M)$, with moment map $\tilde{\moment}:M\to \calG_0$,
	\begin{equation}
	\label{eq:prop:homogeneous_symplectic_action}
	\tilde\Phi:\calG{}_s{\times}_{\tilde\moment} M\to M,\ (g,x)\mapsto\tilde\Phi_g(x)=g\cdot x,
	\end{equation}
	which are defined by $\tilde{\moment}([p])=[\moment(p)]$ and $[\gamma]\cdot[p]=[\gamma\cdot p]$, for all $(\gamma,p)\in\Gamma{}_s{\times}_\moment P$.
\end{proposition}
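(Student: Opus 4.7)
The plan is to dehomogenize the given data piece by piece, invoking the dictionary of Appendix~\ref{sec:homogeneous_symplectic/Poisson}. First, I would verify the well-definedness of both $\tilde{\moment}$ and $\tilde{\Phi}$, which is immediate from the $\bbR^\times$-equivariance hypotheses on $\moment$ and $\Phi$ (together with the equivariance of $s,t$ built into the homogeneous symplectic groupoid structure). Indeed, if $p'=r\cdot p$ then $\moment(p')=r\cdot\moment(p)$, so $[\moment(p')]=[\moment(p)]$ in $\calG_0$; and given $g=[\gamma]$ and $x=[p]$ with $s(g)=\tilde{\moment}(x)$, one can rescale a representative to arrange $s(\gamma)=\moment(p)$, after which the diagonal $\bbR^\times$-equivariance of $\Phi$ forces the class $[\gamma\cdot p]\in M$ to be independent of the remaining choice. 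The three action axioms~\eqref{eq:action_axioms} for $\tilde{\Phi}$ then descend from those for $\Phi$ by passing to $\bbR^\times$-orbits; e.g.~$\tilde{\moment}(g\cdot x)=[\moment(\gamma\cdot p)]=[t(\gamma)]=t(g)$.

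Second, to apply the characterization in Proposition~\ref{prop:contact_action}~\ref{enumitem:prop:contact_action:forms}, I would construct the fat moment map $\hat{\moment}:L_M\to L_0$ covering $\tilde{\moment}$. Under the Appendix~\ref{sec:homogeneous_symplectic/Poisson} identifications that present $P$ and $\Gamma_0$ as $\bbR^\times$-principal bundles over $M$ and $\calG_0$ (concretely, as $L_M^\ast$ and $L_0^\ast$ with their zero sections removed), the $\bbR^\times$-equivariant map $\moment$ restricts fiberwise to an $\bbR^\times$-equivariant map between punctured lines, which must be the restriction of a linear isomorphism; taking inverse duals yields the required regular line bundle morphism $\hat{\moment}:L_M\to L_0$.

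Finally, and this is the main technical step, I would verify the contact action condition~\eqref{eq:contact_action_bis} by translating the symplectic relation $\Phi^\ast\omega_P=\mathrm{pr}_\Gamma^\ast\omega_\Gamma+\mathrm{pr}_P^\ast\omega_P$ through the dehomogenization dictionary: by Proposition~\ref{prop:contact=symplecticAtiyah} combined with Appendix~\ref{sec:homogeneous_symplectic/Poisson}, $\omega_P$ and $\omega_\Gamma$ correspond to the $L_M$- and $t^\ast L_0$-valued symplectic Atiyah forms associated to $\theta_M$ and $\theta_\calG$. The additive symplectic relation then translates directly into an additive $L_0$-valued relation, which, after contracting with $\mathbbm{1}$ via the identity $\iota_{\mathbbm{1}}\varpi=\theta\circ\sigma$, yields~\eqref{eq:contact_action_bis}. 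The main obstacle is bookkeeping the line bundle values: the three terms of~\eqref{eq:contact_action_bis} live a priori in different fibers of $L_0$ (namely $L_{0,t(g)}$ and $L_{0,s(g)}=L_{0,\tilde{\moment}(x)}$), and the action-by-$g$ factor in front of $\mathrm{pr}_M^\ast\theta_M$ is exactly the identification provided by the $\calG$-representation on $L_0$. The cleanest framework is the fat action $\hat{\Phi}:L{}_S{\times}_{\hat{\moment}}L_M\to L_M$ of Remark~\ref{rem:lifted_action}: constructing it from $\hat{\moment}$ and $\Phi$ via~\eqref{eq:rem:lifted_action}, and comparing~\eqref{eq:contact_action_bis} with the multiplicativity relation~\eqref{eq:enumitem:mult_contact_form} for $\theta_\calG$, makes the parallel with the symplectic action condition manifest and completes the verification.
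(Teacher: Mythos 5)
The paper does not actually write out a proof of this proposition: it states that the argument ``works along the same lines as'' the contact-groupoid/homogeneous-symplectic-groupoid correspondence of \cite[Theorem 5.8]{bruce2017remarks} and omits it. Your proposal is precisely that omitted dehomogenization argument spelled out correctly --- well-definedness of $\tilde{\moment}$ and $\tilde{\Phi}$ from $\bbR^\times$-equivariance, the fat moment map obtained from the fibrewise-linear equivariant map of slit duals, and the contact-action identity~\eqref{eq:contact_action_bis} obtained by contracting the additive symplectic relation with the generator of the diagonal $\bbR^\times$-action (the homogenization of $\mathbbm{1}$, via $\iota_{\mathbbm{1}}\varpi=\theta\circ\sigma$) and descending to the quotient while tracking the $L_0$-fibres through the $\calG$-representation.
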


Since the proof of Proposition~\ref{prop:homogeneous_symplectic_action} works along the same lines as the afore-mentioned correspondence between contact groupoids and homogeneous symplectic groupoids (see~\cite[Theorem 5.8]{bruce2017remarks}), we omit it.

\begin{proof}[{Proof of Proposition~\ref{prop:contact_action}}]
	\ref{enumitem:prop:contact_action:distributions} $\Rightarrow$ \ref{enumitem:prop:contact_action:forms}.
	The trasversality property of contact groupoids (see Equation~\eqref{eq:CDP_from_contact_grpd:1st_condition}), together with Condition~\ref{enumitem:prop:contact_action:distributions}, guarantees that there exists a unique regular line bundle morphism $\hat\moment:L_M\to L_0$, covering $\moment:M\to\calG_0$, such that
	\begin{equation}
		\label{eq:proof:prop:contact_actions:forms:0}
	(\hat\moment_x\circ\theta_{M,x}\circ T_{(\moment x,x)}\Phi)(\xi_{\moment x},0_x)=\theta_{\calG,\moment x}(\xi_{\moment x}),
	\end{equation}
	for all $x\in M$ and $\xi_{\bbJ(x)}\in \ker T_{\bbJ(x)}s$.
	Additionally, such $\hat\moment$ satisfies also the following two conditions
	\begin{gather}
	\label{eq:proof:prop:contact_actions:forms:1}
	(\hat\moment_{g\cdot x}\circ\theta_{M,g\cdot x}\circ T_{(g,x)}\Phi)(\xi_g,0_x)=\theta_{\calG,g}(\xi_g),\\
	\label{eq:proof:prop:contact_actions:forms:2}
	(\hat\moment_{g\cdot x}\circ\theta_{M,g\cdot x}\circ T_{(g,x)}\Phi)((T_{sg}\Sigma)(T_x\moment)u_x,u_x)=g\cdot(\hat{\bbJ}_x\circ\theta_{M,x}(u_x)),
	\end{gather}
	for all $(g,x)\in\calG{}_s{\times}_\moment M$, $\xi_g\in \ker T_gs$ and $u_x\in T_xM$, where $\Sigma$ is any local Legendrian bisection of $(\calG,\calH_\calG)\rightrightarrows\calG_0$ with $\Sigma(sg)=g$.
	Indeed, setting $\xi_g=(T_{(tg,g)}m)(\xi_{tg},0_g)$, with $\xi_{tg}\in \ker T_{tg}s$, one can easily compute:
	\begin{multline*}
		(\hat\moment_{g\cdot x}\circ\theta_{M,g\cdot x}\circ T_{(g,x)}\Phi)(\xi_g,0_x)=(\hat\moment_{g\cdot x}\circ\theta_{M,g\cdot x}\circ T_{(g,x)}\Phi)((T_{(tg,g)}m)(\xi_{tg},0_g),0_x)\\
		\overset{\eqref{eq:action_axioms}}{=}(\hat\moment_{g\cdot x}\circ\theta_{M,g\cdot x}\circ T_{(tg,g\cdot x)}\Phi)(\xi_{tg},0_{g\cdot x})\overset{\eqref{eq:proof:prop:contact_actions:forms:0}}{=}\theta_{\calG,tg}(\xi_{tg})=\theta_{\calG,g}(\xi_g),
	\end{multline*}
	and this proves Equation~\eqref{eq:proof:prop:contact_actions:forms:1}.
	Further, Condition~\ref{enumitem:prop:contact_action:distributions} allows to assume w.l.o.g.~that $u_x=(T_{(\moment x,x)}\Phi)(\xi_{\moment x},0_x)$, for some arbitrary $\xi_{\moment x}\in 
	\ker T_{\moment x}s$.
	Then, setting $\xi_g:=(T_{g,sg}m)((T_{sg}\Sigma)(T_{sg}t)\xi_{\moment x},\xi_{\moment x})\in \ker T_gs$, one can compute:
	\begin{multline*}
		(\hat\moment_{g\cdot x}\circ\theta_{M,g\cdot x}\circ T_{(g,x)}\Phi)((T_{sg}\Sigma)(T_x\moment)u_x,u_x)=(\hat\moment_{g\cdot x}\circ\theta_{M,g\cdot x}\circ T_{(g,x)}\Phi)((T_{sg}\Sigma)(T_{\moment x}t)\xi_{\moment x},(T_{(\moment x,x)}\Phi)(\xi_{\moment x},0_x))\\
		=(\hat\moment_{g\cdot x}\circ\theta_{M,g\cdot x}\circ T_{(g,x)}\Phi)(\xi_g,0_x)\overset{\eqref{eq:proof:prop:contact_actions:forms:1}}{=}\theta_{\calG,g}(\xi_g)=\theta_{\calG,g}((T_{g,sg}m)((T_{sg}\Sigma)(T_{sg}t)\xi_{\moment x},\xi_{\moment x}))\\
		\overset{\eqref{eq:mult_contact_form:CrainicSalazar}}{=}g\cdot(\theta_{\calG,\moment x}(\xi_{\moment x}))\overset{\eqref{eq:proof:prop:contact_actions:forms:0}}{=}g\cdot((\hat\moment_x\circ\theta_{M,x}\circ T_{(\moment x,x)}\Phi)(\xi_{\moment x},0_x))=g\cdot((\hat\moment_x\circ\theta_{M,x})u_x),
	\end{multline*}
	and this proves Equation~\eqref{eq:proof:prop:contact_actions:forms:2}.
	Finally, because of the direct sum decomposition
	\begin{equation*}
	T_{(g,x)}\calG{}_s{\times}_\moment M=(\ker T_g s\times\{0_x\})\oplus\{((T_{sg}\Sigma)(T_x\moment)u_x,u_x):u_x\in T_xM\},\quad\text{for all}\ (g,x)\in\calG{}_s{\times}_\bbJ M,
	\end{equation*}
	Equation~\eqref{eq:contact_action} follows directly from Equations~\eqref{eq:proof:prop:contact_actions:forms:1} and~\eqref{eq:proof:prop:contact_actions:forms:2}.
	This completes the proof of Condition~\ref{enumitem:prop:contact_action:forms}. 
		
	\ref{enumitem:prop:contact_action:forms} $\Rightarrow$ \ref{enumitem:prop:contact_action:distributions}.
	Fix $(g,x)\in\calG{}_s{\times}_\moment M$ and $(\xi,u)\in T_{(g,x)}(\calG{}_s{\times}_\moment M)$.
	Assume further that $u\in\calH_M$.
	Then Condition~\ref{enumitem:prop:contact_action:forms} guarantees that
	\begin{equation*}
	(T_{(g,x)}\Phi)(\xi,u)\in\calH_M\Leftrightarrow 0=(\Phi^\ast\theta_M)(\xi,u)\overset{\eqref{eq:contact_action_bis}}{=}(\operatorname{pr}_\calG^\ast\theta_\calG)(\xi,u)+g\cdot\cancel{(\operatorname{pr}^\ast\theta_M)(\xi,u)}=\theta_\calG\xi\Leftrightarrow\xi\in\calH_G,
	\end{equation*}
	and this proves Condition~\ref{enumitem:prop:contact_action:distributions}.
\end{proof}

\subsection{Contact actions by Lie groups}
\label{sec:Lie_group_actions}

This section shows that, under a certain transversality condition, any action by contactomorphisms of a Lie group $G$ on a contact manifold is fully encoded into a contact action (see Definition~\ref{def:contact_action}) of the contact groupoid $\bbP(T^\ast G)\rightrightarrows\bbP(\frakg^\ast)$ of Example~\ref{projective2} on the same contact manifold.

Let $G$ be a Lie group, with Lie algebra $\frakg$, and let $M$ be a manifold.
Consider a (left) action of $G$ on $M$
\begin{equation}
\label{eq:Lie_group_action}
\phi:G\times M\to M,\ (g,x)\mapsto\phi_g(x)=g\cdot x,
\end{equation}
with induced Lie algebra action $\zeta:\frakg\to\frakX(M),\ X\mapsto\zeta(X)$.
Assume, additionally, that $M$ is equipped with a contact structure, equivalently given by the contact distribution $\calH$ and the contact form $\theta\in\Omega^1(M;L)$.

\begin{proposition}
	\label{prop:Lie_group_actions}
	Let the Lie group action $\phi$, as in Equation~\eqref{eq:Lie_group_action}, satisfy the following two conditions
	\begin{enumerate}[label=(\arabic*)]
		\item\label{enumitem:prop:Lie_group_actions:contactomorphisms}
		the Lie group $G$ acts by contactomorphisms on $(M,\calH)$, i.e.~$(T\phi_g)\calH=\calH$, for all $g\in G$,
		\item\label{enumitem:prop:Lie_group_actions:trasversality}
		the $G$-orbits are transverse to the contact distribution $\calH$, i.e.~$TM=\calH+\operatorname{span}\{\zeta(X):X\in\frakg\}$.
	\end{enumerate}
	Then it gives rise to a contact action $\tilde\Phi:\bbP(T^\ast G){}_s{\times}_{\tilde\moment}M\to M$ of the contact groupoid $\bbP(T^\ast G)\rightrightarrows\bbP(\frakg^\ast)$ on the contact manifold $(M,\calH)$, with moment map $\tilde\moment:M\to\bbP(\frakg^\ast)$,
	which are defined as follows:
	\begin{equation}
	\label{eq:prop:Lie_group_actions:moment_map_and_action}
	\tilde\moment(x)=[\eta_x\circ\theta_x\circ\zeta],\qquad[\alpha_g]\cdot x=g\cdot x,
	\end{equation}
	for all $x\in M$, $g\in G$,  $\eta_x\in L_x^\ast$ and $\alpha_g\in T_g^\ast G$, such that $\tilde\moment(x)=s[\alpha_g]\equiv[\alpha_g\circ T_eR_g]$.
\end{proposition}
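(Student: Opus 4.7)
The plan is to pass to the homogeneous symplectic category via the symplectization/homogenization procedure of Appendix~\ref{sec:homogeneous_symplectic/Poisson}, and then invoke Proposition~\ref{prop:homogeneous_symplectic_action}, which already converts homogeneous symplectic groupoid actions into contact groupoid actions by dehomogenization. Concretely, I would lift the $G$-action on $(M,\calH)$ to a Hamiltonian $G$-action on the homogenization $\widetilde L$, integrate it to a $T^\ast G$-action, and then dehomogenize to produce the desired $\bbP(T^\ast G)$-action.

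First I would verify that $\tilde\moment$ is well-defined. For any $x\in M$ and any nonzero $\eta_x\in L_x^\ast$, the linear form $\eta_x\circ\theta_x\circ\zeta\in\frakg^\ast$ is nonzero: otherwise $\zeta(X)_x$ would lie in $\calH_x=\ker\theta_x$ for every $X\in\frakg$, contradicting the transversality condition~\ref{enumitem:prop:Lie_group_actions:trasversality}. Different nonzero choices of $\eta_x$ differ by a scalar, so $[\eta_x\circ\theta_x\circ\zeta]\in\bbP(\frakg^\ast)$ is independent of this choice, and smoothness follows from local frames of $L^\ast$.

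Next I would construct the lift to $\widetilde L:=L^\ast\setminus\{0\}$. Since $G$ acts by contactomorphisms, the canonical identification $L=TM/\calH$ yields a $G$-action on $L$ by regular line bundle automorphisms covering $\phi$, which, after dualizing and removing the zero section, produces a $G$-action on $\widetilde L$ by $\bbR^\times$-equivariant symplectomorphisms of $(\widetilde L,\widetilde\varpi)$. This lifted action preserves the tautological primitive $\widetilde\theta$ satisfying $d\widetilde\theta=\widetilde\varpi$, given by $\widetilde\theta_p(v)=p(\theta_{\pi p}(T\pi\,v))$ for $\pi\colon\widetilde L\to M$ the projection; by Cartan's formula it is therefore Hamiltonian with moment map
\begin{equation*}
\moment(p)(X)=\widetilde\theta_p(\widetilde\zeta_X(p))=p(\theta_{\pi p}(\zeta_X(\pi p))),\qquad p\in\widetilde L,\ X\in\frakg.
\end{equation*}
This moment map is $\bbR^\times$-homogeneous of degree $1$ and, by the same transversality argument, nowhere zero, so it descends to the projective moment map $\tilde\moment=[\moment]\colon M\to\bbP(\frakg^\ast)$, matching the formula in~\eqref{eq:prop:Lie_group_actions:moment_map_and_action}.

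Finally, following the standard construction that turns a Hamiltonian $G$-action into an action of the cotangent symplectic groupoid, I would integrate the above action to a left action $\Phi$ of $T^\ast G\rightrightarrows\frakg^\ast$ on $\widetilde L$ with moment map $\moment$, defined by $\alpha_g\cdot p:=g\cdot p$ whenever $s(\alpha_g)=\moment(p)$. A direct check using the multiplicative canonical symplectic form on $T^\ast G$ shows that $\Phi$ is symplectic, and the $\bbR^\times$-equivariance of both $\moment$ and the underlying $G$-action makes it homogeneous. Applying Proposition~\ref{prop:homogeneous_symplectic_action} (together with Example~\ref{projective2}, which identifies $\bbP(T^\ast G)\rightrightarrows\bbP(\frakg^\ast)$ with the dehomogenization of $T^\ast G\rightrightarrows\frakg^\ast$) then yields the desired contact action $\widetilde\Phi$ of $\bbP(T^\ast G)\rightrightarrows\bbP(\frakg^\ast)$ on $(M,\calH)$ with moment map $\tilde\moment$ and underlying formula $[\alpha_g]\cdot x=g\cdot x$. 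The main technical hurdle will be carefully verifying that the lifted $G$-action on $\widetilde L$ is $\bbR^\times$-equivariant, preserves the tautological primitive, and integrates to a homogeneous symplectic $T^\ast G$-action; once this is in place, the explicit formulas and the contact-action property follow formally from Proposition~\ref{prop:homogeneous_symplectic_action}.
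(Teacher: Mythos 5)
Your proposal is correct and follows essentially the same route as the paper: both pass to the symplectization $\widetilde L=L^\ast\setminus\mathbf 0_M$ (which the paper realizes as $\calH^\circ\setminus M\subset T^\ast M$ via the cotangent lift, while you describe it intrinsically via the tautological primitive of $\widetilde\varpi$ — the same object under the identification $L^\ast\simeq\calH^\circ$), observe that conditions (1) and (2) yield respectively the invariance of $L^\ast$ and the injectivity/non-vanishing of the homogeneous moment map, assemble the homogeneous symplectic action of $T^\ast G\rightrightarrows\frakg^\ast$, and conclude by dehomogenizing through Proposition~\ref{prop:homogeneous_symplectic_action}. No gaps.
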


\begin{proof}
	As for any Lie group action, one can construct the cotangent lift of $\phi$ to a Lie group action of $G$ on the cotangent bundle $T^\ast M$
	\begin{equation*}
	G\times T^\ast M,\ (g,\nu_x)\mapsto (T^\ast_x\phi_g)\nu_x:=\nu_x\circ(T_x\phi_g)^{-1}.
	\end{equation*}
	Further, as it is well-known, this action is Hamiltonian w.r.t.~the canonical symplectic form $\omega_M:=\rmd\lambda_M$, where $\lambda_M\in\Omega^1(T^\ast M)$ is the Liouville $1$-form, and it has the following coadjoint equivariant moment map, which is fiberwise $\bbR$-linear,
	\begin{equation*}
	\moment:T^\ast M\to\frakg^\ast,\ \nu_x\mapsto\moment(\nu_x):=\nu_x\circ\zeta.
	\end{equation*}
	Now one can construct the following action of $T^\ast G\rightrightarrows\frakg^\ast$ on $T^\ast M$, with moment map $\moment:T^\ast M\to\frakg^\ast$,
	\begin{equation*}
	\Phi:T^\ast G{}_s{\times}_\moment T^\ast M\to T^\ast M,\ (\alpha_g,\nu_x)\mapsto (T^\ast_x\phi_g)\nu_x,
	\end{equation*}
	and, as it is easy to see, this action is homogeneous and symplectic.
	
	Using the line bundle isomorphism $TM/\calH\to L$ induced by the contact form $\theta\in\Omega^1(M;L)$, one also identifies $L^\ast$ with $\calH^0\subset T^\ast M$, the annihilator of $\calH$.
	Condition~\ref{enumitem:prop:Lie_group_actions:contactomorphisms} means exactly that $\Phi$ preserves $L^\ast$.
	Therefore, by restriction, $\Phi$ induces an action, still denoted by $\Phi$, of $T^\ast G\rightrightarrows\frakg^\ast$ on $L^\ast$, with moment map obtained by restriction of $\moment$ to $L^\ast$, and still denoted by $\bbJ$.
	Unwrapping the identifications, one gets that the moment map is explicitly given by
	\begin{equation}
	\label{eq:proof:prop:Lie_group_actions:moment_map}
	\moment:L^\ast\to\frakg^\ast,\ \eta_x\mapsto\moment(\eta_x)=\eta_x\circ\theta_x\circ\zeta,
	\end{equation}
	and the action $\Phi:T^\ast G{}_s{\times}_\moment L^\ast\to L^\ast$ is completely determined as follows
	\begin{equation}
	\label{eq:proof:prop:Lie_group_actions:action}
	\langle\Phi(\alpha_g,\eta_x),\theta_{g\cdot x}(u_{g\cdot x})\rangle=\langle\eta_x,\theta_x((T_x\phi_g)^{-1}u_{g\cdot x})\rangle,
	\end{equation}
	for all $\alpha_g\in T^\ast_gG$, $\eta_x\in L^\ast_x$, such that $\moment(\eta_x)=s(\alpha_g):=\alpha_g\circ T_e\rmR_g$, and $u_{g\cdot x}\in T_{g\cdot x}M$.
	Additionally, Condition~\ref{enumitem:prop:Lie_group_actions:trasversality} means exactly that, for any $x\in M$, the $\bbR$-linear map $\bbJ_x:L^\ast_x\to\frakg^\ast$ is injective.
	Therefore, by restriction, $\Phi$ induces a homogeneous symplectic action, still denoted by $\Phi$, of the homogeneous symplectic groupoid $T^\ast G\setminus{\bf 0}_G\rightrightarrows\frakg^\ast\setminus\{0\}$ on the homogeneous symplectic manifold $L^\ast\setminus{\bf 0}_M$, with $\bbR^\times$-equivariant moment map $\moment:L^\ast\setminus{\bf 0}_M\to\frakg^\ast\setminus\{0\}$.
	
	Finally, applying to the current setting the dehomogeneization procedure described in Proposition~\ref{prop:homogeneous_symplectic_action}, one obtains a contact action $\tilde\Phi$ of the contact groupoid $\bbP(T^\ast G)\rightrightarrows\bbP(\frakg^\ast)$ on the contact manifold $M$ with moment map $\tilde\moment:M\to\bbP(\frakg^\ast)$.
	It is now easy to see that, in view of Equations~\eqref{eq:proof:prop:Lie_group_actions:moment_map} and~\eqref{eq:proof:prop:Lie_group_actions:action}, the expressions of $\tilde\moment$ and $\tilde\Phi$ agree with the ones given in Equation~\eqref{eq:prop:Lie_group_actions:moment_map_and_action}
\end{proof}

\subsection{Global contact reduction}
\label{subsec:global_contact_reduction}

This section shows that contact dual pairs naturally emerge from reduction of contact manifolds with symmetries.
We begin by stating the main results and leave their proofs, split in a series of smaller lemmas, for the end of the current Section~\ref{subsec:global_contact_reduction}.

The next proposition extends, to the non-coorientable case, a result first obtained in~\cite[Section~4.2]{zambon2006contact}.
\begin{proposition}
	\label{prop:quotient_Jacobi_structure}
	Let the contact action $\Phi$ be free and proper.
	Then there exists a unique Jacobi structure on the quotient line bundle $L_M/\calG\to M/\calG$, denoted by $\{-,-\}_{M/\calG}$, such that the quotient map $\hat{q}:L_M\to L_M/\calG$, regular line bundle morphism covering the quotient map $q:M\to M/\calG$, is a Jacobi morphism from $(M,L_M,\{-,-\}_M)$ to $(M/\calG,L_M/\calG,\{-,-\}_{M/\calG})$.
\end{proposition}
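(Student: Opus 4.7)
My plan is to construct $L_M/\calG$ as a smooth line bundle, identify $\Gamma(L_M/\calG)$ with the $\calG$-invariant sections $\Gamma(L_M)^\calG\subset\Gamma(L_M)$, and then transfer the Jacobi bracket via the forced formula
\[
    \hat{q}^\ast\{\bar\lambda,\bar\mu\}_{M/\calG} := \{\hat{q}^\ast\bar\lambda,\hat{q}^\ast\bar\mu\}_M,
\]
reducing the question to showing that $\Gamma(L_M)^\calG$ is a Lie subalgebra of $(\Gamma(L_M),\{-,-\}_M)$. Concretely, the lifted fat action $\hat\Phi:L{}_S{\times}_{\hat\bbJ}L_M\to L_M$ from Remark~\ref{rem:lifted_action}, being a regular line bundle morphism over the free and proper action $\Phi$, is itself free and proper. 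So $L_M/\calG$ inherits a canonical smooth line bundle structure over $M/\calG$ making $\hat q$ a regular line bundle morphism over the submersion $q$; pullback along $\hat q$ identifies $\Gamma(L_M/\calG)$ with $\Gamma(L_M)^\calG$; and the skew-symmetry, Jacobi identity, and 1st-order bi-DO properties of $\{-,-\}_{M/\calG}$ descend automatically from those of $\{-,-\}_M$, since $\hat q$ is a surjective submersion at the line-bundle level and thus locality and multi-linearity propagate through $\hat q^\ast$.

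The main technical step is to prove that $\Gamma(L_M)^\calG$ is closed under $\{-,-\}_M$. Passing to the source-identity-component subgroupoid if necessary, $\calG$-invariance is equivalent to infinitesimal invariance under the Lie algebroid $A$ of $\calG$. Differentiating the contact action identity~\eqref{eq:contact_action_bis} at units in the $\calG$-direction, and leveraging the identification $\Gamma(L_0)\overset{\sim}{\to}\mathfrak{X}(\calG,\calH_\calG)^r$, $\mu\mapsto\calX_{T^\ast\mu}$ used in the proof of Theorem~\ref{theor:Jacobi}, I would show that the infinitesimal action on $L_M$ by $\mu\in\Gamma(L_0)$ is realized by the Hamiltonian DO $\Delta_{\hat\bbJ^\ast\mu}$ of the pullback section along the fat moment map. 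In particular, $\lambda\in\Gamma(L_M)^\calG$ is then characterized by $\Delta_{\hat\bbJ^\ast\mu}\lambda=0$ for all $\mu\in\Gamma(L_0)$. With this in hand, for $\lambda_1,\lambda_2\in\Gamma(L_M)^\calG$ the Jacobi identity for $\{-,-\}_M$ yields
\[
    \Delta_{\hat\bbJ^\ast\mu}\{\lambda_1,\lambda_2\}_M = \{\Delta_{\hat\bbJ^\ast\mu}\lambda_1,\lambda_2\}_M + \{\lambda_1,\Delta_{\hat\bbJ^\ast\mu}\lambda_2\}_M = 0,
\]
so $\{\lambda_1,\lambda_2\}_M\in\Gamma(L_M)^\calG$, completing the descent.

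The hard part will be the precise infinitesimal reformulation of~\eqref{eq:contact_action_bis} that identifies the infinitesimal $\calG$-action on sections of $L_M$ with the Hamiltonian DOs of pullbacks along $\hat\bbJ$. This requires careful bookkeeping with the line bundle groupoid $L\rightrightarrows L_0$, the twisted evaluation $g\cdot(-)$ appearing in~\eqref{eq:contact_action}, and the interplay between $\theta_\calG$, $\theta_M$, and $\hat\bbJ$; in effect, this is a ``moment-map'' property of $\hat\bbJ$ that mirrors, at the level of an action, the Jacobi morphism property of the source and target of a contact groupoid established in Proposition~\ref{prop:Jacobi_structures_from_contact_grpds}. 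Everything else is standard quotient-descent once this intertwining is secured.
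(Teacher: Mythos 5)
Your overall skeleton matches the paper's: identify $\Gamma(L_M/\calG)$ with the invariant sections $\Gamma(L_M)^\calG$, force the bracket by $\hat q^\ast\{\bar\lambda,\bar\mu\}_{M/\calG}:=\{\hat q^\ast\bar\lambda,\hat q^\ast\bar\mu\}_M$, and reduce everything to the closure of $\Gamma(L_M)^\calG$ under $\{-,-\}_M$ (the paper likewise observes that the bi-DO property then follows because the Hamiltonian vector fields of invariant sections are $q$-projectable). The intertwining you flag as ``the hard part'' --- that the infinitesimal action of $\mu\in\Gamma(L_0)$ on $L_M$ is realized by the Hamiltonian DO $\Delta_{\hat{\moment}^\ast\mu}$ --- is true and is exactly the content of the paper's Lemma~\ref{lem:Jacobi_moment_map}, proved there by integrating $\Delta_{t^\ast\mu}$ to a one-parameter group and pushing it through the action axioms and Equation~\eqref{eq:contact_action}; so that step is sound.

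The genuine gap is your reduction of $\calG$-invariance to infinitesimal invariance. Proposition~\ref{prop:quotient_Jacobi_structure} does not assume $\calG$ source-connected, and ``passing to the source-identity-component subgroupoid if necessary'' does not repair this: for $\lambda_1,\lambda_2\in\Gamma(L_M)^\calG$ your computation $\Delta_{\hat{\moment}^\ast\mu}\{\lambda_1,\lambda_2\}_M=0$ only shows that $\{\lambda_1,\lambda_2\}_M$ is invariant under the source-identity component, not under all of $\calG$. Note also that a single arrow $g$ does not act on $M$ (only on the fiber $\moment^{-1}(s(g))$), so controlling the remaining components requires local bisections, and an arbitrary bisection need not act by Jacobi automorphisms. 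This is precisely why the paper characterizes invariance through local \emph{Legendrian} bisections $\Sigma$ (which exist through every point of $\calG$) and proves in Lemma~\ref{lem:Legendrian_bisections} that each induced map $\hat r_\Sigma$ satisfies $\hat r_\Sigma^\ast\theta_M=\theta_M$, hence preserves $\{-,-\}_M$; closure of $\Gamma(L_M)^\calG$ is then immediate. Your infinitesimal argument is essentially the one the paper reserves for Lemma~\ref{lem:centralizers}, where source-connectedness is explicitly added as a hypothesis. To complete your proof, either add that hypothesis (which suffices for Theorem~\ref{theor:CDP_contact groupoid_action}, where it is assumed anyway) or supplement the infinitesimal step with the Legendrian-bisection argument to handle the non-identity components.
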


\begin{theorem}
	\label{theor:CDP_contact groupoid_action}
	Let $\Phi$ be a free, proper and contact action of a source-connected contact groupoid $(\calG,\calH_\calG)\rightrightarrows\calG_0$ on a contact manifold $(M,\calH_M)$, with moment map $\moment:M\to\calG_0$.
	Then the quotient bundle map $\hat{q}:L_M\to L_M/\calG$, covering $q:M\to M/\calG,$ and the fat moment map $\hat{\moment}:L_M\to L_0$, covering $\moment:M\to\calG_0$, form the full contact dual pair
	\begin{equation}
	\label{eq:theor:CDP_contact_groupoid_action}
	\begin{tikzcd}
	(M/\calG,L_M/\calG,\{-,-\}_{M/\calG})&\ar[l, swap, "\hat{q}"](M,L_M,\{-,-\}_M)\ar[r, "\hat{\moment}"]&(\calG^\moment_0,L_{\calG^\moment_0},\{-,-\}_{\calG^\moment_0}),
	\end{tikzcd}
	\end{equation}
	where $\calG^\moment_0\subset\calG_0$ denotes the open $\calG$-invariant image of $M$ by $\moment$. 
\end{theorem}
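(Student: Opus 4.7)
The plan is to reduce the theorem to the well-known symplectic analogue via homogenization, exploiting the equivalence between contact dual pairs and homogeneous symplectic dual pairs (Proposition~\ref{prop:CDPs=homogeneous_SDPs}).

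I would first homogenize the whole setup. The source-connected contact groupoid $(\calG,\calH_\calG)\rightrightarrows\calG_0$ homogenizes to a source-connected homogeneous symplectic groupoid $(\widetilde L,\widetilde\varpi)\rightrightarrows\widetilde L_0$ (Corollary~\ref{cor:homogeneous_symplectic_groupoids}), and $(M,\calH_M)$ homogenizes to a homogeneous symplectic manifold $(\widetilde L_M,\widetilde\varpi_M)$. By means of the fat action $\hat\Phi$ of Remark~\ref{rem:lifted_action}, the contact action $\Phi$ lifts to a homogeneous symplectic action $\widetilde\Phi\colon\widetilde L\,{}_S{\times}_{\widetilde\moment}\widetilde L_M\to\widetilde L_M$ with $\bbR^\times$-equivariant moment map $\widetilde\moment\colon\widetilde L_M\to\widetilde L_0$ covering $\moment$; the contact-action identity~\eqref{eq:contact_action_bis} translates precisely into its homogeneous-symplectic counterpart $\widetilde\Phi^{\ast}\widetilde\varpi_M=\mathrm{pr}_1^{\ast}\widetilde\varpi+\mathrm{pr}_2^{\ast}\widetilde\varpi_M$. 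Freeness, properness and source-connectedness all pass to the lifted setting because $\widetilde L\to\calG$ and $\widetilde L_M\to M$ are $\bbR^\times$-principal bundles and the lifting is $\bbR^\times$-equivariant.

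Next, I would invoke the classical fact---the groupoid analogue of the Lie-group example recalled in the Introduction---that a free and proper symplectic action of a source-connected symplectic groupoid on a symplectic manifold produces a full symplectic dual pair via the quotient map and the moment map. Applied here, this yields a homogeneous full symplectic dual pair
\begin{equation*}
\begin{tikzcd}
(\widetilde L_M/\widetilde L,\{-,-\}_{\mathrm{red}})&(\widetilde L_M,\widetilde\varpi_M)\ar[l,swap,"\widetilde q"]\ar[r,"\widetilde\moment"]&(\widetilde L_0^{\widetilde\moment},\{-,-\}),
\end{tikzcd}
\end{equation*}
with $\widetilde L_0^{\widetilde\moment}:=\widetilde\moment(\widetilde L_M)$ open and $\widetilde L$-invariant. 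Dehomogenizing via Proposition~\ref{prop:CDPs=homogeneous_SDPs} then produces a full contact dual pair on $M$.

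The main obstacle is the final identification of this dehomogenized contact dual pair with~\eqref{eq:theor:CDP_contact_groupoid_action}. Concretely, one must verify that the dehomogenization of $(\widetilde L_M/\widetilde L,\{-,-\}_{\mathrm{red}})$ agrees with $(M/\calG,L_M/\calG,\{-,-\}_{M/\calG})$ of Proposition~\ref{prop:quotient_Jacobi_structure}, that $\widetilde q$ dehomogenizes to $\hat q$, and analogously that $\widetilde\moment$ dehomogenizes to $\hat\moment$---this last point yielding in particular the openness of $\calG_0^\moment\subset\calG_0$ asserted in the statement. The technical crux is ensuring that the smooth quotient by $\widetilde L$ commutes with the residual $\bbR^\times$-quotient in such a way that the induced Jacobi bracket on $M/\calG$ is exactly the one characterized by Proposition~\ref{prop:quotient_Jacobi_structure}; this amounts to a diagram-chase between two different orders of taking quotients. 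Once these naturality checks are performed, the theorem follows directly from the symplectic version.
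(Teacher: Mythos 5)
Your strategy is genuinely different from the paper's. The paper never passes to the symplectization in this proof: it verifies directly that the diagram is a full \emph{Howe} contact dual pair --- $\hat{q}$ and $\hat{\moment}$ are Jacobi morphisms (Proposition~\ref{prop:quotient_Jacobi_structure} and Lemma~\ref{lem:Jacobi_moment_map}), the transversality of $\ker Tq$ and $\ker T\moment$ to $\calH_M$ follows from the locally-free criterion of Lemma~\ref{lem:locally_free} and the tangency of the $\calX_{\hat{\moment}^\ast\lambda}$ to the orbits, and the centralizer conditions are established in Lemma~\ref{lem:centralizers} via Legendrian bisections and source-connectedness --- and then concludes by Proposition~\ref{prop:relation_Howe_Weinstein}~\ref{item:Howe->Weinstein} together with the dimension count $1+\dim(M/\calG)+\dim\calG_0=\dim M$. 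Your route, reducing everything to the symplectic case through Proposition~\ref{prop:CDPs=homogeneous_SDPs}, is conceptually attractive and consistent with how the paper treats Lie group actions in Proposition~\ref{prop:Lie_group_actions}; if completed it would give the Lie--Weinstein orthogonality directly, whereas the paper's argument yields the Howe (global) version as a by-product of independent interest.

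However, as written the proposal has two genuine gaps, and they are precisely where the content of the theorem lives. First, the ``classical fact'' that a free, proper symplectic action of a symplectic groupoid produces a full symplectic dual pair through $\widetilde{q}$ and $\widetilde{\moment}$ is invoked without proof and is not available anywhere in this paper; one would have to establish the existence of the quotient Poisson structure on $\widetilde{L}_M/\widetilde{L}$, the Poisson property of $\widetilde{\moment}$ onto its open image, and the orthogonality $\ker T\widetilde{q}=(\ker T\widetilde{\moment})^{\perp\widetilde{\varpi}}$ (essentially the identity $\zeta(\alpha)=\widetilde{\varpi}^\sharp(\widetilde{\moment}^\ast\alpha)$ for the infinitesimal action), i.e.\ redo in the homogeneous symplectic category the same work the paper does in the Jacobi category. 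Second, you explicitly defer the identification of the dehomogenized reduced space with $(M/\calG,L_M/\calG,\{-,-\}_{M/\calG})$ and of $\widetilde{\moment}$ with the homogenization of $\hat{\moment}$; the commutation of the $\widetilde{L}$-quotient with the residual $\bbR^\times$-quotient, and the matching of the resulting bracket with the one characterized in Proposition~\ref{prop:quotient_Jacobi_structure}, is not a formality and is not carried out. Also the claim that Equation~\eqref{eq:contact_action_bis} ``translates precisely'' into $\widetilde{\Phi}^\ast\widetilde{\varpi}_M=\operatorname{pr}_1^\ast\widetilde{\varpi}+\operatorname{pr}_2^\ast\widetilde{\varpi}_M$ is only the converse of Proposition~\ref{prop:homogeneous_symplectic_action}, which the paper states in one direction and does not prove; the twist by $g\cdot(-)$ in~\eqref{eq:contact_action_bis} must be checked to be absorbed by the fat action. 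Until these three points are supplied, the proposal is a plausible plan rather than a proof.
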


\begin{remark}[\textbf{Contact actions by Lie group}]
	\label{rem:CDP_Lie group_action}
	Consider a free and proper action of a connected Lie group $G$ on a contact manifold $(M,\calH)$ s.~t.~$G$ acts by contactomorphisms and the $G$-orbits are transverse to $\calH$.
	As shown in Proposition~\ref{prop:Lie_group_actions}, this gives rise to a contact action $\tilde\Phi:\bbP(T^\ast G){}_s{\times}_\moment M\to M$ of the source-connected contact groupoid $\bbP(T^\ast G)\rightrightarrows\bbP(\frakg^\ast)$ on the contact manifold $(M,\calH)$, with a certain moment map $\moment:M\to\bbP(\frakg^\ast)$.
	Fix an associated fat moment map $\hat{\moment}:L_M\to \bigO_{\bbP(\frakg^\ast)}(1)$, covering $\tilde\moment:M\to\bbP(\frakg^\ast)$, as in Proposition~\ref{prop:contact_action}, where $\bigO_{\bbP(\frakg^\ast)}(1)$ denotes
	the dual of the tautological line bundle over $\bbP(\frakg^*)$.
	Together with the quotient bundle map $\hat{q}:L_M\to L_M/G$, covering $q:M\to M/G,$ they
	form the full contact dual pair
	\begin{equation}
	\label{eq:theor:CDP_Lie_group_action}
	\begin{tikzcd}
	(M/G,L/G,\{-,-\}_{M/G})&\ar[l, swap, "\hat{q}"](M,L_M,\{-,-\}_M)\ar[r, "\hat{\moment}"]&(\bbP(\frakg^\ast)^{\tilde\moment},\bigO_{\bbP(\frakg^\ast)}(1),\{-,-\}_{\bbP(\frakg^\ast)}),
	\end{tikzcd}
	\end{equation}
	where $\bbP(\frakg^\ast)^{\tilde\moment}\subset\bbP(\frakg^\ast)$ denotes the open $G$-invariant image of $M$ by $\tilde\moment$. 
\end{remark}

The following is a straightforward consequence of Theorems~\ref{theor:LeafCorrespondenceI} and~\ref{theor:CDP_contact groupoid_action} and Example~\ref{ex:contact_groupoids}.
So we obtain an alternative proof of a result which was first proven, by different techniques, in the coorientable case (cf.~\cite[Theorem 4.4]{zambon2006contact}).

\begin{corollary}[{\cite[Theorem 4.4]{zambon2006contact}}]
	\label{cor:CDP_contact groupoid_action}
	Let $\Phi$ be a free, proper and contact action of a source-connected contact groupoid $\calG\rightrightarrows\calG_0$ on a contact manifold $(M,\calH_M)$, with moment map $\moment:M\to\calG_0$, as in Theorem \ref{theor:CDP_contact groupoid_action}.
	Then, the characteristic leaves of $(M/\calG,L_M/\calG,\{-,-\}_{M/\calG})$ are given by
	\begin{equation*}
	\moment^{-1}(\bigO_x)/\calG,\quad\text{for}\ x\in \calG^\bbJ_0:=\bbJ(M),
	\end{equation*} 
	where $\bigO_x:=t(s^{-1}(x))\subset \calG_0$ is the coadjoint orbit of $\calG$ through $x$.
	Moreover, $ \moment^{-1}(\bigO_x)/\calG$ is either a contact leaf if the dimension of $\bigO_x$ is odd, or is a l.c.s.~leaf if the dimension of $\bigO_x$ is even.
\end{corollary}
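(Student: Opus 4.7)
My plan is to read this corollary as a direct consequence of three previously established results: Theorem~\ref{theor:CDP_contact groupoid_action} (which supplies a full contact dual pair from the action), Example~\ref{ex:contact_groupoids} (which identifies the characteristic leaves of the base of a contact groupoid with its orbits $\bigO_x$), and the Characteristic Leaf Correspondence Theorems~\ref{theor:LeafCorrespondenceI} and~\ref{theor:LeafCorrespondenceII}, extended via Remark~\ref{noncon} to the case in which only one of the two underlying maps of the dual pair has connected fibers.

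The first step is to apply Theorem~\ref{theor:CDP_contact groupoid_action} to obtain the full contact dual pair~\eqref{eq:theor:CDP_contact_groupoid_action} with underlying maps $q:M\to M/\calG$ and $\moment:M\to \calG_0^\moment$. To invoke the leaf correspondence one must control the connectedness of fibers. Source-connectedness of $\calG$ means that every source fiber $s^{-1}(z)\subset\calG$ is connected, so both the $\calG$-orbits in $M$ (images of source fibers under the action map $g\mapsto g\cdot y$) and the orbits $\bigO_x=t(s^{-1}(x))\subset\calG_0$ are connected. In particular the fibers of $q$ are connected, whereas those of $\moment$ need not be; this is precisely the setting addressed by Remark~\ref{noncon}.

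Next, Example~\ref{ex:contact_groupoids} identifies the characteristic leaves of $(\calG_0,L_0,\{-,-\}_0)$ with the orbits $\bigO_x$. Because $\moment(g\cdot y)=t(g)$, the open subset $\calG_0^\moment=\moment(M)$ is $\calG$-invariant, hence the characteristic leaves of its restricted Jacobi structure are precisely the $\bigO_x$ with $x\in\calG_0^\moment$, and $\moment^{-1}(\bigO_x)\subset M$ is $\calG$-invariant. Applying the symmetric version of Theorem~\ref{theor:LeafCorrespondenceI} from Remark~\ref{noncon}, each such $\bigO_x$ corresponds to the pseudoleaf
\begin{equation*}
q\bigl(\moment^{-1}(\bigO_x)\bigr)\;=\;\moment^{-1}(\bigO_x)/\calG
\end{equation*}
of $M/\calG$, whose connected components are characteristic leaves of $(M/\calG,L_M/\calG,\{-,-\}_{M/\calG})$. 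Theorem~\ref{theor:LeafCorrespondenceII} (in the same generalized form) then tells us that corresponding leaves have the same parity of dimension; combined with Propositions~\ref{prop:odd-dim_transitive_Jacobi} and~\ref{prop:even-dim_transitive_Jacobi} this yields the advertised dichotomy: $\moment^{-1}(\bigO_x)/\calG$ inherits a contact structure if $\dim\bigO_x$ is odd, and an l.c.s.\ structure if $\dim\bigO_x$ is even.

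No substantive obstacle is expected: the argument is essentially a bookkeeping exercise assembling the three ingredients above. The one delicate point is verifying the connectedness of $q$-fibers, which follows from source-connectedness of $\calG$. A minor interpretive subtlety is that $\moment^{-1}(\bigO_x)/\calG$ may in general be a disjoint union of characteristic leaves of $M/\calG$, all of the same dimension and type, so the corollary is best read as parametrizing the characteristic leaves of $M/\calG$ by $\calG$-orbits $\bigO_x\subset\calG_0^\moment$ via $\bigO_x\mapsto\moment^{-1}(\bigO_x)/\calG$.
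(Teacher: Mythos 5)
Your proposal is correct and follows essentially the same route as the paper, which presents this corollary as a direct consequence of Theorem~\ref{theor:CDP_contact groupoid_action}, Example~\ref{ex:contact_groupoids}, and the Characteristic Leaf Correspondence. Your explicit verification that the $q$-fibers (the $\calG$-orbits in $M$) are connected by source-connectedness, and your appeal to Remark~\ref{noncon} to handle the possibly disconnected $\moment$-fibers, supply precisely the bookkeeping the paper leaves implicit.
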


Coming back to the proofs of Proposition~\ref{prop:quotient_Jacobi_structure} and Theorem~\ref{theor:CDP_contact groupoid_action}, we split them into a series of Lemmas having their own interest.
The next lemma extends to the non-coorientable case a result first obtained in~\cite[Remark~3.2.i) and Lemma 3.3]{zambon2006contact}.

\begin{lemma}
	\label{lem:Jacobi_moment_map}
	For any contact action $\Phi$, as in Definition~\ref{def:contact_action}, the fat moment map $\hat{\moment}:L_M\to L_0$, covering the moment map $\moment:M\to\mathcal{G}_0$, is a $\calG$-equivariant Jacobi morphism.
	In particular, for all $\lambda\in\Gamma(L_0)$ and $(g,x)\in \calG{}_s{\times}_\moment M$, we have:
	\begin{equation}
	\label{eq:lem:Jacobi_moment_map}
	(\Delta_{\hat{\moment}^\ast\lambda})_{g\cdot x}=(\Delta_{t^\ast\lambda})_g\cdot 0_x\quad\text{and}\quad (\calX_{\hat\moment^\ast\lambda})_{g\cdot x}=(\calX_{t^\ast\lambda})_g\cdot 0_x,
	\end{equation}
	where the dot $\cdot$
	denotes, on the LHS, the induced action of the derivation Lie groupoid $DL\rightrightarrows DL_0$ (cf.~\cite[Section~4.2]{ETV2016}) on $DL_M$, with moment map $D\hat\moment:DL_M\to DL_0$, and, on the RHS, the induced action of the tangent Lie groupoid $T\calG\rightrightarrows T\calG_0$ on $TM$, with moment map $T\moment:TM\to T\calG_0$.
\end{lemma}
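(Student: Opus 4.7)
The plan is to transport the argument used for contact groupoids (Theorem~\ref{theor:Jacobi}) to this action-groupoid setting, by exploiting the formal similarity between the contact action compatibility~\eqref{eq:contact_action_bis} and the multiplicativity~\eqref{eq:enumitem:mult_contact_form} of $\theta_\calG$. Using the fat action $\hat\Phi\colon L{}_S{\times}_{\hat\moment}L_M\to L_M$ from Remark~\ref{rem:lifted_action}, Equation~\eqref{eq:contact_action_bis} rephrases cleanly as the ``action-multiplicative'' identity
$$
\hat\Phi^\ast\theta_M=\operatorname{Pr}_L^\ast\theta_\calG+\operatorname{Pr}_{L_M}^\ast\theta_M,
$$
which is the direct analogue of~\eqref{eq:enumitem:mult_contact_form}. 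The $\calG$-equivariance of $\hat\moment$ is precisely the defining property~\eqref{eq:rem:lifted_action} of the fat action, so it only remains to prove the formulas in~\eqref{eq:lem:Jacobi_moment_map} and the Jacobi morphism property.

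First I would establish the first formula in~\eqref{eq:lem:Jacobi_moment_map}; the second then follows by applying the symbol componentwise and using the action axiom $\sigma\circ D\hat\Phi=T\Phi\circ\sigma$. Both sides of the first identity lie in $D_{g\cdot x}L_M$, so equality can be tested by evaluation on an arbitrary section $\nu\in\Gamma(L_M)$ near $g\cdot x$. Unfolding the RHS through $D\hat\Phi$ gives
$$
\bigl((\Delta_{t^\ast\lambda})_g\cdot 0_x\bigr)(\nu)=\bigl((\Delta_{t^\ast\lambda})_g,\,0_x\bigr)(\hat\Phi^\ast\nu),
$$
while the LHS is $(\Delta_{\hat\moment^\ast\lambda})_{g\cdot x}(\nu)=\{\hat\moment^\ast\lambda,\nu\}_M(g\cdot x)$. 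Using the action-multiplicative identity to control $\hat\Phi^\ast\nu$ in terms of its $L$- and $L_M$-components, and the defining property $\theta_\calG(\Delta_{t^\ast\lambda})=t^\ast\lambda$, the computation parallels exactly the one showing that $\Delta_{T^\ast\lambda}$ is right-invariant in the contact-groupoid setting (cf.~the proof of Theorem~\ref{theor:Jacobi} and~\cite[Corollary~5.2]{crainic2015jacobi}), and yields the required equality.

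With the first formula in hand, the Jacobi morphism property follows as a direct by-product: for $\lambda,\mu\in\Gamma(L_0)$,
$$
\{\hat\moment^\ast\lambda,\hat\moment^\ast\mu\}_M(g\cdot x)=(\Delta_{\hat\moment^\ast\lambda})_{g\cdot x}(\hat\moment^\ast\mu)=\bigl((\Delta_{t^\ast\lambda})_g\cdot 0_x\bigr)(\hat\moment^\ast\mu).
$$
Expanding the last expression via $\hat\Phi^\ast(\hat\moment^\ast\mu)=\operatorname{Pr}_L^\ast(T^\ast\mu)$, which is a consequence of the equivariance $\hat\moment\circ\hat\Phi=T\circ\operatorname{Pr}_L$, one gets $(\Delta_{T^\ast\lambda})_g(T^\ast\mu)=\{T^\ast\lambda,T^\ast\mu\}(g)=T^\ast\{\lambda,\mu\}_0(g)=\hat\moment^\ast\{\lambda,\mu\}_0(g\cdot x)$, where the middle equality uses the Jacobi morphism property of $T\colon L\to L_0$ from Proposition~\ref{prop:Jacobi_structures_from_contact_grpds}.

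The hardest part will be the bookkeeping required to handle rigorously the induced actions of the derivation Lie groupoid $DL\rightrightarrows DL_0$ on $DL_M$ (with moment map $D\hat\moment$) and of the tangent groupoid $T\calG\rightrightarrows T\calG_0$ on $TM$, together with the precise treatment of the action-multiplicative identity once the various $\calG$-action scalings implicit in~\eqref{eq:contact_action_bis} are absorbed into $\hat\Phi^\ast$ via the trivial-core structure of $L\rightrightarrows L_0$ from Remark~\ref{rem:multiplicative_forms}. Once these identifications are made explicit, the pointwise computation reduces to a direct analogue of the groupoid calculation used in the proof of Theorem~\ref{theor:Jacobi}.
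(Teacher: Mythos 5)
Your overall strategy is structurally close to the paper's: both arguments reduce everything to the invariance identity~\eqref{eq:lem:Jacobi_moment_map} (the action-analogue of the right-invariance of $\Delta_{t^\ast\lambda}$ on a contact groupoid), and your derivation of the Jacobi-morphism property from that identity, via $\hat{\Phi}^\ast(\hat{\moment}^\ast\mu)=\operatorname{Pr}_L^\ast(t^\ast\mu)$ and the fact that $T$ is a Jacobi morphism, is fine (the paper performs the analogous reduction using the vector-field half of~\eqref{eq:lem:Jacobi_moment_map} and $\theta_M$ applied to a bracket of vector fields). The gap is at the central step: the identity~\eqref{eq:lem:Jacobi_moment_map} itself is only asserted by analogy with the left-multiplication case, and the one concrete mechanism you propose for proving it --- ``using the action-multiplicative identity to control $\hat{\Phi}^\ast\nu$ in terms of its $L$- and $L_M$-components'' --- does not work. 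Equation~\eqref{eq:contact_action_bis} constrains the contact form $\theta_M$ only; for an arbitrary test section $\nu\in\Gamma(L_M)$ the pull-back $\hat{\Phi}^\ast\nu$ admits no decomposition into a $\calG$-part and an $M$-part, so there is nothing to ``control''. Moreover, evaluating $(\Delta_{\hat{\moment}^\ast\lambda})_{g\cdot x}$ on $\nu$ means computing $\{\hat{\moment}^\ast\lambda,\nu\}_M=\theta_M([\calX_{\hat{\moment}^\ast\lambda},\calX_\nu])$, which already presupposes knowledge of $\calX_{\hat{\moment}^\ast\lambda}$ --- the other half of what you set out to prove --- so the order ``first $\Delta$ by testing on sections, then $\calX$ by taking symbols'' is close to circular unless you characterize $\Delta_{\hat{\moment}^\ast\lambda}$ intrinsically.

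What closes this step in the paper is a flow argument. One integrates $\Delta_{t^\ast\lambda}$ to a one-parameter group $\{\Psi_\epsilon\}$ of local Jacobi automorphisms of $(\calG,L,\{-,-\}_\calG)$ covering contactomorphisms $\{\psi_\epsilon\}$; since $\calX_{t^\ast\lambda}$ is $s$-vertical (Proposition~\ref{prop:LW_CDP:tangent_fibers} and Theorem~\ref{theor:Jacobi}), these transport through the action to $w_\epsilon:=\Phi\circ(\psi_\epsilon\circ\moment,\id_M)$ and $W_\epsilon:=\hat{\Phi}\circ(\Psi_\epsilon\circ\hat{\moment},\id_{L_M})$. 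One then checks two things: first, $W_\epsilon^\ast\theta_M=\theta_M$, using Equation~\eqref{eq:contact_action} together with $\Psi_\epsilon^\ast\theta_\calG=\theta_\calG$ \emph{and} the fact that $\calG_0$ is Legendrian in $\calG$, so that $\moment^\ast\theta_\calG=0$ --- an ingredient your sketch never invokes and without which the unwanted term $(\theta_\calG)_{\psi_\epsilon(\bar x)}\circ T\psi_\epsilon\circ T\moment$ does not cancel; second, $\theta_M\bigl(\tfrac{\rmd}{\rmd\epsilon}w_\epsilon(x)\bigr)=(\hat{\moment}^\ast\lambda)_{w_\epsilon(x)}$. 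Together these say that the generator of $\{W_\epsilon\}$ is the infinitesimal contactomorphism with Hamiltonian section $\hat{\moment}^\ast\lambda$, i.e.~$\Delta_{\hat{\moment}^\ast\lambda}$, which is exactly~\eqref{eq:lem:Jacobi_moment_map}. If you prefer to stay infinitesimal, the correct route is to show that the globally defined derivation $\Delta_{t^\ast\lambda}\cdot 0$ of $L_M$ preserves $\theta_M$ and has $\theta_M$-image $\hat{\moment}^\ast\lambda$; either way this computation is the actual content of the lemma and cannot be inferred from \cite[Corollary~5.2]{crainic2015jacobi}, which is the special case of left multiplication.
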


\begin{proof}
	We claim that it is sufficient to prove (the RHS of) Equation~\eqref{eq:lem:Jacobi_moment_map}.
	Indeed, the latter, which is equivalent to $\calX_{\hat{\moment}^\ast \lambda}=\Phi_\ast(\calX_{t^\ast \lambda},0),$
	for all $\lambda\in\Gamma(L_0)$, allows us to compute
	\begin{multline*}
	(\{\hat{\moment}^\ast\lambda_1,\hat{\moment}^\ast\lambda_2\}_M)_{g\cdot x}=(\theta_M)_{g\cdot x}\left(\Phi_\ast([\calX_{t^\ast\lambda_1},\calX_{t^\ast\lambda_2}],0)\right)\overset{\eqref{eq:contact_action}}{=}(\theta_\calG)_g[\calX_{t^\ast\lambda_1},\calX_{t^\ast \lambda_2}]+g\cdot((\theta_M)_x0)\\
	=\left(\{t^\ast \lambda_1,t^\ast\lambda_2\}_\calG\right)_g	=(t^\ast\{\lambda_1,\lambda_2\}_0)_g= (\hat{\moment}^\ast\{\lambda_1,\lambda_2\}_0)_{g\cdot x},
	\end{multline*}
	for all $\lambda_1,\lambda_2\in\Gamma(L_0)$, and $(g,x)\in \calG{}_s{\times}_\moment M$.
	This actually means that, as needed, $\hat{\moment}$ is a Jacobi morphism from $(M,L_M,\{-,-\}_M)$ to $(\calG_0,L_0,\{-,-\}_0)$. 
	
	Fix now an arbitrary section $\lambda\in\Gamma(L_0)$.
	The contact vector field $\calX_{t^\ast\lambda}$ generates a one-parameter group $\{\psi_\epsilon\}_{\epsilon\in\bbR}$ of local contactomorphisms of $(\calG,\calH_G)$.
	Similarly, the Hamiltonian DO $\Delta_{t^\ast\lambda}:={\rm J}_\calG^\sharp(j^1(t^\ast\lambda))$ generates a one-pa\-ram\-e\-ter group $\{\Psi_\epsilon\}_{\epsilon\in\bbR}$ of local Jacobi automorphisms of $(\calG,L,\{-,-\}_\calG\simeq{\rm J}_\calG)$, covering $\{\psi_\epsilon\}_{\epsilon\in\bbR}$.
	In view of Proposition~\ref{prop:LW_CDP:tangent_fibers} and Theorem~\ref{theor:Jacobi}, the $\psi_\epsilon$'s preserve the $s$-fibers of $\calG$, and so we can define a one-parameter group $\{w_\epsilon\}_{\epsilon\in\bbR}$ of local diffeomorphisms of $M$ by setting $w_\epsilon:=\Phi\circ(\psi_\epsilon\circ\moment,\id_M)$, for all $\epsilon\in\bbR$.
	Similarly, we can consider the one-parameter group $\{W_\epsilon\}_{\epsilon\in\bbR}$ of local line bundle automorphisms of $L_M\to M$, covering $\{w_\epsilon\}_{\epsilon\in\bbR}$, which is well-defined by setting $W_\epsilon:=\hat{\Phi}\circ(\Psi_\epsilon\circ\hat{\moment},\id_{L_M})$, for all $\epsilon\in\bbR$.
	In order to complete the proof of Equation~\eqref{eq:lem:Jacobi_moment_map}, it is enough to show that $\{W_\epsilon\}_{\epsilon\in\bbR}$ consists of Jacobi automorphisms of $(M,L_M,\{-,-\}_M\simeq{\rm J}_M)$ and it is generated by $\Delta_{\hat{\moment}^\ast\lambda}:={\rm J}_M^\sharp(j^1(\hat{\moment}^\ast\lambda))$.
	
	Fix $x\in M$, $\epsilon\in\bbR$, and set $\bar{x}:=\moment(x)$.
	Then we can compute:
	\begin{multline}
	(W_\epsilon^\ast\theta_M)_x=(W_\epsilon)_{x}^{-1}((\theta_M)_{w_\epsilon(x)}\circ T_{(\psi_\epsilon(\bar{x}),x)}\Phi\circ(T_{\bar{x}}\psi_\epsilon\circ T_x\moment,\id_{T_xM})\\
	\overset{\eqref{eq:contact_action}}{=}(W_\epsilon)_{x}^{-1}\left((\theta_\calG)_{\psi_\epsilon(\bar{x})}\circ T_{\bar{x}}\psi_\epsilon\circ T_x\moment+(\psi_\epsilon(\bar{x}))\cdot(\theta_M)_x\right)
	=(\theta_M)_x.
	\label{eq:proof:lem:Jacobi_moment_map:Jacobi_automorphism}
	\end{multline}
	In the last two steps above we have used the following two facts: 1) $\{\Psi_\epsilon\}_{\epsilon\in\bbR}$ consists, by construction, of Jacobi automorphisms of $(\calG,L_\calG,\{-,-\}_\calG)$, i.e.~$\Psi_\epsilon^\ast\theta_\calG=\theta_\calG$, and 2) $\calG_0$ is a Legendrian submanifold in $\calG$, so that $\hat{\moment}^\ast\theta_\calG=0$.
	Equation~\eqref{eq:proof:lem:Jacobi_moment_map:Jacobi_automorphism} means exactly	that $\{W_\epsilon\}_{\epsilon\in\bbR}$ consists of Jacobi automorphisms of $(M,L_M,\{-,-\}_M)$.
	Further we can compute:	
	\begin{multline*}
	\theta_M\left(\frac{\rmd}{\rmd \epsilon}w_\epsilon(x)\right)=\theta_M\left(T\Phi((\calX_{t^\ast \lambda})_{\psi_\epsilon(\bar{x})},0_x)\right) \overset{\eqref{eq:contact_action}}{=}{\underbrace{\theta_\calG((\calX_{t^\ast \lambda})_{\psi_\epsilon(\bar{x})})}_{\lambda_{t(\psi_\epsilon(\bar{x}))}}}\;+\;\psi_\epsilon(\bar{x})\cdot(\theta_M(0_x))\overset{\eqref{eq:action_axioms}}{=}\lambda_{\moment(w_\epsilon(x))}=(\hat{\moment}^\ast\lambda)_{w_\epsilon(x)}.
	\end{multline*}
	The latter means that the one-parameter group $\{w_\epsilon\}_{\epsilon\in\bbR}$ of contactomorphisms of $(M,\calH_M)$ is generated by the contact vector field $\calX_{\hat{\moment}^\ast\lambda}$, or equivalently the one-parameter group of Jacobi automorphisms $\{W_\epsilon\}_{\epsilon\in\bbR}$ of $(M,L_M,\{-,-\}_M)$ is generated by the Hamiltonian differential operator $\Delta_{\hat{\moment}^\ast\lambda}$.
	This completes the proof.
\end{proof}


\begin{remark}
	Let us recall that any action $\Phi$, as in Equation~\eqref{eq:action}, of the Lie groupoid $\calG\rightrightarrows\calG_0$ on $M$, with moment map $\moment:M\to\calG_0$, induces an \emph{infinitesimal action}~\cite{kosmann2002differential} of the associated Lie algebroid $A\Rightarrow\calG_0$ on $M$, with moment map $\moment:M\to\calG_0$, denoted by
	\begin{equation}
	\zeta:\Gamma(A)\to\frakX(M),\ a\mapsto\zeta(a),
	\end{equation}
	such that $\zeta(a)_{g\cdot x}:=(a^r)_g\cdot 0_x$, for all $a\in\Gamma(A)$ and $(g,x)\in\calG{}_s{\times}_\moment M$, where the dot $\cdot$ denotes the induced action of $T\calG\rightrightarrows T\calG_0$ on $TM$, with moment map $T\moment:TM\to T\calG_0$.
	Then the action $\Phi$ is called \emph{locally free} at $x\in M$ if the $\bbR$-linear map $\zeta_x:A_{\moment(x)}\to T_xM$, $a_{\moment(x)}\mapsto\zeta(a)_x$, is injective.
	Further recall that, as for any contact groupoid (cf.~\cite[Theorem 1]{crainic2015jacobi}), there exists a canonical isomorphism from the Lie algebroid $A\Rightarrow\calG_0$ of $\calG\rightrightarrows\calG_0$ to the Lie algebroid $J^1L_0\Rightarrow\calG_0$ of the Jacobi manifold $(\calG_0,L_0,\{-,-\}_{\calG_0})$ identifying $\calX_{t^\ast\lambda}$ with $j^1\lambda$, for all $\lambda\in\Gamma(L_0)$.
	Understanding this latter identification, the RHS of Equation~\eqref{eq:lem:Jacobi_moment_map} means that
	\begin{equation}
	\label{eq:infinitesimal_action_at_a_point}
	\zeta(j^1\lambda)=\calX_{\hat{\moment}^\ast\lambda},\quad\text{for all}\ \lambda\in\Gamma(L_0).
	\end{equation}
\end{remark}

The next lemma extends to the non-coorientable case a result first obtained in~\cite[Lemma 3.5]{zambon2006contact}.

\begin{lemma}
	\label{lem:locally_free}
	The contact action $\Phi$ is locally free at $x\in M$ if and only if the following two conditions hold:
	\begin{enumerate}[label=(\arabic*)]
		\item\label{enumitem:lem:locally_free:submersion}
		$\moment:M\to\calG_0$ is a submersion at $x$, and
		\item\label{enumitem:lem:locally_free:transverse}
		$\ker T_x\moment$ is transverse to $\calH_{M,x}$.
	\end{enumerate}
\end{lemma}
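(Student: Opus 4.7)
The plan is to analyze the linear map $\zeta_x \colon A_{\moment(x)} \to T_x M$ explicitly using the identification $A \simeq J^1 L_0$, under which, by Equation~\eqref{eq:infinitesimal_action_at_a_point}, $\zeta_x(j^1_{\moment(x)}\lambda) = \calX_{\hat\moment^\ast \lambda, x}$ for all $\lambda \in \Gamma(L_0)$. I would show that $\ker\zeta_x$ is controlled exactly by the failure of $T_x\moment$ restricted to $\calH_{M,x}$ to be surjective onto $T_{\moment(x)}\calG_0$, and then observe that the latter surjectivity is equivalent to the conjunction of \ref{enumitem:lem:locally_free:submersion} and \ref{enumitem:lem:locally_free:transverse}.

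The first step is to localize to $\ker\operatorname{ev}_{\moment(x)} \subset J^1_{\moment(x)}L_0$. Indeed, since $\theta_M(\calX_{\hat\moment^\ast\lambda,x}) = (\hat\moment^\ast\lambda)_x = \hat\moment_x^{-1}(\lambda_{\moment(x)})$ and $\hat\moment_x$ is a linear isomorphism, any $\xi = j^1_{\moment(x)}\lambda$ lying in $\ker\zeta_x$ forces $\lambda_{\moment(x)} = 0$, so that $\xi$ belongs to the subspace $T^\ast_{\moment(x)}\calG_0 \otimes L_{0,\moment(x)} \subset J^1_{\moment(x)}L_0$. For $\xi = \alpha \otimes \eta$ in this subspace, write $\lambda = f\mu$ locally with $f(\moment(x))=0$, $(df)_{\moment(x)}\otimes\mu_{\moment(x)} = \alpha\otimes\eta$, and apply the DO formula~\eqref{eq:contact_vector_fields:DO} to obtain
\begin{equation*}
\zeta_x(\alpha\otimes\eta) \;=\; \rmc_{\calH_M}^\sharp\bigl(((T_x\moment)^\ast\alpha)|_{\calH_{M,x}} \otimes \hat\moment_x^{-1}(\eta)\bigr).
\end{equation*}
Since $\rmc_{\calH_M}^\sharp$ is an isomorphism, this vanishes iff $\alpha$ annihilates $T_x\moment(\calH_{M,x})$. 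Hence $\zeta_x$ is injective iff the annihilator of $T_x\moment(\calH_{M,x})$ in $T^\ast_{\moment(x)}\calG_0$ is trivial, i.e.\ iff $T_x\moment(\calH_{M,x}) = T_{\moment(x)}\calG_0$.

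It remains to check that this single surjectivity condition is equivalent to \ref{enumitem:lem:locally_free:submersion}~$\wedge$~\ref{enumitem:lem:locally_free:transverse}. If both hold, then $T_x\moment(\calH_{M,x}+\ker T_x\moment) = T_x\moment(\calH_{M,x})$, and the LHS equals $T_{\moment(x)}\calG_0$ by~\ref{enumitem:lem:locally_free:submersion} combined with~\ref{enumitem:lem:locally_free:transverse}. Conversely, if $T_x\moment(\calH_{M,x}) = T_{\moment(x)}\calG_0$, then $\moment$ is a submersion at $x$ (giving~\ref{enumitem:lem:locally_free:submersion}), and for any $v \in T_xM$ one finds $w \in \calH_{M,x}$ with $T_x\moment(v) = T_x\moment(w)$, whence $v - w \in \ker T_x\moment$ and \ref{enumitem:lem:locally_free:transverse} follows.

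The only place where something could go subtly wrong is the well-definedness of the displayed formula for $\zeta_x(\alpha\otimes\eta)$, i.e.\ checking that the answer depends only on $\alpha\otimes\eta$ and not on the chosen decomposition $\lambda = f\mu$; this is the main, though entirely routine, bookkeeping step, and it follows because the symbol of the 1st-order DO $\calX_{(-)}\colon \Gamma(L_M)\to \frakX(M)$ is exactly $\rmc_{\calH_M}^\sharp$ composed with the restriction to $\calH_M$.
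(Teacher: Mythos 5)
Your proof is correct, but it takes a genuinely different route from the paper's. The paper factorizes $\zeta_x$ through the first jet bundle of $L_M$: it writes $\zeta_x=(\sigma\circ\calJ^\sharp)\circ(D_x\hat{\moment})^\ast$, notes that the kernel of $\sigma\circ\calJ^\sharp$ is the line $\langle(\sigma^\ast\theta_M)_x\rangle$, and so splits local freeness into injectivity of $(D_x\hat{\moment})^\ast$ (matched with condition~\ref{enumitem:lem:locally_free:submersion}) plus triviality of $\langle(\sigma^\ast\theta_M)_x\rangle\cap\im[(D_x\hat{\moment})^\ast]$ (matched with condition~\ref{enumitem:lem:locally_free:transverse} by passing to annihilators under the $L_M$-valued duality pairing). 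You instead compute $\zeta_x$ fibrewise on $J^1_{\moment(x)}L_0$ using the jet sequence and the first-order DO formula~\eqref{eq:contact_vector_fields:DO}: the evaluation part of a jet in the kernel is killed by $\theta_M(\calX_{\hat{\moment}^\ast\lambda})=\hat{\moment}^\ast\lambda$, and on the cotangent part $\zeta_x$ acts as $\rmc_{\calH_M}^\sharp$ applied to $((T_x\moment)^\ast\alpha)|_{\calH_{M,x}}\otimes\hat{\moment}_x^{-1}(\eta)$, so injectivity collapses to the single surjectivity statement $T_x\moment(\calH_{M,x})=T_{\moment(x)}\calG_0$, which you then correctly unpack as the conjunction of~\ref{enumitem:lem:locally_free:submersion} and~\ref{enumitem:lem:locally_free:transverse}. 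Your route is more elementary (no der-complex or Atiyah forms, only the curvature isomorphism $\rmc_\calH^\sharp$), and the intermediate reformulation as one surjectivity condition is a pleasant by-product; the paper's route keeps the two conditions separated from the outset and stays inside the gauge-algebroid formalism used throughout the rest of the paper. The well-definedness issue you flag at the end is in fact automatic: when $f(\moment(x))=0$ the jet $j^1_{\moment(x)}(f\mu)$ already depends only on $\alpha\otimes\eta$, and $\zeta_x$ is defined on jets, so no further bookkeeping is required.
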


\begin{proof}
	Fix $x\in M$, and set $\bar{x}:=\moment(x)\in\calG_0$.
	Equation~\eqref{eq:infinitesimal_action_at_a_point} allows us to factorize $\zeta_x:A_{\bar{x}}\simeq J^1_{\bar{x}}L_0\to T_xM$, $a_{\bar{x}}\mapsto(\zeta_a)_x$, as follows
	\begin{equation}
	\begin{tikzcd}
	A_{\bar{x}}\simeq J^1_{\bar{x}}L_0\ar[rr, "\zeta_x"]\ar[rd, swap, "(D_x\hat{\moment})^\ast"]&&T_xM\\
	&J^1_xL_M\ar[ur, two heads, swap, "\sigma\circ{\calJ}^\sharp"]&
	\end{tikzcd}
	\end{equation}
	where $(D_x\hat{\moment})^\ast$ denotes the $\bbR$-linear map $J^1_{\bar{x}}L_0\simeq((DL_0)^\ast\otimes L_0)_{\bar{x}}\to J^1_xL_M\simeq((DL_M)^\ast\otimes L_M)_x$, $j^1_{\bar{x}}\lambda\mapsto j^1_x(\hat{\moment}^\ast\lambda)$.
	Moreover, we denote by $\sigma:DL_M\to TM$ the symbol map and by ${\calJ}\in\calD^2L_M\equiv\Gamma(\wedge^2(J^1L_M)^\ast\otimes L_M)$ the non-degenerate Jacobi bi-DO on $L_M\to M$ corresponding to the contact structure on $M$ (cf.~Proposition~\ref{prop:non-degenerateJacobi=contact}).	
	Hence, since $\ker\sigma=\langle\mathbbm{1}\rangle$ and, by construction, $\calJ^\sharp(\sigma^\ast\theta)=\mathbbm{1}$,	we get that the contact groupoid action $\Phi$ is locally free at $x$ if and only if the following conditions hold:
	\begin{enumerate}[label=(\arabic*${}^\prime$)]
		\item\label{enumitem:proof:lem:locally_free:submersion}
		$(D_x\hat{\moment})^\ast:J^1_{\bar{x}}L_0\to J^1_xL_M$ is injective, and
		\item\label{enumitem:proof:lem:locally_free:transverse}
		the intersection of $\langle(\sigma^\ast\theta_M)_x\rangle$ and $\im[(D_x\hat{\moment})^\ast]$ is trivial, i.e.~$\langle(\sigma^\ast\theta_M)_x\rangle\cap\im[(D_x\hat{\moment})^\ast]=\{0_x\}\subset J^1_xL_M$.
	\end{enumerate}
	In order to conclude the proof, it will be sufficient to prove that conditions~\ref{enumitem:proof:lem:locally_free:submersion} and~\ref{enumitem:proof:lem:locally_free:transverse} are equivalent to conditions~\ref{enumitem:lem:locally_free:submersion} and~\ref{enumitem:lem:locally_free:transverse} respectively.
	The equivalence of conditions~\ref{enumitem:proof:lem:locally_free:submersion} and~\ref{enumitem:lem:locally_free:submersion} can be easily checked, e.g.~working with local frames and local coordinates, so we will concentrate on the remaining equivalence.
	
	The natural $L_M$-valued pairing between $J^1L_M$ and $DL_M$ allows us to write
	\begin{equation*}
	\label{eq:proof:lem:locally_free:transverse:bis}
	\{\langle(\sigma^\ast\theta_M)_x\rangle\cap\im((D_x\hat{\moment})^\ast)\}^\circ=\langle(\sigma^\ast\theta_M)_x\rangle^\circ+\{\im[(D_x\hat{\moment})^\ast]\}^\circ=\sigma^{-1}(\ker\theta_M)_x+\ker D_x\hat{\moment}=\sigma^{-1}((\calH_M)_x+\ker T_x\moment).
	\end{equation*}
	Since $\sigma:DL_M\to TM$ is surjective, the latter shows that $\langle(\sigma^\ast\theta_M)_x\rangle$ and $\im[(D_x\hat{\moment})^\ast]$ intersect trivially in $J^1_xL_M$ if and only if $(\calH_M)_x$ and $\ker T_x\moment$ are transverse in $T_xM$.
	This proves the remaining equivalence of conditions~\ref{enumitem:proof:lem:locally_free:transverse} and~\ref{enumitem:lem:locally_free:transverse}, and so concludes the proof.
\end{proof}

The next lemma extends to the non-coorientable case a result first obtained in~\cite[Lemma~4.5]{zambon2006contact}.

\begin{lemma}
	\label{lem:Legendrian_bisections}
	Let $\Sigma$ be a local Legendrian bisection of $\calG\rightrightarrows\calG_0$, with $\underline{\smash{r_\Sigma}}:=t\circ\Sigma$ the induced local diffeomorphism of $\calG_0$.
	Consider the local line bundle automorphism $\hat r_\Sigma$ of $L_M\to M$, covering the local diffeomorphism $r_\Sigma$ of $M$, which are defined by $r_\Sigma:=\Phi\circ(\Sigma\circ\moment,\id_M)$ and $\hat r_\Sigma:=\hat{\Phi}\circ(\Sigma\circ\moment\circ\pi,\id_{L_M})$.
	Then $\hat r_\Sigma$ is a local Jacobi automorphism of $(M,L_M,\{-,-\}_M)$, or, equivalently, $r_\Sigma$ is a contactomorphism of $(M,\calH_M)$, i.e.~locally
	\begin{equation*}
	\hat r_\Sigma^\ast\theta_M=\theta_M.
	\end{equation*}
\end{lemma}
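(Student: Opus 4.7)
The plan is to derive the identity $\hat r_\Sigma^\ast\theta_M=\theta_M$ directly from the defining equation of a contact action (Equation~\eqref{eq:contact_action_bis}), specialized along the graph map $x\mapsto(\Sigma(\moment(x)),x)\colon M\to\calG{}_s{\times}_\moment M$. The Legendrian hypothesis on $\Sigma$, namely $\Sigma^\ast\theta_\calG=0$, will kill one of the two terms on the right-hand side of~\eqref{eq:contact_action_bis}, and the remaining term will, after unwinding the identifications provided by the fat moment map, turn out to encode precisely the action of the line bundle isomorphism $\hat r_\Sigma$ on $\theta_M$.

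Concretely, first I would fix $x\in M$, set $g:=\Sigma(\moment(x))$ and denote by $\gamma_\Sigma\colon M\to\calG{}_s{\times}_\moment M,\ x\mapsto(\Sigma(\moment(x)),x)$, so that $r_\Sigma=\Phi\circ\gamma_\Sigma$. Pulling back~\eqref{eq:contact_action_bis} along $\gamma_\Sigma$ and composing with the identification $\hat\moment$ (under which~\eqref{eq:contact_action_bis} is literally an equality of $L_0$-valued forms, cf.~Remark~\ref{rem:def:contact_action}) yields, for every $u\in T_xM$,
\begin{equation*}
\hat\moment\bigl((\theta_M)_{r_\Sigma(x)}(T_xr_\Sigma\cdot u)\bigr)
= (\moment^\ast\Sigma^\ast\theta_\calG)_x(u) + g\cdot\hat\moment\bigl((\theta_M)_x(u)\bigr).
\end{equation*}
The Legendrian condition on $\Sigma$ forces the first summand to vanish, so the equation reduces to $\hat\moment\circ(r_\Sigma^\ast\theta_M^{\text{naive}}) = g\cdot(\hat\moment\circ\theta_M)$, where by $r_\Sigma^\ast\theta_M^{\text{naive}}$ I mean the ordinary pullback of the $L_M$-valued form along the smooth map $r_\Sigma$. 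Next I would invoke the defining identity~\eqref{eq:rem:lifted_action} of the fat action: for the unique $\lambda_g\in L_g$ with $S(\lambda_g)=\hat\moment((\theta_M)_x(u))$, one has $g\cdot\hat\moment((\theta_M)_x(u))=\hat\moment\bigl(\hat\Phi(\lambda_g,(\theta_M)_x(u))\bigr)=\hat\moment\bigl(\hat r_\Sigma((\theta_M)_x(u))\bigr)$, the last equality being the very definition of $\hat r_\Sigma$. Since $\hat\moment$ is a regular line bundle morphism, hence fibrewise invertible, I can cancel it and obtain
\begin{equation*}
(\theta_M)_{r_\Sigma(x)}(T_xr_\Sigma\cdot u)=\hat r_\Sigma\bigl((\theta_M)_x(u)\bigr),
\end{equation*}
which in view of Remark~\ref{rem:pull-back_VB_valued_forms} is exactly the statement $\hat r_\Sigma^\ast\theta_M=\theta_M$.

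The only delicate point is the correct interpretation of the notation $\hat r_\Sigma=\hat\Phi\circ(\Sigma\circ\moment\circ\pi,\id_{L_M})$: the first component must be read as taking values in $L$ rather than $\calG$, via the unique fibrewise lift dictated by the compatibility $S(\lambda_g)=\hat\moment(\lambda_x)$ built into the domain of $\hat\Phi$. Once this bookkeeping is in place, the argument is essentially a direct substitution. I would also briefly verify that $r_\Sigma$ and $\hat r_\Sigma$ are genuine local diffeomorphism and local line bundle automorphism (this follows from the fact that $\Sigma$ is a bisection and $\hat\Phi$ is a regular line bundle morphism), but these are routine checks rather than genuine obstacles; the heart of the proof is the one-line cancellation enabled by the Legendrian hypothesis.
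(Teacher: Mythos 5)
Your proof is correct and follows essentially the same route as the paper: expand $T_xr_\Sigma$ via the chain rule, apply the contact action identity~\eqref{eq:contact_action_bis}, kill the $\theta_\calG$-term by the Legendrian hypothesis on $\Sigma$, and recognize the surviving term $g\cdot(\theta_M)_x$ as the fibrewise action of $\hat r_\Sigma$. The only (harmless) difference is presentational: you make the last identification explicit by routing it through $\hat\moment$ and the defining property~\eqref{eq:rem:lifted_action} of the fat action, whereas the paper simply applies $(\hat r_\Sigma)_x^{-1}$ directly to $(\Sigma(\bar x))\cdot(\theta_M)_x$.
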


\begin{proof}
	Fix an arbitrary $x\in M$ belonging to the domain of $r_\Sigma$, and set $\bar{x}:=\moment(x)$.
	Then we can compute:
	\begin{multline*}
	(\hat{r}_\Sigma^\ast\theta_M)_x=(\hat{r}_\Sigma)_x^{-1}(\theta_M\circ T_xr_{\Sigma})=(\hat{r}_\Sigma)_x^{-1}(\theta_M\circ T\Phi\circ(T_{\bar{x}}\Sigma\circ T_x\moment,\id_{T_xM}))\\
	=(\hat{r}_\Sigma)_x^{-1}(\cancel{(\theta_\calG)_{\Sigma\bar{x}}\circ T_{\bar{x}}\Sigma\circ T_x\moment}\,+\,(\Sigma(\bar{x}))\cdot(\theta_M)_x)=(\theta_M)_x,
	\end{multline*}
	where, just in the last step, we used the fact that $\Sigma$ is a Legendrian embedding in $\calG$.
\end{proof}

\begin{lemma}
	Through any point of the contact groupoid $\calG\rightrightarrows\calG_0$ there exists a local Legendrian bisection.
\end{lemma}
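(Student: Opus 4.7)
The plan is threefold: first, set up the dimensions and translate the problem into a linear-algebra question in $(\calH_g,\rmc_\calH)$; then, choose a Lagrangian subspace of $\calH_g$ with the required double transversality; finally, integrate it to a Legendrian submanifold via a contact Darboux chart and verify that this submanifold is a local bisection.

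I would start by fixing $g\in\calG$ and writing $\dim\calG=2n+1$. The transversality $\calH+\ker Ts=T\calG=\calH+\ker Tt$ from~\eqref{eq:CDP_from_contact_grpd:1st_condition}, combined with the orthogonality $\calH^s=(\calH^t)^{\perp\rmc_\calH}$ from~\eqref{eq:CDP_from_contact_grpd:3rd_condition}, immediately forces $\dim\calG_0=n$ (one gets $\dim\calH^s_g+\dim\calH^t_g=2n$ from the orthogonality and $\dim\calH^s_g=\dim\calH^t_g=2n-\dim\calG_0$ from the transversality), and hence $\dim\calH^s_g=\dim\calH^t_g=n$, where $\calH^s_g:=\calH_g\cap\ker T_gs$ and $\calH^t_g:=\calH_g\cap\ker T_gt$. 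So the problem reduces to finding a Lagrangian subspace $W\subset(\calH_g,\rmc_\calH)$ additionally satisfying $W\cap\calH^s_g=0=W\cap\calH^t_g$; once such a $W$ is exhibited, any Legendrian submanifold $\Sigma$ through $g$ with $T_g\Sigma=W$ will automatically be transverse to both $\ker T_gs$ and $\ker T_gt$, because $W\subset\calH_g$ implies $W\cap\ker T_gs=W\cap\calH^s_g=0$, and similarly for $t$.

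For the existence of $W$, I would invoke the standard fact that, in any $2n$-dimensional symplectic vector space, the set of Lagrangians transverse to a given subspace of dimension at most $n$ forms a dense open subset of the Lagrangian Grassmannian, so the intersection of the two dense open conditions (transversality with $\calH^s_g$ and with $\calH^t_g$) is still non-empty. To build $\Sigma$ from $W$, I would apply the contact Darboux theorem to produce local coordinates $(x^1,\ldots,x^n,y_1,\ldots,y_n,z)$ near $g$ in which $\theta_\calG$ takes the standard form $\rmd z-\sum_i y_i\,\rmd x^i$, with $g$ at the origin and $W=\operatorname{span}\{\partial/\partial x^1,\ldots,\partial/\partial x^n\}$; the coordinate submanifold $\Sigma:=\{y_1=\cdots=y_n=z=0\}$ is then Legendrian with $T_g\Sigma=W$. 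Shrinking $\Sigma$ so that both $s|_\Sigma$ and $t|_\Sigma$ are diffeomorphisms onto open subsets of $\calG_0$ (which is possible since they are local diffeomorphisms at $g$, by the dimension match $\dim W=n=\dim\calG_0$) yields the desired local Legendrian bisection through $g$.

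The main obstacle, and really the only non-routine step, is the linear-algebraic construction of $W$: unlike the familiar situation where one seeks a Lagrangian complement to a single Lagrangian, here $\calH^s_g$ and $\calH^t_g$ are only forced to be mutual $\rmc_\calH$-orthogonal complements—they need not themselves be Lagrangian, nor transverse to each other. Establishing the existence of a Lagrangian simultaneously transverse to both subspaces reduces to the density-of-transverse-Lagrangians fact from symplectic linear algebra, after which the rest of the argument is routine Darboux manipulation.
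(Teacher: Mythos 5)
Your proof is correct and follows essentially the same route as the paper's: pick a Lagrangian subspace of $(\calH_g,\rmc_{\calH})$ transverse to both $\calH_g\cap\ker T_gs$ and $\calH_g\cap\ker T_gt$, then use the contact Darboux theorem to realize it as the tangent space of a coordinate Legendrian submanifold, which is then automatically a local bisection. The only difference is that you explicitly justify the existence of the doubly transverse Lagrangian via the density of transverse Lagrangians in the Lagrangian Grassmannian, a step the paper simply asserts.
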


	\begin{proof}
	Fix arbitrarily $g\in\calG$.
	We will prove that there exists a local Legendrian bisection of $\calG\rightrightarrows\calG_0$ passing through $g$.
	Let us choose a Lagrangian subspace $V$ of the symplectic linear space $((\calH_\calG)_g,(\rmc_{\calH_\calG})_g)$ which is transverse to both $(\calH_\calG\cap\ker Ts)_g$ and  $(\calH_\calG\cap\ker Tt)_g$.
	By Darboux theorem for contact manifolds, there exist local coordinates $(x^i,u,y_i)$ on $\calG$, centered at $g$, such that $\calH_\calG=\ker\left(\rmd u-y_i\rmd x^i\right)$ locally around $g$, and $V=\langle\left.\frac{\partial}{\partial y_1}\right|_{g},\ldots,\left.\frac{\partial}{\partial y_n}\right|_{g}\rangle$.
	Therefore $u=x^1=\ldots=x^n=0$ defines a Legendrian submanifold of $\calG$ which is transverse to both the $s$-fibers and the $t$-fibers locally around $g$.
\end{proof}

\begin{proof}[Proof of Proposition~\ref{prop:quotient_Jacobi_structure}]
	First let us recall that, as $C^\infty(M/\calG)\simeq q^\ast(C^\infty(M/\calG))$ coincides with the space $C^\infty(M)^\calG\subset C^\infty(M)$ of $\calG$-invariant functions on $M$, similarly $\Gamma(L_M/\calG)\simeq \hat{q}^\ast(\Gamma(L_M/\calG))$ coincides with the space $\Gamma(L_M)^\calG\subset\Gamma(L_M)$ of $\calG$-invariant sections of $L_M\to M$.
	
	In the following, we will show that $\Gamma(L_M)^\calG$ is a Lie subalgebra of $(\Gamma(L_M),\{-,-\}_M)$.
	As a consequence, for any $\lambda\in\Gamma(L_M)^\calG$, the associated contact vector field $\calX_\lambda$ will be $q$-projectable.
	Hence, the Lie bracket $\{-,-\}_{M/\calG}$ induced by $\{-,-\}_M$ on $\Gamma(L_M/\calG)$ will be automatically a bi-DO, i.e.~it is actually a Jacobi structure on $L_M/\calG\to M/\calG$.
	
	It is easy to see that, for any $\lambda\in\Gamma(L_M)$, the following conditions are equivalent:
	\begin{enumerate}[label=(\arabic*)]
		\item $\lambda$ is $\calG$-invariant, i.e.~$\lambda\in\hat{q}^\ast[\Gamma(L_M/\calG)]\simeq\Gamma(L_M)^\calG\subset\Gamma(L_M)$,
		\item $\hat{r}_\Sigma^\ast\lambda=\lambda$, for all local Legendrian bisections $\Sigma$ of $\calG\rightrightarrows\calG_0$.
	\end{enumerate}
	Further, Lemma~\ref{lem:Legendrian_bisections} guarantees that, for all sections $\lambda_1,\lambda_2\in\Gamma(L_M)$, and all local Legendrian bisections $\Sigma$, if $\hat{r}_\Sigma^\ast\lambda_1=\lambda_1$ and $\hat{r}_\Sigma^\ast\lambda_2=\lambda_2$, then we also have
	$\hat{r}_\Sigma^\ast\{\lambda_1,\lambda_2\}_M=\{\hat{r}_\Sigma^\ast \lambda_1,\hat{r}_\Sigma^\ast\lambda_2\}_M=\{\lambda_1,\lambda_2\}_M$.
	So $\Gamma(L_M)^\calG$ is closed w.r.t.~$\{-,-\}_M$, and this concludes the proof.
\end{proof}

\begin{lemma}
	\label{lem:centralizers}
	Let the contact action $\Phi$ be free and proper.
	Assume further that $\calG$ is source-connected.
	Then the subspaces $\hat{q}^\ast(\Gamma(L_M/\calG)),\hat{\moment}^\ast(\Gamma(L_0))\subset\Gamma(L_M)$ centralize each other w.r.t.~$\{-,-\}_M$, i.e.
	\begin{equation}
	\label{eq:lem:centralizers}
	\hat{q}^\ast(\Gamma(L_M/\calG))=[\hat{\moment}^\ast(\Gamma(L_0))]^c\quad\text{and}\quad \hat{\moment}^\ast(\Gamma(L_0))=[\hat{q}^\ast(\Gamma(L_M/\calG))]^c.
	\end{equation}
\end{lemma}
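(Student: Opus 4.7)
The plan is to prove the two centralizer identities in Equation~\eqref{eq:lem:centralizers} separately: (A) $\hat q^\ast(\Gamma(L_M/\calG))=[\hat\moment^\ast(\Gamma(L_0))]^c$, and (B) $\hat\moment^\ast(\Gamma(L_0))=[\hat q^\ast(\Gamma(L_M/\calG))]^c$. The first identity will rely crucially on Lemma~\ref{lem:Jacobi_moment_map} and source-connectedness of $\calG$; the second will then follow from (A) for one inclusion by skew-symmetry, while the other inclusion requires a more delicate local bi-DO analysis.

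For (A), the strategy is to use the key formula $(\calX_{\hat\moment^\ast\lambda})_{g\cdot x}=(\calX_{t^\ast\lambda})_g\cdot 0_x$ from Equation~\eqref{eq:lem:Jacobi_moment_map} to identify the flow of the Hamiltonian DO $\Delta_{\hat\moment^\ast\lambda}$ on $L_M$ with $\hat r_{\Sigma_t}$, where $\{\Sigma_t\}$ is the $1$-parameter family of local Legendrian bisections of $\calG$ through the unit section $u:\calG_0\to\calG$ obtained by flowing $u$ along the right-invariant contact vector field $\calX_{t^\ast\lambda}\in\mathfrak{X}(\calG,\calH_\calG)^r$. By Lemma~\ref{lem:Legendrian_bisections}, each $\hat r_{\Sigma_t}$ is a local Jacobi automorphism of $(M,L_M,\{-,-\}_M)$. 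The forward inclusion is then immediate, since any $\calG$-invariant section is preserved by every $\hat r_{\Sigma_t}$ and hence killed by $\Delta_{\hat\moment^\ast\lambda}$. For the reverse inclusion, source-connectedness of $\calG$ allows me to write every arrow of $\calG$ as a finite product of elements on such $1$-parameter families through $u$ (via the Lie algebroid identification $A_\calG\simeq J^1L_0$ from \cite[Theorem~1]{crainic2015jacobi} and the standard exponential generation argument for source-connected Lie groupoids), so a section $\sigma$ invariant under all such flows is automatically $\calG$-invariant.

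For (B), the inclusion $\hat\moment^\ast(\Gamma(L_0))\subset[\hat q^\ast(\Gamma(L_M/\calG))]^c$ follows immediately from (A) and the skew-symmetry of $\{-,-\}_M$. For the reverse inclusion, given $\sigma\in[\hat q^\ast(\Gamma(L_M/\calG))]^c$, I would proceed locally. Picking a local $\calG$-invariant frame $\nu=\hat q^\ast\bar\nu$ of $L_M$ (which exists since $L_M/\calG\to M/\calG$ is a line bundle), I decompose $\sigma=f\nu$. For every $\calG$-invariant function $g$, the bi-DO expansion $\{\sigma,g\nu\}_M=\calX_\sigma(g)\nu+g\{\sigma,\nu\}_M=0$ yields, with $g=1$, the identity $\{\sigma,\nu\}_M=0$, and then $\calX_\sigma(g)=0$ for every $\calG$-invariant $g$, i.e.~$\calX_\sigma$ is tangent to the $\calG$-orbits $\ker Tq$. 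Switching to a frame $\tilde\nu=\hat\moment^\ast\mu_0$ of $L_M$ (where $\mu_0$ is a local frame of $L_0$) and re-expanding $\sigma=\tilde h\tilde\nu$, these two infinitesimal conditions translate, via the bi-DO property, into the statement that $\tilde h$ is locally constant along $\moment$-fibers, exhibiting $\sigma=\hat\moment^\ast\mu$ locally. Global well-definedness of $\mu\in\Gamma(L_0)$ then follows from the connectedness of the $\moment$-fibers, which comes from source-connectedness of $\calG$ together with the contact-action transversality conditions (Proposition~\ref{prop:contact_action}, Lemma~\ref{lem:locally_free}) and the dimensional identity $\dim\calG=2\dim\calG_0+1$ characterizing contact groupoids.

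The hard part will be the reverse inclusion of (B), where one must translate the purely Lie-algebraic centralizer condition ``$\Delta_\sigma$ annihilates every $\calG$-invariant section'' into the structural geometric conclusion $\sigma\in\hat\moment^\ast\Gamma(L_0)$. The technical heart is the careful bookkeeping between the two natural local frames of $L_M$---the $\calG$-invariant frame $\nu$ adapted to $\hat q$, and the $\hat\moment$-pullback frame $\tilde\nu$ adapted to $\hat\moment$---whose interrelation is controlled via the bi-DO expansion involving $\calX_\sigma$.
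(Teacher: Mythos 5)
Your part (A) is essentially the paper's own proof: the paper establishes exactly the identity $\hat q^\ast(\Gamma(L_M/\calG))=[\hat\moment^\ast(\Gamma(L_0))]^c$ via the chain ``$\lambda$ is $\calG$-invariant $\Leftrightarrow$ $\lambda$ is invariant under the one-parameter groups generated by the $\Delta_{\hat\moment^\ast\mu}$ $\Leftrightarrow$ $\{\hat\moment^\ast\mu,\lambda\}_M=0$ for all $\mu$'', using Lemma~\ref{lem:Jacobi_moment_map}, the identity $\ker Ts=\operatorname{span}\{\calX_{t^\ast\mu}\}$ from Proposition~\ref{prop:LW_CDP:tangent_fibers} and Theorem~\ref{theor:Jacobi}, source-connectedness, and Proposition~\ref{prop:DO_infinitesimal_LB_automorphisms}. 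Your route through the bisections $\Sigma_\epsilon=\psi_\epsilon\circ u$ and the maps $\hat r_{\Sigma_\epsilon}$ is the same argument in slightly different clothing.

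The divergence, and the gap, is in (B). The paper never proves the second identity directly: it invokes Remark~\ref{rem:Howe->Weinstein}, whereby one centralizer identity, transversality, surjectivity of the submersions and the dimension relation $1+\dim(M/\calG)+\dim\calG_0=\dim M$ force $(\ker D\hat q)^{\perp\varpi}=\ker D\hat\moment$ by a rank count, so the diagram is a contact dual pair and the other identity comes from Proposition~\ref{prop:relation_Howe_Weinstein}. In your direct argument for $[\hat q^\ast(\Gamma(L_M/\calG))]^c\subset\hat\moment^\ast(\Gamma(L_0))$, the crux is unproved: from the centralizer condition you only get that $\tilde h$ is constant along $\operatorname{span}\{\calX_\lambda:\lambda\in\Gamma(L_M)^\calG\}$ (equivalently, that $\sigma$ is killed by $\operatorname{span}\{\Delta_\lambda:\lambda\ \calG\text{-invariant}\}\subset\ker D\hat\moment$). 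To conclude constancy along the $\moment$-fibers you must show this span exhausts $\ker T\moment$ (resp.\ $\ker D\hat\moment$), which is precisely the rank count $\dim(\ker D\hat q)^\circ=1+\dim(M/\calG)=\dim\ker D_x\hat\moment$ furnished by the dimension relation; the phrase ``via the bi-DO property'' does not supply it, and without it the argument does not close. Separately, your globalization step asserts that the $\moment$-fibers are connected ``from source-connectedness''; source-connectedness gives connected $\calG$-orbits, i.e.\ connected $q$-fibers, and no implication for $\moment$-fibers is offered. (The paper's reduction through Remark~\ref{rem:Howe->Weinstein} also tacitly requires connected $\moment$-fibers, so this last point is a shared weakness rather than one you introduced, but you present it as a consequence of stated facts, which it is not.)
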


\begin{proof}
	In view of the dimensional relation $1+\dim(M/\calG)+\dim\calG_0=\dim M$, it is enough to prove only one component of Equation~\eqref{eq:lem:centralizers} (cf.~Remark~\ref{rem:Howe->Weinstein}).
	Specifically, we will restrict to prove its RHS, namely that $\hat{q}^\ast(\Gamma(L_M/\calG))$ is the centralizer of $\hat{\moment}^\ast(\Gamma(L_0))$ in $(\Gamma(L_M),\{-,-\}_M)$.	
	As recalled above, the sections of $L/\calG\to M/\calG$ are identified by pull-back via $\hat{q}$ to the $\calG$-invariant sections of $L_M\to M$.
	So we are going to prove that a section $\lambda\in\Gamma(L_M)$ is $\calG$-invariant if and only if $\lambda$ belongs to the centralizer of $\hat{\moment}^\ast(\Gamma(L_0))$.

	In view of Proposition~\ref{prop:LW_CDP:tangent_fibers} and Theorem~\ref{theor:Jacobi}, we have that $\ker Ts=\text{span}\{\calX_{t^\ast\lambda'}: \lambda'\in\Gamma(L_0)\}$, and additionally we have assumed that $\calG$ is source-connected.
	Hence Lemma~\ref{lem:Jacobi_moment_map}, and specifically Equation~\eqref{eq:lem:Jacobi_moment_map}, implies that, for any $\lambda\in\Gamma(L_M)$, the following two conditions are equivalent:
	\begin{enumerate}[label=\alph*)]
		\item\label{enumitem:proof:lem:centralizers:1}
		$\lambda$ is $\calG$-invariant, i.e.~$\lambda_{g\cdot x}=g\cdot\lambda_x$, for all $(g,x)\in \calG{}_s{\times}_\moment M$,
		\item\label{enumitem:proof:lem:centralizers:2}
		$\lambda$ is invariant under the one-parameter group $\psi_t$ of local line bundle automorphisms of $L_M\to M$ generated by $\Delta_{\hat{\moment}^\ast\mu}$, for all $\mu\in\Gamma(L_0)$.
		\setcounter{proof:lem:centralizers}{\value{enumi}}
	\end{enumerate}
	As a consequence of Proposition~\ref{prop:DO_infinitesimal_LB_automorphisms}, Condition~\ref{enumitem:proof:lem:centralizers:2} can be equivalently rewritten as
	\begin{enumerate}[label=\alph*)]
		\setcounter{enumi}{\value{proof:lem:centralizers}}
		\item $\lambda$ commutes with $\hat{\moment}^\ast\mu$ in $(\Gamma(L_M),\{-,-\}_M)$, for all $\mu\in\Gamma(L_0)$.
	\end{enumerate}
	The latter means exactly that $\lambda$ belongs to the centralizer of $\hat{\moment}^\ast(\Gamma(L_0))$ in $(\Gamma(L_M),\{-,-\}_M)$, and so this concludes the proof.
\end{proof}

\begin{proof}[Proof of Theorem~\ref{theor:CDP_contact groupoid_action}]
	In view of Proposition~\ref{prop:relation_Howe_Weinstein}~\ref{item:Howe->Weinstein} and the dimensional relation $1+\dim(M/\calG)+\dim\calG_0=\dim M$, it is enough to prove that diagram~\eqref{eq:theor:CDP_contact_groupoid_action} forms a full Howe contact dual pair.
	
	The quotient bundle map $\hat{q}:L_M\to L_M/\calG$ and the moment bundle map $\hat{\moment}:L_M\to L_0$ are Jacobi morphisms in view of Proposition~\ref{prop:quotient_Jacobi_structure} and Lemma~\ref{lem:Jacobi_moment_map} respectively.
	Since the groupoid action is free and proper,the quotient map $q:M\to M/\calG$ is a surjective submersion and the moment map $\moment:M\to\calG_0$ is a submersion (see Condition~\ref{enumitem:lem:locally_free:submersion} in Lemma~\ref{lem:locally_free}).
	
	We prove now that $\ker T\moment$ and $\ker Tq$ are both transverse to $\calH_M$.
	Since the contact groupoid action is free (and a fortiori locally free), the fibers of the moment map $\moment:M\to\calG_0$ are transverse to $\calH_M$ (see condition~\ref{enumitem:lem:locally_free:transverse} in Lemma~\ref{lem:locally_free}).
	As a consequence of Lemma~\ref{lem:Jacobi_moment_map}, and in particular (the RHS of) Equation~\eqref{eq:lem:Jacobi_moment_map}, we get that, for any section $\lambda\in\Gamma(L_0)$, the contact vector field $\calX_{\hat{\moment}^\ast\lambda}$ is tangent to the $\calG$-orbits,
	and so the fibers of the quotient map $q:M\to M/\calG$ are transverse to $\calH_M$.
	
	Finally, Lemma~\ref{lem:centralizers} assures that $\hat{q}^\ast(\Gamma(L_M/\calG))$ and $\hat{\moment}^\ast(\Gamma(L_0))$ centralize each other in $\Gamma(L_M)$ w.r.t.~$\{-,-\}_M$.
	So diagram~\eqref{eq:theor:CDP_contact_groupoid_action} is a full Howe contact dual pair, and this concludes the proof.
\end{proof}

\appendix

\section{Differential Operators and Atiyah Forms of a Line Bundle}
\label{sec:differential_operators}

This Appendix sets the notation and collects some basic facts about jets and differential operators (DOs) of a line bundle $L\to M$.
In particular, it provides a quick review of what represents the conceptual background of the line bundle approach to Jacobi (and Dirac--Jacobi) geometry.
Therefore, it describes the gauge algebroid (and the omni-Lie algebroid) of $L$, with the associated graded Lie algebra of multi-DOs on $L$ and the der-complex of $L$-valued Atiyah forms.
Our main references for this material are~\cite{LOTV,tortorella2017phdthesis,vitagliano2018djbundles}.

\subsection{The Gauge Algebroid}
\label{sec:gauge_algebroid}

Given a line bundle $L\to M$, we recall that a \emph{first order linear differential operator (DO)} from $L$ to $L$ can be seen as a $\bbR$-linear map $\square:\Gamma(L)\to\Gamma(L)$ such that, for some (necessarily unique) $X\in\frakX(M)$, the following Leibniz rule holds
\begin{equation*}
\square(f\lambda)=X(f)\lambda+f\square\lambda,
\end{equation*}
for all $f\in C^\infty(M)$ and $\lambda\in\Gamma(L)$.
The vector field $X$ is called the \emph{symbol} of $\square$ and is denoted by $\sigma_\square$, or also $\sigma(\square)$.
In this paper, all the DOs are assumed to be first order linear.
As stated in the next proposition, the DOs from $L$ to $L$ can also be viewed as infinitesimal line bundle automorphisms of $L\to M$.

\begin{proposition}[{\cite[Lemma 2.2]{ETV2016}}]
	\label{prop:DO_infinitesimal_LB_automorphisms}
	For any line bundle $L\to M$, the following relation
	\begin{equation}
	\square\lambda=\left.\frac{\rmd}{\rmd\epsilon}\right|_{\epsilon=0}\Phi_\epsilon^\ast\lambda,\quad\text{for all}\ \lambda\in\Gamma(L),
	\end{equation}
	establishes a 1-1 correspondence between DOs $\square$ from $L$ to $L$ and one-parameter groups of local line bundle automorphisms $\{\Phi_\epsilon\}_{\epsilon\in\bbR}$ of $L\to M$.
\end{proposition}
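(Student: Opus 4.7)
The plan is to prove the two directions of the correspondence separately, and then observe that they are mutually inverse.

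For the forward direction, given a one-parameter group $\{\Phi_\epsilon\}$ of local line bundle automorphisms of $L\to M$ covering the flow $\{\varphi_\epsilon\}$ of some $X\in\frakX(M)$, I would define $\square\lambda:=\frac{\rmd}{\rmd\epsilon}|_{\epsilon=0}\Phi_\epsilon^\ast\lambda$ for $\lambda\in\Gamma(L)$. Working in a local trivialization of $L$ shows that the derivative exists pointwise and depends smoothly on the base point, hence $\square\lambda\in\Gamma(L)$. The map $\square$ is then $\bbR$-linear by construction, and the Leibniz identity $\Phi_\epsilon^\ast(f\lambda)=(\varphi_\epsilon^\ast f)\,\Phi_\epsilon^\ast\lambda$, which holds because $\Phi_\epsilon$ is a line bundle morphism covering $\varphi_\epsilon$, combined with the product rule at $\epsilon=0$, yields
\[
\square(f\lambda)=\frac{\rmd}{\rmd\epsilon}\Big|_{\epsilon=0}\bigl[(\varphi_\epsilon^\ast f)\,\Phi_\epsilon^\ast\lambda\bigr]=X(f)\,\lambda+f\,\square\lambda,
\]
so $\square$ is a first order linear DO with symbol $X$.

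For the reverse direction, starting from a DO $\square$ with symbol $X$, I would integrate $X$ to a local flow $\{\varphi_\epsilon\}$ of $M$ and construct $\Phi_\epsilon$ through a local trivialization argument. On an open $U\subset M$ equipped with a frame $e$ of $L$, write $\square e=b\cdot e$ for some $b\in C^\infty(U)$, so that $\square$ acts on sections $fe$ as $f\mapsto X(f)+bf$, and define locally
\[
\Phi_\epsilon^\ast(fe)(z)\;:=\;f(\varphi_\epsilon(z))\,h_\epsilon(z)\,e(z),\qquad h_\epsilon(z):=\exp\!\Bigl(\int_0^\epsilon b(\varphi_u(z))\,\rmd u\Bigr).
\]
The cocycle identity $h_{\epsilon+s}(z)=h_s(z)\,h_\epsilon(\varphi_s(z))$, immediate from splitting the integral and applying the change of variable $v=u-s$, encodes the group law $\Phi_{\epsilon+s}=\Phi_\epsilon\circ\Phi_s$, while the identity $\frac{\rmd}{\rmd\epsilon}|_{\epsilon=0}\Phi_\epsilon^\ast(fe)=(X(f)+bf)\,e=\square(fe)$ is a direct verification.

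The main point to check, and the only substantial step in the proof, is that this local construction does not depend on the chosen frame and hence patches together into a globally defined one-parameter group of local line bundle automorphisms. Under a change of frame $e'=g\cdot e$ with $g\in C^\infty(U)$ nowhere zero, one computes $\square e'=(X(\log g)+b)\,e'$, so the local coefficient transforms as $b\mapsto b+X(\log g)$, which yields $h'_\epsilon(z)=h_\epsilon(z)\cdot g(\varphi_\epsilon(z))/g(z)$; plugging this back into the defining formula in the $e'$-trivialization recovers the same $\Phi_\epsilon$ on $U$, so the local assignments glue on overlaps. Finally, that the two constructions are mutually inverse follows from the uniqueness of solutions to the linear evolution $\dot\mu_\epsilon=\square\mu_\epsilon$ with initial condition $\mu_0=\lambda$: in both directions, $\epsilon\mapsto\Phi_\epsilon^\ast\lambda$ satisfies this ODE (the forward direction by the group law, the reverse by direct computation), and coincidence at $\epsilon=0$ forces them to agree for all $\epsilon$.
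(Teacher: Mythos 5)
Your proof is correct. Note that the paper itself offers no proof of this statement: it is quoted verbatim from \cite[Lemma 2.2]{ETV2016}, so there is nothing internal to compare against. Your argument is the standard one --- differentiate the pulled-back sections for one direction, and for the other integrate the symbol to a local flow and correct by the exponential factor $h_\epsilon$ in a trivialization, checking the cocycle identity, frame-independence, and uniqueness of solutions of $\dot\mu_\epsilon=\square\mu_\epsilon$. A slightly more conceptual route to the same result, closer in spirit to the cited reference, is to identify $\calD L$ with the fiberwise-linear (equivalently, $\bbR^\times$-invariant) vector fields on the total space of $L$ (or of $L^\ast\setminus M$); their flows are then automatically one-parameter groups of line bundle automorphisms, which packages your local exponential computation into the existence and uniqueness of flows. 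Both arguments are complete; yours is more elementary and self-contained.
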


The space of DOs from $L$ to $L$ is denoted by $\calD L$ and it can be identified with the space of sections of the vector bundle $DL:=(J^1L)^\ast\otimes L$, i.e.~the space of vector bundle morphisms, over $\id_M$, from the first jet bundle $J^1L$ to $L$.
In particular, for any $x\in M$, the corresponding fiber of $DL$, denoted by $D_xL$, consists of $\bbR$-linear maps $\delta:\Gamma(L)\to L_x$ s.t.~for some (necessarily unique) $\xi\in T_xM$ the following Leibniz rule holds
\begin{equation}
\delta(f\lambda)=\xi(f)\lambda_x+f(x)\delta\lambda,
\end{equation}
for all $f\in C^\infty(M)$ and $\lambda\in\Gamma(L)$.
The vector bundle $DL\to M$ is an illustration of a Jacobi algebroid (cf., e.g., \cite[Definition 1.3]{tortorella2017phdthesis}) and it is called the \emph{gauge algebroid (or Atiyah algebroid) of $L$}.
Indeed, it is naturally equipped with a Lie algebroid structure $([-,-],\rho)$ and a representation $\nabla$ on the line bundle $L\to M$.
Specifically, we have:
\begin{equation}
\label{eq:gauge_algebroid:structure_maps}
[\square,\Delta]:=\square\circ\Delta-\Delta\circ\square,\quad \rho(\square)=\sigma_\square,\quad \nabla_\square=\square,
\end{equation}
for all $\square,\Delta\in\calD L$.
A regular line bundle morphism $\Phi:L_1\to L_2$ (cf.~Remark~\ref{rem:regularVBmorphisms}), over $\varphi:M_1\to M_2$, determines a vector bundle morphism $D\Phi:DL_1\to DL_2$, over $\varphi:M_1\to M_2$, by setting
\begin{equation}
(D\Phi)\delta:=\Phi_x\circ\delta\circ\Phi^\ast\in D_{\varphi(x)}L_2,
\end{equation}
for all $x\in M_1$ and $\delta\in D_xL_1$.
Additionally, $D\Phi:DL_1\to DL_2$ is a Jacobi algebroid morphism, i.e.~it is a Lie algebroid morphism and it is compatible with the representations (of $DL_i$ on $L_i$, for $i=1,2$), in the sense that $(D\Phi)\nabla_\delta=\nabla_{(D\Phi)\delta}$, for all $\delta\in DL_1$.
Further, the above constructions define a functor from the category of line bundles (with regular line bundle morphisms) to the category of Jacobi algebroids.

For any line bundle $L\to M$, out of the structures~\eqref{eq:gauge_algebroid:structure_maps} existing on $DL$, one can also construct the \emph{Schouten--Jacobi bracket} $\ldsb-,-\rdsb$ (see, e.g., \cite[Section~1.3.1]{tortorella2017phdthesis} for its explicit definition).
In particular, $\ldsb-,-\rdsb$ is a graded Lie bracket on the shifted graded space $(\calD^\bullet L)[1]$, where $\calD^\bullet L:=\Gamma(\wedge^\bullet(J^1L)^\ast\otimes L)$ is the graded space of skew-symmetric multi-DOs from $L$ to $L$.
Indeed, for any $k\in\bbN$, $\calD^kL$ consists of the  skew-symmetric multi-linear maps $\square:\Gamma(L)^{\times k}\to\Gamma(L)$ which are a DO in each entry separately.
In particular, $\calD^0L=\Gamma(L)$ and $\calD^1L=\calD L$.
In this paper, all the multi-DOs are assumed to be skew-symmetric.

\subsection{The Der-Complex}
\label{sec:der-complex}

For any line bundle $L\to M$, the associated \emph{der-complex} $(\Omega_L^\bullet,\rmd_D)$ is the Lie algebroid de Rham complex of $DL$ with coefficients in its representation $L$ (cf.~\cite{rubtsov1980dercomplex}).
Hence $\Omega_L^\bullet=\Gamma(\wedge^\bullet(DL)^\ast\otimes L)$ is 
the space of the so called \emph{$L$-valued Atiyah forms},
with, in particular, $\Omega_L^0=\Gamma(L)$ and $\Omega_L^1=\Gamma(J^1L)$, and $\rmd_D$, the so-called \emph{der-differential}, is given by the Chevalley--Eilenberg formula
\begin{equation*}
(d_D\eta)(\Delta_0,\ldots,\Delta_k)=\sum_i(-)^i\Delta_i(\eta(\Delta_0,\ldots,\widehat{\Delta_i},\ldots,\Delta_k))+\sum_{i<j}(-)^{i+j}\eta([\Delta_i,\Delta_j],\Delta_0,\ldots,\widehat{\Delta_i},\ldots,\widehat{\Delta_j},\ldots,\Delta_k)
\end{equation*}
for all $k\in\bbN$, $\eta\in\Omega_L^k$ and $\Delta_0,\ldots,\Delta_k\in\calD L$, where the ``hat'' denotes omission.
In particular, $d_D:\Omega^0_L\to\Omega_L^1$ coincides with the first jet prolongation $j^1:\Gamma(L)\to\Gamma(J^1L),\ \lambda\mapsto j^1\lambda$.
A regular line bundle morphism $\Phi:L_1\to L_2$, over $\varphi:M_1\to M_2$, determines a cochain morphism $\Phi^\ast:(\Omega^\bullet_{L_2},\rmd_D)\to(\Omega^\bullet_{L_1},\rmd_D)$ by setting
\begin{equation}
(\Phi^\ast\eta)_x(\delta_1,\ldots,\delta_k):=\Phi_x^{-1}(\eta_{\varphi(x)}((D\Phi)\delta_1,\ldots,(D\Phi)\delta_k)),
\end{equation}
for all $k\in\bbN$, $\eta\in\Omega_{L_2}^k$, $x\in M_1$, and $\delta_1,\ldots,\delta_k\in D_xL_1$.
As on any Lie algebroid de Rham complex, there exists on the der-complex $(\Omega_L^\bullet,\rmd_D)$ a well-defined \emph{Cartan calculus}.
In addition to the der-differential $\rmd_D:\Omega^\bullet_L\to\Omega^{\bullet+1}_L$, the structural operations of this calculus are given, for any $\square\in\calD L$, by the \emph{contraction} $\iota_\square:\Omega^\bullet_L\to\Omega^{\bullet-1}_L$ and the \emph{Lie derivative} $\calL_\square:\Omega^\bullet_L\to\Omega_L^\bullet$.
The structural relations of such a calculus are:
\begin{equation*}
[d_D,\iota_\square]=\calL_\square,\quad[\calL_\square,\calL_\Delta]=\calL_{[\square,\Delta]},\quad[\calL_\square,\iota_\Delta]=\iota_{[\square,\Delta]},\quad[\iota_\square,\iota_\Delta]=0,
\end{equation*}
for all $\square,\Delta\in\Gamma(DL)$, where, on the LHS, $[-,-]$ denotes the graded commutator.
Consequently, the der-complex $(\Omega_L^\bullet,\rmd_D)$ is acyclic with a contracting homotopy given by $\iota_{\mathbbm{1}}$, i.e.~$\calL_{\mathbbm{1}}=[\rmd_D,\iota_{\mathbbm{1}}]$ is the identity map on $\Omega_L^\bullet$.
Here by $\mathbbm{1}\in\calD L$ we denote the DO which acts like the identity map, i.e.~$\mathbbm{1}\lambda=\lambda$, for all $\lambda\in\Gamma(L)$.

\subsection{The Omni-Lie Algebroid}
\label{sec:Dirac-Jacobi}

Following~\cite{vitagliano2018djbundles}, we recall that, for any line bundle $L\to M$, the associated \emph{omni-Lie algebroid} is the vector bundle $\bbD L:=DL\oplus J^1L\to M$, with the standard projections denoted by $\operatorname{pr}_D:\bbD L\to DL$ and $\operatorname{pr}_J:\bbD L\to J^1 L$, which is further equipped with:
\begin{itemize}
	\item $\ldab-,-\rdab:\bbD L\underset{M}{\times}\bbD L\to L$, the $L$-valued non-degenerate symmetric product on $\bbD L$ defined by
	\begin{equation*}
	\ldab(\square,\alpha),(\Delta,\beta)\rdab=\iota_\square\beta+\iota_\Delta\alpha,\qquad\text{for all}\ \square,\Delta\in\Gamma(D L),\ \alpha,\beta\in\Gamma(J^1L),
	\end{equation*}
	\item $\ldsb-,-\rdsb:\Gamma(\bbD L)\times\Gamma(\bbD L)\to\Gamma(L)$, the Dorfman-like bracket on $\Gamma(\bbD L)$ defined by
	\begin{equation*}
	\ldsb(\square,\alpha),(\Delta,\beta)\rdsb=([\square,\Delta],\calL_\square\beta-\iota_\Delta\rmd_D\alpha),\qquad\text{for all}\ \square,\Delta\in\Gamma(D L),\ \alpha,\beta\in\Gamma(J^1L).
	\end{equation*}
\end{itemize}
An isomorphism of omni-Lie algebroids $\bbD L_1 \to \bbD L_2$ is a pair $(\Psi,\Phi)$ of vector bundle isomorphisms $\Psi:\bbD L_1\to\bbD L_2$ and $\Phi:L_1\to L_2$, covering the same diffeomorphism $\varphi:M_1\to M_2$, such that
\begin{equation*}
\operatorname{pr}_D\circ\Psi=D\Phi\circ\operatorname{pr}_D,\quad \Psi^\ast\ldsb w_1,w_2\rdsb=\ldsb\Psi^\ast w_1,\Psi^\ast w_2\rdsb,\quad\Phi^\ast\ldab w_1,w_2\rdab=\ldab\Psi^\ast w_1,\Psi^\ast w_2\rdab,
\end{equation*}
for all $w_1,w_2\in\Gamma(\bbD L)$.
Let us just recall that, in particular, each line bundle isomorphism $\Phi:L_1\to L_2$, covering $\varphi:M_1\to M_2$, determines the isomorphism $(\bbD\Phi,\Phi)$ of omni-Lie algebroids $\bbD L_1\to\bbD L_2$, where
	\begin{equation*}
	\bbD\Phi:\bbD L_1\to\bbD L_2,\ (\delta,\alpha)\mapsto((D\Phi)\delta,\Phi\circ\alpha\circ(D\Phi)^{-1}).
	\end{equation*}
	Each closed $L$-valued Atiyah $2$-form $\varpi$, i.e.~$\varpi\in\Omega^2_L$ with $\rmd_D\varpi=0$, determines the automorphism $(\calR_\varpi,\id_L)$ of the omni-Lie algebroid $\bbD L$, where $\calR_\varpi$ is called the \emph{gauge transformation} by $\varpi$ and it is defined as follows
	\begin{equation*}
	\calR_\varpi:\bbD L\to\bbD L,\ (\delta,\alpha)\mapsto(\delta,\alpha+\varpi^\flat(\delta)).
	\end{equation*}
Actually these two kinds of automorphisms generate the entire automorphism group of the omni-Lie algebroid $\bbD L$ (cf.~\cite[Lemma 2.6]{schnitzer2019normal}).

A \emph{Dirac--Jacobi structure} on $L\to M$ consists of a vector subbundle $\calL\subset\bbD L$ which is both \emph{Lagrangian} w.r.t.~$\ldab-,-\rdab$, i.e.~$\calL=\calL^{\perp}$, and \emph{involutive} w.r.t.~$\ldsb-,-\rdsb$, i.e.~$\ldsb\Gamma(\calL),\Gamma(\calL)\rdsb\subset\Gamma(\calL)$.
Notice that, if $\calL\subset\bbD L$ is a Dirac--Jacobi structure, then also its \emph{opposite} $-\calL:=\{(\delta,-\alpha):(\delta,\alpha)\in\calL\}\subset\bbD L$ is a Dirac--Jacobi structure.

For the reader's convenience we conclude this section recalling that a regular line bundle morphism $\Phi:L\to L'$, covering a $\varphi:M\to M'$, gives rise to the operation of \emph{pull-back} and \emph{push-forward}.
For any linear family $\calL\subset\bbD L$, the \emph{push-forward of $\calL$ along $\Phi$} is the linear family $\Phi_!\calL\subset\varphi^\ast(\bbD L')$ such that
\begin{equation*}
	(\Phi_!\calL)_x=\{((D_x\Phi)\delta,\alpha')\in(\bbD L')_{\varphi(x)}:(\delta,\Phi_x^{-1}\circ\alpha'\circ D_x\Phi)\in\calL_x\},\quad\text{for any}\ x\in M.
\end{equation*}
For any linear family $\calL'\subset\bbD L'$, the \emph{pull-back of $\calL'$ along $\Phi$} is the linear family $\Phi^!\calL'\subset\bbD L$ such that
\begin{equation*}
	(\Phi^!\calL')_x=\{(\delta,\Phi_x^{-1}\circ\alpha'\circ D_x\Phi)\in(\bbD L)_x:((D_x\Phi)\delta,\alpha')\in\calL'_{\varphi(x)}\},\quad\text{for any}\ x\in M.
\end{equation*}

\section{Symplectization/Poissonization of contact/Jacobi structures}
\label{sec:homogeneous_symplectic/Poisson}

Closely following~\cite{bruce2017remarks}, this Appendix summarizes an alternative, but equivalent, approach to contact and Jacobi geometry.
This approach, inspired by the ``symplectization/Poissonization trick'', plays a crucial role in Sections~\ref{sec:Homogeneization_CDP} and~\ref{sec:Lie_group_actions}.
Specifically, in this appendix, we first introduce, in Definition~\ref{def:homogeneous_symplectic/Poisson}, the category of homogeneous symplectic (resp.~Poisson) manifolds and then we describe, in Proposition~\ref{prop:equivalence_categories:2}, the equivalence of categories existing between the latter and the category of contact (resp.~Jacobi) manifolds.

Let $P$ be a principal $\bbR^\times$-bundle, with action $\rmh:\bbR^\times\times P\to P$, $(t,p)\mapsto \rmh_t(p)$, quotient $M:=P/\bbR^\times$ and bundle map $\pi:P\to M$.
One can introduce the following spaces:
\begin{itemize}
	\item the $C^\infty(M)$-submodule $C^\infty_\text{lin}(P)\subset C^\infty(P)$ of those functions $f$ on $P$ which are degree $1$ homogeneous, i.e.~$\rmh_t^\ast f=tf$, for all $t\in\bbR^\times$,
	\item $\frakX^\bullet_\text{hom}(P)\subset\frakX^\bullet(P)$, the $C^\infty(M)$-submodule and Lie subalgebra formed by the \emph{homogeneous multivector fields}, i.e., for any $k\in\bbN$, by those $X\in\frakX^k(P)$ such that $\rmh_t^\ast X=t^{1-k}X$, for all $t\in\bbR^\times$,
	\item $\Omega^\bullet_\text{hom}(P)\subset\Omega^\bullet(P)$, the $C^\infty(M)$-submodule and cochain subcomplex formed by the \emph{homogeneous differential forms}, i.e.~by those $\omega\in\Omega^\bullet(P)$ such that $\rmh_t^\ast\omega=t\omega$, for all $t\in\bbR^\times$.
\end{itemize}
In particular, a Poisson/symplectic structure on $P$ is called \emph{homogeneous} if it is given by a homogeneous Poisson bivector/symplectic form.
So, one can introduce the homogeneous Poisson/symplectic category.

\begin{definition}
	\label{def:homogeneous_symplectic/Poisson}
	A \emph{homogeneous symplectic} (resp.~\emph{Poisson}) \emph{manifold} is a principal $\bbR^\times$-bundle equipped with a homogeneous symplectic (resp.~Poisson) structure.
	A \emph{homogeneous symplectomorphism} (resp.~\emph{Poisson map}) is a $\bbR^\times$-equivariant symplectomorphism (resp.~Poisson map).
\end{definition}

For our aims it is convenient to recall the equivalence of categories existing between: (1) the category of line bundles, with regular line bundle morphisms, and (2) the category of principal $\bbR^\times$-bundles, with $\bbR^\times$-equivariant maps.

In one direction (1) $\Rightarrow$ (2) there exists the \emph{homogenization functor}~\cite{vitagliano2017holomorphic}.
For any line bundle $L\to M$, the associated principal $\bbR^\times$-bundle $\widetilde{L}\overset{\pi}{\to} M$ is given by the \emph{slit dual} of $L$, i.e.~$\widetilde{L}:=L^\ast\setminus M$, where we canonically identify $M$ with the image of the zero section of $L^\ast\to M$.
For any regular line bundle morphism $\Phi:L_1\to L_2$, the associated $\bbR^\times$-equivariant map $\widetilde{\Phi}:\widetilde{L}_1\to \widetilde{L}_2$ is given by the fiberwise adjoint of $\Phi$, i.e.~$\widetilde{\Phi}(\nu_x):=\nu_x\circ\Phi_x^{-1}$, for all $x\in M_1$ and $\nu_x\in\widetilde{L}_{1,x}$.

In the other direction (2) $\Rightarrow$ (1) there exists the \emph{de\-ho\-mog\-e\-ni\-za\-tion functor}~\cite{vitagliano2017holomorphic}.
For any principal $\bbR^\times$-bundle $P\overset{\pi}{\to}M$, the associated line bundle $\widetilde{P}\to M$ is given by the dual of the tautological bundle, i.e.~$\widetilde{P}:=\bigO_{P/\bbR^\times}(1)\to M$.
Equivalently, $\widetilde{P}$ is given by the associated bundle for the action $\bbR^\times\times\bbR\to\bbR$, $(t,s)\mapsto t^{-1}s$, i.e.~$\widetilde{P}=(P\times\bbR)/\bbR^\times$.
For any $\bbR^\times$-equivariant map $\Psi:P_1\to P_2$, the associated regular line bundle morphism $\widetilde{\Psi}:\widetilde{P}_1\to\widetilde{P}_2$, is given by $\widetilde{\Psi}[(p,s)]=[(\Psi(p),s)]$, for all $(p,s)\in P_1\times\bbR$.

For any line bundle $L\to M$ and principal $\bbR^\times$-bundle $P$, there exist a canonical line bundle isomorphism $L\simeq\bigO_{\widetilde{L}/\bbR^\times}(1)$ and a canonical $\bbR^\times$-equivariant diffeomorphism $P\simeq\widetilde{\bigO_{P/\bbR^\times}(1)}$.
The latter defines natural transformations between the identity functors and the compositions of homogenization and dehomogenization functors.
This leads to the following.

\begin{proposition}
	\label{prop:equivalence_categories:1}
	The homogenization and dehomogenization functors establish an equivalence of categories between the category of line bundles and the category of principal $\bbR^\times$-bundles.
\end{proposition}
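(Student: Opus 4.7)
The plan is to establish the equivalence by verifying three things in turn: (i) that homogenization $L\mapsto\widetilde L$ and dehomogenization $P\mapsto\widetilde P$ are genuinely functors, (ii) that the canonical maps $\eta_L\colon L\to\widetilde{\widetilde L}$ and $\epsilon_P\colon P\to\widetilde{\widetilde P}$ already mentioned in the paragraph preceding the statement are isomorphisms (in their respective categories), and (iii) that these isomorphisms assemble into natural transformations between the compositions of the two functors and the corresponding identity functors. Once these are in place, the standard characterization of an equivalence of categories finishes the argument.

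For functoriality, I would first check that homogenization sends the identity line bundle morphism to the identity $\bbR^\times$-equivariant diffeomorphism, and respects composition: this reduces to the identity $\widetilde{\Phi\circ\Psi}(\nu_x)=\nu_x\circ(\Phi\circ\Psi)_x^{-1}=\nu_x\circ\Psi_x^{-1}\circ\Phi_x^{-1}$, combined with regularity of $\Phi$ and $\Psi$, which ensures fiberwise invertibility and so that the target of $\widetilde\Phi$ lands in the slit dual $L_2^\ast\setminus M_2$. Functoriality of dehomogenization is even more immediate: it descends from the componentwise assignment $(\Psi\times\mathrm{id}_\bbR)$ on $P\times\bbR$ to the quotient by the diagonal $\bbR^\times$-action, and this assignment visibly preserves identity and composition.

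For naturality, I would write down $\eta_L\colon L\to\widetilde{\widetilde L}$ explicitly as $\eta_L(v)(\nu):=\langle\nu,v\rangle$ for $v\in L_x$ and $\nu\in\widetilde L_x\subset L_x^\ast$, and $\epsilon_P\colon P\to\widetilde{\widetilde P}$ by sending $p\in P_x$ to the linear functional $[(p',s')]\mapsto ts'$ on $\widetilde P_x$, where $t\in\bbR^\times$ is uniquely determined by $p'=t\cdot p$; both are well-defined, fiberwise linear isomorphisms (resp.~$\bbR^\times$-equivariant diffeomorphisms), as is checked by a direct computation using the $\bbR^\times$-action $t\cdot(p,s)=(t\cdot p,t^{-1}s)$. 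Naturality reduces to the commutativity of the squares $\widetilde{\widetilde\Phi}\circ\eta_{L_1}=\eta_{L_2}\circ\Phi$ and $\widetilde{\widetilde\Psi}\circ\epsilon_{P_1}=\epsilon_{P_2}\circ\Psi$, both of which unwrap into a one-line identity between dual (resp.~quotient) constructions.

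There is no serious obstacle: every step is purely formal manipulation of duals, slit duals, and principal bundle quotients. The only point requiring a small amount of care is the interplay between regularity of line bundle morphisms and the $\bbR^\times$-equivariance of the induced maps on slit duals --- specifically, verifying that $\widetilde\Phi$ actually maps $L_1^\ast\setminus M_1$ into $L_2^\ast\setminus M_2$ (which requires that $\Phi_x$ is an isomorphism on each fibre, hence the ``regular'' hypothesis is essential) --- and the symmetric check on the dehomogenization side.
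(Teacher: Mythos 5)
Your overall strategy is exactly the paper's: the proposition is reduced to checking functoriality of the two assignments and to exhibiting the canonical isomorphisms $L\simeq\bigO_{\widetilde{L}/\bbR^\times}(1)$ and $P\simeq\widetilde{\bigO_{P/\bbR^\times}(1)}$ as natural transformations to the identity functors; the paper merely asserts these isomorphisms, so your extra detail on the homogenization side (in particular the role of regularity in keeping $\widetilde{\Phi}$ inside the slit dual) is welcome and correct.

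There is, however, a concrete inconsistency in your two explicit formulas, and it sits precisely in the only place where this proposition has actual content. You declare the dehomogenization to be the quotient of $P\times\bbR$ by $t\cdot(p,s)=(t\cdot p,t^{-1}s)$. That quotient is the \emph{tautological} bundle of $P$, not its dual $\bigO_{P/\bbR^\times}(1)$: for $P=\widetilde{L}=L^\ast\setminus M$ its fibre over $x$ is canonically $L_x^\ast$ (the class $[(\nu,s)]\mapsto s\nu$ is well defined there), so with this convention $\widetilde{\widetilde{L}}\simeq L^\ast$ and your $\eta_L(v)(\nu)=\langle\nu,v\rangle$ does not land in the object you constructed. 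Moreover, with this convention your counit $\epsilon_P(p)\colon[(p',s')]\mapsto ts'$ (where $p'=t\cdot p$) is well defined but satisfies $\epsilon_P(u\cdot p)=u^{-1}\cdot\epsilon_P(p)$, i.e.\ it is \emph{anti}-equivariant for the scalar-multiplication action on the slit dual, so it is not an isomorphism of principal $\bbR^\times$-bundles. The fix is to take $\widetilde{P}$ to be the dual of the tautological bundle, i.e.\ the quotient of $P\times\bbR$ by $(p,s)\sim(t\cdot p,ts)$ (this is what the paper's $\bigO_{P/\bbR^\times}(1)$ and its Example~\ref{projective} force, and it is the convention under which sections correspond to degree~$1$ homogeneous functions). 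With that convention your $\eta_L$ is literally the double-dual evaluation $L\to\bigO_{\widetilde{L}/\bbR^\times}(1)$, and the counit must be $\epsilon_P(p)\colon[(p',s')]\mapsto t^{-1}s'$ for $p'=t\cdot p$, which one checks is well defined, fibrewise linear and nonzero, and now genuinely equivariant: $\epsilon_P(u\cdot p)=u\cdot\epsilon_P(p)$. After this adjustment the naturality squares close as you describe and the argument is complete.
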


Let $L\to M$ be a line bundle.
If $\pi$ denotes the bundle map $\widetilde{L}:=L^\ast\setminus M\to M$, then the pull-back line bundle $\pi^\ast L\to\widetilde{L}$ is trivial.
Indeed, there exists a regular line bundle morphism $\widehat{\pi}:\bbR_{\widetilde{L}}\to L$, covering $\pi:\widetilde{L}\to M$, which is defined by $\nu(\widehat{\pi}(\nu,s))=s$, for all $\nu\in\widetilde{L}$ and $s\in\bbR$.
Equivalently, $\widehat{\pi}$ is determined by the fact that, by pull-back, it induces the $C^\infty(M)$-module isomorphism $\Gamma(L)\to C^\infty_{\text{hom}}(\widetilde{L})$, $\lambda\mapsto\widetilde{\lambda}:=\widehat{\pi}^\ast\lambda$, such that, for all $x\in M$, $\nu_x\in\widetilde{L}_x$, and $\lambda\in\Gamma(L)$,
\begin{equation}
\label{eq:homogeneization:1}
\widetilde{\lambda}(\nu_x)=\nu_x(\lambda_x).
\end{equation}
Further, there exist regular vector bundle morphisms, both denoted by $\widehat{\pi}$ and covering $\pi:\widetilde{L}\to M$,
\begin{equation}
\begin{tikzcd}
\wedge^\bullet T\widetilde{L}\arrow[d, swap, "\widehat{\pi}"]\arrow[r]&\widetilde{L}\arrow[d, "\pi"]\\
D^\bullet L:=\wedge^\bullet(J^1L)^\ast\otimes L\arrow[r]& M
\end{tikzcd}
\qquad\qquad
\begin{tikzcd}
\wedge^\bullet T^\ast\widetilde{L}\arrow[d, swap, "\widehat{\pi}"]\arrow[r]&\widetilde{L}\arrow[d, "\pi"]\\
\wedge^\bullet(DL)^\ast\otimes L\arrow[r]& M
\end{tikzcd}
\end{equation}
which are determined by the following properties.
The one on the LHS induces by pull-back the $C^\infty(M)$-module and Lie algebra isomorphism $(\calD^\bullet L)[1]\to\frakX_{\text{hom}}^\bullet(\widetilde{L})[1]$, $\square\mapsto\widetilde{\square}:=\widehat{\pi}^\ast\square$.
The one on the RHS induces by pull-back the $C^\infty(M)$-module and cochain isomorphism $\Omega^\bullet_L\to\Omega^\bullet_{\text{hom}}(\widetilde{L})$, $\varpi\mapsto\widetilde{\varpi}:=\widehat{\pi}^\ast\varpi$, s.~t.
\begin{equation}
\label{eq:homogeneization:2}
\widetilde{\square(j^1\lambda_1,\ldots,j^1\lambda_k)}=\widetilde{\square}(\rmd\widetilde{\lambda}_1,\ldots,\rmd\widetilde{\lambda}_k),\qquad\qquad\widetilde{\varpi(\delta_1,\ldots,\delta_k)}=\widetilde{\varpi}(\widetilde{\delta}_1,\ldots,\widetilde{\delta}_k),
\end{equation}
for all $k\in\bbN$, $\square\in\calD^kL$, $\varpi\in\Omega_L^k$, and $\lambda_1,\ldots,\lambda_k\in\Gamma(L)$, $\delta_1,\ldots,\delta_k\in\calD L$.

\begin{definition}
	\label{def:Poissonization}
	The \emph{Poissonization} of a Jacobi manifold $(M,L,\{-,-\})$ is the homogeneous Poisson manifold $(\widetilde{L}=L^*\setminus M,\{-,-\}_{\widetilde{L}})$ such that, if $\J\in\calD^2L$ is the Jacobi bi-DO corresponding to $\{-,-\}$ (cf.~Proposition~\ref{prop:Jacobi_bi-DOs}), then $\widetilde{\J}\in\frakX_{\text{hom}}^2(\widetilde{L})$ is the homogeneous Poisson bivector field corresponding to $\{-,-\}_{\widetilde{L}}$.
\end{definition}

Unravelling Definition~\ref{def:Poissonization}, $\{-,-\}_{\widetilde{L}}$ is equivalently characterized as the unique (homogeneous) Poisson structure on $\widetilde{L}$ such that $\widehat{\pi}:(\widetilde{L},\bbR_{\widetilde{L}},\{-,-\}_{\widetilde{L}})\to(M,L,\{-,-\})$ is a Jacobi morphism, i.e.~$$\{\widetilde{\lambda},\widetilde{\mu}\}_{\widetilde{L}}=\widetilde{\{\lambda,\mu\}},\ \text{for all}\ \lambda,\mu\in\Gamma(L).$$
Further, for any regular line bundle morphism $\Phi:L_1\to L_2$, it turns out that $\Phi:(M_1,L_1,\{-,-\}_1)\to(M_2,L_2,\{-,-\}_2)$ is a Jacobi morphism if and only if $\widetilde{\Phi}:\widetilde{L}_1\to\widetilde{L}_2$ is a (homogeneous) Poisson map.

\begin{definition}
	\label{def:symplectization}
	The \emph{symplectization} of a contact manifold $(M,L,\theta)$ is the homogeneous symplectic manifold $(\widetilde{L},\widetilde{\varpi})$, where $\varpi\in\Omega_L^2$ is the symplectic Atiyah form corresponding to $\theta$ (cf.~Proposition~\ref{prop:contact=symplecticAtiyah}).
\end{definition}

Unravelling Definition~\ref{def:symplectization}, symplectization $\widetilde{\varpi}$, seen as a non-degenerate Poisson structure, coincides with the Poissonization of $\varpi$, seen as a non-degenerate Jacobi structure.
Further, since canonically $L\simeq TM/\calH$, with $\calH=\ker\theta$, one identifies $\widetilde{L}$ with symplectic submanifold $\calH^\circ\setminus M$ of $(T^\ast M,\omega_M)$, where $\calH^\circ$ denotes the annihilator of $\calH$ in $T^\ast M$.
Under this identification, it is easy to see, e.g.~in local coordinates, that the symplectic form induced by $\omega_M$ on $\calH^\circ\setminus M$ coincides with $\widetilde{\varpi}$. 
In conclusion, we obtain the following.

\begin{proposition}
	\label{prop:equivalence_categories:2}
	The homogenization and dehomogenization functors define an equivalence of categories between the Jacobi/contact category and the homogeneous Poisson/symplectic category. 
\end{proposition}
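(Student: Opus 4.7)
The plan is to bootstrap from the bundle-level equivalence already established in Proposition~\ref{prop:equivalence_categories:1}, and upgrade it to preserve the extra structures on both sides. The decisive ingredients are the isomorphisms $\widehat{\pi}^\ast:\calD^\bullet L\to \frakX^\bullet_{\text{hom}}(\widetilde{L})$ and $\widehat{\pi}^\ast:\Omega^\bullet_L\to\Omega^\bullet_{\text{hom}}(\widetilde{L})$ recalled just before Definition~\ref{def:Poissonization}: the first is an isomorphism of graded Lie algebras with respect to the Schouten--Jacobi bracket $\ldsb-,-\rdsb$ on the left and the Schouten--Nijenhuis bracket on the right, while the second is an isomorphism of cochain complexes with respect to $\rmd_D$ and $\rmd$, and both are compatible with (fiberwise) non-degeneracy.

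First I would handle the objects. For the Jacobi/Poisson case, Proposition~\ref{prop:Jacobi_bi-DOs} identifies Jacobi structures on $L\to M$ with bi-DOs $\J\in\calD^2 L$ satisfying $\ldsb\J,\J\rdsb=0$; transporting this via $\widehat{\pi}^\ast$ yields a homogeneous bivector $\widetilde{\J}\in\frakX^2_{\text{hom}}(\widetilde{L})$ with $[\widetilde{\J},\widetilde{\J}]=0$, and conversely every homogeneous Poisson bivector on $\widetilde{L}$ pulls back to a unique Jacobi bi-DO on $L$. For the contact/symplectic case, Proposition~\ref{prop:contact=symplecticAtiyah} identifies contact forms $\theta$ with symplectic Atiyah forms $\varpi\in\Omega^2_L$, and under $\widehat{\pi}^\ast$ the $\rmd_D$-closedness and non-degeneracy of $\varpi$ correspond to the $\rmd$-closedness and non-degeneracy of $\widetilde{\varpi}\in\Omega^2_{\text{hom}}(\widetilde{L})$, so $\widetilde{\varpi}$ is a homogeneous symplectic form. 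Homogeneity on both sides is built into the construction.

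Next I would treat morphisms. Given a regular line bundle morphism $\Phi:L_1\to L_2$, the identity $\widetilde{\Phi^\ast\lambda}=\widetilde{\Phi}^\ast\widetilde{\lambda}$ (for $\lambda\in\Gamma(L_2)$) combined with $\{\widetilde{\lambda},\widetilde{\mu}\}_{\widetilde{L}_i}=\widetilde{\{\lambda,\mu\}_i}$ shows that the Jacobi morphism condition~\eqref{eq:def:Jacobi_morphism} is equivalent to
\begin{equation*}
	\widetilde{\Phi}^\ast\{\widetilde{\lambda},\widetilde{\mu}\}_{\widetilde{L}_2}=\{\widetilde{\Phi}^\ast\widetilde{\lambda},\widetilde{\Phi}^\ast\widetilde{\mu}\}_{\widetilde{L}_1}
\end{equation*}
for all $\lambda,\mu\in\Gamma(L_2)$. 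Since degree-$1$ homogeneous functions, together with their products with pull-backs from $M_2$, generate $C^\infty(\widetilde{L}_2)$ locally, and since the Poisson bracket is a bi-derivation, this in turn is equivalent to $\widetilde{\Phi}$ being a Poisson map; combined with the $\bbR^\times$-equivariance supplied by the homogenization functor itself, one obtains exactly a homogeneous Poisson map. The contact/symplectic case is completely analogous, using $\theta_1=\Phi^\ast\theta_2$ and the cochain isomorphism to pass to $\widetilde{\varpi}_1=\widetilde{\Phi}^\ast\widetilde{\varpi}_2$. The essential surjectivity and full faithfulness of the functors on structured objects then follow immediately from Proposition~\ref{prop:equivalence_categories:1}, because the natural isomorphisms $L\simeq\bigO_{\widetilde{L}/\bbR^\times}(1)$ and $P\simeq\widetilde{\bigO_{P/\bbR^\times}(1)}$ automatically respect the structures by the compatibility just established. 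The only real obstacle is the bookkeeping: verifying that all the ``tilde'' maps commute with Schouten brackets, $\rmd_D$, pull-backs, and non-degeneracy checks; once these compatibilities are assembled, the equivalence of categories is formal.
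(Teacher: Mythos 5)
Your proposal is correct and follows essentially the same route as the paper, which presents Proposition~\ref{prop:equivalence_categories:2} as the summary of the preceding constructions: the bundle-level equivalence of Proposition~\ref{prop:equivalence_categories:1} upgraded by the graded Lie algebra isomorphism $(\calD^\bullet L)[1]\simeq\frakX^\bullet_{\text{hom}}(\widetilde{L})[1]$ and the cochain isomorphism $\Omega^\bullet_L\simeq\Omega^\bullet_{\text{hom}}(\widetilde{L})$, together with the observation that $\Phi$ is a Jacobi morphism iff $\widetilde{\Phi}$ is a homogeneous Poisson map. The only cosmetic caveat is that your phrase about degree-$1$ homogeneous functions ``generating'' $C^\infty(\widetilde{L}_2)$ should really be the statement that their differentials span the cotangent spaces, which is what the bi-derivation argument actually requires.
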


For future reference, we point out that Proposition~\ref{prop:equivalence_categories:2} specializes to the setting of Lie groupoids.
\begin{definition}
	\label{def:homogeneous_symplectic_groupoid}
	A \emph{homogeneous symplectic groupoid}~\cite{bruce2017remarks} is a symplectic groupoid $(\calG,\omega)\rightrightarrows\calG_0$ equipped with a principal $\bbR^\times$-bundle structure s.~t.~$\bbR^\times$ acts on $\calG$ by groupoid morphisms and $\omega$ is homogeneous.
\end{definition}
Then the following corollary extends to the non-coorientable case, the analogous result first obtained for coorientable contact groupoids (see~\cite[Proposition 2.4]{crainic}).

\begin{corollary}[{\cite[Theorem 5.8]{bruce2017remarks}}]
	\label{cor:homogeneous_symplectic_groupoids}
	Homogenization and dehomogenization establish a 1-1 correspondence between contact groupoids and homogeneous symplectic groupoids.
\end{corollary}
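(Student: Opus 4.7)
The strategy is to bootstrap on Proposition~\ref{prop:equivalence_categories:2}, which already supplies the equivalence of categories between contact and homogeneous symplectic manifolds, and then to match up the compatible groupoid structures on the two sides. In one direction, I start with a contact groupoid $(\calG,L,\theta)\rightrightarrows\calG_0$ and produce a homogeneous symplectic groupoid on $\widetilde L=L^\ast\setminus\calG$; in the other, I dehomogenize a given homogeneous symplectic groupoid and check that the resulting contact form is multiplicative. Functoriality of (de)homogenization, already built into Proposition~\ref{prop:equivalence_categories:1}, will then imply that the two constructions are inverse to each other.

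For the forward direction, I would first use Remark~\ref{rem:multiplicative_forms} to realize $L$ as a trivial-core line bundle groupoid $L\rightrightarrows L_0$ whose structure maps $S,T,M,I,U$ are all fiberwise $\bbR$-linear isomorphisms. Dualizing yields a vector bundle groupoid $L^\ast\rightrightarrows L_0^\ast$; restricting to the complements of the zero sections produces a Lie groupoid $\widetilde L\rightrightarrows\widetilde{L_0}$, where $\widetilde{L_0}:=L_0^\ast\setminus\calG_0$. Because the structure maps are fiberwise linear, they commute with fiberwise scaling, and so the tautological $\bbR^\times$-action on $\widetilde L$ is by groupoid automorphisms, exhibiting $\widetilde L\rightrightarrows\widetilde{L_0}$ as a principal $\bbR^\times$-bundle in Lie groupoids over $\calG\rightrightarrows\calG_0$.

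Next, I would show that the homogeneous symplectic form $\widetilde\varpi$ on $\widetilde L$ (obtained via symplectization of the contact form $\theta$, as in Definition~\ref{def:symplectization}) is multiplicative on $\widetilde L\rightrightarrows\widetilde{L_0}$. The cleanest route is through a primitive: since $\varpi=-\rmd_D(\theta\circ\sigma)$, the homogenization formulas~\eqref{eq:homogeneization:1}-\eqref{eq:homogeneization:2} turn $\theta\circ\sigma\in\Omega^1_L$ into a degree-one-homogeneous $1$-form $\vartheta\in\Omega^1_{\operatorname{hom}}(\widetilde L)$ (a Liouville-type tautological form) with $\widetilde\varpi=-\rmd\vartheta$. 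Hence it suffices to prove $\vartheta$ is multiplicative, i.e.~$\widetilde M{}^\ast\vartheta=\widetilde{\operatorname{Pr}}_1^\ast\vartheta+\widetilde{\operatorname{Pr}}_2^\ast\vartheta$ on $\widetilde L{}_S{\times}_T\widetilde L$. Unwinding the definition of $\vartheta$ on fibers of $L^\ast$, this multiplicativity identity is seen to be a direct pairwise dualization of the multiplicativity of $\theta$ in the compact form~\eqref{eq:enumitem:mult_contact_form}.

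For the converse, given a homogeneous symplectic groupoid $(\Gamma,\omega)\rightrightarrows\Gamma_0$, the $\bbR^\times$-action by groupoid automorphisms is automatically free and proper and intertwines source, target, multiplication and unit, so the quotients $\calG:=\Gamma/\bbR^\times\rightrightarrows\calG_0:=\Gamma_0/\bbR^\times$ form a Lie groupoid; Proposition~\ref{prop:equivalence_categories:1} promotes the associated $\bigO(1)$ bundles to a line bundle groupoid $L\rightrightarrows L_0$. Dehomogenization of $\omega$ yields a symplectic Atiyah form $\varpi$ and hence a contact form $\theta$ on $\calG$ (Propositions~\ref{prop:non-degenerateJacobi=symplecticAtiyah} and~\ref{prop:contact=symplecticAtiyah}), and the multiplicativity of $\omega$, read off on homogeneous sections and differentiated, translates back to~\eqref{eq:enumitem:mult_contact_form} for $\theta$. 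Finally, the natural isomorphisms $L\simeq\bigO_{\widetilde L/\bbR^\times}(1)$ and $\Gamma\simeq\widetilde{\bigO_{\Gamma/\bbR^\times}(1)}$ from Proposition~\ref{prop:equivalence_categories:1} are visibly groupoid isomorphisms, closing the loop.

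The one genuine technical point is the bookkeeping involved in step two: $\theta$ lives in $\Omega^1(\calG;t^\ast L_0)$, the corresponding Atiyah primitive $\theta\circ\sigma$ lives in $\Omega^1_L$, and the multiplicativity conditions on these two objects must match up with the multiplicativity of the real-valued $\vartheta=\widetilde{\theta\circ\sigma}$ on $\widetilde L$; the linearity of the structure maps of $L\rightrightarrows L_0$ is what makes this chain of translations work cleanly.
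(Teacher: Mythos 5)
The paper does not actually prove this corollary: it is quoted verbatim from \cite[Theorem 5.8]{bruce2017remarks} (extending \cite[Proposition 2.4]{crainic} from the coorientable case), so there is no internal proof to compare against. Your sketch reconstructs essentially the argument of that reference and is sound in outline, with two points that deserve sharpening. First, ``dualizing'' the trivial-core line bundle groupoid $L\rightrightarrows L_0$ must not be read as taking the VB-groupoid dual in the technical sense (that would produce a full-core object over the zero side bundle); what you actually use --- and what works --- is the fiberwise contragredient of the regular structure maps: under $L\simeq t^\ast L_0$ you dualize the representation of $\calG$ on $L_0$ and take the associated action groupoid $t^\ast L_0^\ast\rightrightarrows L_0^\ast$, restricted to the slit duals. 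With that reading, the identity hiding behind your ``pairwise dualization'' step is that for composable $\nu_g,\mu_h\in\widetilde L$ and composable $\ell_1\in L_g$, $\ell_2\in L_h$ one has $\langle\nu_g\cdot\mu_h,M(\ell_1,\ell_2)\rangle=\langle\nu_g,\ell_1\rangle=\langle\mu_h,\ell_2\rangle$ (the two right-hand sides agree precisely because of the composability conditions), and pairing \eqref{eq:enumitem:mult_contact_form} against $\nu_g\cdot\mu_h$ then yields multiplicativity of $\vartheta$ and hence of $\widetilde\varpi=\pm\,\rmd\vartheta$. Second, in the converse direction your phrase ``read off on homogeneous sections and differentiated'' is too vague to carry the argument: the clean mechanism is that homogeneity of $\omega$ produces the canonical primitive $\vartheta=\iota_\Delta\omega$, where $\Delta$ is the Euler vector field of the $\bbR^\times$-action; since the action is by groupoid automorphisms, $\Delta$ is a multiplicative vector field, contraction of a multiplicative $2$-form with a multiplicative vector field is again multiplicative, and $\vartheta$ dehomogenizes to $\theta\circ\sigma$, which gives back \eqref{eq:enumitem:mult_contact_form}. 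With these two points made explicit, your argument is a complete proof of the statement the paper only cites.
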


\small
\addtocontents{toc}{\SkipTocEntry}
\section*{Acknowledgements}

We are grateful to 
David Iglesias Ponte,
Bozidar Jovanovi\'c,
Juan Carlos Marrero,
Jonas Schnitzer,
Daniele Sepe,
Izu Vaisman, 
Luca Vitagliano and
Marco Zambon
for useful discussions and helpful suggestions.
A.~M.~B.~and C.~V.~authors were partially supported by  CNCS UEFISCDI, 
project number PN-II-ID-PCE-2011-3-0921.
A.~G.~T.~is supported by an FWO postdoctoral fellowship and acknowledges the partial support received by the FWO research project G083118N (Belgium) during the preparation of this paper, further he is  member of GNSAGA (INdAM).
This study was financed in part by the Coordenação de Aperfeiçoamento de Pessoal de Nìvel Superior - Brasil (CAPES) - Finance code 001


\nocite{cabrera2018local,SaSe,schnitzer2019generalizedcontactbundles}

\end{document}